\documentclass[12pt]{amsart}
\usepackage[mathcal]{eucal}
%nicer looking calligraphic fonts
\usepackage{amssymb, labelfig, graphics, amsxtra, amsthm}
\usepackage{hyperref}

\setlength{\oddsidemargin}{0pt}
\setlength{\evensidemargin}{0pt}
\setlength{\topmargin}{-20pt}
\setlength{\textwidth}{6.5in}
\setlength{\textheight}{9in}
\setlength{\marginparsep}{.1 in}
\setlength{\marginparwidth}{.8 in}

\newtheorem{thm}{Theorem}[section]
\newtheorem{prop}[thm]{Proposition}
\newtheorem{lem}[thm]{Lemma}

\newtheorem{cor}[thm]{Corollary}
\newtheorem{comp}[thm]{Complement}

\theoremstyle{remark}
\newtheorem{rem}[thm]{Remark}

\theoremstyle{definition}

\newcommand{\R}{\mathbb R}
\newcommand{\Z}{\mathbb Z}
\newcommand{\N}{\mathbb N}

\newcommand{\HH}{\mathbb H}
\newcommand{\SL}{\mathrm{SL_n(\R)}}
\newcommand{\PSL}{\mathrm{PSL_n(\R)}}
\newcommand{\GL}{\mathrm{GL_n(\R)}}
\newcommand{\PGL}{\mathrm{PGL_n(\R)}}
\newcommand{\Flag}{\mathrm{Flag}(\R^n)}
\newcommand{\F}{\mathcal F}
\newcommand{\E}{\mathrm{e}}

\newcommand{\leftn}{\left \vert \kern -1.5pt \left  \vert \kern -1.5pt \left \vert}
\newcommand{\rightn}{\right \vert \kern -1.5pt \right \vert \kern -1.5pt \right \vert}
\newcommand{\ML}{\mathcal{ML}}

\newcommand{\Hit}{\mathrm{Hit}_{\mathrm{n}}}
\newcommand{\Id}{\mathrm{Id}}
\newcommand{\CC}{\mathcal C}
\newcommand{\CH}{\mathcal C^{\mathrm{H{\ddot o}l}}}
\newcommand{\slits}{\mathrm{slits}}
\newcommand{\delv}{\partial_{\mathrm v}}
\newcommand{\delh}{\partial_{\mathrm h}}

\newcommand{\db}{/\kern -4pt/}

\renewcommand{\leq}{\leqslant}
\renewcommand{\geq}{\geqslant}
\renewcommand{\phi}{\varphi}
\renewcommand{\epsilon}{\varepsilon}

%%%%%%%%%%%%%%%%%%%%%%

\title
{Hitchin characters and geodesic laminations}

\author{Francis Bonahon}
\address {Department
of Mathematics,  University of
Southern California, KAP 108, Los Angeles,
CA~90089-2532, U.S.A.}
\email{fbonahon@math.usc.edu}

\author{Guillaume Dreyer}
\address {Department
of Mathematics,  University of
Notre Dame, 255 Hurley Hall, Notre Dame, IN~46556, U.S.A.}
\email{dreyfactor@gmail.com}

\thanks{This research was partially supported by the grants  DMS-0604866 and DMS-1105402 from the U.S. National Science Foundation, and by a  Fellowship from the Simons Foundation (grant 301050). In addition, the authors gratefully acknowledge support from the NSF grants DMS-1107452, 1107263 and 1107367 ``RNMS: GEometric structures And Representation varieties'' (the GEAR Network).}
\date{\today}

\begin{document}

\begin{abstract}
For a closed surface $S$, the Hitchin component $\Hit(S)$ is a preferred component of the character variety consisting of group homomorphisms from the fundamental group $\pi_1(S)$  to the Lie group $\PSL$. We construct a parametrization of the Hitchin component that is well-adapted to a geodesic lamination $\lambda$ on the surface. This is a natural extension of Thurston's parametrization of the Teichm\"uller space $\mathcal T(S)$ by shear coordinates associated to $\lambda$, corresponding to the case $n=2$. However, significantly new ideas are needed in this higher dimensional case. The article concludes with a few applications. 
\end{abstract}

\maketitle

\begin{tableofcontents}

\end{tableofcontents}

\section*{Introduction}
\label{bigsect:Intro}

\subsection{Background and motivation}
For a closed, connected, oriented surface $S$ of genus $g>1$, the \emph{Hitchin component} $\Hit(S)$ is a preferred component of the character variety
$$
\mathcal X_{\PSL}(S) = \{ \text{homomorphisms } \rho \colon \pi_1(S) \to \PSL \} \db \PSL
$$
consisting of group homomorphisms $\rho \colon \pi_1(S) \to \PSL$ from the fundamental group $\pi_1(S)$ to the Lie group $\PSL$ (equal to the special linear group $\SL$ if $n$ is odd, and to  $\SL/\{\pm \Id\}$ if $n$ is even), where $\PSL$ acts on these homomorphisms by conjugation. The quotient should normally be taken in the sense of geometric invariant theory \cite{Mum}, but this subtlety is irrelevant here as this quotient construction coincides with the usual topological quotient on the Hitchin component. 

When $n=2$, the Lie group $\mathrm{PSL}_2(\R)$ is also the orientation-preserving isometry group of the hyperbolic plane $\HH^2$, and the Hitchin component $\mathrm{Hit}_2(S)$   of $\mathcal X_{\mathrm{PSL}_2(\R)}(S)$ consists of all  characters represented by injective homomorphisms $\rho\colon \pi_1(S) \to \mathrm{PSL}_2(\R)$ whose image $\rho \bigl( \pi_1(S) \bigr)$ is discrete in $\mathrm{PSL}_2(\R)$ and for which the natural homotopy equivalence $S \to \HH^2/\rho \bigl( \pi_1(S) \bigr)$ has degree $+1$. The Hitchin component $\mathrm{Hit}_2(S)$ is in this case  called the \emph{Teichm\"uller component}, and can also be described as the space of isotopy classes of hyperbolic metrics on $S$. 

When $n>2$, there is a preferred homomorphism $\mathrm{PSL}_2(\R) \to \PSL$ coming from the unique $n$--dimensional representation of $\mathrm{SL}_2(\R)$ (or, equivalently, from the natural action of $\mathrm{SL}_2(\R) $ on the vector space $\R[X,Y]_{n-1}\cong \R^n$ of homogeneous polynomials  of degree $n-1$ in two variables). This provides a natural map $\mathcal X_{\mathrm{PSL}_2(\R)}(S) \to \mathcal X_{\PSL}(S) $, and the \emph{Hitchin component} $\Hit(S)$ is the component of $\mathcal X_{\PSL}(S)$ that contains the image of  $\mathrm{Hit}_2(S)\subset \mathcal X_{\mathrm{PSL}_2(\R)}(S) $. 
The terminology is motivated by the following fundamental result of Hitchin \cite{Hit}, who was the first to single out this component. 

\begin{thm} [Hitchin]
\label{thm:Hitchin}
The  Hitchin component $\Hit(S)$ is diffeomorphic to $\R^{2(g-1)(n^2-1)}$. 
\end{thm}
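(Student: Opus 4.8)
The plan is to fix an auxiliary complex structure on $S$, making it a Riemann surface $\Sigma$, and to recast the question through the non-abelian Hodge correspondence of Hitchin, Simpson, Corlette and Donaldson. That correspondence identifies the subset of $\mathcal X_{\mathrm{PSL}_n(\C)}(S)$ represented by reductive homomorphisms with the moduli space $\mathcal M_\Sigma$ of polystable $\mathrm{PSL}_n(\C)$--Higgs bundles of degree $0$ on $\Sigma$, that is, pairs $(E,\Phi)$ with $E$ a holomorphic bundle of rank $n$ and $\Phi \in H^0(\Sigma, \mathrm{End}(E)\otimes K)$ modulo isomorphism, where $K$ is the canonical bundle; moreover, as noted in the excerpt, on the relevant locus this moduli construction coincides with the usual topological quotient. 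Characters of homomorphisms valued in the split real form $\PSL$ then correspond to the fixed locus of the anti-holomorphic involution obtained by combining a compact structure with the Cartan involution of the split form; concretely these are the Higgs bundles admitting a nondegenerate symmetric bilinear form $Q$ on $E$ with respect to which $\Phi$ is self-adjoint. The goal becomes to isolate inside this real locus a connected component which is an explicit affine space of the predicted dimension.

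First I would introduce the \emph{Hitchin base} $B=\bigoplus_{i=2}^n H^0(\Sigma, K^i)$, whose elements are tuples $q=(q_2,\dots,q_n)$ of holomorphic $i$--differentials. Since $\deg K^i=i(2g-2)>2g-2$ for $i\geq 2$ and $g>1$, Serre duality forces $H^1(\Sigma,K^i)=0$, so Riemann--Roch gives $\dim_\C H^0(\Sigma,K^i)=(2i-1)(g-1)$; summing over $i=2,\dots,n$ and using $\sum_{i=1}^n(2i-1)=n^2$ yields $\dim_\C B=(n^2-1)(g-1)$, hence $\dim_\R B=2(g-1)(n^2-1)$. Next comes Hitchin's section: on $E_0=K^{(n-1)/2}\oplus K^{(n-3)/2}\oplus\dots\oplus K^{-(n-1)/2}$ one puts the Higgs field deforming the principal nilpotent, with the entries along the $i$--th subdiagonal prescribed by $q_{i+1}$. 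One checks these pairs are stable and carry the symmetric form $Q$ demanded by reality, so this defines a map $s\colon B\to\mathcal M_\Sigma$ landing in the $\PSL$--locus, and by construction $s$ is a section of the Hitchin map $h\colon \mathcal M_\Sigma\to B$ recording the coefficients of the characteristic polynomial of $\Phi$; in particular $s$ is an injective smooth embedding.

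The heart of the argument is to show that $s(B)$ is a connected component of $\mathcal X_{\PSL}(S)$. It is closed because $h$ is proper and $s$ is a section of it. It is open because the deformation complex of each Higgs bundle $s(q)$ computes the Zariski tangent space of $\mathcal X_{\PSL}(S)$ at $[s(q)]$ to be smooth of real dimension $2(g-1)(n^2-1)=\dim_\R B$, so $s$ is a local diffeomorphism there; openness and closedness together make $s(B)$ a full component. Finally, $q=0$ produces the Higgs bundle of the uniformizing Fuchsian representation post-composed with the principal homomorphism $\mathrm{PSL}_2(\R)\to\PSL$, so the component $s(B)$ is exactly $\Hit(S)$ as defined in the excerpt. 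Hence $s\colon B\to\Hit(S)$ is a diffeomorphism, and $B$, being a complex vector space of complex dimension $(n^2-1)(g-1)$, is diffeomorphic to $\R^{2(g-1)(n^2-1)}$.

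The two places demanding genuine work are: the verification that the Higgs bundles in the image of $s$ are stable and satisfy the reality constraint — a direct but delicate computation with the principal $\mathfrak{sl}_2$--triple — and, more substantially, the openness/closedness step that upgrades $s(B)$ from an embedded submanifold to an entire component. This last point is where one must invoke the properness of the Hitchin fibration together with a smoothness analysis of the real moduli space (deformation-theoretic, or Morse-theoretic via the function $\|\Phi\|^2$), and I expect it to be the main obstacle.
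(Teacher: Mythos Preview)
The paper does not give its own proof of this theorem: it is stated in the introduction with attribution to Hitchin \cite{Hit} and used as background. Your sketch is an accurate outline of Hitchin's original Higgs-bundle argument, so in that sense you have reproduced ``the paper's proof'' (namely, the cited one).

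It is worth noting what the paper itself contributes and how it relates to this statement. The main result (Theorem~\ref{thm:InvariantsHomeomorphism}, together with Proposition~\ref{prop:ComputePolytopeDim}) produces a homeomorphism from $\Hit(S)$ to an open convex polyhedral cone in a linear space of dimension $2(g-1)(n^2-1)$, which at first glance looks like an independent proof of Hitchin's theorem by completely different (geodesic-lamination) methods. However, the paper's argument is not logically independent of Hitchin's: the key step (Corollary~\ref{cor:InvariantsLocalHomeo}) invokes Invariance of Domain, which requires knowing in advance that $\dim\Hit(S)=2(g-1)(n^2-1)$, and that input comes precisely from Theorem~\ref{thm:Hitchin}. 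So the paper's parametrization recovers the shape of $\Hit(S)$ but presupposes its dimension; your Higgs-bundle approach is the one that actually establishes the dimension from scratch.
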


A \emph{Hitchin character} is an element of the Hitchin component $\Hit(S)$, and a \emph{Hitchin homomorphism} is a homomorphism $\rho \colon \pi_1(S) \to \PSL$ representing a Hitchin character.  We will use the same letter to represent the Hitchin homomorphism $\rho \colon \pi_1(S) \to \PSL$  and the corresponding Hitchin character $\rho \in \Hit(S)$. 

About 15 years after \cite{Hit}, Labourie \cite{Lab1} showed that Hitchin homomorphisms satisfy many important geometric and dynamical properties, and in particular are injective with discrete image; see also \cite{FoG1}.

Hitchin's construction of the parametrization of  $\Hit(S)$ given by Theorem~\ref{thm:Hitchin} is based on geometric analysis techniques that provide little information on the geometry of the Hitchin homomorphisms themselves; see \cite{Loft, Lab3, Lab2} for  different geometric analytic parametrizations when $n=3$. The current article is devoted to developing another parametrization of the Hitchin component $\Hit(S)$ which is much more geometric, and has the additional advantage of being well-behaved with respect to a geodesic lamination. Geodesic laminations were introduced by Thurston to develop a continuous calculus for simple closed curves on the surface $S$, and provide very powerful tools for many topological and geometric problems in dimensions 2 and 3. See \S\S \ref{bigsect:PseudoAnosov} and \ref{bigsect:LengthMeasLam}  for two simple applications of our parametrization, one to the dynamics of the action of a pseudo-Anosov homomorphism of $S$ on the Hitchin component, and another one to the length functions defined by a Hitchin character on Thurston's space $\ML(S)$ of measured laminations on $S$. 

Our construction is a natural extension of Thurston's parametrization of the Teichm\"uller component by shear coordinates \cite{Thu1, Bon96}. It draws its inspiration from this classical case where $n=2$, but also from work of Fock-Goncharov \cite{FoG1} on a variant of the Hitchin component where the surface $S$ has punctures, and where these punctures are endowed with additional information. As in the classical case when $n=2$, the situation is conceptually and analytically much more complicated for a closed surface than in the case considered in \cite{FoG1}. Many arguments, such as those of \S\S \ref{subsect:Slithering}, \ref{subsect:ParamInjective} and \ref{subsect:PositiveIntersectionRevisited}, are new even for the case $n=2$.  

The companion article \cite{BonDre} is devoted to a special case of our parametrization, when the geodesic lamination has only finitely many leaves. The situation is much simpler in that case, and in particular the arguments of \cite{BonDre} tend to be very combinatorial in nature. The current article has a much more analytic flavor. It is also  more conceptual, and provides a homological interpretation of some of the invariants and phenomena that were developed in a purely computational way in \cite{BonDre}. And of course the framework of general geodesic laminations, possibly with uncountably many leaves, considered in this article is better suited for applications. 

The article \cite{Dre2} was developed, to a large extent, as a first step towards the more general results of the current paper. It investigates all deformations of a Hitchin character $\rho\in \Hit(S)$ that respect its triangle invariants, as discussed in the next section.

\subsection{Main results}
We can now be more specific. Let $\lambda$ be a maximal geodesic lamination in $S$. See \S \ref{bigsect:GeodLam} for precise definitions. What we need to know here is just that, for an arbitrary auxiliary metric of negative curvature on the surface, $\lambda$ is decomposed as  a union of disjoint geodesic leaves, and that its complement $S-\lambda$ consists of $4(g-1)$ infinite triangles with geodesic boundary. Some maximal geodesic laminations, such as the ones considered in \cite{BonDre},  have only a finite number of leaves, but generic examples have uncountably many leaves. 

Given a Hitchin character  $\rho\in \Hit(S)$, the rich dynamical structure for $\rho$ discovered by Labourie \cite{Lab1} associates a triple $(E,F,G)$ of three flags of $\R^n$ to each triangle component $T_j$ of $S-\lambda$. In addition, Fock and Goncharov \cite{FoG1} prove that this flag triple $(E,F,G)$ is positive, in a sense discussed in \S \ref{subsect:Positivity}, and is determined by $\frac{(n-1)(n-2)}2$ invariants $\tau_{abc}^\rho(E,F,G) \in \R$. Since $S-\lambda$ has $4(g-1)$ components, these flag triple invariants  can be collected into a single \emph{triangle invariant}  $\tau^\rho \in \R^{2(g-1)(n-1)(n-2)}$. 

The really new feature introduced in this article describes how to glue these flag triples across the (possibly uncountably many) leaves of the lamination, and simultaneously involves analytic and combinatorial arguments. The analytic part of this analysis is based on the slithering map constructed in \S \ref{subsect:Slithering}, which is a higher dimensional analogue of the horocyclic foliation  that is at the basis of the case $n=2$ \cite{Thu1, Bon96}.  This slithering map enables us to control the gluing by elements of the homology of a train track neighborhood $U$ for $\lambda$, which we now briefly describe. The precise definition of train track neighborhoods can be found in \S \ref{subsect:TrainTracks} (and is familiar to experts); at this point, it suffices to say that $U$ is obtained from $S$ by removing $2(g-1)$ disjoint disks, one in each component of $S-\lambda$; in addition, the boundary $\partial U$ is decomposed into a \emph{horizontal boundary} $\delh U$ and a \emph{vertical boundary} $\delv U$, in such a way that each component of $\partial U$ is a hexagon made up of three arc components of $\delh U$ and three arc components of $\delv U$. 

The geodesic lamination has a well-defined 2--fold \emph{orientation cover} $\widehat\lambda$, whose leaves are continuously oriented, and the covering map $\widehat\lambda \to \lambda$ uniquely extends to a 2--fold cover $\widehat U \to U$. In particular, $\widehat\lambda$ is a geodesic lamination in the surface $\widehat U$. 

Our new invariant for a Hitchin character $\rho \in \Hit(S)$ is a certain \emph{shearing class} $[\sigma^\rho] \in H_1(\widehat U,  \delv \widehat U; \R^{n-1})$. This shearing class has the property that $\iota_* \bigl([\sigma^\rho]\bigr) = - \overline{[ \sigma^\rho]}$, for the covering involution $\iota$ of the cover $\widehat U \to U$ and for the involution $x\mapsto \overline x$ of $\R^{n-1}$ that associates $\overline x = (x_{n-1}, x_{n-2}, \dots, x_1)$ to $x=(x_1, x_2, \dots, x_{n-1})$. In particular, $[\sigma^\rho]$ can also be interpreted as a twisted homology class $[ \sigma^\rho] \in H_1(U,  \delv U; \widetilde\R^{n-1})$  valued in a suitable coefficient bundle $\widetilde\R^{n-1}$ over $U$ with fiber $\R^{n-1}$.

The triangle invariant $\tau^\rho \in  \R^{2(g-1)(n-1)(n-2)}$ and shearing class $[\sigma^\rho] \in H_1(\widehat U,  \delv \widehat U; \R^{n-1})$ satisfy two types of constraints. The first constraint is a homological equality. 

\begin{prop}[Shearing Cycle  Boundary Condition]
\label{prop:ShearingCycleBdryConditionIntro}
The boundary $\partial [\sigma^\rho] \in H_0(\delv \widehat U; \R^{n-1})$ of the shearing  class $[\sigma^\rho] \in H_1(\widehat U,  \delv \widehat U; \R^{n-1})$ of a Hitchin character $\rho \in \Hit(S)$ is completely determined by the triangle invariant $\tau^\rho \in \R^{2(g-1)(n-1)(n-2)}$, by an explicit linear formula given in {\upshape \S \ref{subsect:ShearingCycle}}. 
\end{prop}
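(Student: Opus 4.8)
The plan is to reduce the statement to a purely local computation near each leaf of the lamination $\lambda$, and then to assemble these local contributions using the long exact sequence of the pair $(\widehat U, \delv\widehat U)$. First I would recall from \S\ref{subsect:ShearingCycle} that the shearing class $[\sigma^\rho]$ is represented by a relative $1$--cycle $\sigma^\rho$ carried by the train track $\widehat U$, whose value on each branch of $\widehat U$ is the $\R^{n-1}$--valued shearing invariant measuring how the flag data on the two sides of the corresponding leaf (or band of leaves) of $\widehat\lambda$ are glued via the slithering map of \S\ref{subsect:Slithering}. The boundary $\partial\sigma^\rho$ lives in $H_0(\delv\widehat U;\R^{n-1})$, which is free on the components of the vertical boundary, i.e. on the spikes of the complementary triangles of $\lambda$ (lifted to $\widehat U$); so what must be shown is that the component of $\partial\sigma^\rho$ at each such spike is an explicit linear function of the triangle invariants $\tau^\rho_{abc}$ of the two ideal triangles adjacent to that spike.

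The key step is the local analysis at a single spike of a complementary triangle $T_j$ of $S-\lambda$. Near such a spike, two leaves of $\lambda$ spiral together into the triangle, and the vertical boundary arc $\delv\widehat U$ cuts across the corner of $T_j$. On one side of this arc the relevant flag data are governed by the triangle invariant $\tau^\rho$ of $T_j$; on the other side they are governed by the shearing cocycle accumulated along the spiralling leaves. Using the positivity of the flag triple $(E,F,G)$ associated to $T_j$ (Fock--Goncharov) and the explicit projective/linear-algebraic normal form for a positive triple, I would compute the $\R^{n-1}$--coordinates of the shearing invariant along the branch entering the spike, in terms of the $\frac{(n-1)(n-2)}2$ numbers $\tau^\rho_{abc}(E,F,G)$. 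The symmetry constraint $\iota_*[\sigma^\rho]=-\overline{[\sigma^\rho]}$ should make the bookkeeping consistent between the two sheets of $\widehat U\to U$, and should be used to check that the formula is well-defined on the orientation cover. Summing the contributions of the (at most two) triangles meeting a given spike yields the claimed linear formula $\partial[\sigma^\rho]=L(\tau^\rho)$ for an explicit linear map $L$.

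The main obstacle is the local flag-algebra computation at the spike: one has to relate the dynamical/slithering description of the shearing data along infinitely many leaves accumulating on a triangle to the finite combinatorial data $\tau^\rho$ of that triangle, and the slithering map is only defined as a (uniformly convergent) infinite product of elementary shearing maps. Controlling this limit and extracting the leading $\R^{n-1}$--term that contributes to $H_0(\delv\widehat U;\R^{n-1})$ is where the real work lies; this is presumably exactly the point where the analytic estimates of \S\ref{subsect:Slithering} are invoked. Once the per-spike formula is in hand, the global statement is immediate, since $\partial$ is computed componentwise on the free group $H_0(\delv\widehat U;\R^{n-1})$ and the gluing of triangles across spikes is purely combinatorial. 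I would also double-check the two edge cases where a spike of $T_j$ spirals onto a closed leaf versus a non-closed leaf of $\lambda$, to confirm the formula is uniform — though I expect the slithering formalism handles both at once.
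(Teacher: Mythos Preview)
Your overall framing --- compute $\partial[\sigma^\rho]$ slit by slit, since $H_0(\delv\widehat U;\R^{n-1})$ is free on the vertical boundary components --- is correct, but you have misidentified both the mechanism and the difficulty of the local computation.

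First, a spike belongs to exactly one complementary triangle, not two; the formula at a slit $s$ involves only the triangle invariants $\tau^\rho_{abc}(s)$ of that single triangle. More importantly, the boundary of $\sigma^\rho$ is \emph{not} obtained by ``extracting a leading $\R^{n-1}$--term'' from the infinite slithering product. In the paper's setup (Proposition~\ref{prop:RelTgtCycleDiffrentViewpoint}), a relative tangent cycle is a function $\sigma$ on ordered pairs of complementary triangles, and its boundary at a slit is precisely the \emph{additivity defect}
\[
\sigma_a^\rho(T,T'') + \sigma_a^\rho(T'',T') - \sigma_a^\rho(T,T')
\]
when $T''$ separates $T$ from $T'$ and has the relevant spike. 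This is a finite algebraic computation: the composition property $\Sigma_{gg'} = \Sigma_{gf}\circ\Sigma_{ff'}\circ\Sigma_{f'g'}$ of the slithering map (already established in \S\ref{subsect:Slithering}) reduces everything to comparing three double ratios, and the elementary identities of Lemma~\ref{lem:RelationsDoubleRatios} collapse this to a single wedge-product expression, namely the quadruple ratio $Q_a(E'',H,H')$ of \S\ref{subsect:QuadRatios} associated to the flag triple of $T''$. Lemma~\ref{lem:ExpressQuadrupleRatioTripleRatios} then writes $Q_a$ as a product of triple ratios $T_{abc}$, and taking logarithms gives the linear formula $\partial\sigma_a^\rho(s^+) = \sum_{b+c=n-a}\tau^\rho_{abc}(s)$.

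So the analytic work of \S\ref{subsect:Slithering} is needed to \emph{define} the shearing cycle, but once $\Sigma_{gg'}$ exists with its composition property, no further limits or convergence estimates enter the boundary computation: it is pure linear algebra in $\Lambda^\bullet(\R^n)$. Your proposal, as written, would have you re-proving the convergence of the slithering map at each spike rather than using it.
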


The second constraint is a positivity property, proved as Corollary~\ref{cor:TransMeasureHasPositiveIntersection} in \S \ref{subsect:ShearingAndLength}. Because the leaves of the orientation cover $\widehat \lambda$ are oriented,  a famous construction of Ruelle and Sullivan  \cite{RueSul} interprets every  transverse measure $\mu$ for the orientation cover $\widehat\lambda$ as a 1--dimensional de Rham current in $\widehat U$. In particular, such a transverse measure $\mu$ determines a homology class $[\mu] \in H_1(\widehat U; \R)$. 

\begin{prop}[Positive Intersection Condition]
\label{prop:PositiveIntersectionIntro}
For every  transverse measure $\mu$ for the orientation cover $\widehat\lambda$, the algebraic intersection vector $ [\mu]  \cdot [\sigma^\rho] \in \R^{n-1}$ of the shearing class $[\sigma^\rho] \in H_1(\widehat U,  \delv \widehat U; \R^{n-1})$ with  $[\mu] \in H_1(\widehat U; \R)$ is positive, in the sense that all its coordinates are positive. 
\end{prop}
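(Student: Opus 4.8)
The plan is to identify the intersection vector $[\mu]\cdot[\sigma^\rho]$ with a vector of geometrically defined length functions of $\mu$, and then to invoke the positivity of those length functions for Hitchin characters. Recall that for a Hitchin homomorphism $\rho$ and any nontrivial $\gamma\in\pi_1(S)$, Labourie's theorem guarantees that $\rho(\gamma)$ is conjugate to a diagonal matrix with eigenvalues $\lambda_1(\gamma)>\lambda_2(\gamma)>\dots>\lambda_n(\gamma)>0$, ordered according to an orientation of $\gamma$. For $i=1,\dots,n-1$ this produces a \emph{simple root length} $\ell_\rho^i(\gamma)=\log\lambda_i(\gamma)-\log\lambda_{i+1}(\gamma)>0$. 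By the Anosov property of $\rho$, each $\ell_\rho^i$ is coarsely comparable to the translation length of $\gamma$ for a fixed auxiliary negatively curved metric, so (as in Bonahon's theory of geodesic currents) it extends to a continuous, homogeneous function $\ell_\rho^i\colon\ML(S)\to\R$ that is strictly positive on every nonzero measured lamination. Assembling these gives a vector-valued length $\vec\ell_\rho\colon\ML(S)\to\R^{n-1}$ taking values in the open positive cone.

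First I would prove the identity $[\mu]\cdot[\sigma^\rho]=\vec\ell_\rho(\mu)$ for every transverse measure $\mu$ for $\widehat\lambda$, regarded as a measured lamination of $S$ through the covering $\widehat\lambda\to\lambda\subset S$ (so that the orientation of the leaves of $\widehat\lambda$ fixes the eigenvalue ordering used above). Both sides are linear in $\mu$ and continuous for the weak-$*$ topology on transverse measures: the left side because, by the Ruelle--Sullivan construction, it is a homological intersection number computed against the de Rham current carried by $\mu$, and the right side by the previous paragraph. Since weighted simple closed curves carried by the train track neighborhood $\widehat U$ are dense in the cone of transverse measures for $\widehat\lambda$, it is enough to verify the identity when $\mu=\mu_{\widehat\gamma}$ is the counting measure of such a curve $\widehat\gamma$. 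In that case $[\mu_{\widehat\gamma}]\cdot[\sigma^\rho]$ is computed branch by branch along $\widehat\gamma$: traversing $\widehat\gamma$ once, the composition of the slithering maps of \S\ref{subsect:Slithering} and of the triangle gluing maps encountered along the way reconstructs a conjugate of $\rho(\widehat\gamma)$, and, directly from the definition of the shearing cocycle $\sigma^\rho$ in terms of the slithering map, the total shearing vector picked up is exactly $\bigl(\log\lambda_i(\widehat\gamma)-\log\lambda_{i+1}(\widehat\gamma)\bigr)_i$. A key point is that the triangle gluing maps affect the relative class only through its boundary $\partial[\sigma^\rho]$, as recorded by the Shearing Cycle Boundary Condition (Proposition~\ref{prop:ShearingCycleBdryConditionIntro}); pairing with the \emph{closed} cycle $[\mu_{\widehat\gamma}]$ therefore does not see them, which is why no triangle invariant enters the formula.

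Granting the identity, the Proposition is immediate: a transverse measure $\mu\neq0$ for $\widehat\lambda$ is a nonzero measured lamination, hence $[\mu]\cdot[\sigma^\rho]=\vec\ell_\rho(\mu)$ lies in the open positive cone of $\R^{n-1}$. I expect the technical crux to be the branch-by-branch computation of the intersection number --- in particular, showing that the slithering maps and triangle maps genuinely assemble into (a conjugate of) the monodromy $\rho(\widehat\gamma)$ with precisely the predicted shearing vector, and controlling the convergence of the relevant infinite products when $\widehat\lambda$ has uncountably many leaves, which is exactly where the analytic estimates on the slithering map from \S\ref{subsect:Slithering} are needed. A secondary subtlety is to match the coordinate conventions --- the reversal $x\mapsto\overline x$ and the covering involution $\iota$ --- so that the $i$-th coordinate of $[\mu]\cdot[\sigma^\rho]$ corresponds to the $i$-th simple root length of $\mu$ with its given orientation.
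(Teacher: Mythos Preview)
Your overall strategy---identify $[\mu]\cdot[\sigma^\rho]$ with the vector of simple-root lengths $\vec\ell_\rho(\mu)$ and then invoke positivity of those lengths for Hitchin characters---is exactly the paper's (Corollary~\ref{cor:TransMeasureHasPositiveIntersection} combines Theorem~\ref{thm:ShearingAndLength} with Proposition~\ref{prop:MeasureHasPositiveLengths}, the latter being a direct consequence of the Anosov property). The gap is in how you propose to establish the identity $[\mu]\cdot[\sigma^\rho]=\vec\ell_\rho(\mu)$.

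Your density argument does not work as stated. A transverse measure for $\widehat\lambda$ is, by definition, supported on $\widehat\lambda$, while a simple closed curve $\widehat\gamma$ carried by $\widehat U$ is typically not contained in $\widehat\lambda$; when $\widehat\lambda$ is minimal with no closed leaf there are no such curves at all in the cone you want to approximate. Even when $[\widehat\gamma]=[\alpha]$ in $H_1(\widehat U;\R)$ for some tangent cycle $\alpha\in\CC(\widehat\lambda;\R)$, the H\"older geodesic current associated to $\alpha$ is supported on $\widehat\lambda$ and is \emph{not} the Dirac current of $\widehat\gamma$, so there is no a~priori reason for $\ell_a^\rho(\alpha)$ and $\ell_a^\rho(\widehat\gamma)$ to agree; checking the identity on $\widehat\gamma$ therefore says nothing about $\alpha$. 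Relatedly, your ``branch by branch'' reconstruction of $\rho(\widehat\gamma)$ via slithering only makes literal sense when $\widehat\gamma$ is a genuine leaf of $\widehat\lambda$, since the maps $\Sigma_{gg'}$ are defined between leaves of $\widetilde\lambda$; for a generic curve carried by the train track there is no such decomposition, and the formula of Lemma~\ref{lem:GroupActionAndSlithering} does not hand you the eigenvalues directly.

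The paper bypasses this by an analytic argument (proof of Theorem~\ref{thm:ShearingAndLength}): from the Anosov line bundles $L_a$ it builds a H\"older-continuous topological closed $1$--form $\omega_a$ on $\widehat U$ whose restriction to leaves is the length density, so that $\ell_a^\rho(\alpha)=\langle[\omega_a-\omega_{a+1}],[\alpha]\rangle$ for every tangent cycle $\alpha$ via the Ruelle--Sullivan pairing; a direct computation of $\int_k(\omega_a-\omega_{a+1})$ over tightly transverse arcs $k$, expressed through the slithering map, then identifies $[\omega_a-\omega_{a+1}]$ with the Poincar\'e dual of $[\sigma_a^\rho]$. This works uniformly for all tangent cycles and never needs closed curves.
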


The Shearing Cycle  Boundary and Positive Intersection Conditions restrict the pair $\bigl( \tau^\rho, [\sigma^\rho]\bigr )$ to a convex polyhedral cone $\mathcal P$ in $\R^{2(g-1)(n-1)(n-2)} \times H_1(\widehat U,  \delv \widehat U; \R^{n-1})$. The main result of the article, proved as Theorem~\ref{thm:InvariantsHomeomorphism} in \S \ref{subsect:RealizeInvariants}, shows that these are the only restrictions on the triangle and shearing invariants, and that these provide a parametrization of the Hitchin component $\Hit(S)$. 

\begin{thm}[Parametrization of the Hitchin component]
\label{thm:ParametrizationIntro}
The map $\Hit(S) \to \mathcal P$, which to a Hitchin character $\rho \in \Hit(S)$ associates the pair $\bigl( \tau^\rho, [\sigma^\rho]\bigr )$ formed by its triangle invariant $\tau^\rho \in \R^{2(g-1)(n-1)(n-2)}$ and its shearing class $[\sigma^\rho] \in H_1(\widehat U,  \delv \widehat U; \R^{n-1})$, is a homeomorphism. 
\end{thm}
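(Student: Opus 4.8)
The plan is to show that the map $\Phi\colon \Hit(S)\to\mathcal P$ sending $\rho$ to $\bigl(\tau^\rho,[\sigma^\rho]\bigr)$ is a continuous bijection, and then to upgrade it to a homeomorphism by a dimension and properness argument. That $\Phi$ does land in $\mathcal P$ is precisely the content of Propositions~\ref{prop:ShearingCycleBdryConditionIntro} and~\ref{prop:PositiveIntersectionIntro}. For continuity, recall that the triangle invariant $\tau^\rho$ is extracted from the flag triples that Labourie's Anosov boundary map $\xi_\rho\colon\partial_\infty\widetilde S\to\Flag$ attaches to the components of $S-\lambda$; since $\xi_\rho$ varies continuously (in fact smoothly) with $\rho$, so does $\tau^\rho$. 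Likewise the slithering maps of \S\ref{subsect:Slithering}, hence the shearing cocycle $\sigma^\rho$, depend continuously on $\xi_\rho$, so the shearing class $[\sigma^\rho]\in H_1(\widehat U,\delv\widehat U;\R^{n-1})$ depends continuously on $\rho$.

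For injectivity, suppose $\rho,\rho'\in\Hit(S)$ satisfy $\tau^\rho=\tau^{\rho'}$ and $[\sigma^\rho]=[\sigma^{\rho'}]$. Fix a component $\widetilde T_0$ of $\widetilde S-\widetilde\lambda$. Equality of the triangle invariants means that the flag triples which $\xi_\rho$ and $\xi_{\rho'}$ attach to $\widetilde T_0$ lie in the same orbit, so after conjugating $\rho'$ we may assume that $\xi_\rho$ and $\xi_{\rho'}$ agree on the three ideal vertices of $\widetilde T_0$. The slithering formalism then propagates this agreement from $\widetilde T_0$ to every other component of $\widetilde S-\widetilde\lambda$: the discrepancy between the two flag decorations across any leaf of $\widetilde\lambda$ is governed by the shearing cocycle together with the triangle invariants, and equality of both pins down the cocycle, and not merely its homology class, so $\xi_\rho=\xi_{\rho'}$ on the dense set of ideal vertices of the triangulation of $\widetilde S$. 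By continuity of the boundary maps, $\xi_\rho=\xi_{\rho'}$ everywhere, hence $\rho=\rho'$ in $\Hit(S)$.

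The heart of the argument, and the step I expect to be the main obstacle, is surjectivity. Given $\bigl(\tau,[\sigma]\bigr)\in\mathcal P$, one builds a candidate boundary map $\xi\colon\partial_\infty\widetilde S\to\Flag$ by decorating each component of $\widetilde S-\widetilde\lambda$ with the positive flag triple whose Fock--Goncharov invariants are those prescribed by $\tau$, and gluing these triples across the leaves of $\widetilde\lambda$ via the generalized shearing transformations determined by a cocycle $\sigma$ representing $[\sigma]$ and by the slithering maps. The Shearing Cycle Boundary Condition is exactly what makes these local gluings consistent around every ideal vertex, so that $\xi$ is equivariant for some homomorphism $\rho\colon\pi_1(S)\to\PSL$. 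One must then check that $\xi$ is positive in the Frenet and hyperconvex sense: positivity of each individual flag triple is automatic once $\tau$ is real, but promoting this to positivity across infinite families of leaves that accumulate on one another — the genuinely new difficulty relative to the finite-leaf setting of \cite{BonDre} — is precisely where the analytic control furnished by the slithering map and the full strength of the Positive Intersection Condition are required. Granting this, the characterizations of Hitchin representations in terms of positive boundary curves, due to Labourie \cite{Lab1} and Fock--Goncharov \cite{FoG1}, yield $\rho\in\Hit(S)$, and by construction $\Phi(\rho)=\bigl(\tau,[\sigma]\bigr)$.

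Finally, to see that the continuous bijection $\Phi$ is a homeomorphism, a homological computation identifies the dimension of the convex polyhedral cone $\mathcal P$ with $2(g-1)(n^2-1)=\dim\Hit(S)$ coming from Theorem~\ref{thm:Hitchin}, and shows that $\mathcal P$ has non-empty interior in its affine span. Since the realization construction of the previous paragraph can be performed continuously in the data $(\tau,[\sigma])$, the inverse $\Phi^{-1}$ is continuous, which completes the proof; alternatively, one observes that $\Phi$ is a proper injection between spaces homeomorphic to Euclidean space of equal dimension, applies invariance of domain on the interior of $\mathcal P$, and handles $\partial\mathcal P$ by a limiting argument.
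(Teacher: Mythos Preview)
Your continuity and injectivity sketches are broadly in the spirit of the paper (the paper's injectivity is actually carried out via an explicit reconstruction formula for $\rho(\gamma)$ on a generating set, Lemma~\ref{lem:GroupActionAndInvariants} and Corollary~\ref{cor:HitchinDeterminedByInvariants}, rather than by propagating flag agreement and invoking density, but your outline could be made to work).

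The real divergence is in surjectivity. You propose to build a candidate flag curve $\xi$ from $(\tau,[\sigma])$ and then \emph{verify} positivity in the Frenet/hyperconvex sense, appealing to the Labourie/Fock--Goncharov characterization of Hitchin representations. You correctly flag this as the ``main obstacle,'' and you do not actually carry it out. The paper deliberately sidesteps this obstacle: it never directly checks positivity of a constructed flag curve. Instead, the paper proves that $\Phi$ is \emph{proper} (Proposition~\ref{prop:InvariantsProper}), and this is where the Positive Intersection Condition does its real work. Concretely, Lemma~\ref{lem:PositiveIntersectionCondnEstimate} converts the Positive Intersection Condition into an exponential decay estimate on the contributions of triangles $T$ in the infinite product expression for $\rho(\gamma)$, uniform over a neighborhood of the limit point in~$\mathcal P$; this forces $\rho_i(\gamma)$ to stay bounded whenever $\Phi(\rho_i)$ converges in~$\mathcal P$. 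Once $\Phi$ is continuous, injective, between spaces of the same dimension (Proposition~\ref{prop:ComputePolytopeDim}), invariance of domain makes it a local homeomorphism; properness then makes it a covering map onto the connected set~$\mathcal P$, hence a homeomorphism. Surjectivity falls out for free.

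Your closing ``alternatively'' sentence gestures at exactly this route, but you do not say how to prove properness, and that is the substantive content. In short: your direct-construction plan for surjectivity is not the paper's strategy and is left incomplete; the paper's trick is to trade the hard positivity verification for a properness estimate driven by Lemma~\ref{lem:PositiveIntersectionCondnEstimate}.
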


The Shearing Cycle Boundary Condition provides some unexpected constraints on the triangle invariants of Hitchin characters, as well as on their shearing classes. The following two statements are abbreviated expressions of more specific computations given in \S \ref{subsect:ConstraintInvariantsBis}. These restrictions are somewhat surprising when one considers the relatively large dimension $2(g-1)(n^2-1)$ of $\Hit(S)$. 

\begin{prop}
\label{prop:RestrictTriangleInvIntro}
An element $\tau \in \R^{2(g-1)(n-1)(n-2)}$ is the triangle invariant $\tau^\rho$ of a Hitchin character $\rho \in \Hit(S)$ if and only if it belongs to a certain explicit subspace of codimension $\lfloor \frac {n-1}2 \rfloor$ of $\R^{2(g-1)(n-1)(n-2)}$. 
\end{prop}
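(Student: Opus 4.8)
The plan is to read this off from the Parametrization Theorem~\ref{thm:ParametrizationIntro} together with the Shearing Cycle Boundary Condition, Proposition~\ref{prop:ShearingCycleBdryConditionIntro}. Write $f$ for the explicit linear map of Proposition~\ref{prop:ShearingCycleBdryConditionIntro}, so that $\partial[\sigma^\rho]=f(\tau^\rho)$; one checks directly from the formula recalled in~\S\ref{subsect:ShearingCycle} that $f$ takes values in the anti-invariant subspace $H_0(\delv\widehat U;\R^{n-1})^-$, where a superscript $-$ denotes the $(-1)$--eigenspace of the involution $\theta\colon[\sigma]\mapsto\iota_*\overline{[\sigma]}$ (the symmetry relation $\iota_*[\sigma^\rho]=-\overline{[\sigma^\rho]}$ being exactly the statement that $[\sigma^\rho]$ lies in this eigenspace). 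By Theorem~\ref{thm:ParametrizationIntro}, a vector $\tau\in\R^{2(g-1)(n-1)(n-2)}$ is the triangle invariant of some $\rho\in\Hit(S)$ if and only if there is a class $[\sigma]\in H_1(\widehat U,\delv\widehat U;\R^{n-1})^-$ with $\partial[\sigma]=f(\tau)$ (a closed linear condition) and with $[\mu]\cdot[\sigma]>0$ coordinatewise for every nonzero transverse measure $\mu$ of $\widehat\lambda$ (an open condition). For the ``only if'' direction: $f(\tau^\rho)=\partial[\sigma^\rho]$ is by construction a boundary, hence lies in the image of $\partial$ on the $(-1)$--eigenspace. The long exact sequence of the pair $(\widehat U,\delv\widehat U)$ with coefficients in $\R^{n-1}$ is $\theta$--equivariant and therefore restricts to an exact sequence on $(-1)$--eigenspaces, in which this image is the kernel of the inclusion-induced map $g\colon H_0(\delv\widehat U;\R^{n-1})^-\to H_0(\widehat U;\R^{n-1})^-$. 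Hence every realizable $\tau$ lies in the subspace $V:=(g\circ f)^{-1}(0)$, which is the explicit subspace of the statement.

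To compute $\mathrm{codim}\,V$: the surface $\widehat U$ is connected — recall that $\lambda$, being a maximal geodesic lamination with ideal-triangle complement, is non-orientable, so its orientation cover is connected — and $\iota$ acts trivially on $H_0(\widehat U;\R)$, so $\theta$ acts on $H_0(\widehat U;\R^{n-1})\cong\R^{n-1}$ simply as $x\mapsto\overline x$. Its $(-1)$--eigenspace $\{x\in\R^{n-1}:x_k=-x_{n-k}\ \text{for all}\ k\}$ has dimension $\lfloor\frac{n-1}{2}\rfloor$, so $\mathrm{codim}\,V\le\lfloor\frac{n-1}{2}\rfloor$, with equality precisely when $g\circ f$ is onto. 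Verifying this surjectivity, by unwinding the explicit formula for $f$ as in~\S\ref{subsect:ShearingCycle} and~\S\ref{subsect:ConstraintInvariantsBis}, is the one step that requires genuine computation, and is where I expect the main difficulty to be; the rest of the argument is formal.

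For the ``if'' direction, fix a Hitchin character $\rho_0$ in the Fuchsian locus, coming from a hyperbolic structure on $S$ via the irreducible homomorphism $\mathrm{PSL}_2(\R)\to\PSL$; its Frenet flag triples all have triple ratios equal to $1$, so $\tau^{\rho_0}=0$, and hence $[\sigma^{\rho_0}]\in H_1(\widehat U,\delv\widehat U;\R^{n-1})^-$ satisfies $\partial[\sigma^{\rho_0}]=0$, while by Proposition~\ref{prop:PositiveIntersectionIntro} it has $[\mu]\cdot[\sigma^{\rho_0}]>0$ coordinatewise for every nonzero transverse measure $\mu$. Given $\tau\in V$, the first direction shows $f(\tau)$ lies in the image of $\partial$ on the $(-1)$--eigenspace, so we may pick $\xi_0\in H_1(\widehat U,\delv\widehat U;\R^{n-1})^-$ with $\partial\xi_0=f(\tau)$ and set $[\sigma_t]:=\xi_0+t\,[\sigma^{\rho_0}]$ for $t>0$; this lies in the $(-1)$--eigenspace and still has $\partial[\sigma_t]=f(\tau)$. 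The cone of transverse measures of $\widehat\lambda$ is finite dimensional, so its projectivization is compact, and one can choose constants $C\ge0$ and $c>0$ so that, for all $\mu$, every coordinate of $[\mu]\cdot\xi_0$ is at least $-C\|\mu\|$ and every coordinate of $[\mu]\cdot[\sigma^{\rho_0}]$ is at least $c\|\mu\|$. Taking $t>C/c$ then makes every coordinate of $[\mu]\cdot[\sigma_t]=[\mu]\cdot\xi_0+t\,[\mu]\cdot[\sigma^{\rho_0}]$ strictly positive for all nonzero $\mu$, so $(\tau,[\sigma_t])$ satisfies all the constraints defining $\mathcal P$ and Theorem~\ref{thm:ParametrizationIntro} yields a Hitchin character with triangle invariant $\tau$. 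Combining the two directions identifies the triangle invariants of Hitchin characters with the subspace $V$, of codimension $\lfloor\frac{n-1}{2}\rfloor$.
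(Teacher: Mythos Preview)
Your approach is essentially identical to the paper's (Corollary~\ref{cor:RestrictionTriangleInvariants}): both deduce the characterization from Theorem~\ref{thm:InvariantsHomeomorphism}, compute the image of the boundary map via the long exact sequence of the pair $(\widehat U,\delv\widehat U)$ restricted to the appropriate eigenspace (yielding codimension $\lfloor\frac{n-1}2\rfloor$), and for the ``if'' direction add a large multiple of the shearing cycle $\sigma^{\rho_0}$ of a Fuchsian character to force the Positive Intersection Condition. The surjectivity of $g\circ f$ that you correctly flag as the substantive computational step is exactly what the paper establishes in Lemmas~\ref{lem:ImageTheta} and~\ref{lem:IntersectionImagesBoundaryMapTheta}, via an explicit description of the image of~$\Theta$.
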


\begin{prop}
\label{prop:RestrictShearingCycleIntro}
A relative homology class $[\sigma] \in H_1(\widehat U, \delv \widehat U; \R^{n-1})$ is the shearing class $[\sigma^\rho]$ of a Hitchin character $\rho \in \Hit(S)$ if and only if it belongs to a certain open convex polyhedral cone in an explicit linear subspace of dimension $6(g-1)(3n-7)$ if $n>3$, of dimension $16(g-1)$ if $n=3$, and of dimension $6(g-1)$ if $n=2$. 
\end{prop}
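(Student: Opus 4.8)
The plan is to derive Proposition~\ref{prop:RestrictShearingCycleIntro} from the Parametrization Theorem~\ref{thm:ParametrizationIntro} by projecting the polyhedral cone $\mathcal P$ onto its shearing--class factor and analyzing that projection by linear algebra. By Theorem~\ref{thm:ParametrizationIntro}, the shearing classes $[\sigma^\rho]$ realized by Hitchin characters $\rho\in\Hit(S)$ are exactly the image of $\mathcal P$ under the projection $\pi\colon\R^{2(g-1)(n-1)(n-2)}\times H_1(\widehat U,\delv\widehat U;\R^{n-1})\to H_1(\widehat U,\delv\widehat U;\R^{n-1})$. Now $\mathcal P$ is cut out by exactly two conditions: the Shearing Cycle Boundary Condition of Proposition~\ref{prop:ShearingCycleBdryConditionIntro}, which is a linear equation $\partial[\sigma]=L(\tau)$ for the explicit linear map $L$ of \S\ref{subsect:ShearingCycle}, and the Positive Intersection Condition of Proposition~\ref{prop:PositiveIntersectionIntro}, which involves only $[\sigma]$ (the equivariance relation $\iota_*[\sigma]=-\overline{[\sigma]}$ being already part of the ambient vector space). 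Since these conditions uncouple in this way, a class $[\sigma]$ lies in $\pi(\mathcal P)$ if and only if, first, the equation $L(\tau)=\partial[\sigma]$ has a solution $\tau$, i.e. $\partial[\sigma]\in\mathrm{image}(L)$, and second, $[\mu]\cdot[\sigma]$ has all coordinates positive for every transverse measure $\mu$ of $\widehat\lambda$.

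The first condition is linear and therefore cuts out a linear subspace $V=\{[\sigma]:\partial[\sigma]\in\mathrm{image}(L)\}$ of the (equivariant part of the) relative homology; the second condition then carves an open convex polyhedral cone out of $V$: homogeneous inequalities give a cone, finiteness of the set of inequalities actually needed --- the transverse measures of $\widehat\lambda$ form a finitely generated cone, being the non-negative solutions of the switch relations of a train track carrying $\widehat\lambda$ --- gives polyhedrality and openness, and the cone is nonempty, equivalently $V$ is genuinely its linear span, simply because $\Hit(S)$ is nonempty and maps onto $\mathcal P$. This already establishes the qualitative content of the statement: the realizable shearing classes form an open convex polyhedral cone inside a linear subspace $V$.

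It remains to compute $\dim V$. The point is that the fiber of $\pi|_{\mathcal P}$ over a class $[\sigma]\in\pi(\mathcal P)$ is the affine space $L^{-1}(\partial[\sigma])$, a translate of $\ker L$, since no further constraint is imposed on $\tau$; hence $\mathcal P$ is homeomorphic to $(V\cap\text{cone})\times\ker L$ and $\dim\mathcal P=\dim V+\dim\ker L$, so from $\mathcal P\cong\Hit(S)\cong\R^{2(g-1)(n^2-1)}$ one gets $\dim V=2(g-1)(n^2-1)-\dim\ker L$. Everything thus reduces to computing $\mathrm{rank}\,L$ directly from the explicit linear formula of \S\ref{subsect:ShearingCycle}. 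For $n\ge4$ this bookkeeping gives $\mathrm{rank}\,L=12(g-1)(n-3)$, hence $\dim\ker L=2(g-1)(n-4)(n-5)$ and $\dim V=6(g-1)(3n-7)$; for $n=2$ there are no triangle invariants, so $\ker L$ is trivial and $\dim V=\dim\mathcal P=6(g-1)$; and for $n=3$ the map $L$ turns out to be injective on the (one-dimensional-per-triangle) space of triangle invariants, so again $\ker L$ is trivial and $\dim V=\dim\mathcal P=16(g-1)$.

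The main obstacle is precisely this last rank computation. Unwinding the definition of $L$ from \S\ref{subsect:ShearingCycle} requires a careful analysis of the hexagonal decomposition of $\partial U$, of how the $\frac{(n-1)(n-2)}2$ triangle invariants on each of the $4(g-1)$ complementary triangles feed into the boundary of the shearing cycle, and --- most delicately --- of the interplay between the covering involution $\iota$ and the symmetry $x\mapsto\overline x$ on the coefficients $\R^{n-1}$. This is the same computation that produces the codimension $\lfloor\frac{n-1}2\rfloor$ of Proposition~\ref{prop:RestrictTriangleInvIntro} (which records exactly the failure of $\mathrm{image}(L)$ to lie inside $\mathrm{image}(\partial)$), and the low-dimensional cases $n=2,3$ have to be checked separately because the generic rank formula breaks down there.
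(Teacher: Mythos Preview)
Your argument is correct and follows the same route as the paper (Corollaries~\ref{cor:RestrictionShearingCycle} and \ref{cor:RestrictionShearingCycle3and2}): the realizable shearing classes are exactly those $\sigma$ satisfying the Positive Intersection Condition together with the linear constraint $\partial\sigma\in\mathrm{im}(\Theta)$, where your $L$ is the paper's map $\Theta$ of Lemma~\ref{lem:ImageTheta}. The only organizational difference is in the dimension count: the paper computes $\dim V$ directly as $\dim\ker\partial+\dim(\mathrm{im}\,\partial\cap\mathrm{im}\,\Theta)$ using Proposition~\ref{prop:ComputeTwistedTgentCycles} and Lemma~\ref{lem:IntersectionImagesBoundaryMapTheta}, whereas you obtain it as $\dim\mathcal P-\dim\ker\Theta$, borrowing $\dim\mathcal P=2(g-1)(n^2-1)$ from Hitchin's theorem via Theorem~\ref{thm:ParametrizationIntro}. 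Both reduce to the same rank computation $\mathrm{rank}\,\Theta=12(g-1)(n-3)$ for $n>3$ (and injectivity of $\Theta$ for $n=3$), which is exactly Lemma~\ref{lem:ImageTheta}; your shortcut simply trades the homological computation of $\dim\ker\partial$ and $\dim(\mathrm{im}\,\partial\cap\mathrm{im}\,\Theta)$ for the a~priori knowledge of $\dim\Hit(S)$.
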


The dimensions in Proposition~\ref{prop:RestrictShearingCycleIntro} should be compared to the dimension $18(g-1)(n-1)$ of the twisted homology space $ H_1(U,  \delv U; \widetilde\R^{n-1})$, consisting of those  $\alpha \in H_1(\widehat U,  \delv \widehat U; \R^{n-1})$ such that $\iota_* (\alpha)= - \overline{\alpha}$.

At first, the relative homology group $H_1(\widehat U, \delv \widehat U; \R^{n-1})$ of a train track neighborhood $U$ may not appear very natural. In fact, although we  decided to privilege this more familiar point of view in this introduction, it occurs as a space $\CC(\widehat\lambda, \slits; \R^{n-1}) $ of tangent cycles for the orientation cover $\widehat\lambda$ relative to its slits, where the slits of $\widehat\lambda$ are lifts of the spikes of the complement $S-\lambda$; Proposition~\ref{prop:RelativeHomologyTangentCycles} then provides an isomorphism $\CC(\widehat\lambda, \slits; \R^{n-1}) \cong H_1(\widehat U, \delv \widehat U; \R^{n-1})$. A relative tangent cycle $\alpha\in \CC(\widehat\lambda, \slits; \R^{n-1})$ assigns a vector $\alpha(k) \in \R^{n-1}$ to each arc $k$ transverse to $\widehat \lambda$, in a quasi-additive way: If $k$ is split into two  subarcs $k_1$ and $k_2$, then $\alpha(k)$ is equal to the sum of $\alpha(k_1)$, $\alpha(k_2)$ and of a correction factor depending on the slit of $\widehat\lambda $ facing the point $k_1\cap k_2$ along which $k$ was split. In particular, $\CC(\widehat\lambda, \slits; \R^{n-1})$ depends only on the maximal geodesic lamination $\lambda$, and not on the train track neighborhood $U$. 

The lack of additivity of a relative tangent cycle $\alpha \in \CC(\widehat\lambda, \slits; \R^{n-1})$ has a nice expression in terms of the boundary map $\partial \colon H_1(\widehat U, \delv \widehat U; \R^{n-1}) \to H_0( \delv \widehat U; \R^{n-1})$, and is at the basis of  the Shearing Cycle Boundary Condition of Proposition~\ref{prop:ShearingCycleBdryConditionIntro}. In the classical case where $n=2$, the Shearing Cycle Boundary Condition says that the shearing class $[\sigma^\rho] \in H_1(\widehat U, \delv \widehat U; \R^{n-1}) $ has boundary 0, and in particular that  the corresponding tangent cycle $[\sigma^\rho] \in \CC(\widehat \lambda, \slits; \R)$ is additive with no correction factors; such objects were called ``transverse cocycles'' in \cite{Bon97a, Bon96}. 

This point of view enables us to shed some light on the Positive Intersection Condition of Proposition~\ref{prop:PositiveIntersectionIntro}. Given a Hitchin character $\rho \in \Hit(S)$, Labourie \cite{Lab1} shows that for every  nontrivial $\gamma \in \pi_1(S)$ the matrix $\rho(\gamma)\in \PSL$ is diagonalizable, and that its eigenvalues $m_a^\rho(\gamma)$ can be ordered in such a way that $|m_1^\rho(\gamma)|> |m_2^\rho(\gamma)|> \dots >|m_n^\rho(\gamma)| $. If we define $\ell^\rho (\gamma) \in \R^{n-1}$ by the property that its $a$--th coordinate is $\ell^\rho_a (\gamma) =\log \frac{|m_a^\rho(\gamma)|}{ |m_{a+1}^\rho(\gamma)|}$, the second author showed in  \cite{Dre1} that this formula admits a continuous linear extension $\ell^\rho  \colon \CH(S) \to \R^{n-1}$ to the space $\CH(S)$ of H\"older geodesic currents of $S$, a topological vector space that contains all conjugacy classes of $\pi_1(S)$ in a natural way; this continuous extension $\ell^\rho  \colon \CH(S) \to \R^{n-1}$ is unique on the subspaces of $\CH(S)$ that are of interest to us in this paper (see Remark~\ref{rem:LengthHolderGeodCurrentsUnique?}). 

In particular, an (additive) tangent cycle $\alpha \in \CC(\widehat \lambda; \R)$ defines a H\"older geodesic current $\alpha \in \CH(S)$ (see \cite{Bon97a}), and we can restrict  the length function of \cite{Dre1} to $\ell^\rho  \colon \CC(\widehat \lambda; \R) \to \R^{n-1}$. 

The following result, proved as Theorem~\ref{thm:ShearingAndLength} in \S \ref{subsect:ShearingAndLength}, relates the length vector $\ell^\rho(\alpha) \in \R^{n-1}$ to the shearing class $[\sigma^\rho] \in \CC(\widehat\lambda, \slits; \R^{n-1}) \cong H_1(\widehat U, \delv \widehat U; \R^{n-1})$. 

\begin{thm}[Length and Intersection Formula]
\label{thm:LengthIntersectionInto}
If  $[\sigma^\rho] \in \CC(\widehat\lambda, \slits; \R^{n-1}) \cong H_1(\widehat U, \delv \widehat U; \R^{n-1})$ is the shearing cycle of a Hitchin character $\rho \in \Hit(S)$, and if  $\alpha \in \CC(\widehat\lambda; \R)  \cong H_1(\widehat U; \R)$ is a  tangent cycle for the orientation cover $\widehat\lambda$, then
$$
\ell_a^\rho(\alpha) = [\alpha]\cdot [\sigma^\rho] \in \R^{n-1}
$$
is the algebraic intersection vector of the  homology classes $[\alpha]\in H_1(\widehat U; \R)$ and $[\sigma^\rho ] \in H_1(\widehat U, \delv \widehat U; \R^{n-1})$ in the train track neighborhood $\widehat U$ of $\widehat\lambda$. 
\end{thm}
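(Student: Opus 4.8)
The plan is to reduce the general (possibly uncountably-leaved) statement to a computation on a finite train track and then pass to the limit, exploiting that both sides of the asserted identity are continuous linear functions of $\alpha \in \CC(\widehat\lambda; \R) \cong H_1(\widehat U; \R)$. First I would treat the case where $\alpha$ is carried by a weighted simple closed geodesic $\gamma$ contained in (a sublamination of) $\widehat\lambda$, indeed the case $\gamma$ a single leaf that happens to be closed; here $\ell_a^\rho(\gamma) = \log\bigl(|m_a^\rho(\gamma)|/|m_{a+1}^\rho(\gamma)|\bigr)$ is Labourie's eigenvalue ratio, and the right-hand side $[\gamma]\cdot[\sigma^\rho]$ is a finite sum of local contributions of $\sigma^\rho$ along the arcs of a transversal to $\gamma$. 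The key mechanism is the slithering map of \S\ref{subsect:Slithering}: it expresses the holonomy $\rho(\gamma)$ as an (ordered, possibly infinite) product of ``elementary shears'' across the leaves of $\widehat\lambda$ crossed by a fundamental domain of $\gamma$, each such shear being conjugate to a unipotent-times-diagonal factor whose diagonal part is governed precisely by the local shearing vector $\sigma^\rho$ assigned to that leaf, while the unipotent parts and the triangle-invariant correction factors are controlled by the positivity of the flag triples (\S\ref{subsect:Positivity}). Taking the logarithm of the modulus of the $a$-th eigenvalue of this product — using that the diagonal parts commute up to terms that do not affect the leading eigenvalue asymptotics — collapses to the sum defining $[\gamma]\cdot[\sigma^\rho]$.

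Second, I would upgrade from closed leaves to the general tangent cycle. Since weighted multicurves carried by $\lambda$, hence the corresponding tangent cycles, are dense in $\CC(\widehat\lambda; \R)$ (this is the standard train-track density argument, and the homology identification $\CC(\widehat\lambda; \R) \cong H_1(\widehat U; \R)$ is already recorded in the excerpt), and since $\ell^\rho$ extends continuously and linearly to $\CH(S)\supset \CC(\widehat\lambda;\R)$ by \cite{Dre1}, while $\alpha \mapsto [\alpha]\cdot[\sigma^\rho]$ is manifestly continuous and linear on $H_1(\widehat U;\R)$, the identity on the dense set of multicurves forces the identity everywhere. One must check compatibility of normalizations: that the de Rham/Ruelle–Sullivan homology class $[\gamma]\in H_1(\widehat U;\R)$ attached to a closed leaf is the one whose algebraic intersection with the relative class $[\sigma^\rho]$ counts exactly the signed local shears computed in the first step — this is a bookkeeping matter about orientations on the orientation cover $\widehat\lambda$ and about the sign conventions in the boundary pairing $H_1(\widehat U;\R)\times H_1(\widehat U,\delv\widehat U;\R^{n-1})\to\R^{n-1}$.

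The main obstacle I anticipate is the first step: making rigorous the passage from the (generally infinite) slithering product to the eigenvalue ratios $\ell_a^\rho(\gamma)$. The slithering map produces a product over a set of leaves that need not be finite even along a compact transversal, so one needs the convergence estimates from \S\ref{subsect:Slithering} together with a careful argument that the ``correction factors'' and unipotent contributions, although they genuinely affect $\rho(\gamma)$ as a matrix, are asymptotically negligible for the purpose of extracting $\log|m_a^\rho(\gamma)|$ — essentially a statement that the dominant behavior of the product is diagonal, which is where Labourie's and Fock–Goncharov's positivity (total positivity of the relevant matrices) enters crucially to rule out cancellation. A secondary subtlety is that $\ell^\rho$ is only asserted to be \emph{unique} on the subspaces of $\CH(S)$ relevant here (Remark~\ref{rem:LengthHolderGeodCurrentsUnique?}), so one should phrase the density/continuity argument within those subspaces, or alternatively prove the formula directly for a spanning set of tangent cycles without invoking uniqueness of the extension. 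Once the closed-leaf case and the normalization match are in hand, the remainder is routine.
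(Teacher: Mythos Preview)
Your density step is a genuine gap. You assert that ``weighted multicurves carried by $\lambda$'' give tangent cycles that are dense in $\CC(\widehat\lambda;\R)$, but for a generic maximal geodesic lamination $\lambda$ there are \emph{no} closed leaves at all: every leaf is dense. The isomorphism $\CC(\widehat\lambda;\R)\cong H_1(\widehat U;\R)$ does not mean that homology classes of arbitrary closed curves in $\widehat U$ correspond to elements of $\CC(\widehat\lambda;\R)$ under the embedding $\CC(\widehat\lambda;\R)\hookrightarrow\CH(S)$; that embedding sends a tangent cycle to a H\"older current \emph{supported on} $\widehat\lambda$, and a closed curve $\gamma\subset\widehat U$ not lying in $\widehat\lambda$ is simply not in the image. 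So even if you could verify the formula for closed leaves $\gamma\subset\widehat\lambda$ (and your sketch of how to extract $\log|m_a^\rho(\gamma)|$ from the slithering product is itself vague on a crucial point: slithering moves \emph{between} leaves rather than \emph{along} a single leaf, so the relation to the holonomy of a closed leaf needs work), you would have verified it on a subspace that is typically zero.

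The paper's route avoids this entirely. It goes back to the analytic definition of $\ell_a^\rho$ in \cite{Dre1} as the integral of a leafwise $1$--form $\omega_a-\omega_{a+1}$ against the current $\alpha$, extends $\omega_a$ to a H\"older topological closed $1$--form on $\widehat U$, and shows directly (Lemmas~\ref{lem:LengthIsEvaluationOmega}--\ref{lem:EvaluateOmegaIsIntersection}) that the cohomology class $[\omega_a-\omega_{a+1}]\in H^1(\widehat U;\R)$ is Poincar\'e dual to $[\sigma_a^\rho]\in H_1(\widehat U,\delv\widehat U;\R)$. The slithering map enters when computing $\int_d\omega_a$ over a single gap $d$ of a transverse arc; the key cancellation happens not via positivity but because, for a tangent cycle $\alpha$ with $\partial[\alpha]=0$, the boundary correction terms at $\delh\widehat U$ sum to zero. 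This argument works uniformly for every $\alpha\in\CC(\widehat\lambda;\R)$ and needs no approximation by closed curves.
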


In the special case where $\alpha$ is a transverse measure $\mu$ for $\widehat\lambda$, the Positive Intersection Condition of Proposition~\ref{prop:PositiveIntersectionIntro} is then equivalent to the property that all coordinates of the vector $\ell^\rho(\mu)$ are positive. In this version, this statement is an immediate consequence of the Anosov Property that is central to \cite{Lab1} (see Proposition~\ref{prop:MeasureHasPositiveLengths}). 

The article concludes, in \S\S \ref{bigsect:PseudoAnosov} and \ref{bigsect:LengthMeasLam}, with  two brief applications of Theorems~\ref{thm:ParametrizationIntro} and \ref{thm:LengthIntersectionInto}. The first one is concerned with the dynamics of the action of a pseudo-Anosov diffeomorphism $\phi \colon S \to S$ on the Hitchin component $\Hit(S)$; applying the parametrization of Theorem~\ref{thm:ParametrizationIntro} to the case of a maximal geodesic lamination $\lambda$ containing the stable lamination of $\phi$ shows that the dynamics of the action of $\phi$ on $\Hit(S)$ are concentrated on submanifolds of $\Hit(S)$ of relatively large codimension.  The second application considers the restriction of the length function $\ell^\rho  \colon \CH(S) \to \R^{n-1}$ to Thurston's space $\ML(S)$ of measured laminations on $S$;  a consequence of Theorem~\ref{thm:LengthIntersectionInto} is that, at each $\alpha \in \ML(S)$, the  tangent map $T_\alpha \ell^\rho  \colon T_\alpha \ML(S) \to \R^{n-1}$ is linear on each face of the piecewise linear structure of $\ML(S)$. 

These results can be put in a broader perspective. Indeed, the properties of the Hitchin component remain valid when the Lie group $\PSL$ is replaced by any split real  algebraic group $G$ \cite{Hit, Lab1, FoG1}. In this more general framework, our triangle invariant $\tau^\rho$  associates to each component of $S-\lambda$ a positive  triple in the flag space $B\backslash G$, where $B$ is a Borel subgroup. The shearing class is now a relative homology class $[\sigma^\rho] \in H_1(\widehat U, \delv \widehat U; \mathfrak h)$ valued in the Cartan algebra $\mathfrak h$ of $G$, and equivariant with respect to the covering involution $\iota \colon \widehat U \to \widehat U$ and to minus the opposition involution of $\mathfrak h$. The 
Shearing Cycle Boundary Condition then states that the boundary $\partial [\sigma^\rho] \in H_0 (\delv \widehat U; \mathfrak h)$ is completely determined by the triangle invariant $\tau^\rho \in (B\backslash G)^{4(g-1)}$, while the Positive Intersection Condition requires that the algebraic intersection vector $[\mu]\cdot [\sigma^\rho] \in \mathfrak h$ belong to the principal Weyl chamber of $\mathfrak h$. The output of these constructions is perhaps not as explicit as in the case of $\PSL$, but extending the proofs to this more general context is only a matter of using the right vocabulary.

\medskip\noindent
\textbf{Acknowledgement:} The authors are very pleased to  acknowledge very helpful conversations with Antonin Guilloux and Anne Parreau, at a time when they (the authors) were very confused. They are also grateful to Giuseppe Martone for many useful comments on the manuscript.

\section{Generic configurations of flags}
\label{bigsect:FlagConf}

Flags in $\R^n$  play a fundamental r\^ole in our construction of invariants of Hitchin characters. This section is devoted to certain invariants of finite families of flags, borrowed from \cite{FoG1}.  

\subsection{Flags}
\label{subsect:Flags}

A \emph{flag} in $\R^n$ is a family $F$ of nested linear subspaces $F^{(0)} \subset F^{(1)}\subset \dots\subset F^{(n-1)} \subset F^{(n)}$ of $\R^n$ where each $F^{(a)}$ has dimension $a$. 

A pair of flags $(E,F)$ is \emph{generic} if every subspace $E^{(a)}$ of $E$ is transverse to every subspace $F^{(b)}$ of $F$. This is equivalent to the property that $E^{(a)}\cap F^{(n-a)}=0$ for every $a$. 

Similarly, a triple of flags $(E,F,G)$ is \emph{generic} if each triple of subspaces $E^{(a)}$, $F^{(b)}$, $G^{(c)}$, respectively  in $E$, $F$, $G$, meets transversely. Again, this is equivalent to the property that $E^{(a)} \cap F^{(b)} \cap G^{(c)} =0$ for every $a$, $b$, $c$ with $a+b+c=n$. 

\subsection{Wedge-product invariants of generic flag triples}
\label{subsect:WedgeInvariants}
Elementary linear algebra shows that, for any two generic flag pairs $(E,F)$ and $(E', F')$, there is a linear isomorphism $\R^n \to \R^n$ sending $E$ to $E'$ and $F$ to $F'$. However, the same is not true for generic flag triples. Indeed, there is a whole moduli space of generic flag triples modulo the action of $\GL$, and this moduli space can be parametrized by invariants that we now describe. These invariants are expressed in terms of the exterior algebra $\Lambda^\bullet (\R^n)$ of $\R^n$. 

Consider the discrete triangle
$$
\Theta_n = \{ (a,b,c) \in \Z^3; a+b+c=n \text{ and } a,b,c \geq 0 \}. 
$$
represented in Figure~\ref{fig:DiscTriangle}.  

\begin{figure}[htbp]

\SetLabels
\R(- 0*-.15) $(n,0,0) $ \\
(.5* 1.08) $ (0,n,0)$ \\
\L( 1* -.15) $(0,0,n) $ \\
( * ) $ $ \\
( * ) $ $ \\
\endSetLabels
\vskip 10pt
\centerline{\AffixLabels{\includegraphics{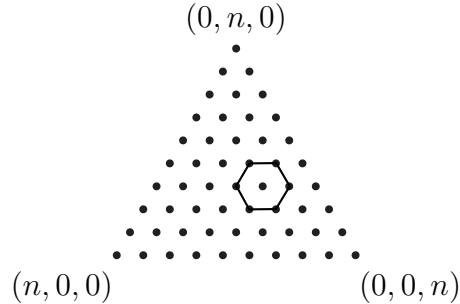}}}
\vskip 10pt
\caption{The discrete triangle $\Theta_n$, with a hexagon cycle}
\label{fig:DiscTriangle}
\end{figure}

A function $\phi \colon \Theta_n \to \Z$ is \emph{balanced} if, for every $a_0$, $b_0$, $c_0$,
$$
\sum_{(a_0,b,c) \in \Theta_n} \phi(a_0, b,c) = \sum_{(a,b_0,c) \in \Theta_n} \phi(a,b_0,c) = \sum_{(a,b,c_0) \in \Theta_n} \phi(a,b,c_0) = 0,
$$
namely if the sum of the $\phi(a,b,c)$ over each line  parallel to one side of the triangle $\Theta_n$ is equal to 0.

Such a balanced function $\phi$ defines an invariant of a generic flag triple $(E,F,G)$ as follows. For each $a$, $b$, $c$ between $0$ and $n$, the spaces $ \Lambda^a\bigl(E^{(a)}\bigr)$, $\Lambda^b\bigl(F^{(b)}\bigr)$ and $\Lambda^c\bigl(G^{(c)}\bigr)$ are each isomorphic to $\R$. Choose non-zero elements   $ e^{(a)}  \in \Lambda^a\bigl(E^{(a)}\bigr)$,  $ f^{(b)}  \in \Lambda^b\bigr(F^{(b)}\bigr)$ and  $ g^{(c)} \in \Lambda^c\bigl(G^{(c)}\bigr)$. We will use the same letters to denote their images $ e^{(a)} \in \Lambda^a(\R^n)$, $ f^{(b)} \in \Lambda^b(\R^n)$ and $ g^{(c)} \in \Lambda^c (\R^n )$. We then define
$$
\Phi(E,F,G) = \prod_{(a,b,c) \in \Theta_n}  \bigl(  e^{(a)} \wedge  f^{(b)} \wedge  g^{(c)}  \bigr)^{\phi(a, b, c)}  \in \R,
$$
where we choose an isomorphism $\Lambda^n(\R^n) \cong \R$ to interpret each term in the product as a real number. The fact that the flag triple is generic guarantees that these numbers are non-zero, while the property that $\phi$ is balanced is exactly what is needed to make sure that this product is independent of the choices of the elements $ e^{(a)}  \in \Lambda^a\bigl(E^{(a)}\bigr)$,  $ f^{(b)}  \in \Lambda^b\bigr(F^{(b)}\bigr)$ and  $ g^{(c)} \in \Lambda^c\bigl(G^{(c)}\bigr)$ and of the isomorphism  $\Lambda^n(\R^n) \cong \R$.  We say that $\Phi$ is the \emph{wedge-product  invariant} of generic flag triples associated to the balanced function $\phi\colon \Theta\to \Z$. 

We now consider a fundamental special case. For  $a$, $b$, $c\geq 1$ with $a+b+c=n$, namely for a point $(a,b,c)$ in the interior of the triangle $\Theta_n$, the \emph{$(a,b,c)$--hexagon cycle} is the balanced function $\phi_{abc} \colon \Theta_n \to \Z$ defined by
$$
\phi_{abc} = \delta_{(a+1, b, c-1)} - \delta_{(a-1, b, c+1)} + \delta_{(a, b-1, c+1)} -\delta_{(a, b+1, c-1)} + \delta_{(a-1, b+1, c)} -\delta_{(a+1, b-1, c)},
$$
where $\delta_{(a,b,c)} \colon \Theta_n \to \Z$ denotes the Kronecker function such that $\delta_{(a,b,c)}(a', b', c') =1$ if $(a,b,c)=(a', b', c')$ and $\delta_{(a,b,c)}(a', b', c') =0$ otherwise. The terminology is explained by the fact that the support of $\phi_{abc} $ is a small hexagon in the discrete triangle $\Theta_n$, centered at the point $(a,b,c)$; see Figure~\ref{fig:DiscTriangle} for the case where $n=9$ and $(a,b,c)=(2,3,4)$.  The  wedge-product  invariant associated to the hexagon cycle $\phi_{abc} $ is
the \emph{$(a,b,c)$--triple ratio}  
\begin{align*}
T_{abc} (E,F,G) &=
\frac
{ e^{(a+1)} \wedge  f^{(b)} \wedge  g^{(c-1)}}
{ e^{(a-1)} \wedge  f^{(b)} \wedge  g^{(c+1)}}
\\
&\qquad\qquad\qquad\qquad
\frac
{ e^{(a)} \wedge  f^{(b-1)} \wedge  g^{(c+1)}}
{ e^{(a)} \wedge  f^{(b+1)} \wedge  g^{(c-1)}}\  \ 
\frac
{ e^{(a-1)} \wedge  f^{(b+1)} \wedge  g^{(c)}}
{ e^{(a+1)} \wedge  f^{(b-1)} \wedge  g^{(c)}}.
\end{align*}

Note the elementary property of triple ratios under permutation of the flags. 

\begin{lem}
\label{lem:SymmetriesTripleRatios}
\pushQED{\qed}
\begin{equation*}
 T_{abc} (E,F,G ) =  T_{bca} (F,G, E)= T_{bac} (F,E,G)^{-1}. \qedhere
\end{equation*}
\end{lem}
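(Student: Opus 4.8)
The plan is to verify each of the two claimed identities directly from the definition of the triple ratio as a wedge-product invariant, by tracking what happens to the defining hexagon cycle $\phi_{abc}$ under the relevant permutation of the three flags. The key observation is that permuting the flags $(E,F,G)$ simultaneously permutes the three coordinate axes of the discrete triangle $\Theta_n$, so that the wedge-product invariant $\Phi$ attached to a balanced function $\phi$ gets sent to the wedge-product invariant attached to the permuted function $\phi\circ\pi$, where $\pi$ is the induced permutation of $\Theta_n$.

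First I would record the general permutation principle: for a permutation $\pi$ of $\{E,F,G\}$ inducing the corresponding permutation of the coordinates $(a,b,c)$, one has $\Phi_{\pi\cdot\phi}(E,F,G) = \Phi_\phi(\pi^{-1}(E,F,G))$, where $\pi\cdot\phi$ is the balanced function obtained by permuting coordinates; this is immediate from the product formula defining $\Phi$, since relabelling the flags just relabels the factors $e^{(a)}\wedge f^{(b)}\wedge g^{(c)}$. Then I would check the two cases. For the cyclic permutation $(E,F,G)\mapsto(F,G,E)$: applying the coordinate cycle $(a,b,c)\mapsto(b,c,a)$ to the hexagon cycle $\phi_{abc}$ yields exactly $\phi_{bca}$ (the six supporting points of $\phi_{abc}$ around $(a,b,c)$ map to the six supporting points of $\phi_{bca}$ around $(b,c,a)$, with signs preserved because the cyclic relabelling of the two ``slots'' being incremented/decremented matches the defining pattern of $\phi_{bca}$). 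Hence $T_{abc}(E,F,G)=T_{bca}(F,G,E)$. For the transposition $(E,F,G)\mapsto(F,E,G)$: applying $(a,b,c)\mapsto(b,a,c)$ to $\phi_{abc}$ gives $-\phi_{bac}$ — the transposition swaps, for instance, the points $(a+1,b,c-1)$ and $(a,b+1,c-1)$, which carry opposite signs in $\phi_{bac}$ relative to $\phi_{abc}$, and similarly for the other two sign-paired points, so the whole function changes sign. Since $\Phi_{-\phi}=\Phi_\phi^{-1}$, this gives $T_{abc}(E,F,G)=T_{bac}(F,E,G)^{-1}$.

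The only genuinely mechanical part, and the place where one must be careful rather than clever, is the bookkeeping of signs in the transposition case: one has to match up each of the three ``$+$ over $-$'' ratios in the definition of $T_{bac}$ against the images of the three ratios in $T_{abc}$ under the coordinate swap and confirm that every one of them gets inverted. I would do this by writing out the six support points of $\phi_{abc}$, applying $(a,b,c)\mapsto(b,a,c)$ to each, and comparing the resulting signed multiset with the six support points of $\phi_{bac}$; the comparison is a short finite check. I expect this sign computation to be the main (and only real) obstacle — everything else is a formal consequence of the permutation principle for wedge-product invariants.

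Alternatively, and perhaps more cleanly for the writeup, one can bypass $\phi_{abc}$ entirely and argue at the level of the explicit three-factor formula for $T_{abc}(E,F,G)$: substitute $(E,F,G)\to(F,G,E)$ and $(E,F,G)\to(F,E,G)$ into the displayed product, use that each factor $e^{(a)}\wedge f^{(b)}\wedge g^{(c)}$ is a single real number (once $\Lambda^n(\R^n)\cong\R$ is fixed) so reorderings of the three wedge factors only introduce a sign $(-1)^{ab+bc+ca}$ or similar that is the \emph{same} in numerator and denominator of each ratio and hence cancels, and then read off that the resulting product is literally the defining expression for $T_{bca}(F,G,E)$, respectively the reciprocal of the defining expression for $T_{bac}(F,E,G)$. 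This makes the lemma a one-line inspection modulo the remark that permuting wedge factors contributes equal signs top and bottom.
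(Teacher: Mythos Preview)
Your proposal is correct; the paper gives no proof at all (the lemma is stated with an immediate \qed as an ``elementary property''), so your direct verification from the explicit three-factor formula is exactly the intended content, just spelled out in more detail than the paper bothers with.
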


The natural action of the linear group $\GL$ on the flag variety $\Flag$ descends to an action of the projective linear group $\PGL$, quotient of $\GL$ by its center $\bigl(\R- \{0\}\bigr) \Id$ consisting of all non-zero scalar multiples of the identity. Note that the projective special linear group $\PSL$ is equal to $\PGL$ if $n$ is odd, and is an index 2 subgroup of $\PGL$ otherwise.

\begin{prop}
\label{prop:TripRatiosDetermineFlagTriples}
Two generic flag triples $(E,F,G)$ and $(E', F', G')$ are equivalent under the action of $\PGL$ if and only if $ T_{abc} (E,F,G)=T_{abc} (E',F',G')$ for every $a$, $b$, $c\geq 1$ with $a+b+c=n$. 

In addition, for any set of non-zero numbers $t_{abc}\in \R - \{0\}$, there exists a generic flag triple $(E,F,G)$ such that $ T_{abc} (E,F,G)=t_{abc}$ for every $a$, $b$, $c\geq 1$ with $a+b+c=n$. 

\end{prop}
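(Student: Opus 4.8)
### Proof Proposal

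The plan is to prove the two assertions of Proposition~\ref{prop:TripRatiosDetermineFlagTriples} by a dimension count combined with an explicit normal form. The key observation is that the moduli space of generic flag triples modulo $\PGL$ has dimension exactly $\frac{(n-1)(n-2)}{2}$, which is the number of interior points $(a,b,c)$ of $\Theta_n$, i.e. the number of triple ratios $T_{abc}$. So the strategy is: first establish a convenient normal form for a generic flag triple under the $\GL$-action; second, read off from that normal form that the triple ratios freely parametrize the normal form; third, deduce both the injectivity statement (triple ratios determine the $\PGL$-orbit) and the surjectivity statement (every choice of nonzero $t_{abc}$ is realized).

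First I would set up the normal form. Given a generic flag pair $(E,F)$, elementary linear algebra (as noted in \S\ref{subsect:WedgeInvariants}) lets us assume $E^{(a)} = \langle u_1, \dots, u_a\rangle$ and $F^{(b)} = \langle u_n, u_{n-1}, \dots, u_{n-b+1}\rangle$ for a fixed basis $u_1, \dots, u_n$; the residual stabilizer of this pair in $\GL$ is the diagonal torus in that basis, of dimension $n$. Now $G$ is a third generic flag; genericity with respect to both $E$ and $F$ means each $G^{(c)}$ is spanned by vectors whose coordinates, in suitable triangular position against the $E$- and $F$-filtrations, are nonzero. One can choose generators $g_1, \dots, g_n$ for the successive quotients so that $G^{(c)} = \langle g_1, \dots, g_c\rangle$, with $g_c = \sum_i \xi_{ci} u_i$ having a prescribed pattern of nonzero pivot entries forced by genericity. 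The point is to exhibit the orbit space as the quotient of an affine chart of such coordinate arrays by the residual $n$-torus, and to count: the genericity-constrained coordinates for $G$ contribute $\frac{(n-1)(n-2)}{2} + (n-1)$ essential parameters (after the obvious rescalings of the $g_c$), and modding by the residual torus acting on these leaves $\frac{(n-1)(n-2)}{2}$ free parameters. The cleanest way to make this precise is to argue that the map sending the coordinate array to $(T_{abc})_{abc}$ descends to a bijection between the orbit space and $(\R-\{0\})^{(n-1)(n-2)/2}$.

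The technical heart — and the step I expect to be the main obstacle — is verifying that the triple ratios do indeed separate orbits and are unconstrained, rather than satisfying some hidden relation. Concretely one must check that from the values $T_{abc}(E,F,G)$ one can reconstruct, inductively on $c$ (the $G$-index), the successive wedge vectors $g^{(c)} = g_1 \wedge \dots \wedge g_c$ up to the residual torus and scaling, hence reconstruct $G$ itself up to the $\PGL$-action. The induction works because the defining product for $T_{abc}$ relates a ``new'' wedge $e^{(a)} \wedge f^{(b-1)} \wedge g^{(c+1)}$ to wedges already determined at lower $c$; solving for the one unknown ratio at each interior lattice point, and tracking that the boundary values ($a=0$, $b=0$, or $c=0$ faces) are fixed by the normalization, shows the reconstruction is both possible and free. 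For the existence half of the proposition, the same inductive scheme is run in reverse: given arbitrary nonzero $t_{abc}$, build the $g^{(c)}$ step by step so that each triple ratio comes out equal to $t_{abc}$, and check genericity of the resulting triple (all the relevant wedges are nonzero by construction). I would also invoke Lemma~\ref{lem:SymmetriesTripleRatios} to keep the bookkeeping symmetric and to reduce the number of cases. Finally, the passage from $\GL$ to $\PGL$ is automatic since the triple ratios are manifestly invariant under scaling $\R^n$, and the $\PGL$-orbit and $\GL$-orbit of a flag triple coincide.
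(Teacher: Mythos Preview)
The paper does not actually prove this proposition: its entire proof is the single line ``See \cite[\S9]{FoG1}.'' So there is no in-paper argument to compare against; the authors defer entirely to Fock--Goncharov.

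Your proposal is a reasonable outline of the kind of direct argument that underlies the Fock--Goncharov result. The dimension count is correct (flag variety has dimension $\frac{n(n-1)}{2}$, the stabilizer of the normalized pair $(E,F)$ in $\PGL$ is the $(n-1)$--dimensional projective torus, so the quotient has dimension $\frac{(n-1)(n-2)}{2}$), and the inductive reconstruction of $G$ from the triple ratios is indeed the substance of what needs to be checked. Your sketch is honest about where the work lies: verifying that the map from normal-form data to $(T_{abc})$ is a bijection, not just a map between spaces of equal dimension. That step is genuinely the content, and your proposal does not carry it out in detail --- but neither does the paper, which simply outsources it. If you were to flesh this out, the cleanest route is probably the ``snake'' parametrization of \cite{FoG1}, which makes the inductive step explicit; your description of solving for one new wedge ratio per interior lattice point is morally the same idea.
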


\begin{proof}
See \cite[\S9]{FoG1}.
\end{proof}

In particular, the moduli space of generic flag triples $(E,F,G)$  under the action of $\PGL$  is homeomorphic to $\bigl (\R-\{0\} \bigr)^{\frac{(n-1)(n-2)}2} $.

Corollary~\ref{cor:WedgeInvarExpressedTripleRatios} below partially  accounts for the important r\^ole played by the triple ratios $T_{abc}$ in Proposition~\ref{prop:TripRatiosDetermineFlagTriples}.  We will not really need this property, but it explains why we will always be able to express in terms of triple ratios $T_{abc}$ the various wedge-product invariants that we will encounter in the paper. 

\begin{lem}
\label{lem:BalancedFunctionsHexagonCycles}
The hexagon cycles $\{ \phi_{abc}; a,b,c\geq 1, a+b+c=n \}$ form a basis for the free abelian group consisting of all balanced function $\phi \colon \Theta_n \to \Z$. 
\end{lem}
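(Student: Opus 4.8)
The plan is to count dimensions and then show the hexagon cycles are independent; since both the space of balanced functions and the span of the hexagon cycles will turn out to have rank $\frac{(n-1)(n-2)}{2}$, independence plus the correct rank will give a basis. First I would compute the rank of the group $B$ of balanced functions $\phi\colon\Theta_n\to\Z$. The full group of functions $\Theta_n\to\Z$ is free of rank $|\Theta_n| = \binom{n+2}{2} = \frac{(n+1)(n+2)}{2}$. The balancing conditions impose, for each of the three families of lines parallel to a side, one linear equation per line, i.e.\ $n+1$ equations per family. However these $3(n+1)$ equations are not independent: the sum of all the ``$a$--constant'' equations equals the sum of all the ``$b$--constant'' equations equals the sum of all the ``$c$--constant'' equations (each equals $\sum_{\Theta_n}\phi$), so there are exactly two relations among them, leaving $3(n+1)-2 = 3n+1$ independent constraints. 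I would check these constraints are indeed independent (the line-sum functionals for a fixed family are obviously independent, and a short argument rules out any further relation). Hence $B$ is free of rank $\frac{(n+1)(n+2)}{2} - (3n+1) = \frac{n^2-3n+2}{2} = \frac{(n-1)(n-2)}{2}$, which matches the number of interior points $(a,b,c)$ with $a,b,c\geq 1$, $a+b+c=n$ — i.e.\ the number of hexagon cycles.

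Next I would verify that each $\phi_{abc}$ is genuinely balanced (a direct check: on each of the three lines through a neighbor of $(a,b,c)$ the six $\pm\delta$ terms cancel in pairs) so that the hexagon cycles lie in $B$, and then prove they are linearly independent over $\Z$ (equivalently over $\Q$). The cleanest way is to exhibit, for each interior lattice point, a value where $\phi_{abc}$ is supported but no ``earlier'' hexagon cycle is; more concretely, I would order the interior points and observe that $\phi_{abc}$ is the unique hexagon cycle whose support contains the point $(a+1,b,c-1)$ among those with a given value of, say, the third coordinate $c$ shifted appropriately — or, even more simply, triangulate: send each $\phi_{abc}$ to its coefficient at the single site $(a-1,b+1,c)$, note that distinct interior points $(a,b,c)$ give distinct sites $(a-1,b+1,c)$ which are hit with coefficient $+1$, and that the remaining overlaps can be cleared by a triangular change of basis. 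Either way one gets a unitriangular matrix, forcing independence.

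Finally, a free abelian group of rank $r$ containing $r$ linearly independent elements $v_1,\dots,v_r$ need not have them as a basis (index could be $>1$), so the last step is to upgrade independence to spanning. I expect \textbf{this to be the main obstacle}: one must show the hexagon cycles generate $B$ over $\Z$, not just over $\Q$. The natural route is to produce an explicit algorithm that writes an arbitrary balanced $\phi$ as an integer combination of the $\phi_{abc}$: peel off values from the ``corners'' inward, using a hexagon cycle centered near a boundary site to kill the outermost nonzero entry while the balancing condition guarantees the relevant boundary entries were forced to be expressible this way. Equivalently, one can show the change-of-basis matrix found in the independence step is not merely triangular but \emph{uni}triangular (diagonal entries $\pm1$), which is exactly what the site-$(a-1,b+1,c)$ evaluation above delivers. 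Since both that approach and \cite[\S9]{FoG1}'s framework make this transparent, I would present the unitriangularity argument, which simultaneously yields independence and spanning and hence the basis claim.
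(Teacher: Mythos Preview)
Your dimension count contains a genuine error. You claim that the $3(n+1)$ line-sum constraints have exactly two linear relations among them, yielding $3n+1$ independent constraints and hence rank
\[
\frac{(n+1)(n+2)}{2} - (3n+1).
\]
But this equals $\frac{n^2-3n}{2} = \frac{n(n-3)}{2}$, not $\frac{(n-1)(n-2)}{2}$ as you write; the two differ by $1$. Checking $n=3$ confirms the problem: $|\Theta_3|=10$, there is exactly one interior point $(1,1,1)$, and one verifies directly that the balanced functions form a rank-$1$ group, so only $9$ of the $12$ constraints are independent---there are three relations, not two. Beyond the two ``total sum'' relations you identified, there is a third, for instance
\[
\sum_{a_0} a_0\, L^{(a)}_{a_0} + \sum_{b_0} b_0\, L^{(b)}_{b_0} + \sum_{c_0} c_0\, L^{(c)}_{c_0} = n \sum_{a_0} L^{(a)}_{a_0},
\]
coming from $a+b+c=n$. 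With three relations the count becomes $\frac{(n+1)(n+2)}{2} - 3n = \frac{(n-1)(n-2)}{2}$, as needed. Once this is repaired your strategy is viable, though you would still owe a clean unitriangularity argument; the evaluation-at-$(a-1,b+1,c)$ idea is on the right track but the ordering and the handling of overlaps need to be made precise, since each such site lies in the support of several hexagon cycles.

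For comparison, the paper's own proof proceeds by induction on $n$ rather than by a global rank count, which sidesteps the relation-counting entirely.
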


\begin{proof}
The proof is elementary, by induction on $n$. 
\end{proof}
Lemma~\ref{lem:BalancedFunctionsHexagonCycles} immediately implies:

\begin{cor}
\label{cor:WedgeInvarExpressedTripleRatios}
Every wedge-product  invariant can be uniquely expressed as a product of integer powers of triple ratios. \qed
\end{cor}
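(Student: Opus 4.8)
\textbf{Proof proposal for Corollary~\ref{cor:WedgeInvarExpressedTripleRatios}.}

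The plan is to deduce the statement directly from the definitions together with Lemma~\ref{lem:BalancedFunctionsHexagonCycles}. First I would record the elementary fact that the map $\phi \mapsto \Phi$, sending a balanced function $\phi \colon \Theta_n \to \Z$ to its associated wedge-product invariant $\Phi$ of generic flag triples, is a homomorphism from the additive group of balanced functions to the multiplicative group of functions $\{$generic flag triples$\} \to \R - \{0\}$. This is immediate from the formula $\Phi(E,F,G) = \prod_{(a,b,c)} (e^{(a)} \wedge f^{(b)} \wedge g^{(c)})^{\phi(a,b,c)}$: adding two balanced functions $\phi$ and $\phi'$ adds the corresponding exponents, hence multiplies the resulting products. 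One should also note the degenerate case: the zero balanced function yields the constant invariant $1$.

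Next I would invoke Lemma~\ref{lem:BalancedFunctionsHexagonCycles}, which asserts that the hexagon cycles $\{\phi_{abc} : a,b,c \geq 1,\ a+b+c=n\}$ form a $\Z$-basis for the free abelian group of balanced functions $\phi \colon \Theta_n \to \Z$. Therefore any balanced function can be written uniquely as $\phi = \sum_{a,b,c} m_{abc}\, \phi_{abc}$ with $m_{abc} \in \Z$. Applying the homomorphism of the previous paragraph, the associated wedge-product invariant is $\Phi = \prod_{a,b,c} \Phi_{abc}^{\,m_{abc}}$, where $\Phi_{abc}$ is the wedge-product invariant of $\phi_{abc}$. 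By definition $\Phi_{abc} = T_{abc}$ is precisely the $(a,b,c)$-triple ratio, so $\Phi = \prod_{a,b,c} T_{abc}^{\,m_{abc}}$ is a product of integer powers of triple ratios, as claimed.

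For uniqueness of this expression, I would argue as follows. Suppose $\prod_{a,b,c} T_{abc}^{\,m_{abc}} = \prod_{a,b,c} T_{abc}^{\,m'_{abc}}$ as functions of the generic flag triple $(E,F,G)$. By the homomorphism property this says the wedge-product invariant of the balanced function $\sum (m_{abc} - m'_{abc})\phi_{abc}$ is identically $1$ on all generic flag triples. By Proposition~\ref{prop:TripRatiosDetermineFlagTriples}, the triple ratios $T_{abc}$ take arbitrary prescribed values in $\R - \{0\}$ on suitable generic flag triples, hence are algebraically independent as functions on the moduli space $(\R - \{0\})^{(n-1)(n-2)/2}$; thus a Laurent monomial in them that is identically $1$ must have all exponents zero, giving $m_{abc} = m'_{abc}$ for all $(a,b,c)$. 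The only genuinely substantive input is Lemma~\ref{lem:BalancedFunctionsHexagonCycles} (the existence and uniqueness of the hexagon-cycle expansion of a balanced function), which is cited; everything else is a formal manipulation, so I do not anticipate a real obstacle — the one point to handle carefully is phrasing the "identically $1$ implies trivial monomial" step using Proposition~\ref{prop:TripRatiosDetermineFlagTriples} rather than invoking algebraic independence as a black box.
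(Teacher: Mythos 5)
Your proof is correct and follows essentially the same route as the paper, which simply notes that the corollary is an immediate consequence of Lemma~\ref{lem:BalancedFunctionsHexagonCycles}: the unique hexagon-cycle expansion of a balanced function, pushed through the additive-to-multiplicative correspondence, gives the product-of-triple-ratios expression. Your additional step verifying uniqueness \emph{as functions} via Proposition~\ref{prop:TripRatiosDetermineFlagTriples} (a Laurent monomial identically $1$ on $(\R-\{0\})^{(n-1)(n-2)/2}$ has all exponents zero) is a sound and welcome elaboration of a point the paper leaves implicit.
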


\subsection{Quadruple ratios}
\label{subsect:QuadRatios}
In addition to triple ratios, the following wedge-product  invariants of generic flag triples will play a very important r\^ole in this article. 

For $a=1$, 2, \dots, $n-1$, the \emph{$a$--th quadruple ratio} of the generic flag triple $(E,F,G)$ is the wedge-product invariant
\begin{align*}
Q_a(E, F, G)
= 
\frac
{e^{(a-1)}\wedge f^{(n-a)} \wedge g^{(1)}}
{e^{(a)}\wedge f^{(n-a-1)} \wedge g^{(1)} }\,
&\frac
{e^{(a)}\wedge f^{(1)} \wedge g^{(n-a-1)}}
{e^{(a-1)}\wedge f^{(1)} \wedge g^{(n-a)}}
  \\
&\qquad\qquad\qquad 
 \frac
{ e^{(a+1)}\wedge f^{(n-a-1)} }
{ e^{(a+1)}\wedge g^{(n-a-1)} }
\,
\frac
{e^{(a)}\wedge g^{(n-a)} }
{e^{(a)}\wedge f^{(n-a)}  }
 \end{align*}
 where, as usual, we consider arbitrary non-zero elements $ e^{(b)}  \in \Lambda^b\bigl(E^{(b)}\bigr)$,  $ f^{(b)}  \in \Lambda^b\bigr(F^{(b)}\bigr)$ and  $ g^{(b)} \in \Lambda^b \bigl(G^{(b)}\bigr)$, and where the ratios are  computed in $\Lambda^n (\R^n) \cong \R$. 

 Note that $
 Q_a(E,G,F) = Q_a(E,F,G)^{-1}
 $, but that this quadruple ratio usually does not behave well under the other permutations of the flags $E$, $F$ and $G$, as  $E$ plays a special r\^ole in $Q_a(E,F,G)$.  
 
 For this wedge-product invariant, we can explicitly determine the formula predicted by Corollary~\ref{cor:WedgeInvarExpressedTripleRatios}. 
  
\begin{lem}
\label{lem:ExpressQuadrupleRatioTripleRatios}
For $a=1$, $2$, \dots, $n-1$,
$$
Q_a(E,F,G) = \prod_{b+c=n-a} T_{abc}(E,F,G)
$$
where the product is over all integers $b$, $c\geq 1$ with $b+c=n-a$. In particular, $Q_{n-1}(E,F,G) = 1$ and $Q_{n-2}(E,F,G) = T_{(n-2)11}(E,F,G)$. 
\end{lem}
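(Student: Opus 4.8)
The plan is to verify the identity $Q_a(E,F,G) = \prod_{b+c=n-a} T_{abc}(E,F,G)$ by a direct computation at the level of balanced functions $\Theta_n \to \Z$. By Corollary~\ref{cor:WedgeInvarExpressedTripleRatios} (or rather by the construction of wedge-product invariants preceding it), it suffices to show that the balanced function $\psi_a$ defining $Q_a$ equals the sum $\sum_{b+c=n-a} \phi_{abc}$ of hexagon cycles. First I would write down $\psi_a$ explicitly from the six factors in the definition of $Q_a(E,F,G)$: reading off exponents, $\psi_a = \delta_{(a-1,n-a,1)} - \delta_{(a,n-a-1,1)} + \delta_{(a,1,n-a-1)} - \delta_{(a-1,1,n-a)} + \delta_{(a+1,n-a-1,0)} - \delta_{(a+1,0,n-a-1)} + \delta_{(a,0,n-a)} - \delta_{(a,n-a,0)}$, an eight-term alternating sum supported on the two lines $\{b\text{-th coordinate}=a\text{-ish}\}$; I would double-check it is balanced as a sanity check.

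Next I would compute $\sum_{b+c=n-a, \, b,c\geq 1} \phi_{abc}$ and watch the telescoping. Each $\phi_{abc}$ has support a small hexagon around $(a,b,c)$; as $(b,c)$ ranges over $b+c = n-a$ with $b,c\geq 1$, the centers $(a,b,c)$ all lie on the line $\ell_a = \{(a,b,c)\in\Theta_n\}$ parallel to the side opposite the vertex $(n,0,0)$. Two consecutive hexagon cycles $\phi_{ab c}$ and $\phi_{a(b+1)(c-1)}$ share terms supported at $(a\pm 1, b+1, c-1)$ and $(a, b, c)$, $(a,b+1,c-1)$ which cancel in pairs; concretely the term $-\delta_{(a-1,b+1,c)}$... — I would carefully match the six-term formula $\phi_{abc} = \delta_{(a+1,b,c-1)} - \delta_{(a-1,b,c+1)} + \delta_{(a,b-1,c+1)} - \delta_{(a,b+1,c-1)} + \delta_{(a-1,b+1,c)} - \delta_{(a+1,b-1,c)}$ against its shifted copy and see that the interior contributions collapse, leaving only boundary terms from the two extreme hexagons, at $(b,c) = (n-a-1,1)$ and $(b,c)=(1,n-a-1)$, together with the ``fringe'' terms that stick off the line $\ell_a$ onto the two neighbouring lines. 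Collecting these surviving terms should reproduce $\psi_a$ exactly.

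For the two special cases: when $a = n-1$ the index set $\{b,c\geq 1 : b+c = 1\}$ is empty, so the product is empty and $Q_{n-1}(E,F,G)=1$; one can also see this directly since $\psi_{n-1}$ telescopes to $0$. When $a=n-2$ the only pair is $(b,c)=(1,1)$, giving $Q_{n-2}(E,F,G)=T_{(n-2)11}(E,F,G)$ immediately.

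The main obstacle I anticipate is purely bookkeeping: correctly reading the exponents $\phi(a,b,c)$ off the six fractions in the definition of $Q_a$ (the factors are grouped in an unusual way, and the last two factors only involve two of the three flags, corresponding to points on the \emph{boundary} of $\Theta_n$ where one coordinate is $0$), and then organizing the telescoping sum of hexagon cycles so that the interior cancellations are transparent rather than a brute-force check of $\Z$-valued functions on all of $\Theta_n$. A clean way to manage this is to introduce the ``line cycles'' or partial sums along $\ell_a$ and argue by induction on the length of the line, reducing the whole computation to the single cancellation identity between $\phi_{abc}$ and $\phi_{a(b+1)(c-1)}$; alternatively, pair each hexagon cycle with the next and note that $\phi_{abc} + \phi_{a(b+1)(c-1)}$ already simplifies substantially. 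Either way the argument is elementary and, as the paper says, the proof is routine; I would present it as the verification that $\psi_a = \sum_{b+c=n-a}\phi_{abc}$ and leave the telescoping to a short displayed computation.
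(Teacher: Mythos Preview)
Your approach is correct and is essentially the same as the paper's: the paper simply says that in the product $\prod_{b+c=n-a} T_{abc}(E,F,G)$ most wedge-product terms $e^{(a')}\wedge f^{(b')}\wedge g^{(c')}$ cancel and the eight terms of $Q_a$ remain, which is exactly your telescoping of the hexagon cycles $\phi_{abc}$ phrased at the level of balanced functions rather than the wedge products themselves. Your write-up is more detailed than the paper's one-line proof, but the underlying computation is identical.
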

\begin{proof}
When computing the right hand side of the equation, most terms $e^{(a')} \wedge f^{(b')} \wedge g^{(c')}$  cancel out and we are left with the eight terms of $Q_a(E,F,G)$.
\end{proof}

\subsection{Double ratios}
\label{subsect:DoubleRatios}
We now consider quadruples $(E,F,G,H)$ of flags $E$, $F$, $G$, $H\in \Flag$.  Such a flag quadruple is \emph{generic} if each quadruple of subspaces $E^{(a)}$, $F^{(b)}$, $G^{(c)}$, $H^{(d)}$ meets transversely. As usual, we can restrict attention to the cases where $a+b+c+d=n$.

For $1\leq a\leq n-1$, the \emph{$a$--th double ratio} of the generic flag quadruple $(E,F,G,H)$ is
$$
D_a (E,F,G,H) = -
\frac
{ e^{(a)} \wedge  f^{(n-a-1)}\wedge  g^{(1)}} 
{ e^{(a)} \wedge  f^{(n-a-1)}\wedge  h^{(1)}}
\frac
{ e^{(a-1)} \wedge  f^{(n-a)}\wedge  h^{(1)}}
{ e^{(a-1)}  \wedge  f^{(n-a)}\wedge  g^{(1)}} 
$$
where we choose arbitrary non-zero elements $ e^{(a')} \in \Lambda^{a'}(E^{(a')})$, $f^{(b')} \in \Lambda^1(F^{(b')})$, $g^{(1)} \in \Lambda^{1}(G^{(1)})$ and $h^{(1)}\in \Lambda^1(H^{(1)})$. As usual,  $D_a (E,F,G,H) $ is independent of these choices. 

The following computation gives a better feeling of what is actually measured by this double ratio. 

\begin{lem}
\label{lem:ComputeDoubleRatios}
For a generic flag quadruple $(E,F,G,H)$, consider the decomposition $\R^n = \bigoplus_{a=1}^n L_a$ where $L_a = E^{(a)} \cap F^{(n-a+1)}$. For arbitrary non-zero vectors $g\in G^{(1)}$ and $h\in H^{(1)}$, let $g_a$, $h_a\in L_a$ be the respective projections of $g$ and $h$ to the line $L_a$ parallel to the other lines $L_b$ with $b\neq a$. Then
\pushQED{\qed}
\begin{equation*}
D_a (E,F,G,H) = -
\frac
{ g_{a+1}} 
{ h_{a+1}}
\frac
{ h_a}
{ g_a}
\qedhere
\end{equation*}
where the ratios $\frac{g_b}{h_b}\in \R$ are measured in the lines $L_b$. 
\end{lem}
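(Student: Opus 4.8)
The plan is to diagonalize the configuration and then read everything off monomials. First I would pin down the structure of the decomposition $\R^n = \bigoplus_{a=1}^n L_a$. Since the pair $(E,F)$ is generic, each $L_a = E^{(a)}\cap F^{(n-a+1)}$ is a line, and moreover $E^{(a-1)}\cap F^{(n-a+1)} = 0$ (the transverse intersection of subspaces of dimensions $a-1$ and $n-a+1$). Using that $L_i \subseteq E^{(i)}\subseteq E^{(a)}$ whenever $i\le a$ and $L_i\subseteq F^{(n-i+1)}\subseteq F^{(b)}$ whenever $i\ge n-b+1$, a short induction (or a dimension count) then gives
$$
E^{(a)} = L_1\oplus\dots\oplus L_a
\qquad\text{and}\qquad
F^{(b)} = L_{n-b+1}\oplus\dots\oplus L_n .
$$
In particular $\R^n = \bigoplus_{a=1}^n L_a$, so the decomposition in the statement is legitimate and the projections $g_a, h_a$ are well defined.

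Next I would fix a basis $v_1,\dots,v_n$ of $\R^n$ with $v_a\in L_a$, write $g_a = \gamma_a v_a$ and $h_a = \eta_a v_a$ so that $g=\sum_a\gamma_a v_a$ and $h=\sum_a\eta_a v_a$, and take $g^{(1)}=g$, $h^{(1)}=h$ in the definition of $D_a$. By the first paragraph, $e^{(b)}$ is a non-zero scalar multiple of $v_1\wedge\dots\wedge v_b$ and $f^{(b)}$ a non-zero scalar multiple of $v_{n-b+1}\wedge\dots\wedge v_n$. The key observation is that each of the two ratios making up $D_a(E,F,G,H)$ has numerator and denominator built from the \emph{same} $e^{(\cdot)}$ and $f^{(\cdot)}$, so the scalars relating $e^{(\cdot)}$, $f^{(\cdot)}$ to the monomials $v_i\wedge\cdots$, together with the chosen identification $\Lambda^n(\R^n)\cong\R$, cancel and may be ignored throughout.

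It then remains to compute the four wedge products. For the first ratio, $e^{(a)}\wedge f^{(n-a-1)}$ is proportional to $v_1\wedge\dots\wedge v_a\wedge v_{a+2}\wedge\dots\wedge v_n$, which omits exactly $v_{a+1}$; wedging with $g$ (resp.\ $h$) kills all terms except the one carrying $v_{a+1}$, leaving $\gamma_{a+1}$ (resp.\ $\eta_{a+1}$) times a reordering sign that depends only on $a$. That sign cancels between numerator and denominator, and the first ratio equals $g_{a+1}/h_{a+1}$, measured in $L_{a+1}$. For the second ratio, $e^{(a-1)}\wedge f^{(n-a)}$ is proportional to $v_1\wedge\dots\wedge v_{a-1}\wedge v_{a+1}\wedge\dots\wedge v_n$, which omits exactly $v_a$, and the identical argument gives $h_a/g_a$, measured in $L_a$. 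Multiplying the two ratios and retaining the overall $-1$ from the definition of $D_a$ yields the asserted formula. The only step that needs genuine care is this sign bookkeeping, but since the "missing" basis vector in each pair of wedge products is determined by $a$ alone — not by whether one wedges with $g$ or with $h$ — the permutation signs do cancel, and everything else is a direct unwinding of the definitions.
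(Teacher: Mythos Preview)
Your argument is correct and is exactly the natural computation one would do; note that the paper itself gives no proof of this lemma (it is stated with a bare \qed), so your write-up simply fills in the elementary verification the authors omit.
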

 Note that $D_a (E,F,G,H)$ does not really depend on the whole flags $G$ and $H$, but only on the lines $G^{(1)}$ and $H^{(1)}$. The following elementary properties indicate how it behaves under transposition of $E$ and $F$, or of $G$ and $H$. 
\begin{lem}
\label{lem:RelationsDoubleRatios}
\pushQED{\qed}
\begin{align*}
 D_a(E,F,H, G) &= D_a(E,F,G,H)^{-1} \\ 
  D_a(F,E,G,H) &= D_{n-a}(E,F,G,H)^{-1},\\
 \text{and }
 D_a(E,F,G,K) &= -D_a(E,F,G,H) D_a(E,F,H,K).  \qedhere
\end{align*}
\end{lem}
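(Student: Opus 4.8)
The plan is to derive all three identities directly from the definition of the double ratio, exploiting the transpositions of flag entries inside the wedge products. Recall that
$$
D_a (E,F,G,H) = -
\frac
{ e^{(a)} \wedge  f^{(n-a-1)}\wedge  g^{(1)}}
{ e^{(a)} \wedge  f^{(n-a-1)}\wedge  h^{(1)}}
\frac
{ e^{(a-1)} \wedge  f^{(n-a)}\wedge  h^{(1)}}
{ e^{(a-1)}  \wedge  f^{(n-a)}\wedge  g^{(1)}}.
$$
For the first identity, $D_a(E,F,H,G) = D_a(E,F,G,H)^{-1}$, I would simply observe that exchanging the roles of $G$ and $H$ swaps $g^{(1)}$ and $h^{(1)}$ throughout, which turns each ratio into its reciprocal; the overall sign $-1$ is unaffected since there are two ratios. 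So the product becomes the reciprocal of the original, as claimed. (One checks that the independence of the choices $g^{(1)}\in\Lambda^1(G^{(1)})$, $h^{(1)}\in\Lambda^1(H^{(1)})$ makes the manipulation legitimate.)

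For the second identity, $D_a(F,E,G,H) = D_{n-a}(E,F,G,H)^{-1}$, the cleanest route is to use the graded-commutativity of the wedge product: for $\alpha\in\Lambda^p(\R^n)$, $\beta\in\Lambda^q(\R^n)$ one has $\alpha\wedge\beta = (-1)^{pq}\beta\wedge\alpha$. Swapping $E$ and $F$ replaces the numerator term $e^{(a)}\wedge f^{(n-a-1)}\wedge g^{(1)}$ (using $E$ in slot of degree $a$ and $F$ in slot of degree $n-a-1$) by $f^{(a)}\wedge e^{(n-a-1)}\wedge g^{(1)}$; rewriting this as $\pm\, e^{(n-a-1)}\wedge f^{(a)}\wedge g^{(1)}$, I match it against the terms appearing in $D_{n-a}(E,F,G,H)$, whose numerator/denominator involve $e^{(n-a)}\wedge f^{(a-1)}\wedge\cdots$ and $e^{(n-a-1)}\wedge f^{(a)}\wedge\cdots$. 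Here I expect to have to be careful: the index $a-1$ in the original pairs with $n-a$ in the reindexed version, and one must verify that all the sign factors $(-1)^{pq}$ coming from the transpositions, together with the leading $-1$'s, combine to give exactly the reciprocal with no extra sign. This bookkeeping of signs is the main obstacle, and I would do it by writing out the four wedge terms of $D_a(F,E,G,H)$ explicitly and comparing, degree by degree, with the four terms of $D_{n-a}(E,F,G,H)$.

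For the third identity, $D_a(E,F,G,K) = -D_a(E,F,G,H)\, D_a(E,F,H,K)$, the quickest argument is via Lemma~\ref{lem:ComputeDoubleRatios}: in the decomposition $\R^n=\bigoplus_b L_b$ with $L_b = E^{(b)}\cap F^{(n-b+1)}$, writing $g_b, h_b, k_b$ for the projections of $g\in G^{(1)}$, $h\in H^{(1)}$, $k\in K^{(1)}$ to $L_b$, one has
$$
D_a(E,F,G,H)\, D_a(E,F,H,K) = \Bigl(-\frac{g_{a+1}}{h_{a+1}}\frac{h_a}{g_a}\Bigr)\Bigl(-\frac{h_{a+1}}{k_{a+1}}\frac{k_a}{h_a}\Bigr) = \frac{g_{a+1}}{k_{a+1}}\frac{k_a}{g_a} = -D_a(E,F,G,K),
$$
the $h_b$'s cancelling telescopically. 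This is a one-line computation once Lemma~\ref{lem:ComputeDoubleRatios} is in hand. Alternatively, one can prove it directly from the wedge definition by a partial-fraction-type identity among the relevant $3$-fold wedge products, but the geometric argument via the projections $g_b$ is cleaner and makes the cancellation transparent; I would present that one.
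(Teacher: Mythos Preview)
Your proposal is correct. The paper gives no proof at all for this lemma (it is stated with an immediate \qed), so you are supplying the elementary verification that the paper leaves to the reader; your arguments via the wedge definition for the first two identities and via Lemma~\ref{lem:ComputeDoubleRatios} for the third are exactly the natural ones. One small remark on the second identity: your worry about sign bookkeeping is unnecessary, since in each of the four ratios the numerator and denominator involve the same pair of degrees for the $e$-- and $f$--factors, so the graded-commutativity signs $(-1)^{pq}$ cancel within each ratio and the computation is clean.
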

The minus sign in the definition of $D_a (E,F,G,H)$ is justified by the positivity property of the next section, and in particular by Proposition~\ref{prop:DihedralPermutPreservesPositivity}. 

\subsection{Positivity} 
\label{subsect:Positivity}
An ordered family of flags $(E_1, E_2, \dots, E_m)\in \Flag^m$ is \emph{positive} if:
\begin{enumerate}
\item for every distinct $i$, $j$, $k$ and for every $a$, $b$, $c\geq 1$ with $a+b+c=n$, the triple ratio $T_{abc}(E_i, E_j, E_k)$ is positive. 
\item for every distinct $i$, $j$, $k$, $l$ with $ i < k< j<l $ or $ k<i < l< j$, and for every $1\leq a\leq n-1$, the double ratio $D_a(E_i, E_j, E_k, E_l)$ is positive. 
\end{enumerate}

Fock and Goncharov \cite[\S5]{FoG1} give a much more conceptual definition of positivity, building on earlier work of Lusztig \cite{Lusz1, Lusz2}. In particular, they prove the following result.

\begin{prop}[{\cite{FoG1}}]
\label{prop:DihedralPermutPreservesPositivity}
If the flag $m$--tuple $(E_1, E_2, \dots, E_m)$ is positive, any flag $m$--tuple obtained by dihedral permutation of the $E_i$ is also positive.
\qed
\end{prop}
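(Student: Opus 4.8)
The plan is to reduce the statement to the two generators of the dihedral group separately: the cyclic shift $(E_1, E_2, \dots, E_m) \mapsto (E_2, \dots, E_m, E_1)$ and the reversal $(E_1, E_2, \dots, E_m) \mapsto (E_m, E_{m-1}, \dots, E_1)$. Since the dihedral group of order $2m$ is generated by these two elements, it suffices to check that positivity, as defined by conditions (1) and (2) above, is preserved by each of them. This is an entirely elementary bookkeeping argument once one has the symmetry relations of Lemmas~\ref{lem:SymmetriesTripleRatios} and \ref{lem:RelationsDoubleRatios} in hand, so I would present it as such rather than invoking the Lusztig-theoretic definition of \cite{FoG1}; the reference to \cite[\S5]{FoG1} can be kept for the equivalence of the two definitions of positivity, but the dihedral invariance itself is cleaner to verify directly from the wedge-product formulas.

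First I would handle the triple ratio condition (1), which is the easy half: condition (1) only refers to \emph{unordered} triples of indices together with all orderings, because Lemma~\ref{lem:SymmetriesTripleRatios} shows $T_{abc}(E_i,E_j,E_k)$, $T_{bca}(E_j,E_k,E_i)$, etc., are either equal or mutually inverse, and in particular the set of positive triple ratios is unchanged under any permutation of the three flags involved (a positive number has positive inverse). Hence condition (1) depends only on the underlying set $\{E_1,\dots,E_m\}$ and is automatically invariant under \emph{every} permutation, dihedral or not. The real content is condition (2). Here I would track the cyclic index set $\{1,2,\dots,m\}$ arranged on a circle and observe that the combinatorial condition ``$i<k<j<l$ or $k<i<l<j$'' is precisely the condition that the pairs $\{i,j\}$ and $\{k,l\}$ \emph{link} (interleave) on the circle. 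Cyclic rotation of the indices obviously preserves this linking relation, so for a rotation the only thing to check is that the double ratio itself stays positive when the four indices are all shifted — but the indices enter $D_a$ only through the flags $E_i, E_j, E_k, E_l$, not through their numerical labels, so $D_a$ is literally unchanged by a rotation. Thus the cyclic shift preserves condition (2) trivially.

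For the reversal $w \colon r \mapsto m+1-r$, the linking relation on the circle is again preserved (reversal is an orientation-reversing symmetry of the circle, and linking of two chords is a symmetric, orientation-independent notion), so a linked quadruple maps to a linked quadruple; what must be checked is that the associated double ratio remains positive. Given a quadruple $i<k<j<l$, after reversal the images $m{+}1{-}i, m{+}1{-}k, m{+}1{-}j, m{+}1{-}l$ appear in the order $m{+}1{-}l < m{+}1{-}j < m{+}1{-}k < m{+}1{-}i$, so relabeling we are asking about $D_a$ of the quadruple of flags in reverse order, i.e. $D_a(E_l, E_j, E_k, E_i)$ versus $D_a(E_i, E_j, E_k, E_l)$ — wait, one must be careful about which two are the ``outer'' pair and which are the ``inner'' pair after relabeling. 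I would carefully match the roles and then use the identities of Lemma~\ref{lem:RelationsDoubleRatios}: transposing the first two arguments replaces $D_a$ by $D_{n-a}^{-1}$, transposing the last two replaces it by $D_a^{-1}$, and composing these handles the full reversal of the four entries. Each such operation sends a positive number to a positive number (inverse of positive is positive; and the index shift $a \mapsto n-a$ just permutes which of the conditions ``$1 \le a \le n-1$'' we are verifying, and we are assuming all of them). So condition (2) is preserved under reversal as well.

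The main obstacle, such as it is, is purely notational: getting the correspondence between the combinatorial orderings ``$i<k<j<l$'' / ``$k<i<l<j$'' and the arguments of $D_a$ exactly right, and then chaining the three relations of Lemma~\ref{lem:RelationsDoubleRatios} in the correct sequence to realize the order-reversal of four flags as a product of adjacent transpositions of the kind those relations control. I do not expect any genuine difficulty — no new geometric input is needed beyond the already-established symmetry lemmas — but it is the one place where a sign or an index could slip, so I would lay out the reversal case as an explicit short computation: express $D_a(E_l,E_k,E_j,E_i)$ in terms of $D_{\ast}(E_i,E_j,E_k,E_l)$ via Lemma~\ref{lem:RelationsDoubleRatios}, conclude positivity, and note that the minus sign built into the definition of $D_a$ (flagged in the paper as justified precisely by this positivity statement) is exactly what makes the bookkeeping come out consistently. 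Since dihedral permutations are generated by the rotation and this single reversal, the proposition follows.
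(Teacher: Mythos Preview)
Your approach is essentially sound and is genuinely different from the paper's treatment: the paper gives no proof at all, simply citing Fock--Goncharov's more conceptual (Lusztig-theoretic) framework, whereas you propose a self-contained elementary verification directly from the wedge-product definition via Lemmas~\ref{lem:SymmetriesTripleRatios} and~\ref{lem:RelationsDoubleRatios}. That is a perfectly legitimate and arguably more transparent route for this particular statement.

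There is, however, one imprecision worth correcting. You assert that the combinatorial condition ``$i<k<j<l$ or $k<i<l<j$'' is \emph{precisely} the linking condition on the circle, and that therefore the cyclic shift preserves condition~(2) ``trivially''. This is not quite right: for a fixed linked pair of chords $\{i,j\},\{k,l\}$ there are eight ordered quadruples $(i,j,k,l)$, and the stated inequality pattern singles out only two of them. Under a cyclic shift, a quadruple satisfying the condition in the new ordering may correspond, in the old ordering, to one of the other six (this happens exactly when the shift wraps one index around from $m$ to $1$). So the cyclic case is not literally trivial: you still need Lemma~\ref{lem:RelationsDoubleRatios} to pass from the ordered quadruple you land on to one of the two that the original hypothesis directly covers. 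This is the same mechanism you correctly invoke for the reversal, and the fix is a one-line application of the swap identities---but your write-up should not claim the cyclic step avoids it.

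Once that is patched, the argument goes through: condition~(1) is genuinely permutation-invariant by Lemma~\ref{lem:SymmetriesTripleRatios}; condition~(2) is, via Lemma~\ref{lem:RelationsDoubleRatios}, equivalent to positivity of $D_a$ for \emph{all eight} orderings of each linked quadruple, and that enlarged condition is manifestly invariant under both generators of the dihedral group.
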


Recall that a \emph{dihedral permutation} is, either a cyclic permutation, or the composition of the order reversal $(E_1, E_2, \dots, E_m) \mapsto (E_m, E_{m-1}, \dots, E_1)$ with a cyclic permutation. 

\section{Geodesic laminations}
\label{bigsect:GeodLam}

Geodesic laminations are a now very classical tool in 2--dimensional topology and geometry. They occur in many different contexts, for instance when one takes limits of sequences of simple closed curves. We state here a few basic definitions and facts, and refer to  \cite{Thu0, CasBlei, PenH, BonLam} for proofs and background.

To define geodesic laminations, one first chooses a metric $m$ of negative curvature on the surface $S$. 

An \emph{$m$--geodesic lamination} is a closed subset $\lambda \subset S$ that can be decomposed as a disjoint union of simple complete $m$--geodesics, called its \emph{leaves}. Recall that a geodesic is \emph{complete} if it cannot be extended to a longer geodesic, and it is \emph{simple} if it has no transverse  self-intersection point. The leaves of a geodesic laminations can be closed or bi-infinite. A geodesic lamination can have finitely many leaves (as in the case considered in \cite{BonDre}), or uncountably many leaves.

An $m$--geodesic lamination has measure 0, and in fact Hausdorff dimension 1 \cite{BirSer}, and its decomposition as a union of leaves is unique. The complement $S-\lambda$ of an $m$--geodesic lamination $\lambda$ is a surface of finite topological type, bounded by finitely many leaves of $\lambda$. The completion of $S-\lambda$ for the path metric induced by $m$ is a finite area surface with geodesic boundary; it is the union of a compact part and of finitely many spikes homeomorphic to $[0,1] \times \left[ 0, \infty \right[$, where $\{0,1\} \times \left[ 0, \infty \right[$ is contained in two  leaves of $\lambda$. The width of these spikes decreases exponentially in the sense that the parametrization by $[0,1] \times \left[ 0, \infty \right[$ can be chosen so that its restriction to each $\{ x\} \times \left[ 0, \infty \right[$ has speed 1 and so that the length of each arc $[0,1] \times \{ t\}$ decreases exponentially with $t$.  

Because the leaves of $\lambda$ are disjoint, every point of $S$ has a neighborhood $U$ homeomorphic to $[0,1] \times [0,1]$ for which the intersection $U\in \lambda$ corresponds to $K\times[0,1]$ for some totally disconnected compact subset $K\subset [0,1]$; beware that, in general, the homeomorphism cannot be made differentiable, only H\"older bicontinuous.  

We will make heavy use of \emph{transverse arcs} for $\lambda$. These are arcs differentiably immersed in $S$ that are transverse to the leaves of $\lambda$. In addition, we require that the endpoints of such a transverse arc be disjoint from $\lambda$.

The notion of geodesic lamination is  independent of the choice of the negatively curved metric $m$ in the  sense that, if $m'$ is another negatively curved metric on $S$, there is a natural one-to-one correspondence between $m$--geodesic laminations and $m'$--geodesic laminations. 

A geodesic lamination $\lambda$ is \emph{maximal} if it is contained in no other geodesic lamination. This is equivalent to the property that each component of its  complement $S-\lambda$ is a triangle, bounded by three infinite leaves of $\lambda$ and containing three spikes of $S-\lambda$. If the surface $S$ has genus $g$, an Euler characteristic argument shows that the number of triangle components of the complement $S-\lambda$ of a maximal geodesic lamination is equal to $4(g-1)$. 

Every geodesic lamination is contained in a maximal geodesic lamination. 

We can think of maximal geodesic laminations as some kind of triangulations of the surface $S$, where the edges are geodesic and where the vertices have been pushed to infinity. This point of view explains why maximal geodesic laminations are powerful tools for many problems, such as the ones considered in the current article.

\section{Triangle invariants}
\label{bigsect:TriangleInv}
Let $\rho \colon \pi_1(S) \to \PSL$ be a Hitchin homomorphism. We will use a maximal geodesic lamination $\lambda$ to construct invariants of the corresponding character $\rho\in \Hit(S)$. 

\subsection{The flag curve} 
\label{subsect:FlagCurve}
The key to the definition of these invariants is the following construction of Labourie \cite{Lab1}. 

Let $T^1S$ and $T^1\widetilde S$ be   the unit tangent bundles of the surface $S$ and of its universal cover $\widetilde S$, respectively.  For convenience, lift the homomorphism  $\rho \colon \pi_1(S) \to \PSL$ to a homomorphism $\rho' \colon \pi_1(S) \to \SL$. The fact that such a lift exists is classical when $n=2$, and therefore when $\rho \colon \pi_1(S) \to \PSL$ comes from a discrete representation $\pi_1(S) \to \mathrm{PSL}_2(\R)$; the existence of the lift in the general case follows by connectedness of the Hitchin component $\Hit(S)$, and by homotopy invariance of the obstruction to lift.  We can then consider the twisted product
 $$
T^1S \times_{\rho'} \R^n = (T^1\widetilde S \times \R^n )/\pi_1(S)
$$
where the fundamental group $\pi_1(S)$ acts on $T^1\widetilde S$ by its usual action on the universal cover $\widetilde S$, and acts on $\R^n$ by $\rho'$. The natural projection $T^1S \times_{\rho'} \R^n \to T^1S$ presents $T^1S \times_{\rho'} \R^n$ as a vector bundle over $T^1S$ with fiber $\R^n$. 

Endow the surface $S$ with an arbitrary metric of negative curvature. This defines a circle at infinity $\partial_\infty \widetilde S$ for the universal cover $\widetilde S$, and a geodesic flow on the unit tangent bundle $T^1S$. It is well known (see for instance \cite{Gro, BridH, GhysHarp}) that these objects are actually independent of the choice of the negatively curved metric, at least if we do not care about the actual parametrization of the geodesic flow (which is the case here).

The geodesic flow $(g_t)_{t\in \R}$ of $T^1S$ has a natural flat lift  to a flow $(G_t)_{t\in \R}$ on the total space $T^1S \times_{\rho'} \R^n$. The flatness property here just means that the flow $(G_t)_{t\in \R}$ is the projection of the flow $(\widetilde{G}_t)_{t\in \R}$ on $T^1\widetilde S \times \R^n$ that acts by the geodesic flow $(\widetilde{g}_t)_{t\in \R}$ of $T^1\widetilde S$ on the first factor, and by the identity $\Id_{\R^n}$ on the  second factor. 

Endow each fiber of the vector bundle $T^1S \times_{\rho'} \R^n \to T^1S$ with a norm $\Vert \ \Vert$  depending continuously on the corresponding point of $T^1S$. 

\begin{thm}[Labourie {\cite{Lab1}}]
\label{thm:AnosovProperty}
If $\rho \colon \pi_1(S) \to \PSL$ is a Hitchin homomorphism, the  vector bundle $T^1S \times_{\rho'} \R^n \to T^1S$ admits a unique decomposition as a  direct sum $L_1 \oplus L_2 \oplus \dots \oplus L_n$ of $n$ line subbundles $L_a \to T^1S$ such that:
\begin{enumerate}
\item each line bundle $L_a$ is invariant under the lift $(G_t)_{t\in \R}$ of the geodesic flow;
\item for every $a>b$, there exist constants $A_{ab}$, $B_{ab}>0$ such that, for every $v_a\in L_a$ and $v_b \in L_b$ in the same fiber of $T^1S \times_{\rho'} \R^n$ and for every $t\geq0$, \pushQED{\qed}
\begin{equation*}
\frac{\Vert G_t(v_b)\Vert} {\Vert v_b \Vert} \leq
A_{ab}
\frac{\Vert G_t(v_a) \Vert} {\Vert v_a\Vert}  \E^{-B_{ab}t}. \qedhere
\end{equation*}

\end{enumerate}
\end{thm}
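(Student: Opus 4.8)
The plan is to follow Labourie's original argument \cite{Lab1}, which derives the splitting from a $\rho$--equivariant curve in the flag variety and then spreads the Anosov property over all of $\Hit(S)$ by an open--closed argument anchored at the Fuchsian locus.

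\emph{Reformulation via a flag curve.} First I would observe that constructing the splitting $L_1\oplus\dots\oplus L_n$ is equivalent to constructing a continuous $\rho$--equivariant map $\xi\colon\partial_\infty\widetilde S\to\Flag$ taking values in generic configurations, in fact a \emph{Frenet} curve: $\xi^{(a_1)}(x_1)\oplus\dots\oplus\xi^{(a_p)}(x_p)=\R^n$ whenever the $x_i\in\partial_\infty\widetilde S$ are distinct and $\sum a_i=n$. Indeed, given such a $\xi$, for $v\in T^1\widetilde S$ lying on the geodesic with endpoints $v_\pm\in\partial_\infty\widetilde S$ one sets
$$
L_a(v)=\xi^{(a)}(v_+)\cap\xi^{(n-a+1)}(v_-),
$$
which is a line by genericity; equivariance makes this descend to a line bundle $L_a\to T^1S$, and invariance under $(G_t)_{t\in\R}$ is automatic since $g_t$ fixes $v_+$ and $v_-$. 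Conversely $\xi^{(a)}(v_+)=L_1(v)\oplus\dots\oplus L_a(v)$ recovers $\xi$ from the splitting, and uniqueness of a flow--invariant splitting satisfying the exponential domination~(2) over the compact base $T^1S$ is a standard fact about hyperbolic cocycles. So the theorem reduces to producing $\xi$ together with estimate~(2).

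\emph{The Fuchsian base case.} Next I would verify the conclusion for the distinguished representation $\rho_0$, the composition of a discrete faithful $\pi_1(S)\to\mathrm{PSL}_2(\R)$ with the irreducible $\mathrm{PSL}_2(\R)\to\PSL$ on $\R[X,Y]_{n-1}\cong\R^n$. Here $\xi_0$ is the osculating flag curve of the rational normal curve $\R P^1\hookrightarrow\R P^{n-1}$, which is manifestly Frenet, and~(2) is a direct computation: the $\mathrm{SL}_2(\R)$ weights on $\R[X,Y]_{n-1}$ are $n-1,n-3,\dots,-(n-1)$, so along the geodesic flow each $L_a$ is scaled at an exponential rate governed by these weights, consecutive rates differing by $2$; this gives~(2) with uniform constants, hence the theorem for $\rho_0$ and, by conjugation, for every Fuchsian Hitchin character.

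\emph{Open--closed propagation and the main obstacle.} I would then show that the set $\mathcal A\subset\Hit(S)$ of characters satisfying the conclusion is open and closed. Openness is structural stability: a flow--invariant splitting satisfying~(2) over the compact manifold $T^1S$ is robust under perturbation of the flat connection (a cone--field / graph--transform argument), the perturbed splitting varies continuously and is again pulled back from a continuous equivariant curve, which stays Frenet for nearby representations. For closedness within $\Hit(S)$, given $\rho_k\to\rho$ with Frenet curves $\xi_k$, uniform control of the domination constants along the sequence gives equicontinuity, so by compactness a subsequence of the $\xi_k$ converges to an equivariant curve $\xi$; one then checks that $\xi$ still lands in generic configurations, so the $L_a$ stay lines, and upgrades the non--strict limiting inequalities back to genuine exponential domination on the compact base. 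Since $\Hit(S)$ is connected by Theorem~\ref{thm:Hitchin} and $\mathcal A$ contains the Fuchsian locus, $\mathcal A=\Hit(S)$. The delicate point — the technical heart of \cite{Lab1} — is precisely this closedness step: ruling out degeneration of the limit flag curve, i.e. showing that the domination constants cannot blow up along a Hitchin path and that hyperconvexity is preserved in the limit. This requires genuine a priori estimates along the Hitchin component, and it is this input that I would cite from \cite{Lab1} rather than reprove.
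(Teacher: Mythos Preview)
The paper does not prove this theorem at all: it is stated with a \qed marker and attributed to Labourie~\cite{Lab1}, so there is no proof in the paper to compare against. Your proposal is a reasonable high-level outline of Labourie's strategy (equivariant Frenet curve, Fuchsian base case, open--closed propagation with the closedness step as the hard part), but for the purposes of this paper the correct ``proof'' is simply the citation.
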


The second property is clearly independent of the choice of the norm $\Vert \ \Vert$. It is referred to as the \emph{Anosov property} of the Hitchin homomorphism $\rho$. This relative property does not say anything about whether the flow $(G_t)_{t \in \R^n}$ expands or contracts the fibers of any individual subbundle $L_a$ but states that, when $a<b$, the flow $(G_t)_{t \in \R^n}$ contracts the fibers of $L_b$ much more than those of $L_a$. Writing this in a more intrinsic way, this means that $(G_t)_{t\in \R^n}$ induces on the line bundle $\mathrm{Hom}(L_a, L_b)$ a flow that is uniformly contracting when $a>b$.

Lift the subbundles $L_a$ of $T^1S \times_{\rho'} \R^n = (T^1\widetilde S \times \R^n)/\pi_1(S)$ to subbundles $\widetilde L_a$ of $T^1\widetilde S \times \R^n$. 
Because the line subbundles $L_a$ are invariant under the lift $(G_t)_{t\in \R}$ of the geodesic flow, the fiber of $\widetilde L_a$ over $\widetilde x \in \widetilde S$ is of the form $\{\widetilde x \} \times \widetilde L_a(g)$ for some line $\widetilde L_a(g) \subset \R^n$ depending only on the orbit $g$ of $\widetilde x$ for the geodesic flow of $T^1\widetilde S$. 

The line $\widetilde L_a(g)\subset \R^n $ depends on the orbit $g$ of the geodesic flow of $T^1 \widetilde S$ or, equivalently, on the corresponding oriented geodesic $g$ of $\widetilde S$. The Anosov property has the following relatively easy consequence. Define a flag $E(g)\in \Flag$ by the property that $E(g)^{(a)} = \widetilde L_1(g) \oplus\widetilde  L_2(g) \oplus \dots \oplus \widetilde L_a(g)$; then $E(g)$ depends only on the positive endpoint of $g$. More precisely:

\begin{prop}[Labourie \cite{Lab1}]
\label{prop:FlagCurve}
For a Hitchin homomorphism $\rho \colon \pi_1(S) \to \PSL$, there exists a unique map $\F_\rho \colon \partial_\infty \widetilde S \to \Flag$ such that
\begin{enumerate}
\item $\F_\rho$ is H\"older continuous;
\item for every oriented geodesic $g$ of $\widetilde S$ with positive  endpoint $\widetilde x_+ \in \partial_\infty \widetilde S$, the image $ \F_\rho(\widetilde x_+)$ is equal to the flag $ E(g)$ defined above;
\end{enumerate}
In addition, $\F_\rho$ is $\rho$--equivariant in the sense that $\F_\rho(\gamma \widetilde x) = \rho(\gamma)\bigl(\F_\rho(\widetilde x)\bigr)$ for every $\widetilde x \in \partial_\infty \widetilde S$ and $\gamma \in \pi_1(S)$.   \qed
\end{prop}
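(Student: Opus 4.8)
The plan is to build $\F_\rho$ directly from the Anosov splitting provided by Theorem~\ref{thm:AnosovProperty}, and to read off its H\"older regularity and equivariance from the exponential domination in that statement together with the compactness of $T^1S$. First I would set up the bookkeeping already sketched before the statement: lift the splitting $L_1\oplus\dots\oplus L_n$ of $T^1S\times_{\rho'}\R^n$ to a splitting $\widetilde L_1\oplus\dots\oplus\widetilde L_n$ of $T^1\widetilde S\times\R^n$; flow--invariance of the $L_a$ and flatness of $(\widetilde G_t)$ show that the fibre of $\widetilde L_a$ over a point depends only on the geodesic orbit $g$ through it, giving lines $\widetilde L_a(g)\subset\R^n$ and the flag $E(g)$ with $E(g)^{(a)}=\widetilde L_1(g)\oplus\dots\oplus\widetilde L_a(g)$. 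Since every point of $\partial_\infty\widetilde S$ is the positive endpoint of some oriented geodesic of $\widetilde S$, once one knows that $E(g)$ depends only on this positive endpoint, condition~(2) forces the map $\F_\rho$ and makes uniqueness automatic; the remaining work is well-definedness, H\"older continuity, and equivariance.

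For well-definedness I would argue, as the authors indicate is easy, that the filtration $E(g)^{(\bullet)}$ has a purely dynamical description: by the inequalities of Theorem~\ref{thm:AnosovProperty}, for $a>b$ the flow $(G_t)_{t\ge0}$ makes $L_a$ dominate $L_b$, so $E(g)^{(a)}$ is exactly the span of the $a$ directions that grow slowest under $(G_t)$ as $t\to+\infty$, and this span is singled out by an exponential dichotomy between $E(g)^{(a)}$ and a complementary subspace. Compactness of $T^1S$ lets one take the constants $A_{ab},B_{ab}$ uniform and the splitting section $v\mapsto(v_1,\dots,v_n)$ H\"older continuous. If $g$ and $g'$ have the same positive endpoint they are forward asymptotic at a uniform exponential rate, so transporting the dichotomy along the flow and using the uniform estimates yields $E(g')^{(a)}=E(g)^{(a)}$ for every $a$. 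One then sets $\F_\rho(\widetilde x_+):=E(g)$ for any $g$ with positive endpoint $\widetilde x_+$.

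H\"older continuity would come from quantifying this last comparison. For $\widetilde x_+,\widetilde y_+\in\partial_\infty\widetilde S$ close together, pick oriented geodesics $g,h$ with these endpoints that stay within bounded distance for a time $T$ of order $|\log d(\widetilde x_+,\widetilde y_+)|$ before diverging; flowing for time $T$, the domination factor $\E^{-B_{ab}T}$ together with the H\"older modulus of the splitting section gives
$$d_{\Flag}\bigl(\F_\rho(\widetilde x_+),\F_\rho(\widetilde y_+)\bigr)\le C\,d(\widetilde x_+,\widetilde y_+)^{\nu}$$
for some $\nu>0$ depending on the $B_{ab}$ and the speed of the geodesic flow. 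Equivariance is then immediate from the uniqueness in Theorem~\ref{thm:AnosovProperty}: the bundle $T^1\widetilde S\times\R^n$ and the flow $(\widetilde G_t)$ are equivariant for the deck action on $T^1\widetilde S$ and the action of $\rho'$ on $\R^n$, so the image of the splitting under $\gamma\in\pi_1(S)$ is another flow-invariant splitting with the same domination and hence equals it; thus $\widetilde L_a(\gamma g)=\rho'(\gamma)\widetilde L_a(g)$, $E(\gamma g)=\rho'(\gamma)E(g)$, and since $\gamma$ carries positive endpoints to positive endpoints one gets $\F_\rho(\gamma\widetilde x_+)=\rho'(\gamma)\F_\rho(\widetilde x_+)$, which descends to the action of $\rho(\gamma)$ on $\Flag$ (the centre acts trivially on flags).

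The main obstacle is the combination of the last two steps: Theorem~\ref{thm:AnosovProperty} only provides a \emph{relative} domination among the $L_a$, not absolute hyperbolicity, so one must repackage it as an exponential dichotomy robust enough to be transported between distinct forward-asymptotic orbits, and then, in quantitative form, to convert the modulus of forward-asymptoticity (controlled by the distance of the endpoints at infinity) into a H\"older exponent. The uniformity coming from compactness of $T^1S$ and the (separately known) H\"older regularity of the Anosov splitting are exactly what make both the well-definedness and the H\"older estimate go through.
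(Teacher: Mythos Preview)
The paper does not prove this proposition at all: it is stated with a \qed\ and attributed to Labourie~\cite{Lab1}, so there is no ``paper's own proof'' to compare against. Your sketch is a reasonable outline of the standard argument that underlies Labourie's result (dominated splitting $\Rightarrow$ the slowest-growing $a$--dimensional subspace is determined by the forward asymptotic class of the orbit, hence by the positive endpoint; H\"older regularity from the exponential rates and compactness; equivariance from uniqueness of the splitting), and nothing in it is wrong in spirit. If anything, the place where a careful reader would push back is your passage from relative domination to an honest H\"older estimate on $\partial_\infty\widetilde S$: you correctly flag this as the main obstacle, but turning ``stay close for time $T\sim|\log d(\widetilde x_+,\widetilde y_+)|$'' into a clean H\"older bound requires a bit more bookkeeping (one typically uses that the dominated splitting itself varies H\"older-continuously along $T^1S$, which is a separate input, and then a cone-field or graph-transform argument to propagate it to the boundary). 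Since the paper treats the proposition as a black box from \cite{Lab1}, none of this is expected of you here.
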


By definition, this map  $\F_\rho \colon \partial_\infty \widetilde S \to \Flag$ is the \emph{flag curve} of the Hitchin homomorphism $\rho \colon \pi_1(S) \to \PSL$. It is independent of the choice of the lift $\rho '\colon \pi_1(S) \to \SL$ of $\rho \colon \pi_1(S) \to \PSL$, and of the negatively curved metric on $S$ used to define  the geodesic flow of the unit tangent bundle $T^1S$. 

The flag curve $\F_\rho$ has the following important positivity property.

\begin{thm}
[Fock-Goncharov {\cite{FoG1}}]
\label{thm:FlagCurvePositive}
For every finite set of distinct points $x_1$, $x_2$, \dots, $x_k \in \partial_\infty \widetilde S$ occurring in this order on the circle at infinity $\partial_\infty \widetilde S$, the flag $k$--tuple $\bigl( \F_\rho(x_1),  \F_\rho(x_2), \dots ,  \F_\rho(x_k) \bigr)$ is positive in the sense of {\upshape \S \ref{subsect:Positivity}}. \qed
\end{thm}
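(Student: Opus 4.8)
The plan is to deduce the statement for a general Hitchin homomorphism from an explicit computation at the Fuchsian locus, using the connectedness of $\Hit(S)$ together with the continuity and the hyperconvexity of the flag curve. The guiding principle: each triple ratio $T_{abc}$ and each double ratio $D_a$, evaluated along the flag curve, is a continuous real-valued function of the character $\rho$ and of the configuration $(x_1,\dots,x_k)$ that \emph{never vanishes}; on a connected domain such a function has constant sign, so it is enough to check one sample point. (A more hands-on alternative uses density of the attracting fixed points of the elements $\rho(\gamma)$ in $\partial_\infty\widetilde S$ together with a direct eigenvalue computation of the triple and double ratios; but positivity is an open condition, so a naive limiting argument does not close the gap, which is why I would prefer the connectedness route.)

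First I would record the \emph{hyperconvexity} of the flag curve of Labourie \cite{Lab1}: for distinct points $y_1,\dots,y_m\in\partial_\infty\widetilde S$ and integers $a_1,\dots,a_m\geq 0$ with $a_1+\dots+a_m\leq n$, the subspaces $\F_\rho(y_1)^{(a_1)},\dots,\F_\rho(y_m)^{(a_m)}$ are in direct sum; this is a consequence of the Anosov property of Theorem~\ref{thm:AnosovProperty} and the $\rho$--equivariance of Proposition~\ref{prop:FlagCurve}. It guarantees that every flag triple and flag quadruple entering the positivity conditions of \S\ref{subsect:Positivity} is generic, so that $T_{abc}$ and $D_a$ along the flag curve are well-defined and nonzero for every $\rho\in\Hit(S)$ and every tuple of distinct points. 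Next I would invoke the structural stability of Anosov representations \cite{Lab1}: the line-bundle splitting of Theorem~\ref{thm:AnosovProperty}, hence the flag curve, varies continuously with $\rho$, so that $(\rho,x)\mapsto\F_\rho(x)$ is a continuous map $\Hit(S)\times\partial_\infty\widetilde S\to\Flag$.

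Combining these, for fixed $a,b,c\geq 1$ with $a+b+c=n$ the assignment
\[
(\rho;x_1,\dots,x_k)\longmapsto T_{abc}\bigl(\F_\rho(x_i),\F_\rho(x_j),\F_\rho(x_k)\bigr)
\]
is continuous and nowhere zero on $\Hit(S)\times\mathrm{Conf}_k$, where $\mathrm{Conf}_k$ is the space of $k$--tuples of distinct points of $\partial_\infty\widetilde S$ in this cyclic order, and similarly for the $D_a$. Since $\mathrm{Conf}_k$ is connected (it fibers over $\partial_\infty\widetilde S$ with open-simplex fibers) and $\Hit(S)$ is connected by Theorem~\ref{thm:Hitchin}, the product $\Hit(S)\times\mathrm{Conf}_k$ is connected and each such function has constant sign. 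To fix that sign I would evaluate at the Fuchsian character $\rho_0$ coming from a cocompact Fuchsian group through the principal homomorphism $\mathrm{PSL}_2(\R)\to\PSL$: here $\partial_\infty\widetilde S=\R\cup\{\infty\}$ and $\F_{\rho_0}$ is the osculating flag curve of the moment curve $t\mapsto v(t)=(1,t,\dots,t^{n-1})$, so that with $e^{(a)}(t)=v(t)\wedge v'(t)\wedge\dots\wedge v^{(a-1)}(t)$ each wedge $e^{(a)}(t_1)\wedge f^{(b)}(t_2)\wedge g^{(c)}(t_3)$ is a confluent Vandermonde determinant, positive for $t_1<t_2<t_3$. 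Feeding this classical total-positivity fact into the formulas of \S\ref{subsect:WedgeInvariants} and \S\ref{subsect:DoubleRatios} (the sign in the definition of $D_a$ being designed precisely for this) gives $T_{abc}>0$ and $D_a>0$ in the relevant orderings, and the constant-sign conclusion then propagates positivity over all of $\Hit(S)$.

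The real obstacle is the hyperconvexity input, which is exactly where Labourie's Anosov property is used in full strength; granting it, the remaining care lies in the continuity step — verifying that $\F_\rho(x)$ depends jointly continuously on $(\rho,x)$ and that, with hyperconvexity, this makes $T_{abc}$ and $D_a$ continuous and nonvanishing along the non-compact configuration directions as well as across $\Hit(S)$. The base-case computation at $\rho_0$ is then routine.
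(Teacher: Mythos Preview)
The paper gives no proof of its own here --- the statement is attributed to Fock--Goncharov \cite{FoG1} and closed with an immediate \qed. Your connectedness argument is a correct and standard alternative to Fock--Goncharov's original approach, which proceeds instead through Lusztig's theory of total positivity in reductive groups and does not rely on the Anosov property; your route is closer to the way the result is often re-derived in the Anosov-representation literature (Labourie, Guichard). One small correction: hyperconvexity is not simply a consequence of the Anosov splitting of Theorem~\ref{thm:AnosovProperty} together with the equivariance of Proposition~\ref{prop:FlagCurve} --- it is a substantially deeper theorem in \cite{Lab1}, though you cite the right source. Granting that input, the remainder of your outline (nonvanishing of all the wedge products entering $T_{abc}$ and $D_a$, joint continuity of $(\rho,x)\mapsto\F_\rho(x)$ from structural stability, connectedness of $\Hit(S)\times\mathrm{Conf}_k$, and the confluent-Vandermonde sign check at the Fuchsian base point) carries the proof.
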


\subsection{Triangle invariants of Hitchin characters} 
\label{subsect:TriangleInv}
We now define a first set of invariants for the Hitchin character represented by a homomorphism $\rho \colon \pi_1(S) \to \PSL$.

The complement of the maximal geodesic lamination $\lambda$ consists of finitely many infinite triangles $T_1$, $T_2$, \dots, $T_m$, each with three spikes. 

Consider such a triangle  component $T$ of $S-\lambda$, and select one of its spikes $s$.   Lift $T$ to  an ideal triangle $\widetilde T$ in the universal cover $\widetilde S$, and let $\widetilde s$  be the spike of $\widetilde T$ corresponding to $s$. The spike $\widetilde s$ uniquely determines a point of the circle at infinity $ \partial_\infty \widetilde S$, which we will also denote by $\widetilde s $.   

Label the spikes of $ T$ as $ s$, $ s'$ and $ s''$ in counterclockwise order around $ T$, and let  $\widetilde s$, $\widetilde s'$ and $\widetilde s'' \in \partial_\infty \widetilde S$ be the corresponding points of the circle at infinity. The  flag triple $\bigl( \mathcal F_\rho(\widetilde s), \mathcal F_\rho(\widetilde s') , \mathcal F_\rho(\widetilde s'')\bigr)$, associated to  $\widetilde s$, $\widetilde s'$ and $\widetilde s'' \in \partial_\infty \widetilde S$ by the flag curve $\F_\rho \colon \partial_\infty \widetilde S \to \Flag$,  is positive by Theorem~\ref{thm:FlagCurvePositive}. We can therefore consider the  logarithms
$$
\tau_{abc}^\rho (s) = \log T_{abc}
\bigl( \mathcal F_\rho(\widetilde s), \mathcal F_\rho(\widetilde s') , \mathcal F_\rho(\widetilde s'')\bigr)
$$
of its triple ratios, defined for  every $a$, $b$, $c\geq 1$ with $a+b+c=n$. By $\rho$--equivariance of the flag curve $\mathcal F_\rho$, these triple ratio   logarithms depend only on the triangle $T$ and on the spike $s$ of $T$, and not on  the choice of the lift $\widetilde T$.

Lemma~\ref{lem:SymmetriesTripleRatios} indicates how the invariant $\tau_{abc}^\rho (s)  \in \R$ changes if we choose a different vertex of the triangle $T$. 

\begin{lem}
\label{lem:RotateTriangleInvariants}
If $s$, $s'$ and $s''$ are the three spikes of the component $T$ of $S-\lambda$, indexed counterclockwise around $T$, then
\pushQED{\qed}
\begin{equation*}
\tau_{abc}^\rho (s)  = \tau_{bca}^\rho (s')  =\tau_{cab}^\rho (s'') .  \qedhere
\end{equation*}
\end{lem}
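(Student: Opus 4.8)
The plan is to deduce this directly from the definition of $\tau^\rho_{abc}(s)$ together with the first symmetry relation of Lemma~\ref{lem:SymmetriesTripleRatios}. Recall that, by definition,
$$
\tau^\rho_{abc}(s) = \log T_{abc}\bigl(\mathcal F_\rho(\widetilde s), \mathcal F_\rho(\widetilde s'), \mathcal F_\rho(\widetilde s'')\bigr),
$$
where $s$, $s'$, $s''$ are the spikes of $T$ listed counterclockwise starting at $s$, and $\widetilde s$, $\widetilde s'$, $\widetilde s''$ are the corresponding points of $\partial_\infty\widetilde S$. The key observation is that ``counterclockwise starting at $s'$'' lists the same three spikes in the cyclically shifted order $s'$, $s''$, $s$, so that
$$
\tau^\rho_{bca}(s') = \log T_{bca}\bigl(\mathcal F_\rho(\widetilde s'), \mathcal F_\rho(\widetilde s''), \mathcal F_\rho(\widetilde s)\bigr),
$$
and similarly $\tau^\rho_{cab}(s'') = \log T_{cab}\bigl(\mathcal F_\rho(\widetilde s''), \mathcal F_\rho(\widetilde s), \mathcal F_\rho(\widetilde s')\bigr)$. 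Here I am using that the triple $(\widetilde s,\widetilde s',\widetilde s'')$ is well-defined up to the action of $\pi_1(S)$ (independently of the lift $\widetilde T$), and the flag curve is $\rho$-equivariant, so each of these three logarithms is genuinely an invariant of the pointed triangle.

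Next I would apply the identity $T_{abc}(E,F,G) = T_{bca}(F,G,E)$ from Lemma~\ref{lem:SymmetriesTripleRatios}, with $(E,F,G) = \bigl(\mathcal F_\rho(\widetilde s), \mathcal F_\rho(\widetilde s'), \mathcal F_\rho(\widetilde s'')\bigr)$. This gives
$$
T_{abc}\bigl(\mathcal F_\rho(\widetilde s), \mathcal F_\rho(\widetilde s'), \mathcal F_\rho(\widetilde s'')\bigr) = T_{bca}\bigl(\mathcal F_\rho(\widetilde s'), \mathcal F_\rho(\widetilde s''), \mathcal F_\rho(\widetilde s)\bigr),
$$
and taking logarithms yields $\tau^\rho_{abc}(s) = \tau^\rho_{bca}(s')$. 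Applying the same cyclic symmetry once more (or equivalently invoking Lemma~\ref{lem:SymmetriesTripleRatios} with $(F,G,E)$ in place of $(E,F,G)$) gives $\tau^\rho_{bca}(s') = \tau^\rho_{cab}(s'')$, completing the chain of equalities.

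There is essentially no obstacle here; the only point requiring a word of care is the bookkeeping of which cyclic shift of the index triple $(a,b,c)$ matches which cyclic shift of the spike triple, and the verification that ``counterclockwise'' is consistent between the triangle $T\subset S$ and its lift $\widetilde T\subset\widetilde S$ (which is automatic, since the covering $\widetilde S\to S$ is a local orientation-preserving diffeomorphism). Once the indexing convention is pinned down, the lemma is an immediate consequence of Lemma~\ref{lem:SymmetriesTripleRatios} and the definition of the triangle invariants.
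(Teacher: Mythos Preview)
Your proposal is correct and matches the paper's approach exactly: the paper states this lemma with a bare \qed, having noted just before it that Lemma~\ref{lem:SymmetriesTripleRatios} governs how $\tau_{abc}^\rho(s)$ changes under a different choice of vertex. Your write-up simply makes explicit the one-line deduction from that symmetry.
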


By invariance of triple ratios under the action of $\PGL$ on $\Flag$, it is immediate that the triangle invariants $\tau_{abc}^\rho (s) $ depend only on the character $\rho \in \Hit(S)$, and not  on the homomorphism $\rho \colon \pi_1(S) \to \PSL$ representing it. 

Because of Lemma~\ref{lem:RotateTriangleInvariants}, we can think of the invariant $\tau_{abc}^\rho(s)$ as mainly associated to the triangle component $T$ of $S-\lambda$ that has the slit $s$ as a vertex, since choosing a different vertex of $T$ only affects the order in which the indices $a$, $b$, $c$ are considered. For this reason, we will refer to the $\tau_{abc}^\rho(s)$ as the \emph{triangle invariants} of the Hitchin character $\rho \in \Hit(S)$.

\begin{rem}
The companion article \cite{BonDre} use a clockwise labeling convention for the vertices of a triangle. As a consequence, the triangle invariants of \cite{BonDre} are the opposite of those introduced here. 
\end{rem}

\section{Tangent cycles for a geodesic lamination}
\label{bigsect:TangentCycles}

The second type of invariants associated to a Hitchin character $\rho \in \Hit(S)$  are more closely tied to the geodesic lamination $\lambda$, and have a  homological flavor. This section is devoted to the definitions and basic properties of the corresponding objects.

\subsection{Tangent cycles}
\label{subsect:TangentCycles}

Let $\widehat \lambda$ be the orientation cover of the geodesic lamination $\lambda$, consisting of all pairs $(x,o)$ where $x\in \lambda$ and where $o$ is an orientation of the leaves of $\lambda$ near $x$. The map $(x,o) \mapsto x$ defines a 2--fold covering map $\widehat\lambda \to \lambda$. 

Intuitively, a  tangent cycle for $\widehat \lambda$ is a certain local multiplicity for the leaves of  $\widehat \lambda$, and defines a $1$--dimensional de Rham current supported in $\widehat\lambda$ as in \cite{RueSul}. This notion was called ``transverse cocycle'' in \cite{Bon97a} and in subsequent papers, with the discrepancy between cycles and cocycles explained by Poincar\'e duality. The change in terminology is motivated by the relative tangent cycles that will be introduced in \S \ref{subsect:RelativeTangentCycles}. 

Let $U$ be a neighborhood of the geodesic lamination $\lambda$ in $S$. If $U$ is small enough that it avoids at least one point of each component of $S-\lambda$, the cover $\widehat \lambda \to \lambda$ extends to a 2--fold cover $\widehat U \to U$  (not necessarily unique, according to the topology of $U$) for some surface $\widehat U$. 

A \emph{tangent cycle} $\alpha$ for the geodesic lamination $\widehat \lambda$ is the assignment of a number $\alpha(k) \in \R$ to each arc $k \subset \widehat U$ transverse to $\widehat \lambda$ such that:
\begin{enumerate}
\item $\alpha$ is \emph{finitely additive}, in the sense that $\alpha(k) = \alpha(k_1) + \alpha(k_2)$ whenever the arc $k$ is split into two transverse arcs $k_1$ and $k_2$;
\item $\alpha$ is \emph{invariant under homotopy respecting} $\widehat \lambda$, in the sense that $\alpha(k)= \alpha(k')$ whenever the transverse arcs $k$ and $k'$ are homotopic by a homotopy that keeps each point of $k\cap \widehat\lambda$ in the same leaf of $\widehat\lambda$. 
\end{enumerate} 

It easily follows from the above two conditions that $\alpha(k)=0$ for every arc $k$ disjoint from $\widehat\lambda$. As a consequence, the notion of tangent cycle is independent of the choice of the neighborhood $U$. 

A well-known example of tangent cycle are \emph{transverse measures} for $\widehat\lambda$. These can be defined as tangent cycles $\mu \in \CC(\widehat\lambda; \R)$ such that $\mu(k) \geq 0$ for every transverse arc $k$. Indeed, this positivity property enhances the finite additivity condition (1) to countable additivity. 

\subsection{Train track neighborhoods}
\label{subsect:TrainTracks}
To determine the space of tangent cycles for the geodesic lamination $\lambda$, we will use a very specific type of neighborhood $U$ for~$\lambda$.

A (trivalent)  \emph{train track neighborhood} for the geodesic lamination $\lambda$ is a closed neighborhood $U$ of $\lambda$ which can be decomposed as a union of finitely many rectangles $R_i$ such that
\begin{enumerate}
\item the boundary of each rectangle $R_i \cong [0,1] \times [0,1]$ is divided into a \emph{horizontal boundary} $\delh R_i = [0,1] \times \{0,1\} $ and a \emph{vertical boundary} $\delv R_i =  \{0,1\} \times [0,1]$;

\item  each component of the intersection $R_i \cap R_j$ of two distinct rectangles $R_i$ and $R_j$ is, either a component of $\delv R_i$ contained in $\delv R_j$ and containing one of the  endpoints of $\delv R_j$, or a component of $\delv R_j$ contained in $\delv R_i$ and containing one of the  endpoints of $\delv R_i$;

\item each of the four endpoints of $\delv R_i$ is contained in some rectangle $R_j$ different from $R_i$;

\item the leaves of $\lambda$ are transverse to the arcs $\{x \} \times [0,1]$ in each rectangle $R_i \cong [0,1] \times [0,1]$;

\item a fifth condition indicated below is satisfied.

\end{enumerate}

By construction, the boundary $\partial U$ of the train track neighborhood $U$ naturally splits into two pieces. The \emph{horizontal boundary} $\delh U$ is the union of the horizontal boundaries $\delh R_i$ of all rectangles $R_i$. The \emph{vertical boundary} consists of those points of  $\partial U$ that are contained in the vertical boundary $\delv R_i$ of some rectangle $R_i$. 

We can now state the missing condition. 

\begin{enumerate}
\setcounter{enumi}{4}
\item no component of $S-U$ is a disk with 0, 1 or 2 components of the vertical boundary $\delv U$ in its closure. 
\end{enumerate}

In particular, the arcs $\{x \} \times [0,1]$ of each rectangle $R_i \cong [0,1] \times [0,1]$ provide a foliation of  $U$, whose leaves are called the \emph{ties} of the train track neighborhood. A tie is \emph{generic} if it meets the boundary of $U$ only at its endpoints. Otherwise, it is \emph{singular}. 

The origin of the train track terminology should become apparent when $U$ is chosen so that its ties are relatively short. See Figure~\ref{fig:TrainTrack}. In particular, a singular tie is also often called a \emph{switch}, and the rectangles $R_i$ are the \emph{edges} of $U$. 

The definitions are such that a singular tie $t$ is adjacent to three edges $R_i$, $R_j$, $R_k$, in such a way that $t$ is equal to a component of the vertical boundary $\delv R_i$, and is also the union of a component of $\delv R_j$, of a component of $\delv R_k$ and of a component of $\delv U$. The rectangles $R_i$, $R_j$, $R_k$ are not necessarily distinct. 

\begin{figure}[htbp]

\SetLabels
( .72*.58) $\lambda$ \\
( .33* .55) $ R_i$ \\
( .53* .7) $ R_j$ \\
(.53 * .17) $R_k $ \\
\endSetLabels
\centerline{\AffixLabels{ \includegraphics{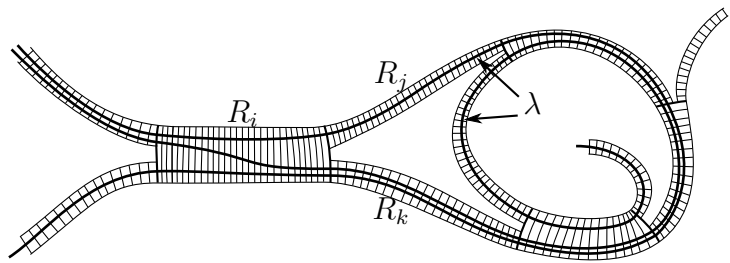} }}
\caption{A train track neighborhood}
\label{fig:TrainTrack}
\end{figure}

Every geodesic lamination admits a train track neighborhood. 

When the geodesic lamination $\lambda$ is maximal, there is a crucial property of its train track neighborhoods $U$ that we will use on a regular basis. Recall that the complement of $\lambda$ then consists of infinite triangles. The following property is easily proved by extending  the foliation of $U$ by its ties to a foliation of $S$ with saddle-type singularities, and by using an index computation on each component of the complement $S-\lambda$. 

\begin{prop}
\label{prop:TrainTrackMaxGeodLam}
Let $U$ be a train track neighborhood of the maximal geodesic lamination $\lambda$. Then, every component
$T$ of the complement $S-\lambda$ contains exactly one component $H=T-U$ of $S-U$;   this component $H$ is a hexagon, namely a disk whose boundary is the union of  $3$ components of the horizontal boundary $\delh U$ and $3 $ components of the vertical boundary $\delv U$. In addition, the foliation of $T\cap U$ by the ties of $U$ is as indicated in Figure~{\upshape\ref{fig:TrainTrackMaxGeodLam}}. \qed
\end{prop}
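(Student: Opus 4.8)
To prove Proposition~\ref{prop:TrainTrackMaxGeodLam}, the plan is to extend the foliation of $U$ by its ties to a singular foliation $\mathcal F$ of the closed surface $S$, and then to combine a global Poincar\'e--Hopf index count with a local analysis of the ties near the spikes of $S-\lambda$. First I would record a compactness remark: since $\lambda$ is compact and $U$ is a neighborhood of $\lambda$, the set $U$ contains the $\epsilon$--neighborhood of $\lambda$ for some $\epsilon>0$ and some auxiliary metric of negative curvature; because the spikes of $S-\lambda$ become exponentially thin, the part of any spike beyond a fixed height lies within distance $\epsilon$ of $\lambda$, hence in $U$. Therefore every component $H$ of $S-U$ is a compact subsurface contained in a single component $T$ of $S-\lambda$ and disjoint from the three spikes of $T$; moreover, since the endpoints of the arcs of $\delh U$ lie on $\delv U$, each boundary circle of $H$ runs alternately through arcs of $\delh U$ (transverse to the ties) and arcs of $\delv U$ (tie segments, hence tangent to the foliation).

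The heart of the argument is the local picture near a spike of a triangle $T$. Going up the spike, the two strands of $\lambda$ that bound it get arbitrarily close, so eventually both lie in a single edge $R_i$ of $U$, and past that point the whole width of the spike is contained in $U$; the innermost tie of $U$ facing the spike is then a component of $\delv R_i$, i.e. a switch. Using condition~(5) in the definition of a train track neighborhood --- which forbids complementary disks with at most two vertical sides --- I would check that each spike of $T$ faces exactly one switch and, conversely, that every switch of $U$ faces exactly one spike of $S-\lambda$; the same condition, together with the transversality of the ties to the leaves of $\lambda$, should also force $H=T-U$ to be simply connected. Thus each $H$ is a disk whose boundary circle alternates between $k_H$ arcs of $\delv U$ --- one facing each of the (at least three) spikes of $T$, so $k_H\geq3$ --- and $k_H$ arcs of $\delh U$.

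Over each disk $H$, I would then extend the tie foliation by a single $k_H$--pronged singularity, with its prongs pointing toward the spikes; this yields a singular foliation $\mathcal F$ of $S$, and the index of the new singularity is $1-k_H/2$ (one quick way to see this: doubling $H$ along its $\delv U$--arcs produces a disk of Euler characteristic $2-k_H$ carrying a foliation transverse to its boundary, and the singularity doubles). The Poincar\'e--Hopf theorem gives $\sum_H\bigl(1-k_H/2\bigr)=\chi(S)=2-2g$, while the local analysis identifies $\sum_H k_H$ with the number of switches of $U$, which equals the number of spikes of $S-\lambda$, namely $3\cdot 4(g-1)=12(g-1)$. Adding the two relations, $2\,\#\{H\}=(4-4g)+12(g-1)=8(g-1)$, so there are exactly $4(g-1)$ components $H$; since each triangle $T$ contains at least one of them (the component bordered by any switch facing a spike of $T$) and there are exactly $4(g-1)$ triangles, each $T$ contains precisely one $H$. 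Finally, $\sum_H k_H=12(g-1)$ distributed over $4(g-1)$ regions with all $k_H\geq3$ forces $k_H=3$ for every $H$, so each $H$ is a hexagon made of three arcs of $\delh U$ and three of $\delv U$. Knowing that $T\cap U$ is the complement in the ideal triangle $T$ of such a hexagon --- meeting each side of $T$ in one $\delh U$--arc and each spike in one switch --- the combinatorial type of the tie foliation on $T\cap U$ (three rectangular fins along the sides, glued in pairs at the three switches) is then completely determined, and is the one shown in Figure~\ref{fig:TrainTrackMaxGeodLam}.

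The main obstacle is the second step: controlling the local structure of an \emph{arbitrary} train track neighborhood near the spikes of $S-\lambda$, in particular establishing the bijection between switches and spikes and the fact that each complementary region $H$ is a disk. This is exactly where condition~(5) in the definition of a train track neighborhood is indispensable --- without it one could manufacture bigon-type pockets near a spike, breaking the count. Once this local normal form is in hand, the index bookkeeping in the last step is routine.
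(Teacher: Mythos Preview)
Your plan matches the paper's hint: extend the tie foliation to a singular foliation of $S$ and run an index count. The one point worth flagging is that the paper says to carry out the index computation \emph{on each component of $S-\lambda$}, not globally on $S$, and this local version is what lets you bypass the step you single out as the main obstacle.

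Concretely: fix a triangle $T$, double it along its three boundary geodesics to get a thrice-punctured sphere, and observe that the tie foliation on $T\cap U$ (being transverse to $\partial T$) glues to a foliation of the double. Near each of the three punctures the doubled foliation is by circles, so one can fill in a center of index $+1$. Each component $H\subset T$ of $S-U$ appears twice in the double and contributes a $k_H$--pronged saddle of index $1-k_H/2$ in each copy. Poincar\'e--Hopf on $S^2$ then reads
\[
3 + 2\sum_{H\subset T}\bigl(1-\tfrac{k_H}{2}\bigr)=2,
\]
so $\sum_{H\subset T}(1-k_H/2)=-\tfrac12$. Since each $k_H\geq 3$ by condition~(5), every term on the left is $\leq -\tfrac12$, and the only solution is a single $H$ with $k_H=3$. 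Thus the hexagon conclusion, the count $\#\{H\}=4(g-1)$, and the switch--spike bijection all drop out together rather than having to be fed in.

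Your global route is not wrong, but it front-loads exactly the local normal form near spikes that the per-triangle index argument gives you for free.
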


Incidentally, another index argument applied to the whole surface $S$ shows that the complement $S-U$ consists of $4(g-1)$ hexagons. In particular, this proves that the complement $S-\lambda$ consists of $4(g-1)$ triangles. 
\begin{figure}[htbp]

\SetLabels
(.5 *.65 ) $\lambda $ \\
(.62 * .48) $S-U $ \\
(.8 *.7 ) $U $ \\
(.91 *.33 ) $\delv U $ \\
( .89*.5) $ \delh U$ \\
\endSetLabels
\centerline{\AffixLabels{\includegraphics{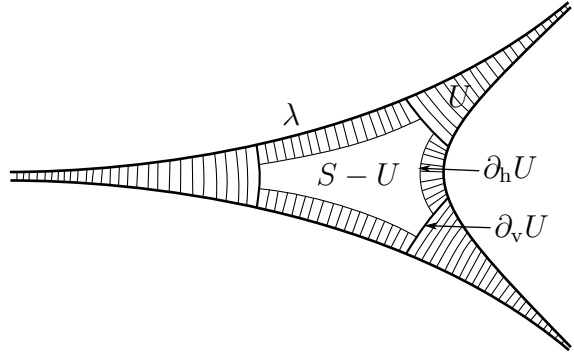}}}
\caption{Train track neighborhoods and maximal geodesic laminations}
\label{fig:TrainTrackMaxGeodLam}
\end{figure}

\subsection{Homological interpretation of tangent cycles}
\label{subsect:SpaceTangentCycles}

Train track neighborhoods provide a convenient tool to perform computations in the vector space  $\mathcal C(\widehat \lambda;\R)$ consisting of all  tangent cycles for the orientation cover $\widehat\lambda$ of $\lambda$. 

Let $U$ be a train track neighborhood of the maximal geodesic lamination $\lambda$. Using Proposition~\ref{prop:TrainTrackMaxGeodLam}, the orientation cover map $\widehat\lambda \to \lambda$ has a unique extension to a cover $\widehat U \to U$. Note that $\widehat\lambda$ is a geodesic lamination in the surface $\widehat U$, and that $\widehat U$ is a train track neighborhood of $\widehat\lambda$. Also, each component of $\widehat U - \widehat\lambda$ is an annulus bounded on one side by a chain of 6 leaves of $\widehat\lambda$, and on the other side by a dodecagon made up of 6 components of the horizontal boundary $\delh \widehat U$ and 6 components of the vertical boundary $\delv\widehat U$. 

The leaves of the orientation cover $\widehat \lambda$ are canonically oriented (use the orientation $o$ near the point $(x,o) \in \widehat\lambda$). This enables us to orient the ties of $\widehat U$ from left to right  with respect to this canonical orientation of $\widehat\lambda$. Indeed,  Proposition~\ref{prop:TrainTrackMaxGeodLam} guarantees that, for every tie $k$ of $\widehat U$,  the left-to-right orientation at the endpoints of a component  $d$ of $k-\widehat \lambda$ extends to an orientation of $d$.  

\begin{prop}
\label{prop:HomologyTangentCycles}

A tangent cycle $\alpha\in  \CC (\widehat \lambda ; \R) $ uniquely determines a homology class $[\alpha] \in  H_1(\widehat U; \R)$ by the property that 
$$
\alpha(k) = [k] \cdot [\alpha]
$$
for every generic tie $k$ of the train track neighborhood $\widehat U$, where $ [k] \cdot [\alpha]$ is the algebraic intersection number of $[\alpha] \in  H_1(\widehat U; \R)$  with the relative homology class
 $[k] \in H_1(\widehat U, \partial \widehat U; \R)$ defined by the tie $k$, endowed with the above left-to-right orientation.
 
 In addition, the rule $\alpha \mapsto [\alpha]$ defines a linear isomorphism $\CC (\widehat \lambda ; \R)  \to  H_1(\widehat U; \R)$. 
\end{prop}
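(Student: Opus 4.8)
The plan is to build the map $\alpha \mapsto [\alpha]$ directly from the combinatorics of the train track neighborhood, and then to show it is both injective and surjective by exhibiting an explicit inverse in terms of intersection numbers with ties. First I would set up the chain-level picture: for each edge (rectangle) $R_i$ of $\widehat U$, pick a generic tie $k_i$ transverse to $\widehat\lambda$, oriented left-to-right as in the discussion preceding the statement. Given a tangent cycle $\alpha \in \CC(\widehat\lambda;\R)$, assign to $R_i$ the real weight $w_i = \alpha(k_i)$; by the homotopy-invariance axiom (2) this is independent of which generic tie in $R_i$ is chosen, and by finite additivity (1) these weights satisfy the \emph{switch conditions} at each singular tie: if a switch $t$ is the vertical boundary of $R_i$ and decomposes as the union of the vertical boundaries of $R_j$ and $R_k$ (with the induced orientations matching up), then $w_i = w_j + w_k$. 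This is exactly the condition for the formal sum $\sum_i w_i\,[R_i]$, viewed as a $1$--chain on the CW structure of $U$ dual to the ties (or equivalently on the "spine" graph obtained by collapsing ties to points), to be a cycle. Thus $\alpha$ determines a class $[\alpha] \in H_1(\widehat U;\R) \cong H_1(\Gamma;\R)$, where $\Gamma$ is the train track graph, and the defining intersection formula $\alpha(k) = [k]\cdot[\alpha]$ holds for every generic tie by construction (and extends to all transverse arcs by additivity and homotopy invariance, after cutting along ties).

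Next I would verify linearity, which is immediate since all the operations above — evaluating $\alpha$ on ties, forming weighted sums of edges — are $\R$--linear in $\alpha$. For injectivity: if $[\alpha] = 0$ then $\alpha(k) = [k]\cdot 0 = 0$ for every generic tie $k$; since every transverse arc can be cut (up to homotopy respecting $\widehat\lambda$) into subarcs each contained in a single edge and hence homotopic into a generic tie, finite additivity forces $\alpha \equiv 0$. Here one uses the standard fact that any transverse arc for $\widehat\lambda$ can be pushed, by a homotopy keeping $\widehat\lambda$--intersection points in their leaves, to a concatenation of tie segments — this is where the train track structure (and in particular that $\widehat U$ is a genuine neighborhood carrying $\widehat\lambda$) is used.

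For surjectivity, I would argue in the reverse direction: given a homology class $h \in H_1(\widehat U;\R)$, represent it by a cycle $\sum_i w_i [R_i]$ on the train track graph (every class on a graph is so represented, and the switch relations are precisely the $1$--cycle conditions), and define $\alpha$ on a transverse arc $k$ by cutting $k$ into tie-segments $k^{(1)},\dots,k^{(r)}$ (each inside some edge $R_{i(l)}$) and setting $\alpha(k) = \sum_l \pm w_{i(l)}$, with signs from comparing the orientation of $k^{(l)}$ to the left-to-right orientation. One checks this is well-defined (independent of the cutting, using the switch relations at singular ties exactly as above) and satisfies axioms (1) and (2); then $[\alpha] = h$ by the intersection formula. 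The routine-but-careful part is checking well-definedness of $\alpha$ across switches and across different homotopy representatives of $k$.

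\textbf{Main obstacle.} The genuinely delicate point is the claim that every transverse arc is, up to homotopy respecting $\widehat\lambda$, a concatenation of tie-segments, and that the resulting count is independent of all choices — i.e. that the combinatorial data on the graph $\Gamma$ faithfully captures the analytic data of a tangent cycle. This rests on Proposition~\ref{prop:TrainTrackMaxGeodLam} (which pins down the local picture of $\widehat U$ near each complementary hexagon/dodecagon, so that no "extra" homotopy classes of arcs sneak in through the complementary regions) and on the fact that $\widehat\lambda$ has measure zero with totally disconnected intersection with each tie, so that an arc disjoint from $\widehat\lambda$ gets weight $0$. Once the dictionary "tangent cycle $\leftrightarrow$ edge-weights satisfying switch conditions $\leftrightarrow$ $1$--cycle on $\Gamma$ $\leftrightarrow$ class in $H_1(\widehat U;\R)$" is nailed down, everything else is bookkeeping. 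I expect the cleanest writeup to first prove the correspondence between tangent cycles and switch-weight systems, and only then invoke $H_1$ of a graph.
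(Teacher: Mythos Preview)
Your proposal is correct and follows the standard train-track argument: identify tangent cycles with edge-weight systems satisfying the switch relations, observe that these are exactly the $1$--cycles on the spine graph $\Gamma$, and hence coincide with $H_1(\widehat U;\R)$. The paper, by contrast, does not reprove any of this: it simply notes that because $\lambda$ is maximal, Proposition~\ref{prop:TrainTrackMaxGeodLam} guarantees $\widehat\lambda$ is tightly carried by $\widehat U$ (each component of $\widehat U-\widehat\lambda$ is an annulus), and then invokes \cite[Theorem~11]{Bon97a} as a black box. So you are essentially supplying a proof of that cited theorem in the present setting; your version is more self-contained, while the paper's is a two-line citation. The ``main obstacle'' you flag---that an arbitrary transverse arc can be homotoped (respecting $\widehat\lambda$) to a concatenation of tie segments---is exactly where the tight-carrying hypothesis enters, and is what the paper's appeal to Proposition~\ref{prop:TrainTrackMaxGeodLam} is packaging.
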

\begin{proof} Because the geodesic lamination $\lambda$ is maximal, Proposition~\ref{prop:TrainTrackMaxGeodLam} shows that it is tightly carried by the train track $U$, in the sense that each component of $U-\lambda$ is an annulus. It follows that $\widehat \lambda$ is tightly carried by $\widehat U$. The result is then a consequence of \cite[Theorem~11]{Bon97a}. 
\end{proof}

\begin{lem}
\label{lem:DimensionTangentCycles}
If the surface $S$ has genus $g$, 
$$
\mathcal C(\widehat \lambda ; \R) \cong H_1(\widehat U; \R) \cong \R^{12g-11}.
$$
\end{lem}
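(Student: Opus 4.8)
The plan is to compute the first homology $H_1(\widehat U;\R)$ of the surface $\widehat U$ directly, using the fact that $\widehat U$ deformation retracts onto the train track graph underlying it, and then to combine this with the isomorphism $\CC(\widehat\lambda;\R)\cong H_1(\widehat U;\R)$ already established in Proposition~\ref{prop:HomologyTangentCycles}. Since a train track neighborhood retracts onto a graph (with one vertex per switch and one edge per rectangle $R_i$), its homology is free of rank $1-\chi$, so everything reduces to an Euler characteristic count.

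First I would recall from \S\ref{subsect:TrainTracks} and Proposition~\ref{prop:TrainTrackMaxGeodLam} that, when $\lambda$ is maximal, $S-U$ consists of $4(g-1)$ hexagons; hence $\chi(U)=\chi(S)-\sum\chi(\text{hexagons})$. A hexagon is a disk, so $\chi=1$ each, giving $\chi(U)=(2-2g)-4(g-1)=2-2g-4g+4=6-6g$. Next, $\widehat U\to U$ is a $2$--fold cover, so $\chi(\widehat U)=2\chi(U)=12-12g$. Since $\widehat U$ retracts onto a connected graph (one should check, or simply note, that $\widehat U$ is connected because the orientation cover of a maximal lamination is connected — alternatively this is not even needed if one tracks the number of components), $H_0(\widehat U;\R)\cong\R$ and $H_1(\widehat U;\R)$ is free of rank $1-\chi(\widehat U)=1-(12-12g)=12g-11$. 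This gives $H_1(\widehat U;\R)\cong\R^{12g-11}$, and Proposition~\ref{prop:HomologyTangentCycles} then transports this to $\CC(\widehat\lambda;\R)$.

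The one point requiring a little care — and the place I expect the only real (though minor) obstacle — is the connectedness of $\widehat U$, equivalently of $\widehat\lambda$: if $\widehat\lambda$ were disconnected the $H_0$ term would change and the rank would come out differently. For a maximal geodesic lamination the orientation cover is in fact connected (no leaf is two-sidedly consistently orientable once the lamination fills the surface in the strong sense that complementary regions are triangles), so $\widehat U$ is connected and the computation above stands. If one wished to avoid invoking this, one could instead argue that even with $c$ components of $\widehat U$ the rank is $c-\chi(\widehat U)$, and then pin down $c=1$ separately; but it is cleaner to just cite connectedness of the orientation cover of a maximal lamination. With that in hand, the proof is a two-line Euler characteristic computation followed by an appeal to Proposition~\ref{prop:HomologyTangentCycles}.
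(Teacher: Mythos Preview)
Your proposal is correct and follows essentially the same route as the paper: an Euler characteristic count using $\chi(U)=\chi(S)-4(g-1)=-6(g-1)$, the doubling $\chi(\widehat U)=2\chi(U)$, and connectedness of $\widehat U$, combined with Proposition~\ref{prop:HomologyTangentCycles}. The paper's justification for connectedness is phrased slightly more crisply --- since $S-\lambda$ consists of triangles, $\lambda$ is non-orientable, hence the orientation cover $\widehat\lambda$ is connected --- which you may want to adopt in place of your parenthetical.
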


\begin{proof}
Since the complement $S-\lambda$ consists of infinite triangles, the geodesic lamination $\lambda$ is non-orientable. This implies that $\widehat\lambda$ is connected, and therefore so is $\widehat U$. By definition of the Euler characteristic $\chi(\ )$, 
$$
\dim H_1(\widehat U; \R) = -\chi(\widehat U) + \dim H_0(\widehat U; \R) = -\chi(\widehat U)  + 1 = -2 \chi(U) +1 .
$$
We observed that the complement of $U$ in $S$ consists of $4(g-1)$ hexagons. Therefore,  $\chi(U) = \chi(S)-4(g-1)=-6(g-1)$. The result follows. 
\end{proof}

\subsection{Tangent cycles relative to the slits}
\label{subsect:RelativeTangentCycles}

We now relax the additivity condition for a tangent cycle.

Let $U$ be a neighborhood of $\lambda$ in $S$ that avoids at least one point of each component of $S-\lambda$. For instance, $U$ can be a train track neighborhood of $\lambda$. Extend the orientation cover $\widehat\lambda \to \lambda$ to a 2--fold cover $\widehat U \to U$. The complement $\widehat U - \widehat\lambda$ has a certain number of infinite spikes, in fact $24(g-1)$ spikes because the complement $S-\lambda$ consists of $4(g-1)$ infinite triangles and because each spike of $S-\lambda$ lifts to two spikes of $\widehat U - \widehat \lambda$. In particular, the spikes of $\widehat U - \widehat\lambda$  are really independent of the choice of the neighborhood $U$. For this reason, we will also refer to the spikes of  $\widehat U - \widehat\lambda$  as the \emph{slits of $\widehat\lambda$}.

We need  to restrict attention to a special class of transverse arcs for $\widehat\lambda$.  An arc $k \subset \widehat U$ is \emph{tightly transverse} to the geodesic lamination $\widehat\lambda$ if it is transverse to $\widehat\lambda$, if it has nonempty intersection with $\widehat\lambda$ and if, for every component $d$ of $k-\widehat\lambda$, one of the following holds:
\begin{itemize}
\item
 $d$ contains one of the endpoints of $k$;
 \item $d$  separates one of the spikes of $\widehat U - \widehat\lambda$ from the rest of $\widehat U - \widehat\lambda$. 
 \end{itemize}

A fundamental example arises when the geodesic lamination $\lambda$ is maximal and when $ U$ is a train track neighborhood of $\lambda$, so that its  lift $\widehat U$ is a train track neighborhood of $\widehat \lambda$. It then follows from Proposition~\ref{prop:TrainTrackMaxGeodLam} that every tie of $\widehat U$ is tightly transverse to $\widehat \lambda$. 

The slits of $\widehat\lambda$, namely the spikes of $\widehat U - \widehat\lambda$, come in two types because of the canonical orientation of the leaves of the orientation cover $\widehat\lambda$: the \emph{positive slits} $s$ where the two leaves of $\widehat\lambda$ that are adjacent to $s$ are oriented towards $s$ for the canonical orientation of $\widehat\lambda$, and the \emph{negative slits} where these two leaves are oriented away from $s$. Define the \emph{sign} of the slit $s$ of $\widehat U - \widehat\lambda$ as $\epsilon(s) =+1$ when $s$ is positive, and $\epsilon(s) =-1$ for a negative slit.

An $\R$--valued \emph{tangent cycle relative to the slits} for $\widehat \lambda$ assigns a number $\alpha(k)\in \R$ to each arc $k \subset \widehat U$ tightly transverse to $\widehat\lambda$ in such a way that:

\begin{enumerate}

\item $\alpha$ is, as before, \emph{invariant under homotopy respecting} $\widehat \lambda$ in the sense that $\alpha(k)= \alpha(k')$ whenever the transverse arcs $k$ and $k'$ are homotopic by a homotopy that keeps each point of $k\cap \widehat\lambda$ in the same leaf of $\widehat\lambda$;

\item $\alpha$ is  \emph{quasi-additive} in the following sense. There is a number $\partial\alpha(s)\in \R$ associated to each slit $s$ of $\widehat \lambda$ such that 
$$
\alpha(k) = \alpha(k_1) + \alpha(k_2) - \epsilon(s) \partial \alpha(s)
$$
whenever the arc $k \subset \widehat U$ is tightly transverse to $\widehat \lambda$, the arcs $k_1$ and $k_2$ are obtained by splitting $k$ at a point $x\in k-\widehat\lambda$ contained in a component $d$ of $k-\widehat\lambda$ that is disjoint from $\partial k$, and  $s$ is the spike separated from the rest of $\widehat U - \widehat \lambda$ by the component $d$. 

%\item $\alpha(k) = 0$ whenever the arc $k \subset \widehat U$ is disjoint from  $\widehat \lambda$. 

\end{enumerate} 

By definition, the function $\partial\alpha \colon \{ \text{slits of } \widehat\lambda\} \to \R$ is the \emph{boundary} of the relative cycle $\alpha$. We could have combined $\partial \alpha$ with the sign $\epsilon$ to create a single function $ \{ \text{slits of } \widehat\lambda\} \to \R$, but the current convention simplifies the homological interpretation of relative tangent cycles that is given below, in Proposition~\ref{prop:RelativeHomologyTangentCycles}. This homological interpretation also explains the boundary terminology.

We let $\CC(\widehat\lambda, \slits; \R)$ denote the space of tangent cycles relative to the slits for $\widehat\lambda$. 

Using the quasi-additivity property, one easily shows that the notion of tangent cycle relative to the slits is independent of the choice of the neighborhood $U$ of $\lambda$. 

These relative tangent cycles generalize the tangent cycles of \S \ref{subsect:TangentCycles}. 

\begin{lem}
\label{lem:RelativeTgtCyclesWithBdry0}
There is a natural correspondence between   the set $\CC(\widehat\lambda; \R)$ of tangent cycles for $\widehat \lambda$ and the set $\{ \alpha \in \CC(\widehat\lambda, \slits; \R); \partial \alpha=0\}$ of tangent cycles relative to the slits with boundary $0$. 
\end{lem}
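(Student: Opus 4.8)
The plan is to establish the bijection in the obvious direction and check it is well-defined both ways. Given a tangent cycle $\alpha \in \CC(\widehat\lambda;\R)$, I would first produce a candidate relative tangent cycle $\widehat\alpha \in \CC(\widehat\lambda,\slits;\R)$. The subtlety is that $\alpha$ is only defined on arcs transverse to $\widehat\lambda$ whose endpoints lie \emph{outside} $\widehat\lambda$, whereas an arc $k$ tightly transverse to $\widehat\lambda$ may have endpoints in $\widehat\lambda$; but this is harmless, since one can push each endpoint of $k$ slightly along the tie direction off $\widehat\lambda$ without changing which leaves are crossed, and finite additivity plus homotopy invariance of $\alpha$ guarantee the resulting value is independent of this pushing. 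This defines $\widehat\alpha(k)$. I would then verify the two axioms for a relative tangent cycle: homotopy invariance of $\widehat\alpha$ is inherited directly from that of $\alpha$; and for quasi-additivity, when $k$ is split at a point $x$ in a component $d$ of $k-\widehat\lambda$ separating a spike $s$, the arc $d$ (suitably truncated) is disjoint from $\widehat\lambda$, so $\alpha(d)=0$ by the remark following the definition of tangent cycles, and hence $\widehat\alpha(k) = \widehat\alpha(k_1)+\widehat\alpha(k_2)$ with $\partial\widehat\alpha(s)=0$ for every slit $s$. Thus $\widehat\alpha$ has boundary $0$.

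Conversely, given $\beta \in \CC(\widehat\lambda,\slits;\R)$ with $\partial\beta = 0$, I would restrict $\beta$ to arcs $k$ that are transverse to $\widehat\lambda$ with endpoints disjoint from $\widehat\lambda$ — but here one must be careful, because such an arc need not be \emph{tightly} transverse. The fix is to choose, for each such $k$, a decomposition of $k$ into subarcs each of which is either tightly transverse or disjoint from $\widehat\lambda$ (this is possible: cut $k$ at one point in each component of $k-\widehat\lambda$ that fails the tightness condition, noting that each component of $k-\widehat\lambda$ other than those carrying an endpoint either separates a single spike or can be subdivided so that each piece does), and define the value on $k$ as the sum of the $\beta$-values of the tightly transverse pieces. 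Since $\partial\beta=0$, the quasi-additivity of $\beta$ reduces to genuine finite additivity, and a short argument shows this sum is independent of the chosen decomposition and that the resulting function is finitely additive and homotopy-invariant on all transverse arcs — i.e.\ an element of $\CC(\widehat\lambda;\R)$.

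Finally I would check the two constructions are mutually inverse, which is immediate once both are seen to agree with the common value on tightly transverse arcs. The main obstacle, and the only place requiring genuine care rather than bookkeeping, is the independence of the chosen auxiliary subdivisions in both directions — i.e.\ confirming that splitting a transverse arc at an "unnecessary" point (one not facing a slit, or facing a slit already accounted for) does not change the assigned value. In the forward direction this is exactly finite additivity of $\alpha$ together with $\alpha = 0$ on arcs missing $\widehat\lambda$; in the reverse direction it is quasi-additivity of $\beta$ with the correction terms $\partial\beta(s)$ all vanishing. Everything else is routine, so the lemma's proof is genuinely short, and I expect the author's proof to be a terse version of this — likely just pointing out that $\alpha$ vanishes on arcs disjoint from $\widehat\lambda$ so the correction factors $\partial\alpha(s)$ are forced to be zero, and conversely that a relative cycle with zero boundary is finitely additive on tightly transverse arcs and extends uniquely.
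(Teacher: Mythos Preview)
Your approach is correct and essentially the same as the paper's, which is indeed a terse version of what you describe: it observes that a relative tangent cycle with $\partial\alpha=0$ is genuinely additive, then extends it to all transverse arcs by splitting any transverse arc into finitely many tightly transverse pieces and summing, with additivity guaranteeing independence from the decomposition.

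One minor misreading: tightly transverse arcs do \emph{not} have endpoints on $\widehat\lambda$. The definition says a tightly transverse arc ``is transverse to $\widehat\lambda$'', and the paper's convention for transverse arcs already requires endpoints disjoint from the lamination; this is confirmed by the clause about components of $k-\widehat\lambda$ that ``contain one of the endpoints of $k$''. So in your forward direction the ``push the endpoints off $\widehat\lambda$'' step is unnecessary---the restriction of a tangent cycle to tightly transverse arcs is literally a restriction, and the boundary-zero property follows immediately from additivity. This doesn't affect the validity of your argument, just streamlines it.
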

\begin{proof}
A relative tangent cycle with boundary equal to 0 is additive. So the only point that requires some discussion is the fact that relative tangent cycles are restricted to arcs tightly transverse to $\widehat\lambda$, whereas the definition of tangent cycles involves all tangent arcs transverse to $\widehat\alpha$. 

However, every arc $k$ transverse to $\widehat\lambda$ can be split into the union of finitely many arcs $k_1$, $k_2$, \dots, $k_l$ that are tightly transverse to $\widehat\lambda$. It easily follows that every relative tangent cycle $\alpha \in \CC(\widehat\lambda, \slits; \R)$ with $\partial \alpha=0$ uniquely extends to a tangent cycle, by the property that $\alpha(k) = \sum_{i=1}^l \alpha(k_i)$ for every transverse arc $k$ split as above into finitely many tightly transverse arcs $k_i$. Indeed, the additivity property guarantees that this $\alpha(k)$ does not depend on the decomposition of $k$ into tightly transverse arcs. 
\end{proof}

\subsection{Homological interpretation of tangent cycles relative to the slits}
\label{subsect:RelativeHomologyTangentCycles}

We now focus on a train track neighborhood $U$ of the maximal geodesic laminations $\lambda$. As before, let $\widehat \lambda$ be the orientation cover of $\lambda$, and extend the covering map $\widehat\lambda \to \lambda$ to a cover $\widehat U \to U$. The  canonical orientation of the leaves of $\widehat\lambda$ provides a left-to-right orientation for the ties of $\widehat U$. 

By Proposition~\ref{prop:TrainTrackMaxGeodLam}, there is a one-to-one correspondence between the slits of $\widehat\lambda$ and the components of the vertical boundary $\delv \widehat U$. Indeed, each component $c$ of $\delv \widehat U$ faces a unique spike $s$ of $U- \widehat \lambda$ (= slit of $\widehat\lambda$) in the sense that, if $k$ is the singular tie of $\widehat U$ that contains $ c$ and if $d$ is the component of $k-\widehat\lambda$ that contains $c$, then $d$ separates $s$ from the rest of $\widehat U - \widehat\lambda$; see Figure~\ref{fig:TrainTrackMaxGeodLam}.

For a relative tangent cycle  $\alpha\in  \CC (\widehat \lambda, \slits ; \R) $, the boundary $\partial \alpha \colon \{\text{slits of } \widehat\lambda\} \to \R$ therefore assigns a multiplicity to each component of $\delv \widehat U$, and therefore can be interpreted as an element of $H_0(\delv\widehat U; \R) $. 

\begin{prop}
\label{prop:RelativeHomologyTangentCycles}
Let $U$ be a train track neighborhood of the maximal geodesic lamination $\lambda$, and let $\widehat U$ be its lift to a train track neighborhood of the orientation cover $\widehat\lambda$. 
A tangent cycle $\alpha\in  \CC (\widehat \lambda, \slits ; \R) $ relative to the slits of $\widehat\lambda$ uniquely determines a relative homology class $[\alpha] \in  H_1(\widehat U, \delv \widehat U; \R)$ by the property that 
$$
\alpha(k) = [k] \cdot [\alpha]
$$
for every generic tie $k$ of the train track neighborhood $\widehat U$, where $ [k] \cdot [\alpha]$ is the algebraic intersection number of $[\alpha] \in  H_1(\widehat U, \delv \widehat U; \R)$  with the relative homology class
 $[k] \in H_1(\widehat U, \delh \widehat U; \R)$ defined by the tie $k$, endowed with the above left-to-right orientation.
 
 In addition, the rule $\alpha \mapsto [\alpha]$ defines a linear isomorphism $\CC (\widehat \lambda, \slits ; \R)  \cong H_1(\widehat U, \delv \widehat U; \R) $, for which the boundary  $\partial \alpha \colon \{\text{slits of } \widehat\lambda\} \to \R$ of the relative tangent cycle $\alpha$ corresponds to the image of $[\alpha] \in H_1(\widehat U, \delv \widehat U; \R)$ under the boundary homomorphism $\partial \colon  H_1(\widehat U, \delv \widehat U; \R)  \to H_0(\delv\widehat U; \R) $. 
\end{prop}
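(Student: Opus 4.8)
The plan is to mimic the proof of Proposition~\ref{prop:HomologyTangentCycles}, quoted from \cite[Theorem~11]{Bon97a}, but keeping track of the relative boundary data. The key point is that for a maximal geodesic lamination $\lambda$ the train track neighborhood $\widehat U$ carries $\widehat\lambda$ \emph{tightly}, in the sense that each component of $\widehat U - \widehat\lambda$ is an annulus (by Proposition~\ref{prop:TrainTrackMaxGeodLam}); this is what lets one reconstruct a relative tangent cycle from finitely many numbers, namely its values on the generic ties of the finitely many edges $R_i$.

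First I would set up the map $\alpha \mapsto [\alpha]$. Given $\alpha \in \CC(\widehat\lambda, \slits; \R)$, for each edge $R_i$ of $\widehat U$ pick a generic tie $k_i$ and assign to $R_i$ the weight $\alpha(k_i) \in \R$; by homotopy invariance this is independent of the choice of generic tie in $R_i$. I would then interpret this edge-weighting as a relative cellular $1$-cycle: view $\widehat U$ as a regular neighborhood of its spine (the train track), so that $H_1(\widehat U, \delv\widehat U; \R)$ is computed by the cellular chain complex whose $1$-chains are formal combinations of (oriented) edges and whose $2$-chains/boundary relations come from the hexagonal complementary regions $\widehat U - \widehat\lambda$. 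The content of the switch (quasi-additivity) condition is exactly that, at a singular tie $t$ adjacent to edges $R_i, R_j, R_k$, the value $\alpha$ assigns to the big tie equals the sum of the values on the two sub-ties minus the correction $\epsilon(s)\partial\alpha(s)$, where $s$ is the slit faced by the component of $\delv\widehat U$ inside $t$; this says precisely that the edge-weighting is a cycle \emph{relative to} $\delv\widehat U$, with algebraic boundary at the vertical boundary component of $t$ equal to $\partial\alpha(s)$. Thus the edge-weighting defines a class $[\alpha]\in H_1(\widehat U, \delv\widehat U; \R)$, and by construction $\partial[\alpha]\in H_0(\delv\widehat U; \R)$ is the function $s \mapsto \partial\alpha(s)$, which is the asserted compatibility with the homological boundary map.

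Next I would check the intersection formula $\alpha(k) = [k]\cdot[\alpha]$ for a generic tie $k$. For $k$ a generic tie of the edge $R_i$ this is immediate since $[k]$ is the dual cell to the edge $R_i$ and $[\alpha]$ has coefficient $\alpha(k)$ on that edge. For an arbitrary generic tie $k$ of $\widehat U$ one reduces to this case by cutting $k$ into sub-ties each contained in a single edge and using additivity of both sides (the correction terms appear only at singular ties, which are not generic, so no $\partial\alpha(s)$ enters). Conversely, I would show the rule is a linear bijection: injectivity holds because a class with all edge-coefficients zero is null, and any relative tangent cycle is determined by its values on generic ties, which in turn are determined (via additivity and the switch relations) by the edge-weights; surjectivity holds because any relative $1$-cycle mod $\delv\widehat U$ is, up to boundaries of the hexagonal regions, represented by an edge-weighting satisfying the switch relations, and one recovers a genuine relative tangent cycle $\alpha$ by declaring $\alpha(k) = [k]\cdot[\alpha]$ for generic ties and extending by quasi-additivity, with $\partial\alpha(s)$ read off from $\partial[\alpha]$ — here one must verify that this extension is well defined on all tightly transverse arcs, which follows from the same cut-and-paste argument combined with homotopy invariance, exactly as in Lemma~\ref{lem:RelativeTgtCyclesWithBdry0}.

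The main obstacle is the bookkeeping in the surjectivity/well-definedness step: one must verify that extending from edge-weights to a function on \emph{all} tightly transverse arcs, using the switch relations as the rule for splitting, produces a consistent answer regardless of how an arc is decomposed and regardless of its homotopy class rel $\widehat\lambda$. This is where tightness of the carrying is essential — it guarantees that every tightly transverse arc can be pushed into a union of ties, and that two such push-offs differ by homotopies supported in the annular regions $\widehat U - \widehat\lambda$, across which the switch relations are precisely the hexagon boundary relations. Once this consistency is established, identifying $\partial\alpha$ with the homological boundary is a formal check on the cellular chain level, and the dimension count is not needed for this proposition (though it is consistent with Lemma~\ref{lem:DimensionTangentCycles} for the absolute case).
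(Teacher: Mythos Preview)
Your approach is correct and follows the same overall strategy as the paper, but there is a worthwhile difference in how the inverse map is built. The paper constructs $\phi\colon\alpha\mapsto[\alpha]$ via Poincar\'e duality: the classes $[k_e]$ of one generic tie per edge form a basis of $H_1(\widehat U,\delh\widehat U;\R)$, so $[k_e]\mapsto\alpha(k_e)$ is a linear functional and hence is realized by a unique dual class $[\alpha]\in H_1(\widehat U,\delv\widehat U;\R)$. For the inverse $\psi$, rather than extending from edge-weights by quasi-additivity as you propose, the paper associates to \emph{every} tightly transverse arc $k$ a class $[k]\in H_1(\widehat U,\delh\widehat U;\R)$ (by extending $k$ along the annular components of $\widehat U-\widehat\lambda$ to an arc with endpoints on $\delh\widehat U$) and sets $\alpha_c(k)=[k]\cdot c$. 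With this definition, homotopy invariance is automatic and quasi-additivity reduces to the single homological identity $[k]=[k_1]+[k_2]+\epsilon(s)[k_s]$, which immediately gives $\partial\alpha_c(s)=-[k_s]\cdot c$ and the boundary compatibility. This sidesteps precisely the consistency-of-extension obstacle you identify; your cellular/spine formulation is equivalent, but the intersection-number construction of $\psi$ is what eliminates the bookkeeping.

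One small inaccuracy: a generic tie is by definition contained in a single edge, so your reduction ``cutting $k$ into sub-ties each contained in a single edge'' is vacuous for generic ties; the nontrivial splitting only arises for singular ties or general tightly transverse arcs.
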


\begin{proof} We  split the proof into a few steps to improve readability.

\smallskip
\noindent\textsc{Step 1.} Construct a linear map  $\phi \colon \CC(\widehat\lambda, \slits; \R) \to H_1(\widehat U, \delv \widehat U; \R)$.

Pick a generic tie $k_e$ in each edge $e$ of $\widehat U$. An easy homological computation shows that, as $e$ ranges over all edges of $\widehat U$, the relative homology classes $[k_e]$ form a basis for $H_1(\widehat U, \delh \widehat U; \R)$. The map $[k_e] \mapsto \alpha(k_e)$ therefore extends to a linear map $H_1(\widehat U, \delh \widehat U; \R) \to \R$. By Poincar\'e duality and since the boundary $\partial \widehat U$ is the union of $\delh \widehat U$ and $\delv \widehat U$, there consequently exists a unique class $[\alpha] \in H_1(\widehat U, \delv \widehat U; \R)$ such that $\alpha(k_e) = [k_e] \cdot [\alpha]$ for every edge $e$. 

An arbitrary generic tie $k$ of $\widehat U$ is contained in an edge $e$. Then, $[k] = [k_e]$ in $H_1(\widehat U, \delh \widehat U; \R)$, and $\alpha(k) = \alpha(k_e)$ by invariance of $\alpha$ under homotopy respecting $\widehat\lambda$. This proves that $
\alpha(k) = [k] \cdot [\alpha]$ for every generic tie $k$ of  $\widehat U$. As a consequence, $[\alpha]$ satisfies the properties indicated in the statement of Proposition~\ref{prop:RelativeHomologyTangentCycles}. 

This provides  a map $\phi \colon \CC(\widehat\lambda, \slits; \R) \to H_1(\widehat U, \delv \widehat U; \R)$, associating the above class $[\alpha]\in H_1(\widehat U, \delv \widehat U; \R)$ to $\alpha \in  \CC(\widehat\lambda, \slits; \R)$.

\smallskip
\noindent\textsc{Step 2.} Construct a linear map  $\psi \colon H_1(\widehat U, \delv \widehat U; \R) \to \CC(\widehat\lambda, \slits; \R)$. 

We first associate a homology class  $[k]\in H_1(\widehat U, \delh \widehat U; \R)$ to  each arc $k$ that is tightly transverse to  $\widehat\lambda$. 

A key observation is that the canonical orientation of the orientation cover $\widehat\lambda$ specifies  a natural orientation for $k$. Indeed the definition of tight transversality implies that, if the arc $k$ is tightly transverse to $\widehat \lambda$, the leaves of $\widehat\lambda$ passing through the endpoints of a component $d$ of $k-\widehat\lambda$ induce the same transverse orientation (namely an orientation of the normal bundle) for $k$. As a consequence, all leaves of $\widehat\lambda$ define the same transverse orientation for $k$. We can therefore orient every tightly transverse arc $k$ from left to right with respect to  the canonical orientation of the leaves of $\widehat \lambda$.

We now extend the tightly transverse arc $k$ to an arc $k'\subset \widehat U$ with $\partial k' \subset \delh \widehat U$. There is  a natural one-to-one correspondence between the components of the horizontal boundary $\delh \widehat U$ and the boundary leaves of $\widehat \lambda$ (namely those which are in the boundary of $\widehat U - \widehat\lambda$); indeed, Proposition~\ref{prop:TrainTrackMaxGeodLam} shows that all ties originating from a component of $\delh\widehat U$ leave $\widehat U - \widehat\lambda$ on the same boundary leaf of $\widehat\lambda$. For each component $d$ of $k-\widehat\lambda$ containing an endpoint of $k$, we can extend $d$ to an arc $d'\subset \widehat U - \widehat\lambda$ going from a boundary leaf to $\widehat\lambda$ to the corresponding component of $\delh \widehat U$, in the homotopy class specified by the arcs in ties of $\widehat U$ that connect this boundary leaf to $\delh \widehat U$. Performing this operation for each of the two components $d$ of $k-\widehat\lambda$ that contain an endpoint of $k$, we have extended $k$ to an oriented arc $k' \supset k$ whose boundary is contained in $\delh \widehat U$. There are many possible choices for $k'$ but all give the same relative homology class in $ H_1(\widehat U, \delh \widehat U; \R)$, which we denote by $[k]$. 

%We also define $[k]=0\in  H_1(\widehat U, \delh \widehat U; \R)$ for every arc $k\subset \widehat U$ that is disjoint from~$\widehat\lambda$. 

Given a relative homology class $c \in H_1(\widehat U, \delv \widehat U; \R) $ we can consider, for every arc $k$ tightly transverse to $\widehat\lambda$, the algebraic intersection number
$$
\alpha_c (k) =  [k] \cdot c \in \R
$$
of $c \in H_1(\widehat U, \delv \widehat U; \R) $ with the homology class $[k] \in H_1(\widehat U, \delh \widehat U; \R)$ associated to $k$ as above. We want to show that this defines a relative tangent cycle $\CC(\widehat\lambda, \slits; \R)$. 

The invariance of $\alpha_c(k) $ under homotopy of $k$ respecting $\widehat\lambda$ is immediate. 

We need to check the quasi-additivity property. Let the
arc $k \subset \widehat U$ be tightly transverse to $\widehat \lambda$,  let  $k_1$ and $k_2$ be obtained by splitting $k$ at a point $x\in k-\widehat\lambda$ contained in a component $d$ of $k-\widehat\lambda$ that is disjoint from $\partial k$, and let $s$ be the spike separated from the rest of $\widehat U - \widehat \lambda$ by the component $d$. Let $k_s$ be the component of $\delv \widehat U$ that faces the slit $s$. Orient $k_s$ by the boundary orientation of $\partial\widehat U$. 

Then, from the definition of the relative homology classes $[k]$, $[k_1]$, and $[k_2]\in H_1(\widehat U, \delh \widehat U; \R) $,
$$
[k] = [k_1] + [k_2] + \epsilon(s) [k_s] \in H_1(\widehat U, \delh \widehat U; \R)
$$
where $\epsilon(s)=\pm1$ is the sign of the slit $s$. Taking intersection numbers with $c \in H_1(\widehat U, \delv \widehat U; \R) $, it follows that 
$$
\alpha_c(k) = \alpha_c(k_1) + \alpha_c(k_2) + \epsilon(s) [k_s]\cdot c.
$$

This proves that $\alpha_c$ is a tangent cycle for $\widehat\lambda$ relative to its slits, with boundary $\partial \alpha_c $ defined by the property that $\partial\alpha_c(s) = - [k_s]\cdot c$ for every slit $s$. 

We define $\psi \colon H_1(\widehat U, \delv \widehat U; \R) \to \CC(\widehat\lambda, \slits; \R)$ by the property that $\psi(c) = \alpha_c$ for every $c \in H_1(\widehat U, \delv \widehat U; \R) $.

\smallskip
\noindent\textsc{Step 3.} For every $c \in H_1(\widehat U, \delv \widehat U; \R) $ and every slit $s$ of $\widehat\lambda$, $\partial \psi(c) (s)\in \R$ is the multiplicity associated to the component $k_s$  of $\delv \widehat U$  facing  $s$ by $\partial c \in H_0(\delv \widehat U; \R)$. 

This is just a rephrasing of the property that $\partial\alpha_c(s) = - [k_s]\cdot c$. 

\smallskip
\noindent\textsc{Step 4.} The maps $\phi \colon \CC(\widehat\lambda, \slits; \R) \to H_1(\widehat U, \delv \widehat U; \R)$ and $\psi \colon H_1(\widehat U, \delv \widehat U; \R) \to \CC(\widehat\lambda, \slits; \R)$  are inverse of each other. 

Pick a generic tie $k_e$ in each edge $e$ of $\widehat U$. Then, by construction, the image $c=\phi(\alpha)$ of $\alpha \in  \CC(\widehat\lambda, \slits; \R) $ is defined by the property that $\alpha(k_e) = [k_e]\cdot c$ for every edge $e$. Conversely, for every $c \in H_1(\widehat U, \delv \widehat U; \R) $, $\alpha = \psi(c)$ is characterized by the fact that $\alpha(k) = [k]\cdot c$ for every arc $k$ tightly transverse to $\widehat\lambda$.

In particular, $[k_e]\cdot \phi\bigl( \psi(c) \bigr)  = [k_e]\cdot c$ for every edge $e$, and it follows that $\phi\bigl( \psi(c) \bigr) = c$ by Poincar\'e duality since the $k_e$ generate $H_1(\widehat U, \delh \widehat U; \R) $. This proves that $\phi \circ \psi$ is equal to the identity.

Conversely, for a relative tangent cycle $\alpha \in  \CC(\widehat\lambda, \slits; \R)$, the same argument shows that $\psi \bigl( \phi(\alpha) \bigr) (k_e) = \alpha(k_e)$ for every edge $e$ of $\widehat U$. For a slit $s$, let $k_s$ be the component of $\delv \widehat U$ that faces $s$, let $e$ be the edge of $\widehat U$ that contains $k_s$, and let $e_1$ and $e_2$ be the other two edges that touch $k_s$. Then, by definition of the quasi-additivity, 
\begin{align*}
\epsilon(s) \partial \alpha(s) &= \alpha (k_{e_1}) + \alpha  (k_{e_2}) - \alpha (k_e)  \\
&= \psi \bigl( \phi(\alpha) \bigr) (k_{e_1}) + \psi \bigl( \phi(\alpha) \bigr)  (k_{e_2}) - \psi \bigl( \phi(\alpha) \bigr) (k_e) \\
&= \epsilon(s) \partial \psi \bigl( \phi(\alpha) \bigr) (s).
\end{align*}
This proves that $\psi \bigl( \phi(\alpha) \bigr) - \alpha $ has boundary 0, and is therefore a tangent cycle by Lemma~\ref{lem:RelativeTgtCyclesWithBdry0}. Since $\psi \bigl( \phi(\alpha) \bigr)(k_e) - \alpha(k_e) $ for every edge $e$ of $\widehat U$, it follows from Proposition~\ref{prop:HomologyTangentCycles} that $\psi \bigl( \phi(\alpha) \bigr) - \alpha =0$.

This proves that $\psi\circ \phi$ is the identity, and completes the proof of Proposition~\ref{prop:RelativeHomologyTangentCycles}. 
\end{proof}

%This homological interpretation of relative tangent cycles has the following consequence which, although elementary,  will play an  important r\^ole in our analysis of Hitchin characters.

%\begin{prop}
%\label{prop:BdryRelTangentCycleCondition}
%Let $\alpha \in \CC(\widehat\lambda, \slits; \R)$ be a tangent cycle for $\widehat \lambda$ relative to its slits, with boundary $\partial\alpha \colon \{ \text{slits of } \widehat\lambda \} \to \R$. Then
%$$
%\sum_{\text{slits } s} \partial \alpha(s)=0
%$$
%where the sum is over all slits $s$ of $\widehat\lambda$. 
%\end{prop}

%\begin{proof}
%This is an immediate consequence of the exact sequence
%$$
%0 \to H_1(\widehat U; \R) \to H_1(\widehat U, \delv \widehat U; \R) \stackrel{\partial} {\to} H_0(\delv \widehat U; \R) \to H_0 (\widehat U; \R) \to 0.
%$$
%and of our observation that, for the isomorphism $\psi\colon H_1(\widehat U, \delv \widehat U; \R)  \to \CC(\widehat\lambda, \slits; \R)$ of Proposition~\ref{prop:RelativeHomologyTangentCycles} and for $c\in H_1(\widehat U, \delv \widehat U; \R)$, the number $\partial \psi(c)(s)\in \R$ is the multiplicity associated by $\partial c \in H_0(\delv \widehat U; \R)$ to the component $k_s$ of $\delv \widehat U$ facing  the slit~$s$.
%\end{proof}

\subsection{Twisted relative tangent cycles}
\label{subsect:TwistedRelTangentCycles}

So far, we have considered relative tangent cycles valued in $\R$. 
In  our analysis of Hitchin characters, we will encounter relative tangent cycles that are valued in $\R^{n-1}$
and behave in a very specific manner with respect to the involution $\tau \colon \widehat U\to \widehat U$ that exchanges the two sheets of the cover $\widehat U \to U$. 

More precisely, an \emph{$\R^{n-1}$--valued  tangent cycle for $\widehat \lambda$ relative to its slits} associates a vector $\alpha(k)\in \R^{n-1}$ to each arc $k$ tightly transverse to $\widehat\lambda$, in such a way that $\alpha$ is invariant under homotopy respecting $\widehat\lambda$ and is quasi-additive with respect to a boundary function $\partial \alpha \colon \{ \text{slits of } \widehat\lambda \} \to \R^{n-1}$. 

A \emph{twisted tangent cycle for $\lambda$ relative to its slits and valued in $\widehat \R^{n-1}$} is an $\R^{n-1}$--valued  relative tangent cycle $\alpha$ for $\widehat\lambda$ such that, for every tightly transverse arc $k$,
$$
\alpha \bigl( \tau(k) \bigr) = \overline {\alpha(k)}
$$
where $x \mapsto \bar x$ is the involution of $\R^{n-1}$ that reverses the order of the coordinates, namely that associates $\bar  x = (x_{n-1}, x_{n-2}, \dots, x_1)$ to $x=(x_1, x_2, \dots, x_{n-1})\in \R^{n-1}$. Let 
$$
\CC(\lambda, \slits; \widehat\R^{n-1}) = \bigl\{ \alpha \in \CC(\widehat\lambda, \slits;\R^{n-1}); \alpha\bigl( \tau(k) \bigr) = \overline{\alpha(k)} \bigr\}
$$ 
denote the space of these twisted relative tangent cycles. 

The terminology and notation is justified by the fact that these twisted relative tangent cycles can be interpreted as tangent cycles for the geodesic lamination $\lambda$, relative to the slits of $\lambda$, and valued in the twisted coefficient bundle $\widehat \R^{n-1} = (\widehat U \times \R^{n-1})/\Z_2$ where $\Z_2$ acts by $\tau$ on $\widehat U$ and by $x\mapsto \bar x$ on $\R^{n-1}$. 

We can similarly define the space of twisted tangent cyles
\begin{align*}
\CC(\lambda; \widehat\R^{n-1}) &=\bigl \{ \alpha \in \CC(\widehat\lambda;\R^{n-1}); \alpha\bigl( \tau(k) \bigr) = \overline{\alpha(k)}\bigr \}\\
&= \bigl \{ \alpha \in \CC(\lambda, \slits ;\widehat \R^{n-1}); \partial \alpha=0 \bigr \}
\end{align*}
where the second equality comes from Lemma~\ref{lem:RelativeTgtCyclesWithBdry0}.

\begin{prop}
\label{prop:ComputeTwistedTgentCycles}
The  vector spaces $\CC(\lambda; \widehat\R^{n-1})$ and $\CC(\lambda, \slits; \widehat\R^{n-1})$ have  dimensions
\begin{align*}
\dim \CC(\lambda; \widehat\R^{n-1}) &= 6(g-1)(n-1) + {\textstyle\lfloor \frac{n-1}2 \rfloor}\\
\dim \CC(\lambda, \slits; \widehat\R^{n-1}) &= 18(g-1)(n-1) 
\end{align*}
where $\lfloor x \rfloor$ denotes the largest integer that is less than or equal to $x$. 
\end{prop}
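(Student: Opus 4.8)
The plan is to move the whole computation to homology via the identifications already established, and then to realize each twisted space of tangent cycles as an eigenspace of a natural involution. Fix a train track neighborhood $U$ of $\lambda$, lift it to $\widehat U$, and let $\tau\colon\widehat U\to\widehat U$ be the covering involution. Since an $\R^{n-1}$--valued tangent cycle is just an $(n-1)$--tuple of $\R$--valued ones, tensoring the isomorphisms $\CC(\widehat\lambda;\R)\cong H_1(\widehat U;\R)$ and $\CC(\widehat\lambda,\slits;\R)\cong H_1(\widehat U,\delv\widehat U;\R)$ of Propositions~\ref{prop:HomologyTangentCycles} and \ref{prop:RelativeHomologyTangentCycles} with $\R^{n-1}$ gives
$$
\CC(\widehat\lambda;\R^{n-1})\cong H_1(\widehat U;\R)\otimes\R^{n-1}
\qquad\text{and}\qquad
\CC(\widehat\lambda,\slits;\R^{n-1})\cong H_1(\widehat U,\delv\widehat U;\R)\otimes\R^{n-1}.
$$
On each of these spaces the rule $\alpha\mapsto\bigl(k\mapsto\overline{\alpha(\tau k)}\bigr)$ is an involution, whose fixed-point set is by definition $\CC(\lambda;\widehat\R^{n-1})$, respectively $\CC(\lambda,\slits;\widehat\R^{n-1})$. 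I will show that under the identifications above this involution takes the form $\theta\otimes(x\mapsto\bar x)$ for a suitable involution $\theta$ of $H_1(\widehat U;\R)$ (resp. of $H_1(\widehat U,\delv\widehat U;\R)$), and then decompose into eigenspaces.

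The delicate point is the precise sign of $\theta$. Because $U=\widehat U/\langle\tau\rangle$ is a subsurface of the oriented surface $S$, it is orientable, so $\tau$ is orientation-\emph{preserving} on $\widehat U$; hence $\tau_*$ respects the Poincar\'e--Lefschetz intersection pairings entering Propositions~\ref{prop:HomologyTangentCycles}--\ref{prop:RelativeHomologyTangentCycles}. On the other hand, those identifications use the left-to-right orientation of the ties of $\widehat U$ coming from the \emph{canonical} orientation of $\widehat\lambda$, and $\tau$ reverses this canonical orientation by the very definition of the orientation cover; therefore $[\tau k]=-\tau_*[k]$ for every tie $k$, and feeding this into the relation $\alpha(k)=[k]\cdot[\alpha]$ gives $[\tau^*\alpha]=-\tau_*[\alpha]$, i.e. $\theta=-\tau_*$. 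Consequently $\CC(\lambda;\widehat\R^{n-1})$ is the $(+1)$--eigenspace of $(-\tau_*)\otimes(x\mapsto\bar x)$, namely
$$
\bigl(H_1(\widehat U;\R)^{-}\otimes V^{+}\bigr)\ \oplus\ \bigl(H_1(\widehat U;\R)^{+}\otimes V^{-}\bigr),
$$
where $H_1(\widehat U;\R)^{\pm}$ is the $(\pm1)$--eigenspace of $\tau_*$ and $V^{\pm}=\{x\in\R^{n-1};\ \bar x=\pm x\}$; and likewise for $\CC(\lambda,\slits;\widehat\R^{n-1})$ with $(\widehat U,\delv\widehat U)$ in place of $\widehat U$.

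It then remains to compute the four relevant dimensions. One has $\dim V^{-}=\lfloor\tfrac{n-1}2\rfloor$ and $\dim V^{+}=(n-1)-\lfloor\tfrac{n-1}2\rfloor$. The transfer isomorphism (valid over $\R$) identifies $H_1(\widehat U;\R)^{+}$ with $H_1(U;\R)$ and $H_1(\widehat U,\delv\widehat U;\R)^{+}$ with $H_1(U,\delv U;\R)$. Using that $U$ and $\widehat U$ are connected, that $\chi(U)=\chi(S)-4(g-1)=-6(g-1)$, that $\delv U$ consists of $12(g-1)$ arcs and $\delv\widehat U$ of $24(g-1)$ arcs (one per spike of $S-\lambda$, respectively per slit of $\widehat\lambda$, by Proposition~\ref{prop:TrainTrackMaxGeodLam}), and that $\dim H_1(\widehat U;\R)=12g-11$ (Lemma~\ref{lem:DimensionTangentCycles}), the long exact sequences of the pairs $(U,\delv U)$ and $(\widehat U,\delv\widehat U)$ yield $\dim H_1(\widehat U;\R)^{+}=6(g-1)+1$, $\dim H_1(\widehat U;\R)^{-}=6(g-1)$, and $\dim H_1(\widehat U,\delv\widehat U;\R)^{+}=\dim H_1(\widehat U,\delv\widehat U;\R)^{-}=18(g-1)$. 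Substituting into the two eigenspace decompositions and simplifying gives $\dim\CC(\lambda;\widehat\R^{n-1})=6(g-1)(n-1)+\lfloor\tfrac{n-1}2\rfloor$ and $\dim\CC(\lambda,\slits;\widehat\R^{n-1})=18(g-1)(n-1)$, as claimed.

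I expect the only genuine obstacle to be the sign $\theta=-\tau_*$: the twisting involution on tangent cycles corresponds to $(-\tau_*)\otimes(x\mapsto\bar x)$ rather than to the naive $\tau_*\otimes(x\mapsto\bar x)$, and overlooking it would turn $\lfloor\tfrac{n-1}2\rfloor$ into $\lceil\tfrac{n-1}2\rceil$ in the first formula (the second formula is insensitive to the sign, since both $\tau_*$--eigenspaces of $H_1(\widehat U,\delv\widehat U;\R)$ have dimension $18(g-1)$). Everything else is routine bookkeeping with Euler characteristics and the train track combinatorics of Proposition~\ref{prop:TrainTrackMaxGeodLam}; alternatively, once $\CC(\lambda;\widehat\R^{n-1})$ and $\CC(\lambda,\slits;\widehat\R^{n-1})$ are identified with the twisted homology groups $H_1(U;\widetilde\R^{n-1})$ and $H_1(U,\delv U;\widetilde\R^{n-1})$, both dimensions drop out of the local-coefficient Euler characteristic identity $\chi(\,\cdot\,;\widetilde\R^{n-1})=(n-1)\,\chi(\,\cdot\,)$ together with the (immediate) computation of $H_0$ and the vanishing of $H_2$.
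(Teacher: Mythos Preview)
Your proof is correct and follows essentially the same route as the paper: both identify the twisted cycle spaces with the subspace of $H_1(\widehat U;\R^{n-1})$ (resp.\ $H_1(\widehat U,\delv\widehat U;\R^{n-1})$) where $\tau_*(c)=-\overline c$, relying on exactly the sign observation you flag, and then compute dimensions by Euler characteristic and the long exact sequence of the pair. The only cosmetic difference is that the paper packages this as twisted homology $H_*(U;\widetilde\R^{n-1})$ and reads off the answer directly, whereas you split further into the tensor product of $\tau_*$--eigenspaces and $(x\mapsto\bar x)$--eigenspaces; you note this alternative yourself at the end, and indeed it is what the paper does.
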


\begin{proof} We use  a version of Propositions~\ref{prop:HomologyTangentCycles} and \ref{prop:RelativeHomologyTangentCycles} that gives a homological interpretation of twisted tangent cycles.  It uses a different coefficient bundle  $\widetilde \R^{n-1} = (\widehat U \times \R^{n-1})/\Z_2$, where $\Z_2$ still acts by the covering involution $\tau$ on $\widehat U$ but now acts  on $\R^{n-1}$ by $x\mapsto -\bar x$.

Indeed, because $\tau$ reverses the orientation of $\widehat\lambda$, the map $\phi \colon \CC(\widehat \lambda, \slits; \R) \to H_1(\widehat U, \delv \widehat U; \R)$ of the proof of Proposition~\ref{prop:RelativeHomologyTangentCycles} conjugates the action of $\tau$ on $\CC(\widehat \lambda, \slits; \R)$ to $-\tau_*$, where $\tau_* \colon H_1(\widehat U, \delv \widehat U; \R) \to H_1(\widehat U, \delv \widehat U; \R)$ is the homomorphism induced by $\tau$. Therefore, the tensor product $\phi\otimes\Id_{\R^{n-1}}$  sends $\CC(\lambda, \slits; \widehat \R^{n-1})$ to $\{ c\in H_1(\widehat U, \delv \widehat U; \R^{n-1}); \tau_*(c) = -\overline c\}$, which is naturally identified to $H_1(U,\delv U; \widetilde \R^{n-1})$. This provides a natural isomorphism $\CC( \lambda, \slits; \widehat \R^{n-1}) \cong H_1(U,\delv U; \widetilde \R^{n-1})$, which also induces an  isomorphism $\CC( \lambda; \widehat \R^{n-1}) \cong H_1(U; \widetilde \R^{n-1})$.

Considering Euler characteristics, 
$$
\chi( U) (n-1) = \dim H_0( U; \widetilde \R^{n-1}) - \dim H_1( U; \widetilde \R^{n-1}) .
$$
Since $\widehat U$ is connected, 
$$
H_0( U; \widetilde \R^{n-1}) = \{ c\in H_0(\widehat U;  \R^{n-1}); \tau_*(c) =-\overline c\} \cong  \{x\in \R^{n-1}; x=-\overline x\}
$$
has dimension $\lfloor \frac{n-1}2 \rfloor$. Also, because the complement $S-U$ consists of $4(g-1)$ hexagons, $\chi(U) = \chi(S)-4(g-1) = -6(g-1)$. It follows that 
\begin{align*}
\dim \CC( \lambda; \widehat \R^{n-1}) &=\dim H_1( U; \widetilde \R^{n-1})
 =  - \chi( U) (n-1) + \dim H_0( U; \widetilde \R^{n-1}) \\
&=  6(g-1)(n-1) + {\textstyle\lfloor \frac{n-1}2 \rfloor}. 
\end{align*}

For $\CC( \lambda, \slits; \widehat \R^{n-1}) \cong H_1(U,\delv U; \widetilde \R^{n-1})$, consider the exact sequence
$$
0\to H_1(U ; \widetilde \R^{n-1}) \to H_1(U,\delv U; \widetilde \R^{n-1}) \to H_0(\delv U; \widetilde \R^{n-1}) \to H_0( U; \widetilde \R^{n-1})\to 0. 
$$
We already observed that $\dim H_0( U; \widetilde \R^{n-1}) = \lfloor\frac{n-1}2\rfloor$. Since $\tau$ respects no component of $\delv \widehat U$, the twisted homology space $H_0(\delv U; \widetilde \R^{n-1}) $ is isomorphic to $ H_0(\delv U;  \R^{n-1}) $ and therefore has dimension $12(g-1)(n-1)$ as $\delv U$ has $12(g-1)$ components. 
It follows from the exact sequence above that
\begin{align*}
\dim \CC( \lambda, \slits; \widehat \R^{n-1})
&= \dim H_1(U, \delv U ; \widetilde \R^{n-1}) \\
&=  \dim H_1(U ; \widetilde \R^{n-1}) +\dim H_0(\delv U; \widetilde \R^{n-1}) - \dim H_0( U; \widetilde \R^{n-1})\\
&= 18(g-1)(n-1). \qedhere
\end{align*}
\end{proof}

\subsection{Relative tangent cycles from another viewpoint}
\label{subsect:RelTgtCyclesDifferentView}

We give a different description of relative tangent cycles. Compared to the original definition, this presentation does not lend itself as well to the homological interpretation and computations of the previous sections. However, it will be better adapted to the geometric constructions that form the core of this  article. It also bypasses the need to consider the orientation cover $\widehat\lambda$. 

In the universal cover $\widetilde S$ of $S$, let $\widetilde U$ be the preimage of a train track neighborhood $U$ of $\lambda$. 

A relative tangent cycle $\alpha \in \CC(\widehat \lambda, \slits; \R)$ associates a number $\alpha(T,T')\in \R$ to each ordered pair of  distinct components $T$ and $T'$  of $\widetilde S - \widetilde\lambda$ as follows. Choose an oriented arc $\widetilde k\subset \widetilde S$ that is tightly transverse to $\widetilde \lambda$ and joints $T$ to $T'$; in this preliminary stage, one can for instance  take for $\widetilde k$ any geodesic arc going from $T$ to $T'$, since every component of $\widetilde S - \widetilde \lambda$ is a triangle. Using Proposition~\ref{prop:TrainTrackMaxGeodLam}, one can modify  $\widetilde k$ by a homotopy respecting $\widetilde \lambda$  so that it is contained in $\widetilde U$, and is tightly transverse to $\widetilde\lambda$ in $\widetilde U$. Project $\widetilde k$  to an arc $k\subset U$, which is tightly transverse to $\lambda$.

 The tightly transverse arc $k$ admits two lifts to the 2--fold cover $\widehat U$ of $U$, each oriented so that the canonical orientation of the leaves of the orientation cover $\widehat\lambda$ points to the left of these arcs at each intersection point. Let $\widehat k \subset \widehat U$ be the lift whose orientation projects to the same orientation of $k$ as that of $\widetilde k$. By construction, $\widehat k$ is tightly transverse to $\widehat \lambda$ in $\widehat U$, and we can consider the number $\alpha(\widehat k) \in \R$ defined by $\alpha \in \CC(\widehat \lambda, \slits; \R)$. 
 
 In this construction, the arc $\widetilde k$ is uniquely determined by $T$ and $T'$ up to homotopy respecting $\widetilde\lambda$ in $\widetilde U$, which determines $\widehat k$ up to homotopy respecting $\widehat \lambda$ in $\widehat U$. It follows that $\alpha(\widehat k)$ depends only on $T$ and $T'$, and we can define $\alpha(T, T') =  \alpha(\widehat k) \in \R$. 
 
 The quasi-additivity property of $\alpha \in \CC(\widehat \lambda, \slits; \R)$ has a relatively simple translation in this context. Each slit $s$ of $\lambda$, namely each spike of $S- \lambda$, lifts to two slits of $\widehat\lambda$: a positive spike $s^+$ of $\widehat U - \widehat\lambda$ where the leaves of $\widehat \lambda$ adjacent to $s^+$ are oriented towards the end of this spike by the canonical orientation of $\widehat\lambda$; and a negative spike $s^-$ where the adjacent leaves are oriented away from the end of $s^-$. Define two functions $\partial^+\alpha$, $\partial^-\alpha \colon \{ \text{slits of } \lambda\} \to \R$ by the property that $\partial^+\alpha (s) = \partial\alpha(s^+)$ and $\partial^-\alpha (s) = \partial\alpha(s^-)$ for every slit $s$ of $\lambda$, where $\partial\alpha \colon \{ \text{slits of } \widehat\lambda\} \to \R$ is the boundary of $\alpha \in \CC(\widehat \lambda, \slits; \R)$. 
 
 If $T$, $T'$, $T''$ are three components of $\widetilde S - \widetilde \lambda$ such that $T''$ separates $T$ from $T'$ in $\widetilde S$, let $\widetilde s''$ be the spike of $T''$ delimited by the two sides of $T''$ that separate $T$ from $T'$, and let $s''$ be the projection of $\widetilde s''$ to $S$. The quasi-additivity of $\alpha \in \CC(\widehat \lambda, \slits; \R)$ then translates to the property that
 $$
 \alpha(T,T') = \alpha(T,T'') + \alpha (T'', T') - \partial^+\alpha(s'')
 $$
 if the spike $\widetilde s''$ of $T''$ points to the left as seen from $T$, and
 $$
 \alpha(T,T') = \alpha(T,T'') + \alpha (T'', T') + \partial^-\alpha(s'')
 $$
 if $\widetilde s''$ points to the right as seen from $T$.   
 
 The following statement is then automatic. 
 
 \begin{prop}
 \label{prop:RelTgtCycleDiffrentViewpoint}
The above construction provides a one-to-one correspondence between relative tangent cycles $\alpha \in \CC(\widehat \lambda, \slits; \R)$ and maps $\alpha$ associating a number $\alpha(T,T')\in \R$ to each ordered pair of  distinct components $T$ and $T'$  of $\widetilde S - \widetilde\lambda$ for which there exist two functions $\partial^\pm\alpha \colon \{ \text{slits of } \lambda\} \to \R$ with:
\begin{enumerate}
\item $\alpha$ is $\pi_1(S)$--invariant, in the sense that $\alpha(\gamma T, \gamma T') = \alpha(T,T')$ for every $\gamma \in \pi_1(S)$ and every pair of  distinct components $T$ and $T'$  of $\widetilde S - \widetilde\lambda$;
\item if $T''$ separates $T$ from $T'$ in $\widetilde S$, if $\widetilde s''$ is the spike of $T''$ delimited by the two sides of $T''$ that separate $T$ from $T'$, and if $s''$ is the slit of $\lambda$ defined by the projection of $\widetilde s''$ to $S$, then
 $$
 \alpha(T,T') = \alpha(T,T'') + \alpha (T'', T') - \partial^+\alpha(s'')
 $$
 if $\widetilde s''$ points to the left as seen from $T$, and
 $$
 \alpha(T,T') = \alpha(T,T'') + \alpha (T'', T') + \partial^-\alpha(s'')
 $$
 if $\widetilde s''$ points to the right as seen from $T$. 
\end{enumerate}

In addition, the boundary $\partial\alpha \colon \{ \text{slits of } \widehat \lambda\} \to \R$ is related to the functions $\partial^\pm\alpha \colon \{ \text{slits of } \lambda\} \to \R$  by the property that $\partial \alpha(s^\pm) = \partial^\pm \alpha(s)$ for every slit $s$ of $\lambda$ lifting to a positive slit $s^+$ and a negative slit $s^-$ of the orientation cover $\widetilde \lambda$. 
\qed
\end{prop}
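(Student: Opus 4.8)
The plan is to package the constructions carried out in the paragraphs preceding the statement and to verify that the resulting assignment is a bijection; as the text signals, once the translation of quasi-additivity has been recorded there is nothing left but bookkeeping.

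First I would treat the forward direction. Given $\alpha \in \CC(\widehat\lambda, \slits; \R)$, the recipe described above — pick a tightly transverse arc $\widetilde k$ from $T$ to $T'$, homotope it (respecting $\widetilde\lambda$) into $\widetilde U$, project to $k \subset U$, lift to the preferred oriented arc $\widehat k \subset \widehat U$, and set $\alpha(T,T') = \alpha(\widehat k)$ — must be shown to be independent of the choice of $\widetilde k$. The key geometric input here is that any two tightly transverse arcs from $T$ to $T'$ in $\widetilde S$ cross exactly the leaves and the spikes of $\widetilde\lambda$ that separate $T$ from $T'$, and in the same order; this follows from Proposition~\ref{prop:TrainTrackMaxGeodLam} together with the tree structure of $\widetilde S - \widetilde\lambda$, and shows that $\widetilde k$ is unique up to homotopy respecting $\widetilde\lambda$. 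Hence $\widehat k$ is well defined up to homotopy respecting $\widehat\lambda$, and invariance of $\alpha$ under such homotopies gives $\alpha(T,T')$. The $\pi_1(S)$--invariance of condition (1) is then immediate: replacing $(T,T')$ by $(\gamma T, \gamma T')$ replaces the connecting arc by its $\gamma$--translate, which projects to the \emph{same} oriented arc $k\subset U$ and hence to the same $\widehat k$. Finally, condition (2), with $\partial^\pm\alpha(s) := \partial\alpha(s^\pm)$, is precisely the translation of the quasi-additivity of $\alpha$ already worked out in the discussion just before the statement.

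Next I would treat the reverse direction. Given a map $\alpha$ on ordered pairs of distinct components of $\widetilde S - \widetilde\lambda$ satisfying (1) and (2), together with functions $\partial^\pm\alpha$, define a function on tightly transverse arcs of $\widehat U$ as follows: an arc $\widehat k \subset \widehat U$ tightly transverse to $\widehat\lambda$, with its canonical left-to-right orientation, lifts to an arc $\widetilde k \subset \widetilde U \subset \widetilde S$ tightly transverse to $\widetilde\lambda$ whose two ends lie in well-defined components $T$ and $T'$ of $\widetilde S - \widetilde\lambda$; set $\alpha(\widehat k) = \alpha(T,T')$, taking $\widetilde k$ oriented from $T$ to $T'$ so that its orientation projects to that of $\widehat k$. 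Here I would check three points: (i) the lift $\widetilde k$ is unique up to the deck action of the covering $\widetilde U \to \widehat U$, whose group is an index--two subgroup of $\pi_1(S)$, so condition (1) makes $\alpha(\widehat k)$ independent of the lift; (ii) a homotopy of $\widehat k$ respecting $\widehat\lambda$ lifts to a homotopy of $\widetilde k$ respecting $\widetilde\lambda$, which changes neither $T$ nor $T'$; and (iii) splitting $\widehat k$ at a point in a component $d$ of $\widehat k - \widehat\lambda$ facing a slit $s^{\pm}$ of $\widehat\lambda$ corresponds to inserting an intermediate triangle $T''$ of $\widetilde S - \widetilde\lambda$ separating $T$ from $T'$, whose relevant spike $\widetilde s''$ points left or right as seen from $T$ according to the sign of the slit, so that condition (2) yields the quasi-additivity relation $\alpha(\widehat k) = \alpha(\widehat k_1) + \alpha(\widehat k_2) - \epsilon(s)\partial\alpha(s)$ with $\partial\alpha(s^\pm) = \partial^\pm\alpha(s)$.

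The two assignments are then visibly mutually inverse: unwinding either composite on a tightly transverse arc (respectively, on an ordered pair of complementary triangles) returns the same value, and two relative tangent cycles agreeing on all tightly transverse arcs coincide (likewise for the maps on pairs). The boundary identity $\partial\alpha(s^\pm) = \partial^\pm\alpha(s)$ has been built into both directions, which finishes the argument. The only genuinely delicate step is the orientation and ``which spike faces which complementary region'' bookkeeping in (iii) — confirming that the left/right convention as seen from $T$ in condition (2) is compatible with the positive/negative convention for the slits $s^\pm$ of $\widehat\lambda$ and with the sign $\epsilon(s)$ — and this is exactly the computation already recorded before the statement, which is why the proposition is essentially automatic.
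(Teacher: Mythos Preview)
Your proposal is correct and follows the same approach as the paper, which simply declares the statement ``automatic'' from the construction and discussion preceding it and marks it with a \qed; you have faithfully unpacked those details. One small imprecision: in the reverse direction it is cleaner to project $\widehat k$ to $k\subset U$ and then lift to $\widetilde k\subset\widetilde U$, where the ambiguity is the full deck group $\pi_1(S)$ of $\widetilde U\to U$ (not an index-two subgroup), but condition~(1) handles this just the same.
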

 
 Proposition~\ref{prop:RelTgtCycleDiffrentViewpoint} has an immediate factor-by-factor extension to relative tangent cycles valued in $\R^{n-1}$. By restriction to the space of twisted relative tangent cycles $\CC( \lambda, \slits; \widehat\R^{n-1}) \subset \CC(\widehat \lambda, \slits; \R^{n-1})$, this automatically gives the following statement. Recall that $x \mapsto \overline x$ denotes the involution of $\R^{n-1}$ that sends $x=(x_1, x_2, \dots, x_{n-1})$ to $\overline x=(x_{n-1}, x_{n-2}, \dots, x_1)$. 
 
 \begin{prop}
  \label{prop:TwistedRelTgtCycleDiffrentViewpoint}
Proposition~{\upshape\ref{prop:RelTgtCycleDiffrentViewpoint}}  provides a one-to-one correspondence between twisted relative tangent cycles $\alpha \in \CC( \lambda, \slits; \widehat\R^{n-1})$ and maps $\alpha$ associating a vector $\alpha(T,T')\in \R^{n-1}$ to each ordered pair of  components $T$ and $T'$  of $\widetilde S - \widetilde\lambda$ such that there exists a function $\partial^+\alpha \colon \{ \text{slits of } \lambda\} \to \R^{n-1}$ with:
\begin{enumerate}
\item $\alpha$ is $\pi_1(S)$--equivariant, in the sense that $\alpha(\gamma T, \gamma T') = \alpha(T,T')$ for every $\gamma \in \pi_1(S)$ and every pair of  distinct components $T$ and $T'$  of $\widetilde S - \widetilde\lambda$;
\item if $T''$ separates $T$ from $T'$ in $\widetilde S$, if $\widetilde s''$ is the spike of $T''$ delimited by the two sides of $T''$ that separate $T$ from $T'$, and if $s''$ is the slit of $\lambda$ defined by the projection of $\widetilde s''$ to $S$, then
 $$
 \alpha(T,T') = \alpha(T,T'') + \alpha (T'', T') - \partial^+\alpha(s'')
 $$
 if $\widetilde s''$ points to the left as seen from $T$, and
 $$
 \alpha(T,T') = \alpha(T,T'') + \alpha (T'', T')  - \overline{ \partial^+\alpha(s'')}
 $$
 if $\widetilde s''$ points to the right as seen from $T$;
 
 \item
 $\alpha(T',T)=\overline{\alpha(T,T')}$ for every pair of  distinct components $T$ and $T'$  of $\widetilde S - \widetilde\lambda$.
 
\end{enumerate}

 In addition, the boundary $\partial\alpha \colon \{ \text{slits of } \widehat \lambda\} \to \R$ is related to the function $\partial^+\alpha \colon \{ \text{slits of } \lambda\} \to \R$  by the property that $\partial \alpha(s^+) = \partial^+ \alpha(s)$ and $\partial \alpha(s^-) =  - \overline{\partial^+\alpha(s)}$  for every slit $s$ of $\lambda$ lifting to a positive slit $s^+$ and a negative slit $s^-$ of the orientation cover $\widetilde \lambda$. 
 \qed
\end{prop}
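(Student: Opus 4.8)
The plan is to deduce this statement from the factor-by-factor extension of Proposition~\ref{prop:RelTgtCycleDiffrentViewpoint} by translating the twisting condition $\alpha\bigl(\tau(k)\bigr) = \overline{\alpha(k)}$ into a relation between the values $\alpha(T,T')$ and $\alpha(T',T)$. That extension already identifies $\CC(\widehat\lambda, \slits; \R^{n-1})$ with the set of $\pi_1(S)$--invariant maps $(T,T') \mapsto \alpha(T,T') \in \R^{n-1}$ that are quasi-additive with respect to a pair of boundary functions $\partial^\pm\alpha \colon \{\text{slits of }\lambda\} \to \R^{n-1}$, so the only task is to cut out the subspace $\CC(\lambda, \slits; \widehat\R^{n-1})$ on both sides of this identification and record the resulting simplifications.

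First I would work out how the covering involution $\tau \colon \widehat U \to \widehat U$ interacts with the recipe $\alpha(T,T') = \alpha(\widehat k)$ of \S\ref{subsect:RelTgtCyclesDifferentView}. Given distinct components $T$, $T'$ of $\widetilde S - \widetilde\lambda$ and a tightly transverse arc $\widetilde k$ from $T$ to $T'$, its projection $k \subset U$ has two lifts to $\widehat U$, and the construction selects the lift $\widehat k$ whose left-to-right orientation, with respect to the canonical orientation of $\widehat\lambda$, projects to the orientation of $k$ induced by $\widetilde k$. Since $\tau$ reverses the canonical orientation of $\widehat\lambda$, the lift selected for the reversed arc (from $T'$ to $T$) is exactly $\tau(\widehat k)$. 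Hence the twisting condition gives $\alpha(T',T) = \alpha\bigl(\tau(\widehat k)\bigr) = \overline{\alpha(\widehat k)} = \overline{\alpha(T,T')}$, which is condition~(3).

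Next I would extract the relation between $\partial^+\alpha$ and $\partial^-\alpha$. Take three components $T$, $T'$, $T''$ with $T''$ separating $T$ from $T'$, so that the distinguished spike $\widetilde s''$ of $T''$ points to the left as seen from $T$ and therefore to the right as seen from $T'$. Writing the appropriate quasi-additivity formula of Proposition~\ref{prop:RelTgtCycleDiffrentViewpoint} once for the pair $(T,T')$ and once for the pair $(T',T)$, applying the bar to the first and using $\alpha(T',T) = \overline{\alpha(T,T')}$ (together with the fact that $x\mapsto\overline x$ is an $\R$--linear involution), one finds $\partial^-\alpha(s'') = -\overline{\partial^+\alpha(s'')}$. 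Substituting this back, the two quasi-additivity formulas of Proposition~\ref{prop:RelTgtCycleDiffrentViewpoint} collapse to the single pair displayed in~(2), phrased entirely in terms of $\partial^+\alpha$, and the boundary relation $\partial\alpha(s^+) = \partial^+\alpha(s)$, $\partial\alpha(s^-) = -\overline{\partial^+\alpha(s)}$ drops out of the corresponding relation in Proposition~\ref{prop:RelTgtCycleDiffrentViewpoint}.

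For the converse, given a map $\alpha$ satisfying~(1)--(3), I would set $\partial^-\alpha(s) := -\overline{\partial^+\alpha(s)}$, verify that $\alpha$ then satisfies the hypotheses of the $\R^{n-1}$--valued version of Proposition~\ref{prop:RelTgtCycleDiffrentViewpoint} (its second quasi-additivity formula being recovered from~(2) by applying the bar, reversing the computation above), conclude that $\alpha$ comes from an element of $\CC(\widehat\lambda, \slits; \R^{n-1})$, and finally use~(3) to check that this element satisfies $\alpha\bigl(\tau(k)\bigr)=\overline{\alpha(k)}$, hence lies in $\CC(\lambda, \slits; \widehat\R^{n-1})$. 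The only genuinely delicate point is the bookkeeping in the first step: correctly tracking how the sheet-selection convention, the left-to-right orientations, and the orientation reversal by $\tau$ fit together, since everything afterwards is formal once $\alpha(T',T) = \overline{\alpha(T,T')}$ is established.
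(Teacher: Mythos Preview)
Your proposal is correct and follows exactly the approach the paper indicates: the paper simply states that the $\R^{n-1}$--valued version of Proposition~\ref{prop:RelTgtCycleDiffrentViewpoint} is an immediate factor-by-factor extension, and that restricting to the twisted subspace ``automatically gives'' this statement, so the proposition carries a \qed\ with no separate proof. You have supplied precisely the bookkeeping the paper leaves implicit---in particular the identification of $\tau(\widehat k)$ with the lift selected for the reversed arc, and the derivation of $\partial^-\alpha = -\overline{\partial^+\alpha}$ from condition~(3), which the paper records just after the proposition.
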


Note that the function $\partial^-\alpha \colon \{ \text{slits of } \lambda\} \to \R^{n-1}$ that one would have expected in this case is equal to $\partial^-\alpha =- \overline{ \partial^+\alpha}$ by the third condition of Proposition~\ref{prop:TwistedRelTgtCycleDiffrentViewpoint}. In particular,   $\partial \alpha (s^\mp) = - \overline{\partial\alpha(s^\pm)}$ for every $\alpha \in \CC( \lambda, \slits; \widehat\R^{n-1})$ when  $s^+$ and $s^-$  are the positive and negative slits of the orientation cover $\widehat \lambda$ that lift the same slit $s$ of $\lambda$. 

\section{The shearing tangent cycle of a Hitchin character}
\label{bigsect:ShearingCycle}

We will now associate a twisted relative tangent cycle $\sigma^\rho \in \CC( \lambda, \slits; \widehat \R^{n-1}) \cong H_1(U,\delv U; \widetilde \R^{n-1})$  to each Hitchin character $\rho \in \Hit(S)$. The key ingredient of this construction is the slithering map introduced in the next section. 

\subsection{Slithering}
\label{subsect:Slithering}

The slithering construction is a higher dimensional analogue of the horocyclic foliation defined, in the case \cite{Thu1, Bon96} where $n=2$ , by a hyperbolic metric and a maximal geodesic lamination $\lambda$ on the surface $S$. 

Consider a Hitchin homomorphism $\rho \colon \pi_1(S) \to \PSL$, and its associated flag map $\mathcal F_\rho \colon \partial_\infty \widetilde S \to \Flag$ as in \S\ref{subsect:FlagCurve}.

In the universal cover $\widetilde S$, let $g$ be a leaf of the preimage $\widetilde \lambda \subset \widetilde S$ of the maximal geodesic lamination $\lambda\subset S$. Choose an arbitrary orientation for $g$, and let $x_+$ and $x_-$  be  its positive and negative endpoints, respectively.  By Theorem~\ref{thm:FlagCurvePositive}, the flag pair $\bigl(\mathcal F_\rho(x_+), \mathcal F_\rho(x_-)  \bigr)$ is generic. It therefore defines a decomposition of $\R^n$ as the direct sum of the lines $\widetilde L_a(g) = \mathcal F_\rho(x_+)^{(a)} \cap \mathcal F_\rho(x_-)^{(n-a+1)}$, as in \S\ref{subsect:FlagCurve}.

Note that reversing the orientation of $g$ exchanges $x_+$ and $x_-$, and therefore replaces $\widetilde L_a(g)$ by $\widetilde L_{n-a+1}(g)$. 

Now consider two leaves $g$ and $g'\subset \widetilde{\lambda}$. We say that $g$ and $g'$ are \emph{oriented in parallel} if exactly one of the orientations of $g$ and $g'$ coincides with the boundary orientation determined by the component of $S-g\cup g'$ that separates $g$ from $g'$.

\begin{prop}
\label{prop:Slithering}
There exists a unique family of linear isomorphisms $\Sigma_{gg'} \colon \R^n \to \R^n$, indexed by all pairs of leaves $g$, $g' \subset \widetilde \lambda$, such that:

\begin{enumerate}

\item $\Sigma_{gg} = \Id_{\R^n}$, $\Sigma_{g'g}=\big(\Sigma_{gg'} \big )^{-1} $, and  $\Sigma_{gg''} = \Sigma_{gg'} \circ \Sigma_{g'g''}$ when $g'$ separates $g$ from $g''$;

\item $\Sigma_{gg'}$  depends locally H\"older continuously on $g$ and $g'$; namely, the map $(g,g') \mapsto \Sigma_{gg'}$ is H\"older continuous on (the square of) any compact subset of the space of leaves of $\widetilde \lambda$;

\item if $g$ and $g'$ have an endpoint $x\in \partial_\infty \widetilde S$ in common and are oriented towards $x$, and if $E= \F_\rho(x)\in \Flag$, then $\Sigma_{gg'}$  sends each line $\widetilde L_a(g') $ to $ \widetilde L_a(g)$ and its  restriction $\widetilde L_a(g') \to \widetilde L_a(g)$ of $\Sigma_{gg'}$ is the composition of the two natural isomorphisms $\widetilde L_a(g') \cong E^{(a)}/E^{(a-1)} \cong \widetilde L_a(g)$.
\end{enumerate}

In addition, the maps $\Sigma_{gg'}$ satisfy
\begin{enumerate}\setcounter{enumi}{3}

\item if $g$ and $g'$ are oriented in parallel, $\Sigma_{gg'}$ sends each line $ \widetilde L_a(g')$ to the line $ \widetilde L_a(g)$;

\item $\Sigma_{gg'}\colon \R^n \to \R^n$ has determinant $+1$.
\end{enumerate}
\end{prop}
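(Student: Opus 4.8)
The plan is to build $\Sigma_{gg'}$ as a kind of parallel transport along the lamination, first along finite chains of leaves and then by a limiting/continuity argument over the whole (possibly uncountable) family. I would begin with the case where $g$ and $g'$ share an endpoint $x\in\partial_\infty\widetilde S$ and are oriented towards $x$: here condition (3) literally defines $\Sigma_{gg'}$, since the flag $E=\F_\rho(x)$ gives canonical identifications $\widetilde L_a(g')\cong E^{(a)}/E^{(a-1)}\cong\widetilde L_a(g)$, and one checks directly that this map has the cocycle property among such ``confluent'' leaves and depends H\"older-continuously on the data via the H\"older continuity of $\F_\rho$ (Proposition~\ref{prop:FlagCurve}). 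Next, for two leaves $g$, $g'$ that do not share an endpoint but are close (say, bounding a thin region of $\widetilde S-\widetilde\lambda$ that meets only finitely many leaves, or more generally related by a short transverse arc), I would approximate $\Sigma_{gg'}$ by composing finitely many of the elementary confluent maps along an interpolating chain of leaves spiraling towards a common spike, in the spirit of Thurston's horocyclic foliation in the $n=2$ case; the cocycle requirement in (1) forces how these compose, and one must show the composition is independent of the chosen chain.

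The core analytic step — and the one I expect to be the main obstacle — is the convergence of these finite approximations as the interpolating chains are refined, i.e.\ showing that the ``slithering'' across the uncountably many leaves of $\widetilde\lambda$ between $g$ and $g'$ actually converges to a well-defined limit $\Sigma_{gg'}\in\mathrm{GL}_n(\R)$, with the H\"older estimate (2) uniform on compacta. This is where the positivity of the flag curve (Theorem~\ref{thm:FlagCurvePositive}) and the resulting control on the triple ratios enter: positivity should give the contraction/comparison estimates needed to bound the product of elementary transition maps and to see that the correction from inserting one more leaf decays at a H\"older rate in the transverse measure of the gap, so that the infinite product converges absolutely. Once $\Sigma_{gg'}$ exists and is H\"older, properties (1) and (3) pass to the limit by construction, and uniqueness follows because any family satisfying (1)–(3) must agree with the finite approximations on confluent pairs and hence, by (1) and density/continuity, everywhere.

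For the two remaining ``In addition'' properties: property (4), that $\Sigma_{gg'}$ carries $\widetilde L_a(g')$ to $\widetilde L_a(g)$ when $g$ and $g'$ are oriented in parallel, I would deduce from property (3) together with (1). Write $g$ and $g'$ as limits of a chain of leaves, each consecutive pair sharing a spike; the orientations being ``in parallel'' means that at each spike the two adjacent leaves are oriented towards that spike, so property (3) applies at every elementary step and sends $\widetilde L_a$ to $\widetilde L_a$; composing and taking the limit preserves this, since the line bundles $\widetilde L_a$ vary continuously. (One must be slightly careful about which of the two endpoints plays the role of $x$ in each elementary step, but the ``oriented in parallel'' hypothesis is exactly the combinatorial condition making this consistent along the whole chain.)

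Finally, property (5), $\det\Sigma_{gg'}=+1$: the determinant is a continuous homomorphism-like invariant, so by the cocycle property (1) and continuity (2) it suffices to check $\det=+1$ on the elementary confluent maps of property (3). For those, $\Sigma_{gg'}$ is block-diagonal with respect to the decompositions $\R^n=\bigoplus_a\widetilde L_a(g')$ and $\R^n=\bigoplus_a\widetilde L_a(g)$, and on the $a$-th block it is the composite $\widetilde L_a(g')\cong E^{(a)}/E^{(a-1)}\cong\widetilde L_a(g)$. Each such composite, read through the filtration of $E=\F_\rho(x)$, is literally the identity of the successive quotient $E^{(a)}/E^{(a-1)}$, so every diagonal block has determinant $1$ and hence $\det\Sigma_{gg'}=1$ for confluent pairs; passing to limits and using (1) gives it in general. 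Alternatively, since the whole construction is manifestly continuous in $\rho$ and $\Sigma_{gg}=\Id$ has determinant $1$, connectedness of the space of leaves (or of a path of configurations collapsing $g'$ onto $g$) forces the determinant, which takes values in $\R-\{0\}$ and is continuous, to stay equal to $+1$.
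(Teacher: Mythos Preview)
Your overall architecture is right, but two points miss the mark. First, the convergence of the infinite product of elementary slitherings does \emph{not} use positivity of the flag curve at all; what drives it is purely the H\"older continuity of $\F_\rho$ (Proposition~\ref{prop:FlagCurve}) together with the classical fact that $\sum_T \ell(k\cap T)^\nu<\infty$ for any $\nu>0$. Concretely, since $\Sigma_T$ depends smoothly on the generic flag pairs $\bigl(\F_\rho(x_T),\F_\rho(y_T)\bigr)$ and $\bigl(\F_\rho(x_T),\F_\rho(y_T')\bigr)$, H\"older continuity of $\F_\rho$ gives $\|\Sigma_T-\Id\|\le A\,\ell(k\cap T)^\nu$, and the product then converges absolutely. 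Positivity only enters later, in the definition of the shearing cycle. Your uniqueness sketch is also too loose: ``density'' of confluent pairs is not what is used. The paper instead writes any candidate $\Sigma'_{gg'}$ via the cocycle relation as an alternating product of elementary maps $\Sigma_{T_i}$ and ``gap'' maps $\Sigma'_{g_{T_i}'g_{T_{i+1}}}$, and uses H\"older continuity of $\Sigma'$ to show the gap maps are $\Id+O(\text{gap length}^\nu)$, forcing the same limit.

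The more serious gap is in your argument for property~(4). You propose to connect $g$ to $g'$ by a chain of leaves of $\widetilde\lambda$ in which each consecutive pair shares an endpoint, apply (3) at each step, and pass to the limit. Such a chain does \emph{not} exist inside $\widetilde\lambda$: the sides $g_{T_i}'$ and $g_{T_{i+1}}$ of consecutive separating triangles are generally separated by a Cantor packet of other leaves and share no endpoint, so the finite product $\Sigma_{T_1}\circ\cdots\circ\Sigma_{T_m}$ does \emph{not} send $\widetilde L_a(g')$ to $\widetilde L_a(g)$; it is only an approximation of $\Sigma_{gg'}$ with no a~priori flag-preserving property. The paper's fix is to go \emph{outside} $\widetilde\lambda$: between $T_i$ and $T_{i+1}$ one inserts an auxiliary geodesic $h_i$ joining an endpoint of $g_{T_i}'$ to an endpoint of $g_{T_{i+1}}$, forming two extra (possibly degenerate) triangles $U_i$, $U_i'$. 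The enriched product $\widehat\Sigma_{\mathcal T}$ including the maps $\Sigma_{U_i}$, $\Sigma_{U_i'}$ genuinely sends $\widetilde L_a(g')$ to $\widetilde L_a(g)$ at every finite stage, and one then shows $\widehat\Sigma_{\mathcal T}$ and $\Sigma_{\mathcal T}$ have the same limit using the same $\ell(k\cap T)^\nu$ estimates. Your treatment of~(5) is fine: each elementary $\Sigma_T$ is unipotent with respect to the flag $E_T$, hence has determinant~$1$, and this survives the limit.
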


By definition, $\Sigma_{gg'} \colon \R^n \to \R^n $ is the \emph{slithering map} from the line decomposition $\R^n = \bigoplus_{a=1}^n \widetilde L_a(g')$ to the line decomposition $\R^n = \bigoplus_{a=1}^n \widetilde L_a(g)$. We will construct $\Sigma_{gg'}$ by sweeping through all the leaves of $\widetilde \lambda$ that separate $g$ from $g'$, and by composition of a (usually infinite)  sequence of pivot moves as in Condition~(3) of Proposition~\ref{prop:Slithering}. The terminology of ``slithering'' is motivated by the fact that, in general,  any small section of this sweep involves both pivot moves    to the left and pivot moves to the right\footnote{In particular,  this is unrelated to Thurston's notion \cite{Thu2, Cal} of ``slithering'' for foliations of 3--dimensional manifolds, beyond the analogy with the movements of a snake.}. 

Note that, although the line decomposition $\R^n = \bigoplus_{a=1}^n \widetilde L_a(g)$ depends on an orientation for the leaf $g$, the slithering map $\Sigma_{gg'} \colon \R^n \to \R^n $ is independent of a choice of orientation for $g$ or $g'$.

\begin{proof}
[Proof of Proposition~\ref{prop:Slithering}]
We will split the construction of the slithering map of Proposition~\ref{prop:Slithering} into several steps, including a few lemmas. 

Let $T$ be a component of $\widetilde S - \widetilde\lambda$ that separates $g$ from $g'$. It is a triangle since the geodesic lamination $\lambda$ is maximal, and two of its three sides separate $g$ from $g'$; among these two sides, let $g_T$ be the one that is closest to $g$, and $g_T'$ the one closest to $g'$. Define $\Sigma_T=\Sigma_{g_{T}g_T'}$ by Condition~(3) of  Proposition~\ref{prop:Slithering}. Namely, if $E_T=\F_\rho(x_T)\in \Flag$ is the image under the flag map $\F_\rho \colon \partial_\infty \widetilde S \to \Flag$ of the common endpoint $x_T\in \partial_\infty \widetilde S$ of $g_T$ and $g_T'$, the map $\Sigma_T=\Sigma_{g_Tg_T'}$ sends $\R^n = \bigoplus_{a=1}^n \widetilde L_a(g_T')$ to $\R^n = \bigoplus_{a=1}^n \widetilde L_a(g_T)$ by the property that its restriction $\widetilde L_a(g_T') \to \widetilde L_a(g_T)$ coincides with the composition of the natural isomorphisms $\widetilde L_a(g_T') \cong E_T^{(a)}/E_T^{(a-1)} \cong \widetilde L_a(g_T)$. Note that $\Sigma_T$ has determinant 1, namely belongs to $\SL$.

We will now define
$$
\Sigma_{gg'} = \overrightarrow{\prod_{T}}\, \Sigma_T
$$
as the composition of the maps $\Sigma_T=\Sigma_{g_Tg_T'} \colon \R^n \to \R^n$ as $T$ ranges over all components of $\widetilde S - \widetilde \lambda$ separating $g$ from $g'$. Of course, there usually are  infinitely many maps in this composition, and we also must be careful with the order in which we compose these maps; the arrow over the product symbol is here to remind us that this is an ordered product, if the components $T$ are ordered from $g$ to $g'$. To make sense of this composition, let $\mathcal T_{gg'}$ be the set of components of $\widetilde S - \widetilde\lambda$ that separate $g$ from $g'$. Let $ \mathcal{T}=\{T_1, T_2, \dots, T_m\}$ be a finite subset of $\mathcal T_{gg'}$, where the indexing is chosen so that each ideal triangle $T_j$ separates $g$ from $T_{j+1}$. We can then consider the finite composition
$$
\Sigma_{\mathcal{T}}= \Sigma_{T_1}\circ \Sigma_{T_2}\circ \cdots \circ \Sigma_{T_{m-1}}\circ \Sigma_{T_m} \in \SL.
$$
We will then show that $\Sigma_{\mathcal T}$ converges to some linear map $\Sigma_{gg'}\in\SL$ as  the finite subset $\mathcal{T}=\{T_1, T_2, \dots, T_m\}$ tends to the whole set $\mathcal T_{gg'}$ of those components of $\widetilde S - \widetilde\lambda$ which separate $g$ from $g'$.

The proof of convergence relies on the following estimate. Choose an arc $k\subset  \widetilde S$ that is tightly transverse to the geodesic lamination $\widetilde\lambda$, and crosses both $g$ and $g'$; for instance, we can choose $k$ to be a geodesic arc. 

In particular, for every component $T$ of $\widetilde S- \widetilde \lambda$ that separates $g$ from $g'$, $k \cap T$ consists of a single arc. 

Endow the space $\mathrm{End}(\R^n)$ of linear maps $\R^n \to \R^n$ with any of the classical norms $\left \Vert \ \right\Vert$ such that $\left\Vert\Id_{\R^n}\right\Vert=1$ and $\left\Vert \phi \circ \psi \right\Vert \leq \left\Vert \phi\right\Vert \left\Vert \psi\right \Vert$. Our estimates will also depend on the choice of a negatively curved metric $m$ on $S$ for which the leaves of $\lambda$ are geodesic. 

\begin{lem}
\label{lem:EstimateElemSlithering}
There exists constants $A$ and $\nu>0$ such that
$$
\left\Vert \Sigma_{T} - \Id_{\R^n} \right\Vert \leq A\, \ell(k\cap T)^\nu
$$
for every component $T$ of $\widetilde S- \widetilde \lambda$ that separates $g$ from $g'$,
where $\ell(\ )$ denotes the arc length for the auxilliary metric $m$. 
\end{lem}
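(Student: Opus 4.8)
The plan is to express the elementary slithering map $\Sigma_T$ explicitly in terms of the flag triple associated to the triangle $T$, and to show that, as the width of $T$ across the transverse arc $k$ shrinks, this flag triple degenerates in a controlled way that forces $\Sigma_T$ to approach the identity. Concretely, the triangle $T$ has three sides, two of which — call them $g_T$ and $g_T'$ — separate $g$ from $g'$, and they share the endpoint $x_T\in\partial_\infty\widetilde S$; let $y_T$ be the remaining (third) vertex of $T$, which is the common endpoint of $g_T$ and of the third side. The map $\Sigma_T$ is, by its very definition (Condition~(3)), the identification of the line decompositions attached to $g_T$ and $g_T'$ through the flag $E_T=\F_\rho(x_T)$. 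First I would write $\Sigma_T-\Id_{\R^n}$, in a basis adapted to the flag $E_T$, as an explicitly upper-triangular (unipotent) matrix whose off-diagonal entries are ratios of wedge-product expressions built from $\F_\rho(x_T)$, $\F_\rho(y_T)$ and $\F_\rho(z_T)$, where $z_T$ is the endpoint of $g_T'$ other than $x_T$; these are exactly the kind of expressions that the double and triple ratios of \S\ref{subsect:DoubleRatios}--\S\ref{subsect:WedgeInvariants} measure. The key point is that these off-diagonal entries are governed by how close $\F_\rho(y_T)$ is to $\F_\rho(x_T)$ (equivalently, how close the two relevant points of $\partial_\infty\widetilde S$ are), because $g_T$ and $g_T'$ share the vertex $x_T$.

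Next I would quantify that degeneration. The two sides $g_T$, $g_T'$ of $T$ emanate from the common ideal vertex $x_T$, so the hyperbolic (or $m$-) distance between the points where $k$ crosses $g_T$ and $g_T'$ is comparable to $\ell(k\cap T)$; by the exponential thinning of the spikes of $S-\lambda$ recalled in \S\ref{bigsect:GeodLam}, the three ideal vertices $x_T$, $y_T$, $z_T$ of $T$, viewed as points of $\partial_\infty\widetilde S$ through a fixed visual metric, are confined to an interval whose diameter is bounded above by a constant times $\ell(k\cap T)$. Here I would use that $k$ is \emph{tightly} transverse: the arc $k\cap T$ is essentially a tie, so its length really does control the visual size of $T$ uniformly, and the relevant comparison constants depend only on the fixed metric $m$ (and on the compact piece of $\widetilde S$ through which $k$ passes, which can be taken $\pi_1(S)$-translated into a fixed fundamental domain so that finitely many uniform bounds suffice).

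Finally I would combine the two ingredients with the H\"older continuity of the flag curve $\F_\rho$ from Proposition~\ref{prop:FlagCurve}. Since $\F_\rho$ is H\"older of some exponent, say $\nu$, and the three ideal vertices of $T$ lie within visual distance $O\bigl(\ell(k\cap T)\bigr)$ of one another, the flags $\F_\rho(x_T)$, $\F_\rho(y_T)$, $\F_\rho(z_T)$ lie within distance $O\bigl(\ell(k\cap T)^\nu\bigr)$ of each other in $\Flag$; feeding this into the explicit formula for $\Sigma_T-\Id_{\R^n}$ (and using that the flag triple is uniformly generic, by positivity and cocompactness, so that no denominator degenerates) yields $\left\Vert\Sigma_T-\Id_{\R^n}\right\Vert\leq A\,\ell(k\cap T)^\nu$ for a uniform constant $A$. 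The main obstacle I anticipate is the uniformity of the constants: one must make sure that the ``genericity'' denominators in the wedge-product expression for $\Sigma_T$ are bounded away from zero \emph{independently} of $T$, and that the comparison between $\ell(k\cap T)$ and the visual diameter of $T$ does not degrade as $T$ runs over the infinitely many triangles crossed by $k$; both are handled by pushing everything into a fixed compact region via the $\pi_1(S)$-action and invoking continuity/compactness there, but this requires a little care because the lamination can have uncountably many leaves.
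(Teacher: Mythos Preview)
There is a genuine conceptual error in your geometric picture. You claim that ``the three ideal vertices $x_T$, $y_T$, $z_T$ of $T$ \dots\ are confined to an interval whose diameter is bounded above by a constant times $\ell(k\cap T)$,'' and hence that all three flags $\F_\rho(x_T)$, $\F_\rho(y_T)$, $\F_\rho(z_T)$ are close. This is false, and in fact self-contradictory: if all three flags were close, the flag triple would be \emph{degenerate}, not ``uniformly generic'' as you later assert, and your wedge-product denominators would blow up rather than stay bounded away from zero. The correct picture is that the arc $k$ crosses the two sides $g_T$, $g_T'$ of $T$ issuing from the common vertex $x_T$; hence $x_T$ lies on one side of $k$ while the other two vertices $y_T$, $z_T$ lie on the opposite side. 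It is only these two non-shared vertices $y_T$ and $z_T$ that are close, with $d(y_T,z_T)=O\bigl(\ell(k\cap T)\bigr)$; the vertex $x_T$ stays at bounded visual distance from both.

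Once this is corrected, the argument simplifies and you do not need explicit wedge-product formulas. The paper's proof observes that $\Sigma_T$ depends smoothly on the two generic flag pairs $\bigl(\F_\rho(x_T),\F_\rho(y_T)\bigr)$ and $\bigl(\F_\rho(x_T),\F_\rho(z_T)\bigr)$, and equals $\Id_{\R^n}$ when these pairs coincide. Because all the points $x_T,y_T,z_T$ arise from leaves meeting the fixed arc $k$, continuity of $\F_\rho$ keeps these flag pairs in a compact subset of the space of generic flag pairs (this is where uniformity comes from, with no need to worry about denominators). Smooth dependence on a compact set then gives $\|\Sigma_T-\Id_{\R^n}\|=O\bigl(d(\F_\rho(y_T),\F_\rho(z_T))\bigr)$, and H\"older continuity of $\F_\rho$ together with $d(y_T,z_T)=O\bigl(\ell(k\cap T)\bigr)$ finishes the job.
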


\begin{proof}
Let $x_T$, $y_T$, $y_T'\in \partial_\infty \widetilde S$ denote the three vertices of the triangle $T$, in such a way that $x_T$ and $y_T$ are the endpoints of the side $g_T$ that is closest to $g$, and $x_T$, $y_T'$ are the endpoints of the side $g_T'$ closest to $g'$. Then $\Sigma_T=\Sigma_{g_T g_T'}$ depends only on the two generic flag pairs $\bigl( \F_\rho(x_T),  \F_\rho(y_T) \bigr)$ and $\bigl( \F_\rho(x_T),  \F_\rho(y_T')  \bigr)$. In fact, $\Sigma_T$ depends differentiably on these two flag pairs,  and these pairs stay in a compact subset  of the space of generic flag pairs (depending on $k$ and on the continuity of the flag curve $\F_\rho$). Therefore, 
$$
\left\Vert \Sigma_{T} - \Id_{\R^n} \right\Vert = O \Bigl (d \bigl( \F_\rho(y_T),  \F_\rho(y_T')  \bigr) \Bigr)$$
where $d( \ )$ is an arbitrary riemannian metric on $\Flag$. 

Since the flag curve $\F_\rho$ is H\"older continuous (Proposition~\ref{prop:FlagCurve}), 
$$
d \bigl( \F_\rho(y_T),  \F_\rho(y_T')  \bigr) = O  \bigl( d(y_T, y_T')^\nu  \bigr)
$$
for some H\"older exponent $\nu$. The required estimate then follows from an easy geometric argument showing that
$$d(y_T, y_T') = O \bigl ( \ell(k\cap T)  \bigr) ,$$
where the constant hidden in the symbol $O(\ )$ depends on a lower bound for the angle between the arc $k$ and the leaves of $\widehat\lambda$ that it crosses. 
\end{proof}

Note that the constant $A$ depends on the arc $k$. The H\"older exponent $\nu$ depends only on the flag curve $\F_\rho$. 

The second ingredient is a now classical property of geodesic laminations.
\begin{lem}
\label{lem:SumPowersLengthsGapsConverges}
As  $T$ ranges over all components of $\widetilde S- \widetilde \lambda$  separating $g$ from $g'$, the sum
$$
\sum_{T\in \mathcal T_{gg'}}  \ell(k\cap T)^\nu 
$$
is convergent for every $\nu>0$. 

More precisely, there is a function $r\colon \mathcal T_{gg'} \to \N$ and  constants $B$, $C$, $B'$, $C'>0$ such that
\begin{enumerate}
\item $B\mathrm e^{-Cr(T)} \leq \ell(k\cap T) \leq B'\mathrm e^{-C'r(T)}$ for every $T\in \mathcal T_{gg'}$;
\item for every $m\in \N$, the number of triangles $T\in \mathcal T_{gg'}$ with $r(T)=m$ is uniformly bounded, independently of $m$. 
\end{enumerate}
\end{lem}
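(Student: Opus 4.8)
The plan is to prove the two refined estimates~(1) and~(2) and to deduce the convergence statement from them as a geometric-series bound. Indeed, once~(1) and~(2) are available, for any $\nu>0$ one gets
$\sum_{T\in\mathcal T_{gg'}}\ell(k\cap T)^\nu \le \sum_{m\ge 0}\#\{T\in\mathcal T_{gg'} : r(T)=m\}\,\bigl(B'\mathrm e^{-C'm}\bigr)^\nu \le N\sum_{m\ge0}\bigl(B'\mathrm e^{-C'm}\bigr)^{\nu}<\infty$,
where $N$ is the uniform bound on the cardinalities coming from~(2). So the whole content is in~(1) and~(2).

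First I would fix the bookkeeping. As noted just above, for each $T\in\mathcal T_{gg'}$ the intersection $k\cap T$ is a single arc; since $T$ is an ideal triangle, this arc runs from the side $g_T$ to the side $g_T'$, and these two sides share exactly one ideal vertex, so $k\cap T$ cuts off a well-defined spike $s_T$ of $T$. Using the parametrization of spikes recalled in \S\ref{bigsect:GeodLam} --- each spike is $[0,1]\times[0,\infty)$ with unit-speed vertical segments and horizontal arcs whose length decays exponentially --- let $r(T)\in\N$ be the integer part of the depth at which $k\cap T$ crosses the spike $s_T$. Estimate~(1) is then the exact analogue of Lemma~\ref{lem:EstimateElemSlithering}: an arc crossing the spike near depth $t$ has length comparable to the width $\asymp\mathrm e^{-ct}$ of the spike at that depth, the comparison constants coming from a lower bound on the angle at which $k$ meets the leaves of $\widetilde\lambda$ it crosses (a compactness statement, exactly as for the geodesic arc used there). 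This yields $B\mathrm e^{-Cr(T)}\le\ell(k\cap T)\le B'\mathrm e^{-C'r(T)}$; combined with the trivial length budget $\sum_T\ell(k\cap T)\le\ell(k)$ it also shows that only finitely many $T$ have $r(T)=0$.

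The heart of the matter, and the step I expect to be the main obstacle, is estimate~(2): that $\#\{T\in\mathcal T_{gg'} : r(T)=m\}$ is bounded independently of $m$. I would prove this by pigeonholing over the $12(g-1)$ spikes of $S-\lambda$: it suffices to bound, for a fixed spike $s$ of $S-\lambda$ lying in the triangle $T_0$ of $S-\lambda$, the number of $T$ with $r(T)=m$ whose spike $s_T$ projects to $s$. List these as $T_{i_1},\dots,T_{i_j}$ in the order in which the disjoint arcs $k\cap T_{i_\ell}$ occur along $k$, and for consecutive indices let $\kappa_\ell\subset k$ be the subarc between $k\cap T_{i_\ell}$ and $k\cap T_{i_{\ell+1}}$. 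Each $\kappa_\ell$ projects to a path in $S$ joining two points of $T_0$ lying near $s$ at depth $\approx m$; closing it up by a short arc inside $T_0$ produces a loop in the closed surface $S$, and this loop is \emph{non-contractible} because the two ends of $\kappa_\ell$ lie in \emph{distinct} lifts of $T_0$ in $\widetilde S$. Hence $\ell(\kappa_\ell)$ is bounded below by a positive constant $\delta$ depending only on $S$ (a lower bound for the length of a non-contractible loop, minus the length of the closing arc, which is kept small by replacing the single value $m$ by a scale of small logarithmic width). Since the $\kappa_\ell$ are disjoint subarcs of $k$, this forces $j\le\ell(k)/\delta+1$; summing over the $12(g-1)$ spikes gives~(2).

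The delicate point in this last step is the claim that the closing-up loop is non-contractible: this can fail only when the lifts $\widetilde T_{i_\ell}$ and $\widetilde T_{i_{\ell+1}}$ are adjacent along a common leaf of $\widetilde\lambda$, in which case $\kappa_\ell$ is instead forced to be short and the two triangles sit in a tightly constrained local configuration near the spike $s$. Controlling these chains of mutually adjacent complementary triangles cut off at a common depth is where the finiteness of the number of complementary triangles of $\lambda$ genuinely enters, and it is the part that requires real care; everything else is either elementary negatively curved geometry (estimate~(1)) or the length budget above. A more combinatorial route to both~(1) and~(2), closer to the classical treatments of this now classical fact, would instead fix a maximal splitting sequence $U=U_0\supset U_1\supset\cdots$ of a train track neighborhood of $\lambda$ with $\bigcap_m U_m=\lambda$ and define $r(T)$ as the first stage $m$ at which $k\cap T$ is contained in the closure of the complementary region of $U_m$ lying inside $T$; this trades the loop-contractibility issue for the bookkeeping of how the fixed arc $k$ meets the branches of $U_m$, but the essential difficulty is the same.
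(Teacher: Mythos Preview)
Your argument is essentially correct, but the paper does not prove this lemma in place: it simply cites Lemmas~4 and~5 of \cite{Bon96} and refers forward to \S\ref{subsect:PositiveIntersectionRevisited}. There the function $r$ is defined combinatorially via a \emph{single fixed} train track neighborhood $U$ (not a splitting sequence as in your alternative): $r(T)$ is the minimum number of edges of $\widetilde U$ crossed by a path in $T$ from $\widetilde k\cap T$ to $T-\widetilde U$. The bound~(2) is then Lemma~\ref{lem:BoundedDivergenceRadius}: one decomposes $U-\lambda$ into rectangles, observes that each of the $12(g-1)$ spikes of $S-\lambda$ is an infinite chain $R_{i_1}\cup R_{i_2}\cup\cdots$ of such rectangles, so a component $d$ of $k-\lambda$ with $r(d)=r_0>1$ meets the rectangle $R_{i_{r_0-1}}$ of its spike, and the number of components of $k-\lambda$ meeting any fixed rectangle is uniformly bounded (by the minimum distance between $\widetilde k$ and its $\pi_1(S)$--translates). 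This sidesteps your loop-closing step entirely.

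On your own approach: the ``delicate point'' you flag is not in fact delicate. Since $T_{i_\ell}$ and $T_{i_{\ell+1}}$ are \emph{distinct} components of $\widetilde S-\widetilde\lambda$ projecting to the \emph{same} component $T_0$ of $S-\lambda$, the closed-up loop is automatically non-contractible, adjacency or not. The only care needed is in the length bound: the closing arc inside $T_0$ has length $O(\mathrm e^{-cm})$, so the systole inequality gives $\ell(\kappa_\ell)\ge\mathrm{sys}(S)-O(\mathrm e^{-cm})$, which is uniform once $m$ is large, while small $m$ are already handled by your length budget $\sum_T\ell(k\cap T)\le\ell(k)$. So your geometric route works, but the paper's train-track version is cleaner for~(2) and is the one actually reused later in \S\ref{subsect:PositiveIntersectionRevisited}.
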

\begin{proof}
See for instance Lemmas~4 and 5 of \cite{Bon96}, and compare \S \ref{subsect:PositiveIntersectionRevisited}. 
\end{proof}

We are now ready to show the convergence of the infinite product $ \overrightarrow{\prod}{}^{\phantom{I}}_{T}\, \Sigma_T $. 

Recall that $\mathcal T_{gg'}$ denotes the set of components of $\widetilde S - \widetilde\lambda$ that separate $g$ from $g'$ and that, for every finite subset $\mathcal{T}=\{T_1, T_2, \dots, T_m\}$ of $\mathcal T_{gg'}$ where the $T_i$ are ordered from $g$ to $g'$, 
$$
\Sigma_{\mathcal{T}}= \Sigma_{T_1}\circ \Sigma_{T_2}\circ \cdots \circ \Sigma_{T_{m-1}}\circ \Sigma_{T_m}.
$$

\begin{lem}
\label{lem:SlitheringBounded}
As $\mathcal T$ ranges over all finite subsets of $\mathcal T_{gg'}$, the matrices $\Sigma_{\mathcal{T}}$ remain uniformly bounded.
\end{lem}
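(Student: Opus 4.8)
The plan is to bound the finite products $\Sigma_{\mathcal T}$ uniformly by combining the two estimates we already have: Lemma~\ref{lem:EstimateElemSlithering}, which says $\left\Vert \Sigma_T - \Id_{\R^n}\right\Vert \leq A\,\ell(k\cap T)^\nu$, and Lemma~\ref{lem:SumPowersLengthsGapsConverges}, which says $\sum_{T\in\mathcal T_{gg'}}\ell(k\cap T)^\nu =: M < \infty$. The elementary fact to invoke is that for matrices with $\left\Vert B_j\right\Vert \leq \beta_j$ one has $\left\Vert \prod_j (\Id + B_j)\right\Vert \leq \prod_j (1+\beta_j) \leq \exp\left(\sum_j \beta_j\right)$, using submultiplicativity of the norm and $\left\Vert\Id\right\Vert = 1$.

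\begin{proof}[Proof of Lemma~\ref{lem:SlitheringBounded}]
Write each factor as $\Sigma_{T_i} = \Id_{\R^n} + B_i$, where $B_i = \Sigma_{T_i} - \Id_{\R^n}$. By Lemma~\ref{lem:EstimateElemSlithering}, $\left\Vert B_i\right\Vert \leq A\,\ell(k\cap T_i)^\nu$. Using the chosen norm on $\mathrm{End}(\R^n)$, which satisfies $\left\Vert\Id_{\R^n}\right\Vert = 1$ and $\left\Vert\phi\circ\psi\right\Vert \leq \left\Vert\phi\right\Vert\left\Vert\psi\right\Vert$, we get
$$
\left\Vert \Sigma_{\mathcal T}\right\Vert = \left\Vert \prod_{i=1}^m (\Id_{\R^n} + B_i)\right\Vert \leq \prod_{i=1}^m \left(1 + \left\Vert B_i\right\Vert\right) \leq \prod_{i=1}^m \left(1 + A\,\ell(k\cap T_i)^\nu\right).
$$
Since $1 + x \leq \mathrm e^x$ for all $x\geq 0$, this is bounded above by
$$
\exp\Bigl( A\sum_{i=1}^m \ell(k\cap T_i)^\nu\Bigr) \leq \exp\Bigl( A\sum_{T\in \mathcal T_{gg'}} \ell(k\cap T)^\nu\Bigr),
$$
which is finite and independent of $\mathcal T$ by Lemma~\ref{lem:SumPowersLengthsGapsConverges}. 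This proves the claimed uniform bound.
\end{proof}

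There is essentially no obstacle here: the two genuinely substantive inputs have already been established as Lemmas~\ref{lem:EstimateElemSlithering} and \ref{lem:SumPowersLengthsGapsConverges}, and what remains is the standard comparison between an infinite product $\prod(1+\beta_i)$ and $\exp(\sum\beta_i)$. The only point to be slightly careful about is that the bound must not depend on the ordering of the triangles in $\mathcal T$ nor on which finite subset is chosen — but since the final bound is expressed in terms of the full convergent sum over all of $\mathcal T_{gg'}$, this is automatic. This lemma is of course the key step toward the actual convergence (not just boundedness) of $\Sigma_{\mathcal T}$ to $\Sigma_{gg'}$, which will follow by a Cauchy argument: the difference between $\Sigma_{\mathcal T}$ and $\Sigma_{\mathcal T'}$ for $\mathcal T \subset \mathcal T'$ is controlled, via the same product-expansion trick and the uniform bound just obtained, by $\sum_{T\in \mathcal T'\setminus\mathcal T}\ell(k\cap T)^\nu$, which is the tail of a convergent series.
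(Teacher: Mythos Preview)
Your proof is correct and essentially identical to the paper's: both use Lemma~\ref{lem:EstimateElemSlithering} to bound $\left\Vert \Sigma_{T_i}\right\Vert \leq 1 + A\,\ell(k\cap T_i)^\nu$, then submultiplicativity to bound $\left\Vert\Sigma_{\mathcal T}\right\Vert$ by the product $\prod_{T\in\mathcal T_{gg'}}(1+A\,\ell(k\cap T)^\nu)$, whose finiteness follows from Lemma~\ref{lem:SumPowersLengthsGapsConverges}. The paper simply stops at the infinite product, whereas you take the extra (equivalent) step of passing to $\exp\bigl(A\sum_T \ell(k\cap T)^\nu\bigr)$ via $1+x\leq \mathrm e^x$.
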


\begin{proof}
If  $\mathcal{T}=\{T_1, T_2, \dots, T_m\}$, Lemma~\ref{lem:EstimateElemSlithering} shows that $\left\Vert \Sigma_{T_i} \right\Vert \leq 1 + A\, \ell(k\cap T)^\nu$ for some constants $A$, $\nu>0$. Then,
$$
\left\Vert \Sigma_{\mathcal{T}} \right \Vert 
\leq \prod_{i=1}^m \bigl( 1 + A\, \ell(k\cap T_i)^\nu \bigr)
\leq \prod_{T \in \mathcal T_{gg'}} \bigl( 1 + A\, \ell(k\cap T)^\nu \bigr)
<\infty
$$
where the finiteness of the second product follows from Lemma~\ref{lem:SumPowersLengthsGapsConverges}. 
\end{proof}

\begin{lem}
\label{lem:SlitheringExists}
As the finite subset $\mathcal T$ tends to $\mathcal T_{gg'}$, the limit
$$
\Sigma_{gg'} = \kern 5pt\overrightarrow{\prod_{\kern -10pt T\in \mathcal T_{gg'}\kern -10pt}} \kern 5pt\Sigma_T
=\lim_{\mathcal T \to \mathcal T_{gg'}} \Sigma_{\mathcal T} 
$$
exists in $\SL$.  
\end{lem}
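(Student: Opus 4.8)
The plan is to show that the net $(\Sigma_{\mathcal T})_{\mathcal T}$ is Cauchy in $\mathrm{End}(\R^n)$, so that it converges; that the limit lies in $\SL$ then follows because $\SL$ is closed in $\mathrm{End}(\R^n)$ and each $\Sigma_{\mathcal T}$ has determinant $1$. The index set is the collection of finite subsets of $\mathcal T_{gg'}$, directed by inclusion, and I will use the telescoping estimate that comes from inserting one triangle at a time.

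First, given two finite subsets $\mathcal T \subset \mathcal T'$ of $\mathcal T_{gg'}$ with $\mathcal T' = \mathcal T \cup \{S_1, \dots, S_p\}$, I would write $\Sigma_{\mathcal T'}$ as a product obtained from $\Sigma_{\mathcal T}$ by inserting the factors $\Sigma_{S_1}, \dots, \Sigma_{S_p}$ in the appropriate positions (each $S_j$ goes between the two consecutive elements of $\mathcal T$ that it separates, or at one end). Using the submultiplicativity of the norm, the bound $\|\Sigma_{\mathcal T''}\| \le M := \prod_{T \in \mathcal T_{gg'}}(1 + A\,\ell(k\cap T)^\nu) < \infty$ from Lemma~\ref{lem:SlitheringBounded} (valid for every finite $\mathcal T''$), and the estimate $\|\Sigma_{S_j} - \Id_{\R^n}\| \le A\,\ell(k\cap S_j)^\nu$ from Lemma~\ref{lem:EstimateElemSlithering}, a standard telescoping argument gives
\begin{equation*}
\|\Sigma_{\mathcal T'} - \Sigma_{\mathcal T}\| \le M^2 \sum_{j=1}^p \|\Sigma_{S_j} - \Id_{\R^n}\| \le A\,M^2 \sum_{T \in \mathcal T_{gg'} \setminus \mathcal T} \ell(k\cap T)^\nu.
\end{equation*}
The precise bookkeeping is: replacing one factor $\Sigma_{S_j}$ by $\Id$ in a product of operators each bounded by $M$ (after the replacement the product still has norm $\le M$, since it equals some $\Sigma_{\mathcal T''}$) changes the product by at most $M \cdot \|\Sigma_{S_j} - \Id\| \cdot M$, and one iterates over $j = 1, \dots, p$.

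Since $\sum_{T \in \mathcal T_{gg'}} \ell(k\cap T)^\nu < \infty$ by Lemma~\ref{lem:SumPowersLengthsGapsConverges}, the tail $\sum_{T \in \mathcal T_{gg'} \setminus \mathcal T} \ell(k\cap T)^\nu$ can be made arbitrarily small by choosing $\mathcal T$ large enough; hence for any two finite subsets $\mathcal T_1, \mathcal T_2 \supseteq \mathcal T$ we get $\|\Sigma_{\mathcal T_1} - \Sigma_{\mathcal T_2}\| \le \|\Sigma_{\mathcal T_1} - \Sigma_{\mathcal T_1 \cup \mathcal T_2}\| + \|\Sigma_{\mathcal T_1 \cup \mathcal T_2} - \Sigma_{\mathcal T_2}\|$, each term controlled by the tail over the complement of $\mathcal T$. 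So the net is Cauchy and converges to some $\Sigma_{gg'} \in \mathrm{End}(\R^n)$; continuity of $\det$ forces $\det \Sigma_{gg'} = 1$, so $\Sigma_{gg'} \in \SL$. Finally I would note the limit is independent of any choice of exhausting sequence, since any two such sequences have a common refinement and the Cauchy estimate applies.

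The main obstacle is the non-commutativity: the factors $\Sigma_T$ do not commute, so one cannot simply compare infinite products coefficient-by-coefficient, and the order of composition genuinely matters. This is handled by the telescoping trick above, which only ever compares products differing by the insertion/deletion of a bounded number of factors in fixed positions, together with the uniform bound $M$ on all partial products from Lemma~\ref{lem:SlitheringBounded} — that uniform bound is exactly what makes the non-commutative telescoping estimate as clean as the commutative one.
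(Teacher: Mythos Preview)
Your proof is correct and follows essentially the same approach as the paper: both use the uniform bound $M$ on partial products from Lemma~\ref{lem:SlitheringBounded}, the estimate $\|\Sigma_T - \Id_{\R^n}\| \le A\,\ell(k\cap T)^\nu$ from Lemma~\ref{lem:EstimateElemSlithering}, and a telescoping argument controlled by the convergent sum of Lemma~\ref{lem:SumPowersLengthsGapsConverges} to establish the Cauchy property. The paper phrases the telescoping for a single insertion $\mathcal T' = \mathcal T \cup \{T\}$ and then invokes the Cauchy property directly, whereas you spell out the multi-step telescoping and the passage through $\mathcal T_1 \cup \mathcal T_2$ more explicitly, but the argument is the same.
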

\begin{proof} Let $\mathcal T = \{T_1, T_2, \dots, T_m\}$ be a finite subset of $\mathcal T_{gg'}$, where the $T_i$ are ordered from $g$ to $g'$. If  $\mathcal T' = \mathcal T \cup \{T\}  $ has one more element $T\in \mathcal T_{gg'}$ and if $T$  separates $T_i$ from $T_{i+1}$, set $\mathcal T_1 = \{T_1, T_2, \dots, T_i\}$ and $\mathcal T_2 = \{T_{i+1}, T_2, \dots, T_m\}$; then
$$
\left\Vert \Sigma_{\mathcal T'} - \Sigma_{\mathcal T}  \right\Vert
=
\left\Vert \Sigma_{\mathcal T_1}\circ (\Sigma_T-\Id_{\R^n}) \circ \Sigma_{\mathcal T_2}  \right\Vert
=O\bigl( \ell (k\cap T)^\nu \bigr)
$$
by Lemmas~\ref{lem:EstimateElemSlithering} and \ref{lem:SlitheringBounded}. Lemma~\ref{lem:SumPowersLengthsGapsConverges} then shows that, as $\mathcal T$ ranges over all finite subsets of $ \mathcal T_{gg'}$, the family of  maps $\Sigma_{\mathcal T}\in \SL$ satisfies the Cauchy Property. The limit therefore exists. 
\end{proof}

Having defined the slithering map $\Sigma_{gg'}\colon \R^n \to \R^n$, we now show that it satisfies the properties of Proposition~\ref{prop:Slithering}. We begin with Condition~(1). 

\begin{lem}
\label{lem:ComposingSlithering}
For any two leaves $g$, $g'$ of $\widetilde\lambda$, $\Sigma_{gg}=\Id_{\R^n}$ and $\Sigma_{g'g}=\Sigma_{gg'}^{-1}$. In addition, $\Sigma_{gg''} = \Sigma_{gg'} \circ \Sigma_{g'g''}$ when one of the three leaves $g$, $g'$, $g''$ separates the other two.
\end{lem}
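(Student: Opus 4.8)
The strategy is to reduce everything to the definition of $\Sigma_{gg'}$ as a limit of finite ordered products $\Sigma_{\mathcal T}$ over finite subsets $\mathcal T \subset \mathcal T_{gg'}$, together with the combinatorial fact that for three leaves $g$, $g'$, $g''$ with $g'$ separating $g$ from $g''$, the set $\mathcal T_{gg''}$ of triangle components separating $g$ from $g''$ is the disjoint union $\mathcal T_{gg'} \sqcup \{T_{g'}\} \sqcup \mathcal T_{g'g''}$, where $T_{g'}$ is the unique component having $g'$ as a side (if $g'$ is itself a leaf, it is the boundary of exactly one such component on the side toward $g''$; one has to be slightly careful about whether $g'$ contributes a triangle, but in any case the leaves strictly between $g$ and $g''$ split into those between $g$ and $g'$ and those between $g'$ and $g''$, plus possibly $T_{g'}$). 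First I would observe that $\mathcal T_{gg} = \emptyset$, so $\Sigma_{gg}$ is the empty product, which is $\Id_{\R^n}$ by the normalization $\left\Vert \Id_{\R^n}\right\Vert = 1$ on the norm; this is immediate.

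Next, for $\Sigma_{g'g} = \Sigma_{gg'}^{-1}$: the set of components separating $g'$ from $g$ is the same set $\mathcal T_{gg'}$, but traversed in the opposite order, and moreover the elementary map $\Sigma_T = \Sigma_{g_Tg_T'}$ is replaced by $\Sigma_{g_T'g_T}$ when we reverse the roles of $g$ and $g'$. I would check directly from Condition~(3) of Proposition~\ref{prop:Slithering} that $\Sigma_{g_T'g_T} = \Sigma_{g_Tg_T'}^{-1}$ for a single triangle $T$: both restrict, on each line, to compositions of the canonical isomorphisms $\widetilde L_a(g_T') \cong E_T^{(a)}/E_T^{(a-1)} \cong \widetilde L_a(g_T)$ and their inverses, so this holds on the nose. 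Then for a finite $\mathcal T = \{T_1,\dots,T_m\}$ ordered from $g$ to $g'$, the reversed product for $g'\to g$ is $\Sigma_{T_m}^{-1}\circ\cdots\circ\Sigma_{T_1}^{-1} = (\Sigma_{T_1}\circ\cdots\circ\Sigma_{T_m})^{-1} = \Sigma_{\mathcal T}^{-1}$. Passing to the limit as $\mathcal T \to \mathcal T_{gg'}$, and using that inversion is continuous on $\SL$ (together with the uniform boundedness from Lemma~\ref{lem:SlitheringBounded}, which also bounds the inverses away from degeneracy since each $\Sigma_{\mathcal T}\in\SL$ has determinant $1$), gives $\Sigma_{g'g} = \Sigma_{gg'}^{-1}$.

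Finally, for the cocycle relation $\Sigma_{gg''} = \Sigma_{gg'}\circ\Sigma_{g'g''}$ when one of the three leaves separates the other two: by the previous two identities it suffices to treat the case where $g'$ separates $g$ from $g''$ (the other two arrangements follow by inverting and relabeling). Given finite subsets $\mathcal T' \subset \mathcal T_{gg'}$ and $\mathcal T'' \subset \mathcal T_{g'g''}$, the union $\mathcal T' \cup \mathcal T''$ (possibly together with $T_{g'}$) is a finite subset of $\mathcal T_{gg''}$, ordered from $g$ to $g''$ by concatenation, so $\Sigma_{\mathcal T' \cup \mathcal T''} = \Sigma_{\mathcal T'} \circ \Sigma_{\mathcal T''}$ as finite ordered products — this is just associativity of composition. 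Letting $\mathcal T' \to \mathcal T_{gg'}$ and $\mathcal T'' \to \mathcal T_{g'g''}$ independently, the left side converges to $\Sigma_{gg''}$ because such unions are cofinal among finite subsets of $\mathcal T_{gg''}$ (every finite subset of $\mathcal T_{gg''}$ is contained in one of this form), and the right side converges to $\Sigma_{gg'}\circ\Sigma_{g'g''}$ by continuity of composition on the bounded set provided by Lemma~\ref{lem:SlitheringBounded}. The main point requiring care — and the only genuine obstacle — is the bookkeeping about the triangle $T_{g'}$ adjacent to $g'$: one must confirm that the contribution of $T_{g'}$ is consistently assigned to exactly one of the two sides (say to $\mathcal T_{g'g''}$, with $g_{T_{g'}} = g'$), so that no elementary factor is counted twice or omitted; once the partition $\mathcal T_{gg''} = \mathcal T_{gg'} \sqcup \mathcal T_{g'g''}$ (with this convention) is pinned down, the rest is a routine limiting argument using the estimates already established in Lemmas~\ref{lem:EstimateElemSlithering} and \ref{lem:SumPowersLengthsGapsConverges}.
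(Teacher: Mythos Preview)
Your approach is correct and essentially the same as the paper's: the paper simply notes that $\mathcal T_{gg''}$ is the disjoint union of $\mathcal T_{gg'}$ and $\mathcal T_{g'g''}$ when $g'$ separates $g$ from $g''$, so the cocycle relation is immediate from the construction, and the other two separation cases follow by algebra from this one together with $\Sigma_{g'g}=\Sigma_{gg'}^{-1}$.

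One clarification: your worry about an extra triangle $T_{g'}$ is unfounded. A triangle adjacent to $g'$ on the $g$--side already belongs to $\mathcal T_{gg'}$ (with $g_T'=g'$), and one adjacent on the $g''$--side already belongs to $\mathcal T_{g'g''}$ (with $g_T=g'$); in either case the elementary factor $\Sigma_T$ is the same whether computed relative to $(g,g')$, $(g',g'')$, or $(g,g'')$, so the partition $\mathcal T_{gg''}=\mathcal T_{gg'}\sqcup\mathcal T_{g'g''}$ is clean with no leftover term.
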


\begin{proof}
The first two properties are immediate from definitions. When $g'$ separates $g$ from $g''$, $\mathcal T_{gg''}$ is the disjoint union of $\mathcal T_{gg'}$ and $\mathcal T_{g'g''}$ and the property that $\Sigma_{gg''} = \Sigma_{gg'} \circ \Sigma_{g'g''}$ is again an immediate consequence of the construction. The other two cases follow from this one by an algebraic manipulation. 
\end{proof}

We now turn to Condition~(2). 

\begin{lem}
\label{lem:SlitheringHolderContinuous}
The slithering map $\Sigma_{gg'}$ provided by Lemma~{\upshape\ref{lem:SlitheringExists}}  depends  H\"older continuously on the leaves $g$ and $g'\subset \widetilde{\lambda}$ meeting the tightly transverse arc $k$. 
\end{lem}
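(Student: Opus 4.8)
The plan is to reduce the Hölder continuity of $\Sigma_{gg'}$ to two ingredients that are already available: the elementary estimate of Lemma~\ref{lem:EstimateElemSlithering} together with the uniform bound of Lemma~\ref{lem:SlitheringBounded}, and the exponential control of gap lengths from Lemma~\ref{lem:SumPowersLengthsGapsConverges}. Fix a tightly transverse arc $k$ and restrict attention to leaves $g$, $g'$, $g_0$, $g_0'$ of $\widetilde\lambda$ all crossing $k$, with $g$ close to $g_0$ and $g'$ close to $g_0'$. Using the cocycle relation $\Sigma_{gg'} = \Sigma_{gg_0}\circ\Sigma_{g_0g_0'}\circ\Sigma_{g_0'g'}$ (Lemma~\ref{lem:ComposingSlithering}), and the fact that the $\Sigma$'s are uniformly bounded on the relevant compact family, it suffices to estimate $\Vert\Sigma_{gg_0}-\Id_{\R^n}\Vert$ and $\Vert\Sigma_{g_0'g'}-\Id_{\R^n}\Vert$ in terms of the distances $d(g,g_0)$ and $d(g_0',g')$ along $k$; so the whole problem collapses to bounding $\Vert\Sigma_{gg_0}-\Id\Vert$ when $g_0$ and $g$ are nearby leaves meeting $k$.

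For that estimate I would write $\Sigma_{gg_0} = \overrightarrow{\prod}_{T\in\mathcal T_{gg_0}}\Sigma_T$ and telescope: since each $\Sigma_T$ is uniformly bounded (Lemma~\ref{lem:SlitheringBounded}) and $\Vert\Sigma_T-\Id\Vert\le A\,\ell(k\cap T)^\nu$ (Lemma~\ref{lem:EstimateElemSlithering}), the standard inequality for products gives
$$
\left\Vert \Sigma_{gg_0} - \Id_{\R^n}\right\Vert \le \Bigl(\prod_{T\in\mathcal T_{gg_0}}\bigl(1+A\,\ell(k\cap T)^\nu\bigr)\Bigr)\sum_{T\in\mathcal T_{gg_0}} A\,\ell(k\cap T)^\nu \le A'\sum_{T\in\mathcal T_{gg_0}} \ell(k\cap T)^\nu,
$$
where $A'$ is uniform over the compact family, by Lemma~\ref{lem:SumPowersLengthsGapsConverges}. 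So the remaining task is to show that the tail sum $\sum_{T\in\mathcal T_{gg_0}}\ell(k\cap T)^\nu$ is $O\bigl(d(g,g_0)^{\nu'}\bigr)$ for some Hölder exponent $\nu'>0$, where $d(g,g_0)$ is the length of the subarc of $k$ between $g$ and $g_0$.

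This last point is where the real work lies, and I expect it to be the main obstacle. The triangles $T$ separating $g$ from $g_0$ are exactly those whose intersection with $k$ lies in the subarc between $g$ and $g_0$, so $\sum_{T\in\mathcal T_{gg_0}}\ell(k\cap T)\le d(g,g_0)$ trivially; the issue is upgrading the exponent $1$ to $\nu$ while keeping a positive power of $d(g,g_0)$ out front. I would use the function $r\colon\mathcal T_{gg'}\to\N$ and the constants $B,C,B',C'$ from Lemma~\ref{lem:SumPowersLengthsGapsConverges}: split the sum according to $r(T)\le m_0$ and $r(T)>m_0$ for a cutoff $m_0$ depending on $d(g,g_0)$. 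For $r(T)>m_0$ the bound $\ell(k\cap T)\le B'\mathrm e^{-C'r(T)}$ together with the uniform bound on the number of triangles with a given $r$-value makes $\sum_{r(T)>m_0}\ell(k\cap T)^\nu$ geometrically small, of order $\mathrm e^{-C'\nu m_0}$ up to a constant. For $r(T)\le m_0$, each such $T$ has $\ell(k\cap T)\ge B\mathrm e^{-Cm_0}$, so the number of such triangles contributing is at most $d(g,g_0)\,B^{-1}\mathrm e^{Cm_0}$, and each contributes at most $(B')^\nu\mathrm e^{-C'\nu r(T)}\le (B')^\nu$; hence $\sum_{r(T)\le m_0}\ell(k\cap T)^\nu = O\bigl(d(g,g_0)\,\mathrm e^{Cm_0}\bigr)$. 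Optimizing the cutoff by choosing $m_0$ proportional to $\log\bigl(1/d(g,g_0)\bigr)$ balances the two contributions and yields $\sum_{T\in\mathcal T_{gg_0}}\ell(k\cap T)^\nu = O\bigl(d(g,g_0)^{\nu'}\bigr)$ for an explicit $\nu'=\nu'(\nu,C,C')>0$. Combining this with the telescoping estimate and the cocycle decomposition gives the desired Hölder continuity of $(g,g')\mapsto\Sigma_{gg'}$ on the square of the compact set of leaves meeting $k$, which completes the proof of Lemma~\ref{lem:SlitheringHolderContinuous}.
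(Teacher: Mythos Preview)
Your proof is correct and follows essentially the same route as the paper: reduce via the cocycle relation $\Sigma_{gg'} = \Sigma_{gg_0}\circ\Sigma_{g_0g_0'}\circ\Sigma_{g_0'g'}$, telescope to get $\Vert\Sigma_{gg_0}-\Id\Vert = O\bigl(\sum_{T\in\mathcal T_{gg_0}}\ell(k\cap T)^\nu\bigr)$, and then bound this tail sum by a power of $d(g,g_0)$ using the exponential estimates of Lemma~\ref{lem:SumPowersLengthsGapsConverges}. The only difference is in this last step: where you run a cutoff-and-optimize argument yielding $\nu' = \frac{C'\nu}{C'\nu+C}$, the paper simply observes that every $T\in\mathcal T_{gg_0}$ has $r(T)\ge \frac1C\log\bigl(B/\max_T\ell(k\cap T)\bigr)$ and sums the resulting geometric series directly, getting the cleaner exponent $\nu' = \nu\,\frac{C'}{C}$; both arguments are standard and either suffices.
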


\begin{proof}
If the leaf $h$ is close to $g$, and if the leaf $h'$ is close to the leaf $g'$, we can apply Lemma~\ref{lem:ComposingSlithering} to decompose $\Sigma_{hh'}$ as 
$$
\Sigma_{hh'} = \Sigma_{hg} \circ \Sigma_{gg'} \circ \Sigma_{g'h'}. 
$$

The argument used in the proof of Lemma~\ref{lem:SlitheringExists} shows that, for some $\nu>0$, 
$$
\left\Vert \Sigma_{hg}-\Id_{\R^n} \right\Vert = O \Biggl( \sum_{T\in \mathcal T_{hg}} \ell(k\cap T)^\nu \Biggr). 
$$
By  Lemma~\ref{lem:SumPowersLengthsGapsConverges}, the above series is dominated by a geometric series and, using the precise estimate provided by the second half of that statement, 
$$
 \sum_{T\in \mathcal T_{hg}} \ell(k\cap T)^\nu = O \Bigl( \max_{T\in \mathcal T_{hg}} \ell(k\cap T)^{\nu'}  \Bigr)=  O \bigl( \ell(k_{hg})^{\nu'}  \bigr) = O \bigl( d(h, g)^{\nu'}  \bigr) 
$$
for $\nu' = \nu \frac{C'}C$ with the constants $C$, $C'>0$ of Lemma~\ref{lem:SumPowersLengthsGapsConverges}, and where $k_{hg}$ is the subarc of $k$ that joins the two points $k\cap g$ and $k \cap h$. Therefore,
$$
\left\Vert \Sigma_{hg}-\Id_{\R^n} \right\Vert =  O \bigl( d(g,h)^{\nu'}  \bigr) .
$$

Similarly,
$$
\left\Vert \Sigma_{g'h'}-\Id_{\R^n} \right\Vert =  O \bigl( d(g',h')^{\nu'}  \bigr) .
$$

Combining these two estimates with the bound provided by Lemma~\ref{lem:SlitheringBounded},
\begin{align*}
\left\Vert \Sigma_{hh'}- \Sigma_{gg'}\right\Vert
&\leq  \left \Vert \Sigma_{hh'}- \Sigma_{gh'}\right\Vert + \left\Vert \Sigma_{gh'}- \Sigma_{gg'}\right\Vert \\
&\leq   \left\Vert \Sigma_{hg}- \Id_{\R^n} \right\Vert   \left\Vert\Sigma_{gh'}\right\Vert +     \left\Vert\Sigma_{gg'}\right\Vert \left \Vert \Sigma_{g'h'}- \Id_{\R^n} \right\Vert \\
& =  O \bigl( d(g,h)^{\nu'} + d(g',h')^{\nu'}  \bigr) ,
\end{align*}
which proves that the map $(g,g') \mapsto \Sigma_{gg'}\in \SL$ is H\"older continuous over the square of the space of leaves of $\widetilde\lambda$ that cross the arc $k$. 
\end{proof}

Lemma~\ref{lem:SlitheringHolderContinuous}  proves the local H\"older continuity Condition (2) of Proposition~\ref{prop:Slithering}. 

If the leaves $g$ and $g'$ share a common endpoint $x\in \partial _\infty \widetilde S$, then all leaves of $\widetilde\lambda$ that separate $g$ from $g'$ also have $x$ as an endpoint. In particular, $\Sigma_{gg'}$ is defined as an infinite product of elementary slitherings $\Sigma_T = \Sigma_{g_Tg_T'}$ that respect the flag $E = \F_\rho(x)$ and act as the identity on each line $E^{(a)}/E^{(a-1)}$. It follows that $\Sigma_{gg'}$ satisfies the same property, which proves Condition~(3) of  Proposition~\ref{prop:Slithering}.

\begin{lem}
\label{lem:SlitheringLineDecompositions}
Suppose that the leaves $g$ and $g'\subset \widetilde{\lambda}$ are oriented in parallel. Then the slithering map $\Sigma_{gg'}$ provided by Lemma~{\upshape\ref{lem:SlitheringExists}}  sends  the line decomposition $\R^n = \bigoplus_{a=1}^n \widetilde L_a(g')$ to the line decomposition $\R^n = \bigoplus_{a=1}^n \widetilde L_a(g)$. 
\end{lem}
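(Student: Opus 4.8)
Fix the parallel orientations of $g$ and $g'$. For every leaf $h$ of $\widetilde\lambda$ that separates $g$ from $g'$ there is a unique orientation making $h$ oriented in parallel with $g$; since the component of $\widetilde S-g\cup g'$ between $g$ and $g'$ is simply connected, "oriented in parallel" is transitive on the leaves lying in it, so $h$ is then also oriented in parallel with $g'$. Write $\widetilde L_a(h)$ for the lines of the associated decomposition and note $\bigoplus_{a\le k}\widetilde L_a(h)=\mathcal F_\rho(h^+)^{(k)}$ and $\bigoplus_{a\ge k}\widetilde L_a(h)=\mathcal F_\rho(h^-)^{(n-k+1)}$, where $h^\pm$ are the endpoints of $h$ for this orientation. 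It suffices to show that $\Sigma_{gg'}$ maps $\mathcal F_\rho(g'^+)^{(k)}$ onto $\mathcal F_\rho(g^+)^{(k)}$ and $\mathcal F_\rho(g'^-)^{(n-k+1)}$ onto $\mathcal F_\rho(g^-)^{(n-k+1)}$ for every $k$, because then $\Sigma_{gg'}$ sends $\widetilde L_a(g')=\mathcal F_\rho(g'^+)^{(a)}\cap\mathcal F_\rho(g'^-)^{(n-a+1)}$ to $\widetilde L_a(g)$. Moreover, reversing the (parallel) orientations of $g$ and $g'$ does not change $\Sigma_{gg'}$ but interchanges these two families of subspaces, so it is enough to treat the "$\mathcal F_\rho(\cdot^+)^{(k)}$" statement.

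The first step is the elementary case. For a triangle $T\in\mathcal T_{gg'}$, its two sides $g_T$ (closest to $g$) and $g_T'$ (closest to $g'$) share the endpoint $x_T$ and are oriented in parallel with each other by transitivity; two leaves through a common point at infinity are oriented in parallel exactly when they are both oriented towards $x_T$ or both away from it. In the first case Condition~(3) of Proposition~\ref{prop:Slithering} says $\Sigma_T=\Sigma_{g_Tg_T'}$ fixes the flag $\mathcal F_\rho(x_T)$, hence $\Sigma_T\bigl(\mathcal F_\rho(g_T'^{\,+})^{(k)}\bigr)=\Sigma_T\bigl(\mathcal F_\rho(x_T)^{(k)}\bigr)=\mathcal F_\rho(x_T)^{(k)}=\mathcal F_\rho(g_T^{\,+})^{(k)}$. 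In the second case one applies Condition~(3) with the opposite, towards-$x_T$ orientations and uses that reversing the orientation of a leaf replaces $\widetilde L_a$ by $\widetilde L_{n-a+1}$ on both leaves simultaneously; the index reversal cancels and one again gets $\Sigma_T\bigl(\mathcal F_\rho(g_T'^{\,+})^{(k)}\bigr)=\mathcal F_\rho(g_T^{\,+})^{(k)}$ (and likewise for the $\mathcal F_\rho(\cdot^-)$ flags).

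The second step is the passage to the limit. Fix a tightly transverse arc $k_0$ meeting $g$ and $g'$. For a finite subset $\mathcal T=\{T_1,\dots,T_m\}\subset\mathcal T_{gg'}$ ordered from $g$ to $g'$ and containing the triangles adjacent to $g$ and to $g'$ (so that $g_{T_1}=g$ and $g_{T_m}'=g'$), Condition~(1) of Proposition~\ref{prop:Slithering} gives the telescoping
$$
\Sigma_{gg'}=\Sigma_{T_1}\circ Q_1\circ\Sigma_{T_2}\circ Q_2\circ\cdots\circ Q_{m-1}\circ\Sigma_{T_m},
$$
where $Q_i=\Sigma_{g_{T_i}'\,g_{T_{i+1}}}$ is the slithering over the triangles of $\mathcal T_{gg'}$ skipped between $T_i$ and $T_{i+1}$. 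By Lemmas~\ref{lem:EstimateElemSlithering} and \ref{lem:SlitheringBounded}, $\bigl\Vert Q_i-\Id_{\R^n}\bigr\Vert=O\bigl(\sum_T\ell(k_0\cap T)^\nu\bigr)$ with the sum over those skipped triangles, so $\sum_i\bigl\Vert Q_i-\Id_{\R^n}\bigr\Vert\to 0$ as $\mathcal T\to\mathcal T_{gg'}$; also, by H\"older continuity of the flag curve, $\bigl\Vert\mathcal F_\rho(g_{T_i}^{\,+})^{(k)}-\mathcal F_\rho(g_{T_{i+1}}'^{\,+})^{(k)}\bigr\Vert\to0$ with the total width of those skipped triangles. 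Feeding $\mathcal F_\rho(g'^+)^{(k)}=\mathcal F_\rho(g_{T_m}'^{\,+})^{(k)}$ through the telescoped product and using the elementary case at each $\Sigma_{T_i}$, one finds that $\Sigma_{gg'}\bigl(\mathcal F_\rho(g'^+)^{(k)}\bigr)$ differs from $\mathcal F_\rho(g^+)^{(k)}=\mathcal F_\rho(g_{T_1}^{\,+})^{(k)}$ by an amount controlled by these two families of errors; since the left-hand side does not depend on $\mathcal T$, equality follows. The same argument handles the $\mathcal F_\rho(\cdot^-)$ flags, and the first paragraph concludes.

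\textbf{Main obstacle.} The delicate point is the bookkeeping in the last step: the number $m$ of factors grows without bound as $\mathcal T\to\mathcal T_{gg'}$, so one must check that the perturbations $\Vert Q_i-\Id_{\R^n}\Vert$ and the jumps $\bigl\Vert\mathcal F_\rho(g_{T_i}^{\,+})^{(k)}-\mathcal F_\rho(g_{T_{i+1}}'^{\,+})^{(k)}\bigr\Vert$ are summable with total tending to $0$, so that they do not accumulate. This is exactly the kind of estimate underlying the convergence of the slithering product itself, and rests on the geometric-series decay of the widths $\ell(k_0\cap T)$ provided by Lemma~\ref{lem:SumPowersLengthsGapsConverges} together with the uniform bound of Lemma~\ref{lem:SlitheringBounded}.
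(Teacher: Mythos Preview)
Your approach is essentially correct and genuinely different from the paper's. The paper approximates the part of $\widetilde\lambda$ between $g$ and $g'$ by a \emph{finite} lamination: between consecutive triangles $T_i$, $T_{i+1}\in\mathcal T$ it inserts a diagonal geodesic $h_i$ joining an endpoint of $g_{T_i}'$ to an endpoint of $g_{T_{i+1}}$, creating auxiliary triangles $U_i$, $U_i'$. The resulting finite composition $\widehat\Sigma_{\mathcal T}=\cdots\circ\Sigma_{T_i}\circ\Sigma_{U_i}\circ\Sigma_{U_i'}\circ\Sigma_{T_{i+1}}\circ\cdots$ of elementary slitherings sends $\bigoplus_a\widetilde L_a(g')$ \emph{exactly} to $\bigoplus_a\widetilde L_a(g)$, and the only limit to control is the map convergence $\widehat\Sigma_{\mathcal T}\to\Sigma_{gg'}$. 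Your argument instead uses the already established composition law (Lemma~\ref{lem:ComposingSlithering}) to telescope $\Sigma_{gg'}$ exactly for each $\mathcal T$, so no auxiliary geometric objects are needed; the price is tracking Grassmannian errors through a growing product. What makes this work is that each left partial composition is itself of the form $\Sigma_{gh}$ for some leaf $h$, hence uniformly bounded by Lemma~\ref{lem:SlitheringBounded}; this is the control preventing error accumulation, and you should say so explicitly rather than leaving it implicit in the ``main obstacle'' paragraph.

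Two small corrections. First, your hypothesis that $\mathcal T$ can be chosen with $g_{T_1}=g$ and $g_{T_m}'=g'$ fails when $g$ or $g'$ is not a boundary leaf of any triangle in $\mathcal T_{gg'}$ (for instance when $g$ is approached by leaves of $\widetilde\lambda$ from the $g'$ side); you then need extra factors $Q_0=\Sigma_{g\,g_{T_1}}$ and $Q_m=\Sigma_{g_{T_m}'\,g'}$, handled exactly like the other $Q_i$. Second, the jump you need is between $\mathcal F_\rho(g_{T_i}'^{\,+})^{(k)}$ and $\mathcal F_\rho(g_{T_{i+1}}^{\,+})^{(k)}$, the flags attached to the two leaves bounding the $i$-th gap; as written your primes are swapped, and the leaves $g_{T_i}$ and $g_{T_{i+1}}'$ are separated by two full triangles plus the gap.
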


\begin{proof}
The strategy is to approximate by a finite lamination the part of $\widetilde\lambda$ that separates $g$ from $g'$. The slithering map associated to this finite lamination will send the line decomposition $\R^n = \bigoplus_{a=1}^n \widetilde L_a(g')$ to the line decomposition $\R^n = \bigoplus_{a=1}^n \widetilde L_a(g)$, and approximate the slithering map $\Sigma_{gg'}$. Passing to the limit in the approximation process will  conclude the proof. 

Let  $\mathcal{T}= \{T_1, T_2 \dots, T_m\}$ be a finite subset of $ \mathcal{T}_{gg'}$, where the $T_i$ are ordered from $g$ to $g'$. We  insert two triangles $U_i$ and $U_i'$ between $T_i$ and $T_{i+1}$ as follows. Recall that $g_{T_i}$ and $g_{T_i}'$ are the two sides of $T_i$ separating $g$ from $g'$,  with $g_{T_i}$ closest to $g$. Let $h_i$ be the geodesic of $\widetilde S$ that joins the left-hand side (as seen from $g$) endpoint of $g_{T_i}'$ to the right-hand side endpoint of $g_{T_{i+1}}^{\phantom{i}}$. The two geodesics $g_{T_i}'$ and $h_i$ are two sides of a unique ideal triangle $U_i \subset \widetilde S$, possibly reduced to a single geodesic when $g_{T_i}'= h_i$. We can similarly consider the ideal triangle $U_i'$, possibly reduced to a single geodesic, with sides $h_i$ and $g_{T_{i+1}}^{\phantom{i}}$. See Figure~\ref{fig:Diagonals Added}. The same construction  with the conventions that $g_{T_0}'=g$ and $g_{T_{m+1}}^{\phantom{i}}=g'$ also defines triangles $U_0$,  $U_0'$, $U_m$, $U_m'$. 

\begin{figure}[htbp]

\SetLabels
( .72*.33 ) $T_i $ \\
( .2* .78) $ T_{i+1}$ \\
( .85* .54) $U_i$ \\
(.15* .52) $ U_i'$ \\
(.73 *.2 ) $ g_{T_i}$ \\
( .75*.5 ) $g_{T_i}' $ \\
( .3*.66 ) $g_{T_{i+1}} $ \\
( .3* .93) $g_{T_{i+1}}' $ \\
( .55*.55 ) $h_i$ \\
\endSetLabels
\centerline{\AffixLabels{\includegraphics{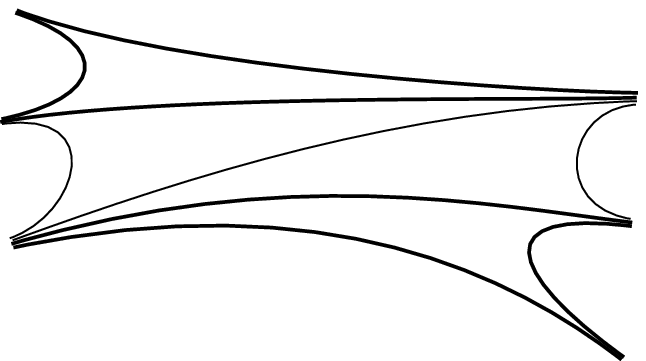}}}

\caption{}
\label{fig:Diagonals Added}
\end{figure}

As before, the triangles $U_i$ and $U_i'$ define an elementary slithering map $\Sigma_{U_i}$ sending  the line decomposition $\R^n = \bigoplus_{a=1}^n \widetilde L_a(h_i)$ to the line decomposition $\R^n = \bigoplus_{a=1}^n \widetilde L_a(g_{T_i}')$, and an elementary slithering map $\Sigma_{U_i'}$ sending  the line decomposition $\R^n = \bigoplus_{a=1}^n \widetilde L_a(g_{T_{i+1}})$ to the line decomposition $\R^n = \bigoplus_{a=1}^n \widetilde L_a(h_i)$. These slithering maps are equal to the identity when the corresponding triangles are reduced to geodesics. 

Now consider
\begin{align*}
\widehat\Sigma_{\mathcal T} =  (\Sigma_{U_0} \circ \Sigma_{U_0'}) \circ \Sigma_{T_1}& \circ (\Sigma_{U_1} \circ \Sigma_{U_1'}) \circ \Sigma_{T_2} \circ (\Sigma_{U_2} \circ \Sigma_{U_2'}) \circ \Sigma_{T_3} \circ \dots\\
&\dots  \circ \Sigma_{T_{m-1}} \circ (\Sigma_{U_{m-1}} \circ \Sigma_{U_{m-1}'}) \circ \Sigma_{T_m} \circ ( \Sigma_{U_m} \circ \Sigma_{U_m'}).
\end{align*}

By construction, $\widehat\Sigma_{\mathcal T} $ sends the line decomposition $\R^n = \bigoplus_{a=1}^n \widetilde L_a(g_{T_{m+1}}^{\phantom{i}}) = \bigoplus_{a=1}^n \widetilde L_a(g')$ to the line decomposition $\R^n = \bigoplus_{a=1}^n \widetilde L_a(g_{T_{0}}') = \bigoplus_{a=1}^n \widetilde L_a(g)$. 

To compare $\widehat\Sigma_{\mathcal T} $ and $\Sigma_{\mathcal T} $, choose an arc $k$ tightly transverse to $\widetilde\lambda$ and meeting both $g$ and $g'$. Then, Lemma~\ref{lem:EstimateElemSlithering} provides constants $A$, $\nu>0$ such that  $\left\Vert \Sigma_{U_i} - \Id_{\R^n} \right\Vert \leq A\, \ell(k\cap U_i)^\nu$ and $\left\Vert \Sigma_{U_i'} - \Id_{\R^n} \right\Vert \leq A\, \ell(k\cap U_i')^\nu$. 

We can assume that $\nu\leq 1$ without loss of generality. Then, with this condition, 
$$
\ell(k\cap U_i)^\nu \leq \ell\bigl (k\cap (U_i\cup U_i') \bigr)^\nu \leq \sum_{\kern 5ptT\in \mathcal T_{g_{T_i}'\kern -1pt g^{\phantom{i}}_{T_{i+1}}}\kern -15pt} \ell(k\cap T)^\nu
$$
where the sum is  over all components $T$ of $\widetilde S - \widetilde \lambda$ that separate $T_i$ from $T_{i+1}$. A similar estimate holds for $\ell(k\cap U_i')^\nu$. It follows that
$$
\left\Vert \Sigma_{U_i}\circ \Sigma_{U_i'} - \Id_{\R^n} \right\Vert = O \Biggl(
\sum_{\kern 5ptT\in \mathcal T_{g_{T_i}'\kern -1pt g^{\phantom{i}}_{T_{i+1}}}\kern -15pt} \ell(k\cap T)^\nu
\Biggr ).
$$

The arguments used in the proof of Lemmas~\ref{lem:SlitheringBounded} and \ref{lem:SlitheringExists} can then be applied to show that
$$
\left\Vert \widehat\Sigma_{\mathcal T} - \Sigma_{\mathcal T}  \right\Vert = O\Biggl(\,
\sum_{T\in \mathcal T_{gg'}-\mathcal T } \ell(k\cap T)^\nu
\Biggr ).
$$

Lemma~\ref{lem:SumPowersLengthsGapsConverges} then shows that $ \widehat\Sigma_{\mathcal T} $ and $\Sigma_{\mathcal T}  $ have the same limit as the finite subset $\mathcal T$ tends to $\mathcal T_{gg'}$. Therefore, $ \widehat\Sigma_{\mathcal T} $ also converges to the slithering map $\Sigma_{gg'}$ as $\mathcal T$ tends to $\mathcal T_{gg'}$.

We already observed that each $\widehat\Sigma_{\mathcal T} $ sends the line decomposition $\R^n = \bigoplus_{a=1}^n \widetilde L_a(g')$ to the line decomposition $\R^n =  \bigoplus_{a=1}^n \widetilde L_a(g)$. Passing to the limit, we conclude that $\Sigma_{gg'}$ has the same property. 
\end{proof}

Lemma~\ref{lem:SlitheringLineDecompositions} proves Condition~(4) of  Proposition~\ref{prop:Slithering}.

We already observed in Lemma~\ref{lem:SlitheringExists} that $\Sigma_{gg'}$ has determinant 1, which is Condition~(5). 

The only property of Proposition~\ref{prop:Slithering} remaining to prove is the uniqueness of the slithering map.

\begin{lem}
\label{lem:SlitheringUnique}
If a family of linear isomorphisms $\Sigma_{gg'}' \colon \R^n \to \R^n$, indexed by all pairs of leaves $g$, $g' \subset \widetilde \lambda$, satisfies Conditions~{\upshape(1--3)} of Proposition~{\upshape \ref{prop:Slithering}}, then $\Sigma_{gg'}' $ is equal to the map $\Sigma_{gg'}$ constructed above for every $g$, $g'$. 
\end{lem}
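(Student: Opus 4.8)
The plan is to show that the family $\Sigma'_{gg'}$ must coincide with the limit $\lim_{\mathcal T \to \mathcal T_{gg'}} \Sigma_{\mathcal T}$ that was used in Lemma~\ref{lem:SlitheringExists} to define $\Sigma_{gg'}$. We may assume $g\neq g'$, the case $g=g'$ being immediate from Condition~(1) of Proposition~\ref{prop:Slithering} (which forces $\Sigma'_{gg}=\Id_{\R^n}=\Sigma_{gg}$). Fix once and for all an arc $k\subset\widetilde S$ tightly transverse to $\widetilde\lambda$ and meeting both $g$ and $g'$.

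The first step is to note that Condition~(3) already pins down $\Sigma'$ on every elementary slithering. If $T$ is a component of $\widetilde S-\widetilde\lambda$ with separating sides $g_T$ and $g_T'$ (ordered from $g$) meeting at the vertex $x_T$, then orienting both $g_T$ and $g_T'$ towards $x_T$ and applying Condition~(3) determines the restriction of $\Sigma'_{g_T g_T'}$ to each line $\widetilde L_a(g_T')$, hence determines $\Sigma'_{g_T g_T'}$ completely. Therefore $\Sigma'_{g_T g_T'}=\Sigma_T$ for every such $T$.

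The second step sets up the comparison. Given a finite subset $\mathcal T=\{T_1,T_2,\dots,T_m\}\subset\mathcal T_{gg'}$ with the $T_i$ ordered from $g$ to $g'$, write $g_{T_0}'=g$ and $g_{T_{m+1}}=g'$, so that the leaves $g=g_{T_0}', g_{T_1}, g_{T_1}', g_{T_2}, \dots, g_{T_m}', g_{T_{m+1}}=g'$ occur in this order along $k$, each separating its neighbours. Iterating Condition~(1) and substituting $\Sigma'_{g_{T_i}g_{T_i}'}=\Sigma_{T_i}$ from the first step, we get
$$
\Sigma'_{gg'} = G_0 \circ \Sigma_{T_1} \circ G_1 \circ \Sigma_{T_2} \circ \cdots \circ \Sigma_{T_m} \circ G_m,
\qquad G_i = \Sigma'_{g_{T_i}'\, g_{T_{i+1}}}.
$$
Since all the leaves involved cross the fixed compact arc $k$, the Hölder continuity of Condition~(2) together with $\Sigma'_{hh}=\Id_{\R^n}$ gives $\Vert G_i-\Id_{\R^n}\Vert = O\bigl(d(g_{T_i}',g_{T_{i+1}})^\nu\bigr)$ for some exponent $\nu$, which we may take $\le 1$. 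A geometric estimate as in the proof of Lemma~\ref{lem:EstimateElemSlithering} bounds $d(g_{T_i}',g_{T_{i+1}})$ by the length of the subarc of $k$ between these two leaves, hence by $\sum \ell(k\cap T)^\nu$ over the triangles $T$ separating $T_i$ from $T_{i+1}$; summing over $i$ yields $\sum_{i=0}^m \Vert G_i-\Id_{\R^n}\Vert = O\bigl(\sum_{T\in\mathcal T_{gg'}\setminus\mathcal T}\ell(k\cap T)^\nu\bigr)$, which is finite by Lemma~\ref{lem:SumPowersLengthsGapsConverges} and tends to $0$ as $\mathcal T\to\mathcal T_{gg'}$.

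The third step concludes by comparing with $\Sigma_{\mathcal T}=\Sigma_{T_1}\circ\cdots\circ\Sigma_{T_m}$. Exactly as in the proof of Lemma~\ref{lem:SlitheringBounded}, the norms of $\Sigma_{\mathcal T}$ and of each partial product involving the $G_i$ are dominated by $\exp\bigl(C\sum_{T\in\mathcal T_{gg'}}\ell(k\cap T)^\nu\bigr)$, uniformly in $\mathcal T$. Removing the factors $G_i$ one at a time from the displayed expression therefore perturbs the operator by $O(\Vert G_i-\Id_{\R^n}\Vert)$ at each step, so $\Vert\Sigma'_{gg'}-\Sigma_{\mathcal T}\Vert = O\bigl(\sum_i\Vert G_i-\Id_{\R^n}\Vert\bigr)\to 0$ as $\mathcal T\to\mathcal T_{gg'}$. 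Hence $\Sigma'_{gg'}=\lim_{\mathcal T\to\mathcal T_{gg'}}\Sigma_{\mathcal T}=\Sigma_{gg'}$. The main obstacle is the bookkeeping of the last two steps: controlling the "gap" terms $G_i$ uniformly and organizing the telescoping estimate amounts to re-running the convergence arguments of Lemmas~\ref{lem:SlitheringBounded} and \ref{lem:SlitheringExists} using only the abstract Conditions~(1)--(3) in place of the explicit construction of $\Sigma_{gg'}$.
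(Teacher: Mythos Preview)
Your proof is correct and follows essentially the same approach as the paper: decompose $\Sigma'_{gg'}$ via Condition~(1) along a finite $\mathcal T\subset\mathcal T_{gg'}$, identify the elementary factors $\Sigma'_{g_{T_i}g_{T_i}'}$ with $\Sigma_{T_i}$ via Condition~(3), control the gap terms $G_i=\Sigma'_{g_{T_i}'g_{T_{i+1}}}$ by H\"older continuity together with the geometric bound on $d(g_{T_i}',g_{T_{i+1}})$, and then rerun the convergence estimates of Lemmas~\ref{lem:SlitheringBounded}--\ref{lem:SlitheringExists}. The paper makes explicit one point you leave implicit, namely that $\widetilde\lambda$ has measure~$0$ so that the length of the subarc of $k$ between $g_{T_i}'$ and $g_{T_{i+1}}$ equals $\sum_T \ell(k\cap T)$ over the separating triangles; otherwise the arguments coincide.
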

In particular, Conditions~(4--5) are consequences of Conditions~(1--3). 
\begin{proof} As usual, let $k$ be a tightly transverse  arc  that crosses both  $g$ and $g'$. Let $\mathcal T = \{ T_1, T_2, \dots, T_m \}$ be a finite subset of the set $\mathcal T_{gg'}$ of components of $\widetilde S - \widetilde \lambda$ that separate $g$ from $g'$, indexed in such a way that the $T_i$ occur in this order as one goes from $g$ to $g'$. Let $g_{T_i}^{\phantom{i}}$ and $g_{T_i}'$ be the sides of $T_i$ that are closest to $g$ and $g'$, respectively. 

By Condition~(1),
\begin{align*}
\Sigma_{gg'}' = \Sigma_{gg_{T_1}^{\phantom{i}}}' \circ \Sigma_{g_{T_1}^{\phantom{i}}g_{T_1}'}' &\circ \Sigma_{g_{T_1}'g_{T_2}^{\phantom{i}}}' \circ \Sigma_{g_{T_2}^{\phantom{i}}g_{T_2}'}' \circ   \dots \\
\dots &\circ \Sigma_{g_{T_{m-1}}^{\phantom{i}}g_{T_{m-1}}'}' \circ \Sigma_{g_{T_{m-1}}'g_{T_m}^{\phantom{i}}}' \circ \Sigma_{g_{T_m}^{\phantom{i}}g_{T_m}'}' \circ \Sigma_{g_{T_m}'g'}' .
\end{align*}

 Condition~(3)  implies that $\Sigma_{g_{T_i}^{\phantom{i}}g_{T_i}'}' = \Sigma_{g_{T_i}^{\phantom{i}}g_{T_i}'} = \Sigma_{T_i}$, so that 
\begin{align*}
\Sigma_{gg'}' = \Sigma_{gg_{T_1}^{\phantom{i}}}' \circ \Sigma_{T_1}  &\circ \Sigma_{g_{T_1}'g_{T_2}^{\phantom{i}}}' \circ\Sigma_{T_2}\circ   \dots \\
\dots &\circ \Sigma_{T_{m-1}}  \circ \Sigma_{g_{T_{m-1}}'g_{T_m}^{\phantom{i}}}' \circ \Sigma_{T_m} \circ \Sigma_{g_{T_m}'g'}' .
\end{align*}

By Condition~(2), the map $(h,h') \mapsto \Sigma_{hh'}'$ is H\"older continuous over the space of leaves of $\widetilde \lambda$ that meet the arc $k$. As a consequence, there exists a constant $\nu>0$ such that for every $i$
$$
\left\Vert \Sigma_{g_{T_i}'g_{T_{i+1}}^{\phantom{i}}}'   - \Id_{\R^n}\right\Vert 
=O\bigl( d(g_{T_i}'  ,   g_{T_{i+1}}^{\phantom{i}}) ^\nu \bigr).
$$
Because the leaves  $g_{T_i}' $  and $  g_{T_{i+1}}^{\phantom{i}}$ are disjoint, a classical estimate in negative curvature geometry (see for instance  \cite[\S 5.2.6]{CEG}) shows that $d(g_{T_i}'  ,   g_{T_{i+1}}^{\phantom{i}}) $ is bounded by a constant times the length of the subarc  $k_{g_{T_i}'   g_{T_{i+1}}^{\phantom{i}}} \subset k$ delimited by the points  $k \cap g_{T_i}'  ,$ and $k \cap  g_{T_{i+1}}^{\phantom{i}}$. The geodesic lamination $\widetilde\lambda$ has measure 0 (\cite[\S 8.5]{Thu0}\cite{BirSer}). Therefore,
$$
\ell(k_{g_{T_i}'   g_{T_{i+1}}^{\phantom{i}}}  ) = \sum_{\kern 5pt T\in \mathcal T_{g_{T_i}'   g_{T_{i+1}}^{\phantom{i}}} \kern -5pt} \ell (k\cap T).
$$

Assuming $\nu\leq 1$ without loss of generality, we can combine all these estimates and conclude that 
$$
\left\Vert \Sigma_{g_{T_i}'g_{T_{i+1}}^{\phantom{i}}}'   - \Id_{\R^n}\right\Vert 
=O\Biggl( \sum_{\kern 5pt T\in \mathcal T_{g_{T_i}'   g_{T_{i+1}}^{\phantom{i}}} \kern -5pt} \ell (k\cap T)^\nu \Biggr).
$$
This also holds for $i=0$ and $m$, with the convention that $g_{T_0}' =g$ and $g_{T_{m+1}}^{\phantom{i}} =g'$. 

From this estimate, we can then use the arguments of the proofs of Lemma~\ref{lem:SlitheringBounded} and \ref{lem:SlitheringExists} to show that
$$
\left\Vert \Sigma_{gg'}'   - \Sigma_{\mathcal T} \right\Vert 
=O\Biggl( \sum_{ T\in \mathcal T_{gg'} - \mathcal T } \ell (k\cap T)^\nu \Biggr).
$$
By Lemma~\ref{lem:SumPowersLengthsGapsConverges}, this proves that
$$
\Sigma_{gg'}'
=\lim_{\mathcal T \to \mathcal T_{gg'}} \Sigma_{\mathcal T} 
=\Sigma_{gg'},
$$
which concludes the proof of Lemma~\ref{lem:SlitheringUnique}.
\end{proof}

This uniqueness property completes the proof of Proposition~\ref{prop:Slithering}.
\end{proof}

\begin{rem}
\label{rem:SlitheringNotUniqueIfNotHolder}
In Proposition~\ref{prop:Slithering} (and in Lemma~\ref{lem:SlitheringUnique}), the uniqueness property would be false without the hypothesis that the slithering map $\Sigma_{gg'}$ depends locally H\"older continuously (and not just continuously) on the leaves $g$, $g'$. To understand why, let $\alpha_1$, $\alpha_2$, \dots, $\alpha_{n-1}$ be transverse measures for $\lambda$ such that $\alpha_{n-a}=\alpha_a$ for every $a$ (so that in practice we have $\lfloor \frac n2 \rfloor$ such $\alpha_a$); assume in addition that the  $\alpha_a$ have no atom (which is automatic if $\lambda$ has no closed leaf). 
For two leaves $g$, $g'$ of $\widetilde\lambda$, the atom-free hypothesis guarantees that the $\alpha_a$--mass $\alpha_a(g,g')$ of the set of leaves of $\widetilde \lambda$ separating $g$ from $g'$ depends continuously on $g$ and $g'$. Define $\beta_1(g,g')$, $\beta_2(g,g')$, \dots, $\beta_n(g,g')$ by the property that $\alpha_a(g,g') = \beta_{a+1}(g,g') - \beta_a(g,g')$ and $\sum_{a=1}^n \beta(g,g') =0$.  If $g$ and $g'$ are oriented in parallel in such a way that $g'$ is to the left of $g$, let  $\Sigma_{gg'}' \colon \R^n \to \R^n$ be obtained by postcomposing the slithering map $\Sigma_{gg'}$ with the linear map that respects each line $\widetilde L_a(g)$ and acts by $\E^{\beta_a(g,g')}$ on $\widetilde L_a(g)$. This new family of maps $\Sigma_{gg'}'$ satisfies Conditions~(1) and (3--5) of  Proposition~\ref{prop:Slithering}, the maps $\Sigma_{gg'}'$ depend continuously (but not locally H\"older continuously) on $g$ and $g'$, and they are of course different from the original family of slithering maps $\Sigma_{gg'}$ if at least one of the $\alpha_a$ is non-zero. 

This construction automatically generalizes to the situation where the $\alpha_a$ are topological differential forms in the sense of \cite{Kenyon}, in which case it completely describes how the uniqueness can fail if we remove the H\"older condition from Proposition~\ref{prop:Slithering}.
\end{rem}

\subsection{The shearing cycle}
\label{subsect:ShearingCycle}

We now use the slithering map to associate to the Hitchin homomorphism $\rho\colon\pi_1(S) \to \PSL$ a certain twisted   tangent cycle $\sigma^\rho \in \CC(\lambda, \slits; \widehat\R^{n-1})$ relative to the slits of $\lambda$. This relative tangent cycle is the \emph{shearing  cycle} of the Hitchin homomorphism $\rho$. 

We will use the point of view of \S \ref{subsect:RelTgtCyclesDifferentView}. Let $T$ and $T'$ be two components of $\widetilde S - \widetilde\lambda$. 

Let $g$ be the side of $T$ that is closest to $T'$, and let $g'$ be the side of $T'$ closest to $T$. We orient these two leaves of $\widetilde \lambda$ to the left as seen from $T$. In particular, $g$ and $g'$ are oriented in parallel, and the slithering map $\Sigma_{gg'} \colon \R^n \to \R^n$ of Proposition~\ref{prop:Slithering} sends each line $\widetilde L_a(g')$ to the line $\widetilde L_a(g)$.  

\begin{figure}[htbp]

\SetLabels
( .5* .22) $ T$ \\
( -.02* .21) $ x$ \\
( 1.03* .28) $y $ \\
( .37* -.04 ) $z $ \\
( .5* .4) $ g$ \\
( .6* .78) $ T'$ \\
( -.03* .68) $ x'$ \\
(1.05 * .62) $y '$ \\
( .72* 1.02) $z '$ \\
( .5*.6 ) $ g'$ \\
\endSetLabels
\centerline{\AffixLabels{\includegraphics{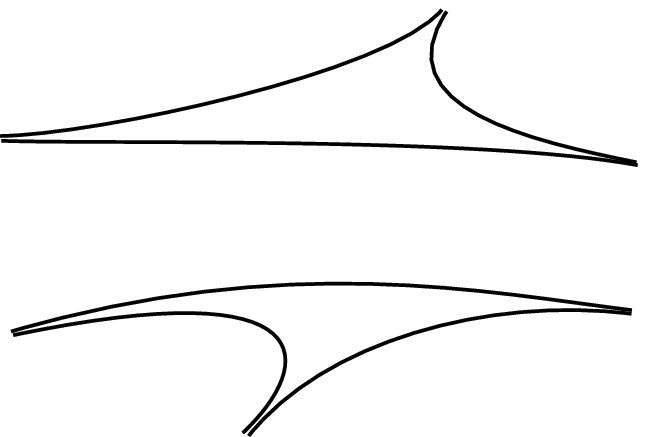}}}

\caption{}
\label{fig:ShearingCycles}
\end{figure}

Let $x$ and $y\in \partial_\infty \widetilde S$  be  the positive and  negative endpoints of  $g$, and let $z$ be the third vertex of the ideal triangle $T$. Similarly, let  $x'$ and $y'\in \partial_\infty \widetilde S$  be  the positive and  negative endpoints of  $g'$, and let $z'$ be the third vertex of  $T'$. See Figure~\ref{fig:ShearingCycles}. The flag curve $\F_\rho \colon \partial_\infty \widetilde S \to \Flag$ of Proposition~\ref{prop:FlagCurve} now associates six flags $\F_\rho(x)$, $\F_\rho(y)$, $\F_\rho(z)$, $\F_\rho(x')$, $\F_\rho(y')$ and $\F_\rho(z')\in \Flag$ to these vertices. By our definitions, the slithering map $\Sigma_{gg'}$ sends $\F_\rho(x')$ to $\F_\rho(x)$ and $\F_\rho(y')$ to $\F_\rho(y)$. 

We want to consider the double ratio
$
D_a \Bigl(
\F_\rho(x), \F_\rho(y), \F_\rho(z), \Sigma_{gg'}\bigl( \F_\rho(z') \bigr)
\Bigr)
$, as in \S\ref{subsect:DoubleRatios}. 

\begin{lem}
\label{lem:ShearAlongArcDefined}
The double ratio $
D_a \Bigl(
\F_\rho(x), \F_\rho(y), \F_\rho(z), \Sigma_{gg'}\bigl( \F_\rho(z') \bigr)
\Bigr)
$ is finite and positive. 
\end{lem}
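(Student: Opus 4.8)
The plan is to use the characterization of the double ratio in terms of a direct-sum decomposition given in Lemma~\ref{lem:ComputeDoubleRatios}, combined with the positivity of flag configurations coming from Theorem~\ref{thm:FlagCurvePositive}. First I would record that the flag pair $\bigl(\F_\rho(x),\F_\rho(y)\bigr)$ is generic (by Theorem~\ref{thm:FlagCurvePositive}, since $x$ and $y$ are distinct points of $\partial_\infty\widetilde S$), so it produces the line decomposition $\R^n=\bigoplus_{a=1}^n L_a$ with $L_a=\F_\rho(x)^{(a)}\cap \F_\rho(y)^{(n-a+1)}=\widetilde L_a(g)$, exactly the decomposition appearing in Lemma~\ref{lem:ComputeDoubleRatios} with $(E,F)=\bigl(\F_\rho(x),\F_\rho(y)\bigr)$. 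To apply that lemma with $G=\F_\rho(z)$ and $H=\Sigma_{gg'}\bigl(\F_\rho(z')\bigr)$, I must first check that the flag quadruple $\bigl(\F_\rho(x),\F_\rho(y),\F_\rho(z),\Sigma_{gg'}(\F_\rho(z'))\bigr)$ is generic, which is the point where the finiteness of the double ratio is established.

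For genericity, the key observation is that Condition~(4) of Proposition~\ref{prop:Slithering} says $\Sigma_{gg'}$ preserves each line $\widetilde L_a(g)=\widetilde L_a(g')$ of the common decomposition, hence preserves the flags $\F_\rho(x)=\Sigma_{gg'}(\F_\rho(x'))$ and $\F_\rho(y)=\Sigma_{gg'}(\F_\rho(y'))$; so applying $\Sigma_{gg'}$ to the quadruple $\bigl(\F_\rho(x'),\F_\rho(y'),\F_\rho(x''),\F_\rho(z')\bigr)$ — where $x''$ is chosen to lie at the vertex $z$ seen from the $T'$ side, i.e. I transport the configuration on the $T$ side rather than on the $T'$ side. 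More directly: the only subspaces of $\F_\rho(z)$ and $H=\Sigma_{gg'}(\F_\rho(z'))$ that enter the double ratio are the lines $\F_\rho(z)^{(1)}=G^{(1)}$ and $H^{(1)}=\Sigma_{gg'}(\F_\rho(z')^{(1)})$, so by the remark following Lemma~\ref{lem:RelationsDoubleRatios} I only need these two lines to be in general position with respect to the subspaces $\F_\rho(x)^{(a)}$, $\F_\rho(y)^{(b)}$ with $a+b=n-1$. For $G^{(1)}=\F_\rho(z)^{(1)}$ this follows from Theorem~\ref{thm:FlagCurvePositive} applied to the triple $\bigl(\F_\rho(x),\F_\rho(y),\F_\rho(z)\bigr)$; for $H^{(1)}$ I would pull back by $\Sigma_{gg'}^{-1}$, which sends $\F_\rho(x)^{(a)}$ to $\F_\rho(x')^{(a)}$ and $\F_\rho(y)^{(b)}$ to $\F_\rho(y')^{(b)}$ (Condition~(4) again), reducing to genericity of the triple $\bigl(\F_\rho(x'),\F_\rho(y'),\F_\rho(z')\bigr)$, which is once more Theorem~\ref{thm:FlagCurvePositive}. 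This establishes finiteness.

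For positivity, I would use the computation of Lemma~\ref{lem:ComputeDoubleRatios}: writing $g$, $h\in\R^n$ for nonzero vectors in $G^{(1)}=\F_\rho(z)^{(1)}$ and $H^{(1)}=\Sigma_{gg'}(\F_\rho(z')^{(1)})$, and $g_a$, $h_a\in L_a$ for their projections to $L_a=\widetilde L_a(g)$ parallel to the other lines, one has
$$
D_a\Bigl(\F_\rho(x),\F_\rho(y),\F_\rho(z),\Sigma_{gg'}(\F_\rho(z'))\Bigr)=-\frac{g_{a+1}}{h_{a+1}}\,\frac{h_a}{g_a},
$$
the ratios being measured in the lines $L_b$. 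So it suffices to show that the ratio $g_a/h_a\in\R$ has a sign that strictly alternates with $a$ (then each $-\frac{g_{a+1}}{h_{a+1}}\frac{h_a}{g_a}$ is positive). The natural way to see this is to relate it to an honest positive double ratio of flags on the boundary circle, where Theorem~\ref{thm:FlagCurvePositive} (together with the sign-convention built into the definition of $D_a$ in \S\ref{subsect:DoubleRatios}, and Proposition~\ref{prop:DihedralPermutPreservesPositivity}) forces positivity. Concretely I expect to argue that, because $\Sigma_{gg'}$ fixes $\F_\rho(x)$ and $\F_\rho(y)$ and because $g$, $g'$ are oriented in parallel with $T$, $T'$ on the correct sides, the quadruple $\bigl(\F_\rho(x),\F_\rho(z),\F_\rho(y),\Sigma_{gg'}(\F_\rho(z'))\bigr)$ is in the cyclic position $i<k<j<l$ required in clause (2) of the positivity definition — here one should think of $\Sigma_{gg'}(\F_\rho(z'))$ as the flag at the point $z'$ transported so that $z$ and the transported $z'$ lie on opposite sides of the geodesic through $x$, $y$ — and then the double ratio in question is, up to the permutation identities of Lemma~\ref{lem:RelationsDoubleRatios}, one of the positive double ratios $D_a$ of that positive quadruple.

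\textbf{Main obstacle.} The routine part is the genericity argument and the algebra of Lemma~\ref{lem:ComputeDoubleRatios}; the delicate point is the positivity, specifically pinning down that $\Sigma_{gg'}(\F_\rho(z'))$ occupies the correct cyclic position relative to $\F_\rho(x)$, $\F_\rho(y)$, $\F_\rho(z)$ so that Theorem~\ref{thm:FlagCurvePositive} and Proposition~\ref{prop:DihedralPermutPreservesPositivity} can be invoked. This requires a careful bookkeeping of orientations — which endpoint of $g$ is $x$ versus $y$, which side of $g$ the triangle $T$ lies on, and how the parallel orientation of $g'$ places $z'$ — and a limiting argument: $\Sigma_{gg'}(\F_\rho(z'))$ is not literally a flag of the curve $\F_\rho$, so one approximates $\Sigma_{gg'}$ by finite compositions $\Sigma_{\mathcal T}$ (as in the proof of Proposition~\ref{prop:Slithering}), checks the positivity of the corresponding double ratio at each finite stage using that each elementary slithering $\Sigma_T$ is realized by a pivot fixing a flag of $\F_\rho$, and passes to the limit, using that the set of positive (hence nonzero, finite) double ratios is open and that the limiting configuration is still generic by the first part of the argument. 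I expect this limiting/orientation analysis to be the crux of the proof.
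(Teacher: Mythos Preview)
Your finiteness argument is correct and essentially matches what the paper does implicitly: pulling back by $\Sigma_{gg'}^{-1}$ reduces the required genericity to that of the triple $\bigl(\F_\rho(x'),\F_\rho(y'),\F_\rho(z')\bigr)$, which follows from Theorem~\ref{thm:FlagCurvePositive}.

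The positivity argument, however, has a genuine gap, and you have put your finger on exactly where it is. You want to invoke Theorem~\ref{thm:FlagCurvePositive} for the quadruple $\bigl(\F_\rho(x),\F_\rho(z),\F_\rho(y),\Sigma_{gg'}(\F_\rho(z'))\bigr)$, but that theorem only applies to flags of the form $\F_\rho(w)$ for $w\in\partial_\infty\widetilde S$, and $\Sigma_{gg'}(\F_\rho(z'))$ is not such a flag. Your proposed fix --- approximate $\Sigma_{gg'}$ by the finite compositions $\Sigma_{\mathcal T}$ and check positivity at each stage --- does not close the gap: $\Sigma_{\mathcal T}(\F_\rho(z'))$ is still not on the flag curve, and the elementary slitherings $\Sigma_T$ pivot around the flags $\F_\rho(x_T)$ at the intermediate triangle vertices, not around $\F_\rho(x)$ or $\F_\rho(y)$. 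So there is no stage at which Theorem~\ref{thm:FlagCurvePositive} applies to the quadruple you care about, and the limit argument has no base case. (A related technical issue: the raw $\Sigma_{\mathcal T}$ does not even send $\F_\rho(x')$, $\F_\rho(y')$ to $\F_\rho(x)$, $\F_\rho(y)$; only the modified $\widehat\Sigma_{\mathcal T}$ of Lemma~\ref{lem:SlitheringLineDecompositions} does.)

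The paper's proof supplies the missing structural invariant. It extracts from positivity a \emph{joint orientation} of the lines $\widetilde L_a(h)$ for each leaf $h$: choose a point $r\in\partial_\infty\widetilde S$ on the left of $h$ and use the projections of a nonzero vector in $\F_\rho(r)^{(1)}$ to orient each $\widetilde L_a(h)$, well-defined up to simultaneous reversal. Positivity (via exactly the $D_a$ formula in Lemma~\ref{lem:ComputeDoubleRatios}) shows that choosing $r$ on the right gives the opposite joint orientation. Each elementary slithering $\Sigma_{hh'}$, because it acts by the identity on $E^{(a)}/E^{(a-1)}$ for $E=\F_\rho$ of the shared endpoint, visibly preserves this joint orientation. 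Chaining through the finite approximation $\widehat\Sigma_{\mathcal T}$ and passing to the limit, $\Sigma_{gg'}$ sends the left joint orientation of the $\widetilde L_a(g')$ to the left joint orientation of the $\widetilde L_a(g)$. Since $z$ lies on the right of $g$ while $z'$ lies on the left of $g'$, the projections $v_a$ of $v\in\F_\rho(z)^{(1)}$ and $\Sigma_{gg'}(v_a')$ of $v'\in\F_\rho(z')^{(1)}$ define opposite joint orientations of the $\widetilde L_a(g)$, so every ratio $\frac{\Sigma_{gg'}(v_a')}{v_a}\cdot\frac{v_{a+1}}{\Sigma_{gg'}(v_{a+1}')}$ is negative, and the minus sign in $D_a$ gives positivity. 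This ``joint orientation'' is precisely the quantity that is preserved step by step and survives the limit; finding it is the idea your proposal was missing.
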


\begin{proof}
When $T$ and $T'$ are adjacent so that  $g=g'$, then $\Sigma_{gg'}=\Id_{\R^n}$ and the statement is   an immediate consequence of  the positivity property of Theorem~\ref{thm:FlagCurvePositive}. In the general case, however, the appearance of the slithering map $\Sigma_{gg'}$ requires a more elaborate argument. 

The key ingredient is a deeper consequence of the positivity property, which
 is that the line bundles $L_a \to T^1S$ of Theorem~\ref{thm:AnosovProperty} carry a canonical  joint orientation. This does not mean that each individual bundle $L_a$ has a preferred orientation, but that the collection of  all $L_a$ carry orientations that are uniquely determined up to simultaneous reversal of all orientations; in other words, all line bundles $L_a \otimes L_{a+1} \to T^1 S$ admit canonical orientations. Actually,  we will see that the line bundles $L_a$  admit two equally canonical but opposite joint orientations: the left-hand-side and right-hand-side joint orientations. 

To define these joint orientations, focus attention on  a point $\widetilde u \in T^1 \widetilde S$. As in \S \ref{subsect:FlagCurve}, consider the line decomposition $\R^n = \bigoplus_{a=1}^n \widetilde L_a(\widetilde u)$ defined by the fibers over $\widetilde u$ of the line bundles $\widetilde L_a \to T^1 \widetilde S$ lifting the bundles $L_a \to T^1S$. Then, if  $p$ and $q\in \partial_\infty \widetilde S$  are  the positive and  negative endpoints of the orbit $g$ of $\widetilde u$ under the geodesic flow, $\widetilde L_a(\widetilde u) = \mathcal F_\rho(p)^{(a)} \cap \mathcal F_\rho(q)^{(n-a+1)}$ by definition of the flag curve $\F_\rho$ in Proposition~\ref{prop:FlagCurve}. Consider another point $r\in \partial_\infty \widetilde S$ that is different from $p$ and $q$, and that sits to the left of $p$ as seen from $q$.
 By Theorem~\ref{thm:FlagCurvePositive}, the flag triple $\bigl( \F_\rho(p), \F_\rho(q), \F_\rho(r) \bigr)$ is generic. As a consequence, if $v$ is a nontrivial vector in the line $\F_\rho(r)^{(1)}$, the projection of $v\in \R^n = \bigoplus_{b=1}^n \widetilde L_b(\widetilde u)$ to the line $\widetilde L_a(\widetilde u)$ parallel to all $\widetilde  L_b(\widetilde u)$ with $b\neq a$ is nontrivial, and therefore specifies an orientation for $\widetilde L_a(\widetilde u)$. Replacing $v$ by any other non-trivial vector $v' \in \F_\rho(r)^{(1)}$ determines the same orientation on $\widetilde L_a(\widetilde u)$  if the ratio $\frac v{v'}$ in the line $\F_\rho(r)^{(1)}$  is positive, or reverses all these orientations if $\frac v{v'}<0$. Therefore the joint orientation of the lines $\widetilde L_a(\widetilde u)$ is independent of the choice of $v \in \F_\rho(r)^{(1)}$. 

To show that the joint orientation of the lines $\widetilde L_a(\widetilde u)$ is independent of the choice of  the point $r\in \partial_\infty \widetilde S$, consider another point $r'\in \partial_\infty \widetilde S$ different from $p$ and $q$, and now located on the right of $p$ as seen from $q$. This point $r'$ similarly defines a joint orientation for the lines $\widetilde L_a(\widetilde u)$, and we will see that this joint orientation is exactly the opposite of that defined by $r$. To prove this, pick nontrivial vectors $v \in \F_\rho(r)^{(1)}$ and $v' \in \F_\rho(r')^{(1)}$. Let $v_a$ and $v_a'$ denote the respective projections of $v$ and $v'$ to the line $\widetilde L_a(\widetilde u)$ parallel to all $\widetilde  L_b(\widetilde u)$ with $b\neq a$. If, in addition, $r$ and $r'$ are in different components of $ \partial_\infty \widetilde S - \{p,q\}$, the positivity condition of Theorem~\ref{thm:FlagCurvePositive} and the definition of the double ratio can be combined to show that
$$
0< D_a \bigl(
\F_\rho(p), \F_\rho(q), \F_\rho(r),  \F_\rho(r') 
\bigr)
=
-\frac{v_{a+1}}{v_{a+1}'}\frac{v_{a}'}{v_{a}}
$$
where the ratios $\frac{v_b'}{v_b}\in \R-\{0\}$ are computed in the lines $\widetilde L_b(\widetilde u)$. As a consequence,  $v$ and $v'$ induce opposite orientations on the lines  $\widetilde L_a(\widetilde u) \otimes \widetilde L_{a+1}(\widetilde u)$. In other words, the joint orientation of the lines $\widetilde L_a(\widetilde u)$ defined by the point $r'\in \partial_\infty \widetilde S$ is the opposite of that defined by $r$. It immediately follows that the joint orientation defined by $r$ is independent of the choice of $r$ in the left-hand-side component of $ \partial_\infty \widetilde S- \{ p,q\}$ (as seen from~$q$). 

We will refer to the joint orientation defined by $r$ as the \emph{left-hand-side joint orientation} of the lines  $\widetilde L_a(\widetilde u)$, whereas the  \emph{right-hand-side joint orientation} will be the one defined by $r'$. These two joint orientations are opposite of each other.

Let $h$ and $h'$ be two oriented geodesics of $\widetilde S$ that share the same positive endpoint $p\in \partial_\infty \widetilde S$, and let $\Sigma_{hh'} \colon \R^n \to \R^n$ be the elementary slithering map, sending each line $\widetilde L_a(h')$ to $\widetilde L_a(h)$, defined as in Proposition~\ref{prop:Slithering}(4). The definition of  $\Sigma_{hh'} $ through the isomorphisms $\widetilde L_a(h') \cong \mathcal F^\rho(p)^{(a)} /  \mathcal F^\rho(p)^{(a-1)} \cong \widetilde L_a(h) $ makes it clear that  $\Sigma_{hh'}$ sends the left-hand-side joint orientation of the family of  lines $\widetilde L_a(h')$ to the left-hand-side  joint orientation of the $\widetilde L_a(h)$. 

We now return to the leaves $g$, $g'$ of $\widetilde\lambda$. As in the proof of Lemma~\ref{lem:SlitheringLineDecompositions} and with the notation used there, approximate the part of $\widetilde \lambda$ that separates $g$ and $g'$ by a finite lamination, and the slithering map $\Sigma_{gg'}$ by a finite composition
\begin{align*}
\widehat\Sigma_{\mathcal T} =  (\Sigma_{U_0} \circ \Sigma_{U_0'}) \circ \Sigma_{T_1}& \circ (\Sigma_{U_1} \circ \Sigma_{U_1'}) \circ \Sigma_{T_2} \circ (\Sigma_{U_2} \circ \Sigma_{U_2'}) \circ \Sigma_{T_3} \circ \dots\\
&\dots  \circ \Sigma_{T_{m-1}} \circ (\Sigma_{U_{m-1}} \circ \Sigma_{U_{m-1}'}) \circ \Sigma_{T_m} \circ ( \Sigma_{U_m} \circ \Sigma_{U_m'}).
\end{align*}
of elementary slitherings where, for any to consecutive terms, the corresponding triangles $T_i$ and $U_i$, or $U_i$ and $U_i'$, or $U_i'$ and $T_{i+1}$, share a side $g_{T_i}^{\phantom i}$, $h_i$ or $g_{T_{i+1}}'$, respectively. By our earlier observation, each of these elementary slitherings respects joint orientations of the appropriate families of lines. It follows that $\widehat\Sigma_{\mathcal T} $ sends the joint orientation of the lines  $\widetilde L_a(g')$ to the joint orientation of the $\widetilde L_a(g)$. Passing to the limit as the approximation $\widehat\Sigma_{\mathcal T}$ tends to $\Sigma_{gg'}$, we concluce that the slithering map $\Sigma_{gg'}$ sends the  left-hand-side  joint orientation of the lines  $\widetilde L_a(g')$ to the  left-hand-side joint orientation of the $\widetilde L_a(g)$. 

We are now ready to determine the sign of  the double ratio $
D_a \Bigl(
\F_\rho(x), \F_\rho(y), \F_\rho(z), \Sigma_{gg'}\bigl( \F_\rho(z') \bigr)
\Bigr)
$. Pick nontrivial vectors $v$ and $v'$ in the lines $\F_\rho(z)^{(1)}$ and  $\F_\rho(z')^{(1)}$, respectively. The left-hand-side joint orientation of the family of lines $\widetilde L_a(g)$ is defined by the projections $v_a$ of $v$ to $\widetilde L_a(g)$ parallel to the other lines $\widetilde L_b(g)$ with $b\neq a$. Similarly, the right-hand-side joint orientation of the  lines $\widetilde L_a(g')$ is defined by the projections $v_a'$ of $v'$ to $\widetilde L_a(g')$ parallel to the  lines $\widetilde L_b(g')$ with $b\neq a$. Since we just proved that the slithering map $\Sigma_{gg'}$ respects joint orientations, and since the left- and right-hand-side orientations are opposite of each other, the joint orientation of the $\widetilde L_a(g)$ by the vectors $v_a$ is opposite to that  defined by the   vectors $\Sigma_{gg'}(v_a') $. In other words, all ratios $\frac{\Sigma_{gg'}(v_{a}')}{v_{a}} \frac{v_{a+1}}{\Sigma_{gg'}(v_{a+1}')}
$ are negative. 
By definition of the double ratio,
$$
D_a \Bigl(
\F_\rho(x), \F_\rho(y), \F_\rho(z), \Sigma_{gg'}\bigl( \F_\rho(z') \bigr)
\Bigr)
=
-\frac{v_{a+1}}{\Sigma_{gg'}(v_{a+1}')}
\frac{\Sigma_{gg'}(v_{a}')}{v_{a}}
>0
$$
which concludes the proof of Lemma~\ref{lem:ShearAlongArcDefined}. 
\end{proof}

Lemma~\ref{lem:ShearAlongArcDefined} enables us to define the \emph{$a$--th shear parameter} of the Hitchin homomorphism $\rho$ between the components $T$ and $T'$ of $\widetilde S - \widetilde\lambda$ as
$$
\sigma_a^\rho (T,T') = \log D_a \Bigl(
\F_\rho(x), \F_\rho(y), \F_\rho(z), \Sigma_{gg'}\bigl( \F_\rho(z') \bigr)
\Bigr) \in \R.
$$
These shear parameters are then combined in the \emph{shear vector}
 $$\sigma^\rho(T,T')=\big(\sigma_1^\rho(T,T'), \sigma_2(T,T'), \dots ,\sigma_{n-1}^\rho(T,T')\big )\in \R^{n-1}.$$

We now show that the family of  shear vectors $\sigma^\rho(T,T')$ define a  relative tangent cycle $\sigma^\rho \in \CC(\lambda, \slits; \widehat \R^{n-1})$ for $\lambda$ valued in the twisted coefficient bundle $\widehat \R^n$, as in Proposition~\ref{prop:TwistedRelTgtCycleDiffrentViewpoint}. We begin with the easier part, namely Condition~(3) of that statement.

\begin{lem}
\label{lem:ReversingOrientationShear}
For any two components $T$ and $T'$ of $\widetilde S - \widetilde\lambda$, 
$$
\sigma_a^\rho(T', T) = \sigma_{n-a}^\rho (T,T').
$$
\end{lem}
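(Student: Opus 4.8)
The plan is to exploit the fact that reversing the orientation of a leaf $g$ of $\widetilde\lambda$ replaces the line decomposition $\R^n=\bigoplus_a\widetilde L_a(g)$ by $\R^n=\bigoplus_a\widetilde L_{n-a+1}(g)$, combined with the behavior of the double ratio $D_a$ under transposition of its first two arguments recorded in Lemma~\ref{lem:RelationsDoubleRatios}. Concretely, when we compute $\sigma_a^\rho(T',T)$ we must choose, as the side of $T'$ closest to $T$, the same geodesic $g'$ as before, and as the side of $T$ closest to $T'$ the same geodesic $g$; but now these leaves are oriented \emph{to the left as seen from $T'$}, which is the opposite of the orientation ``to the left as seen from $T$'' used in the definition of $\sigma_a^\rho(T,T')$. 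So the roles of positive and negative endpoints of $g$ and of $g'$ are interchanged: what was $(x,y)$ for $g$ becomes $(y,x)$, and what was $(x',y')$ for $g'$ becomes $(y',x')$.

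First I would write out both definitions explicitly. We have
$$
\sigma_a^\rho(T,T') = \log D_a\Bigl(\F_\rho(x),\F_\rho(y),\F_\rho(z),\Sigma_{gg'}(\F_\rho(z'))\Bigr),
$$
where $x,y$ are the positive and negative endpoints of $g$ oriented leftward from $T$, and $z$, $z'$ are the third vertices of $T$, $T'$. For $\sigma_a^\rho(T',T)$, the leaf $g'$ now plays the role of ``$g$'' and $g$ the role of ``$g'$'', both oriented leftward from $T'$; thus $g'$ has positive endpoint $y'$ and negative endpoint $x'$, while $g$ has positive endpoint $y$ and negative endpoint $x$. This gives
$$
\sigma_a^\rho(T',T) = \log D_a\Bigl(\F_\rho(y'),\F_\rho(x'),\F_\rho(z'),\Sigma_{g'g}(\F_\rho(z))\Bigr).
$$
Next I would use the uniqueness part of Proposition~\ref{prop:Slithering}(1), namely $\Sigma_{g'g}=\Sigma_{gg'}^{-1}$, together with the fact that $D_a$ is invariant under applying a single linear isomorphism $h\in\GL$ to all four flags (it is a $\PGL$--invariant of flag quadruples). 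Applying $\Sigma_{gg'}$ to all four arguments of the last double ratio turns $\Sigma_{g'g}(\F_\rho(z))=\Sigma_{gg'}^{-1}(\F_\rho(z))$ into $\F_\rho(z)$ and $\F_\rho(z')$ into $\Sigma_{gg'}(\F_\rho(z'))$, while also sending $\F_\rho(y')$ to $\F_\rho(y)$ and $\F_\rho(x')$ to $\F_\rho(x)$ by the defining property of the slithering map on leaves sharing an endpoint (Proposition~\ref{prop:Slithering}(3)) — here one uses that $\Sigma_{gg'}$ carries $\F_\rho(x')\mapsto\F_\rho(x)$ and $\F_\rho(y')\mapsto\F_\rho(y)$, as recorded in \S\ref{subsect:ShearingCycle}. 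Hence
$$
\sigma_a^\rho(T',T) = \log D_a\Bigl(\F_\rho(y),\F_\rho(x),\F_\rho(z),\Sigma_{gg'}(\F_\rho(z'))\Bigr).
$$

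Finally I would invoke the first relation of Lemma~\ref{lem:RelationsDoubleRatios}, $D_a(F,E,G,H)=D_{n-a}(E,F,G,H)^{-1}$ — wait, that gives an inverse, so I should be careful: transposing the first two flags gives $D_a(\F_\rho(y),\F_\rho(x),G,H)=D_{n-a}(\F_\rho(x),\F_\rho(y),G,H)^{-1}$. Taking logarithms this would produce $-\sigma_{n-a}^\rho(T,T')$, not $+\sigma_{n-a}^\rho(T,T')$, so I must recheck the orientation bookkeeping: in fact when the orientation of $g$ is reversed, its positive endpoint becomes what was its negative endpoint, but the \emph{transverse} left-to-right sense is also reversed, so the correct substitution in $D_a$ is the one that uses Lemma~\ref{lem:RelationsDoubleRatios} in the form compensating the extra inversion against the orientation reversal — concretely, the minus sign from $D_a(F,E,G,H)=D_{n-a}(E,F,G,H)^{-1}$ is exactly cancelled by a second transposition coming from the fact that in $\sigma^\rho(T',T)$ the third vertices $z$ and $z'$ enter in the swapped order $G\leftrightarrow H$, which by the first identity $D_a(E,F,H,G)=D_a(E,F,G,H)^{-1}$ of Lemma~\ref{lem:RelationsDoubleRatios} contributes another inversion; the two inversions cancel, leaving $\sigma_a^\rho(T',T)=\sigma_{n-a}^\rho(T,T')$. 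The main obstacle is precisely this sign/index bookkeeping: keeping straight which endpoint is ``positive'', which vertex is $z$ versus $z'$, and which of the three relations of Lemma~\ref{lem:RelationsDoubleRatios} applies, so that all the inversions and the shift $a\leftrightarrow n-a$ combine correctly. I would therefore carry out the argument with a careful picture (Figure~\ref{fig:ShearingCycles} read ``upside down'') pinning down each of the six flags and each orientation before applying the double-ratio identities.
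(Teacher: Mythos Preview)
Your approach is essentially the paper's: both proofs combine the $\PGL$--invariance of the double ratio (applied to the slithering map $\Sigma_{gg'}=\Sigma_{g'g}^{-1}$, which sends $\F_\rho(x')\mapsto\F_\rho(x)$ and $\F_\rho(y')\mapsto\F_\rho(y)$) with the two transposition identities of Lemma~\ref{lem:RelationsDoubleRatios}, whose inversions cancel. The paper applies the transpositions first and the slithering afterwards; you do it in the opposite order, which is fine.

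There is, however, a concrete bookkeeping slip in your displayed intermediate step. Applying $\Sigma_{gg'}$ simultaneously to all four arguments of
$D_a\bigl(\F_\rho(y'),\F_\rho(x'),\F_\rho(z'),\Sigma_{g'g}(\F_\rho(z))\bigr)$
gives
\[
D_a\Bigl(\F_\rho(y),\F_\rho(x),\,\Sigma_{gg'}\bigl(\F_\rho(z')\bigr),\,\F_\rho(z)\Bigr),
\]
with the third and fourth slots in \emph{that} order; your display has them swapped. From this correct expression, the swap of the first two flags contributes a factor $(\cdot)^{-1}$ and the index shift $a\mapsto n-a$, while the swap of the last two flags contributes another $(\cdot)^{-1}$; the two inversions cancel and you land exactly on $D_{n-a}\bigl(\F_\rho(x),\F_\rho(y),\F_\rho(z),\Sigma_{gg'}(\F_\rho(z'))\bigr)$, i.e.\ $\sigma_{n-a}^\rho(T,T')$. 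Your final paragraph reaches this conclusion, but the erroneous display forced you into a vaguer ``the third vertices enter in swapped order'' explanation; once the display is corrected, the two applications of Lemma~\ref{lem:RelationsDoubleRatios} are transparent and no further hand-waving is needed.
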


\begin{proof}
Using the notation of Figure~\ref{fig:ShearingCycles}, 
\begin{align*}
\sigma_a^\rho(T', T) 
&= \log D_a \Bigl(
\F_\rho(y'), \F_\rho(x'), \F_\rho(z'), \Sigma_{g'g}\bigl( \F_\rho(z) \bigr)
\Bigr)\\
&= \log D_{n-a} \Bigl(        
\F_\rho(x'), \F_\rho(y'), \Sigma_{g'g}\bigl( \F_\rho(z)\bigr),  \F_\rho(z')  
\Bigr)\\
&= \log D_{n-a} \Bigl(
\Sigma_{g'g}\bigl( \F_\rho(x)\bigr), \Sigma_{g'g}\bigl( \F_\rho(y)\bigr), \Sigma_{g'g}\bigl( \F_\rho(z)\bigr),  \F_\rho(z')  
\Bigr)\\
&= \log D_{n-a} \Bigl(
 \F_\rho(x),  \F_\rho(y),  \F_\rho(z) , \Sigma_{gg'}\bigl( \F_\rho(z')\bigr) 
\Bigr) = \sigma_{n-a}^\rho(T,T'),
\end{align*}
where the second equality is a consequence of  the elementary properties of  double ratios stated in Lemma~\ref{lem:RelationsDoubleRatios}, the third equality comes from the fact that $\Sigma_{g'g}$ sends each line $\widetilde L_b(g)$ to $\widetilde L_b(g')$, and the fourth equality follows from the invariance of double ratios under the action of $\Sigma_{gg'}=\Sigma_{g'g}^{-1}\in \SL$. 
\end{proof}

Let $s$ be a slit of $\lambda$ or, equivalently, a spike of the complement $S-\lambda$. Lift $s$ to a spike of $\widetilde S - \widetilde \lambda$, namely to a vertex $x\in \partial_\infty \widetilde S$ of a triangle component $T$ of $\widetilde S - \widetilde \lambda$. Let $y$ and $z$ be the other two vertices of $T$, indexed so that $x$, $y$ and $z$ occur in this order counterclockwise around $T$. The flag curve $\F_\rho$ then determines a positive triple of flags $\F_\rho(x)$, $\F_\rho(y)$ and $\F_\rho(z)\in \Flag$. Considering their quadruple ratios as in \S \ref{subsect:QuadRatios}, define
$$
\theta^\rho_a(s) = \log Q_a \bigl (\F_\rho(x), \F_\rho(y), \F_\rho(z) \bigr) ,
$$
which is clearly independent of the lift of the slit $s$ to the universal cover $\widetilde S$.

Lemma~\ref{lem:ExpressQuadrupleRatioTripleRatios} expresses $\theta_a^\rho(s)$ in terms of the triangle invariants $\tau^\rho_{abc}(s)$ of $\rho$. 
\begin{lem}
\label{lem:ExpressThetaTriangleInvariants}
\pushQED{\qed}
\begin{equation*}
\theta^\rho_a(s) = \sum_{b+c=n-a} \tau^\rho_{abc}(s).
\qedhere
\end{equation*}
\end{lem}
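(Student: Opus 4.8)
The plan is to obtain this identity by simply taking logarithms in the multiplicative formula of Lemma~\ref{lem:ExpressQuadrupleRatioTripleRatios}. Recall that, by definition, $\theta_a^\rho(s) = \log Q_a\bigl(\F_\rho(x),\F_\rho(y),\F_\rho(z)\bigr)$ where $x$, $y$, $z\in\partial_\infty\widetilde S$ are the vertices of a lift $T$ of the triangle with slit $s$, taken counterclockwise with $x$ the lift of $s$; and $\tau_{abc}^\rho(s) = \log T_{abc}\bigl(\F_\rho(x),\F_\rho(y),\F_\rho(z)\bigr)$ with the same ordering of the three flags. So the first flag slot corresponds to the chosen slit $s$ in both quantities, which is exactly the slot singled out in $Q_a$.

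First I would note that, by Theorem~\ref{thm:FlagCurvePositive}, the flag triple $\bigl(\F_\rho(x),\F_\rho(y),\F_\rho(z)\bigr)$ is positive, so every triple ratio $T_{abc}\bigl(\F_\rho(x),\F_\rho(y),\F_\rho(z)\bigr)$ is positive and its logarithm $\tau_{abc}^\rho(s)$ is a well-defined real number; in particular the quadruple ratio $Q_a$ is a positive real number too, so $\theta_a^\rho(s)$ makes sense. Then I would invoke Lemma~\ref{lem:ExpressQuadrupleRatioTripleRatios}, which gives
$$
Q_a\bigl(\F_\rho(x),\F_\rho(y),\F_\rho(z)\bigr) = \prod_{b+c=n-a} T_{abc}\bigl(\F_\rho(x),\F_\rho(y),\F_\rho(z)\bigr),
$$
the product being over all integers $b,c\geq 1$ with $b+c=n-a$. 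Taking logarithms of both sides turns the product into a sum and yields
$$
\theta_a^\rho(s) = \sum_{b+c=n-a}\log T_{abc}\bigl(\F_\rho(x),\F_\rho(y),\F_\rho(z)\bigr) = \sum_{b+c=n-a}\tau_{abc}^\rho(s),
$$
which is the claimed formula.

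There is essentially no obstacle here: the content is entirely contained in Lemma~\ref{lem:ExpressQuadrupleRatioTripleRatios}, and the only thing to be slightly careful about is bookkeeping — checking that the ordering convention for the three flags used in the definitions of $\theta_a^\rho(s)$ and $\tau_{abc}^\rho(s)$ agree (both put the flag $\F_\rho(\widetilde s)$ in the first slot, consistent with the special role of $E$ in $Q_a(E,F,G)$), and that the positivity statement legitimizes passing to logarithms. Once those are observed, the proof is a single line.
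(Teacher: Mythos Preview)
Your proof is correct and is exactly the approach the paper takes: the lemma is stated with an immediate \qed, the sentence just before it pointing to Lemma~\ref{lem:ExpressQuadrupleRatioTripleRatios} as the reason. Your additional remarks about positivity and the ordering of the flags are the right sanity checks and confirm that taking logarithms is legitimate.
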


Recall that by definition a slit $\widehat s$ of the orientation cover $\widehat\lambda$ is positive if the canonical orientation of $\widehat\lambda$ orients the two leaves that are adjacent to $\widehat s$ towards $\widehat s$, and that $\widehat s$ is negative when these two leaves are oriented away from~$\widehat s$. 

\begin{lem}
\label{lem:ShearingCycleIsCycle}
The rule $(T,T') \mapsto \sigma_a^\rho(T,T')$ defines a relative tangent cycle  $\sigma_a^\rho \in \CC(\widehat \lambda, \slits; \R)$. The boundary $\partial \sigma_a^\rho \colon \{ \text{slits of } \widehat \lambda \} \to \R$ is defined by the property that, for every slit $\widehat s$ of $\widehat\lambda$ projecting to a slit $s$ of $\lambda$, 
$$
\partial \sigma_a^\rho (\widehat s) = 
\begin{cases}
\theta^\rho_a(s) &\text{ if } \widehat s \text{ is a positive slit of } \widehat \lambda,\\
-\theta^\rho_{n-a}(s) &\text{ if } \widehat s \text{ is negative. }
\end{cases}
$$. 
\end{lem}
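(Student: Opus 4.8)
The plan is to verify directly that the family $(T,T') \mapsto \sigma_a^\rho(T,T')$ satisfies the two conditions of Proposition~\ref{prop:RelTgtCycleDiffrentViewpoint}, namely $\pi_1(S)$--invariance and quasi-additivity with explicit boundary functions $\partial^\pm\sigma_a^\rho$. The $\pi_1(S)$--invariance $\sigma_a^\rho(\gamma T, \gamma T') = \sigma_a^\rho(T,T')$ is immediate: the flag curve $\F_\rho$ is $\rho$--equivariant (Proposition~\ref{prop:FlagCurve}), the slithering maps $\Sigma_{gg'}$ are natural with respect to the $\pi_1(S)$--action (by their uniqueness in Proposition~\ref{prop:Slithering}, since $\rho(\gamma)\circ\Sigma_{gg'}\circ\rho(\gamma)^{-1}$ again satisfies Conditions~(1--3) for the pair $\gamma g, \gamma g'$), and the double ratio $D_a$ is $\mathrm{SL}_n(\R)$--invariant. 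So the content of the lemma is the quasi-additivity relation and the identification of the boundary. Once quasi-additivity is established with boundary functions $\partial^+\sigma_a^\rho$ and $\partial^-\sigma_a^\rho$, the last display of Proposition~\ref{prop:RelTgtCycleDiffrentViewpoint} gives $\partial\sigma_a^\rho(\widehat s) = \partial^+\sigma_a^\rho(s)$ for a positive slit and $\partial\sigma_a^\rho(\widehat s) = \partial^-\sigma_a^\rho(s)$ for a negative slit; combined with Lemma~\ref{lem:ReversingOrientationShear} this forces $\partial^-\sigma_a^\rho = -\overline{\partial^+\sigma_a^\rho}$ coordinatewise, so it suffices to compute $\partial^+\sigma_a^\rho(s) = \theta_a^\rho(s)$.

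For quasi-additivity, take three components $T$, $T''$, $T'$ of $\widetilde S - \widetilde\lambda$ with $T''$ separating $T$ from $T'$, and let $\widetilde s''$ be the spike of $T''$ delimited by the two sides that separate $T$ from $T'$. Write $g$ for the side of $T$ closest to $T'$, $g'$ for the side of $T'$ closest to $T$, and $g_1$, $g_1'$ for the two sides of $T''$ that separate $T$ from $T'$ (with $g_1$ closest to $T$), all oriented to the left as seen from $T$. The key algebraic input is Proposition~\ref{prop:Slithering}(1): since $T''$ separates $g$ from $g'$, we have $\Sigma_{gg'} = \Sigma_{gg_1}\circ\Sigma_{g_1g_1'}\circ\Sigma_{g_1'g'}$, and $\Sigma_{g_1g_1'}$ is the \emph{elementary} slithering at the flag $E'' = \F_\rho(x'')$ where $x''$ is the common vertex of $g_1$ and $g_1'$ (i.e.\ the spike $\widetilde s''$). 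Using the invariance of $D_a$ under $\mathrm{SL}_n(\R)$ and substituting this factorization, one rewrites
$$
\sigma_a^\rho(T,T') = \log D_a\Bigl(\F_\rho(x),\F_\rho(y),\F_\rho(z),\ \Sigma_{gg_1}\bigl(\Sigma_{g_1g_1'}(\Sigma_{g_1'g'}\F_\rho(z'))\bigr)\Bigr),
$$
and splits this, via the third (multiplicativity) relation of Lemma~\ref{lem:RelationsDoubleRatios}, $D_a(E,F,G,K) = -D_a(E,F,G,H)D_a(E,F,H,K)$, by inserting $H = \Sigma_{gg_1}(\F_\rho(w''))$ where $w''$ is the third vertex of $T''$. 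The first factor becomes $\sigma_a^\rho(T,T'')$ (pulling $\Sigma_{gg_1}$ out by invariance, and noting the side of $T''$ closest to $T$ is $g_1$); the second factor becomes $\sigma_a^\rho(T'',T')$ corrected by a double ratio at $T''$ that, after pulling out $\Sigma_{gg_1}\circ\Sigma_{g_1g_1'}$, reduces to a quadruple ratio of the flag triple $\bigl(\F_\rho(x''),\F_\rho(\cdot),\F_\rho(w'')\bigr)$ attached to the spike $\widetilde s''$ — and this is exactly $\theta_a^\rho(s'')$ (or $\theta_{n-a}^\rho(s'')$, depending on the side $\widetilde s''$ points to), by the definition of $\theta_a^\rho$ via $Q_a$ and by the algebraic identity relating $D_a$, the elementary slithering, and $Q_a$.

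I expect the main obstacle to be precisely this last algebraic identity: unwinding that the "correction factor" produced by splitting the double ratio across the intermediate triangle $T''$, after accounting for the elementary slithering $\Sigma_{g_1g_1'}$ at $E'' = \F_\rho(x'')$, equals $Q_a(\F_\rho(x''),\cdot,\cdot)^{\pm1}$. This is a wedge-product computation: the elementary slithering acts as the identity on each quotient $E''^{(b)}/E''^{(b-1)}$ but not on all of $\R^n$, so one must carefully track how it moves the line $\F_\rho(w'')^{(1)}$ relative to the decomposition determined by $\F_\rho(x'')$ and the relevant side of $T''$, and then recognize the resulting expression — via Lemma~\ref{lem:ComputeDoubleRatios} and the defining formula for $Q_a$ — as the $a$--th quadruple ratio. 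The sign bookkeeping (which of $\theta_a^\rho$ or $\theta_{n-a}^\rho$, and the sign $\epsilon(s'')$) is handled by the orientation conventions of \S\ref{subsect:RelTgtCyclesDifferentView} together with Lemma~\ref{lem:ReversingOrientationShear}, and the positivity of all double ratios involved (hence finiteness of the logarithms) is guaranteed by Lemma~\ref{lem:ShearAlongArcDefined}.
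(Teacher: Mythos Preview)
Your proposal is correct and follows essentially the same route as the paper's proof. Both arguments use Proposition~\ref{prop:RelTgtCycleDiffrentViewpoint}, factor the slithering $\Sigma_{gg'}$ through the intermediate triangle $T''$ via Proposition~\ref{prop:Slithering}(1), and apply the multiplicativity relation of Lemma~\ref{lem:RelationsDoubleRatios} to split $\sigma_a^\rho(T,T')$ into $\sigma_a^\rho(T,T'')+\sigma_a^\rho(T'',T')$ plus a correction; the only cosmetic difference is that the paper first transports all three double ratios to the frame $(E'',H)$ at $T''$ before splitting, whereas you stay at $(E,F)$ and split there---these are equivalent by $\mathrm{SL}_n(\R)$--invariance of $D_a$. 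The computation you flag as the main obstacle, namely that the correction equals $Q_a(E'',H,H')^{-1}$, is exactly what the paper carries out by an explicit wedge-product manipulation using that the elementary slithering $\Sigma_{ff'}$ fixes each $e''{}^{(b)}$ and acts trivially on $\Lambda^n(\R^n)$; and your handling of the right-pointing case via Lemma~\ref{lem:ReversingOrientationShear} is also what the paper does.
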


\begin{proof} Using the framework of Proposition~\ref{prop:RelTgtCycleDiffrentViewpoint}, let $T$, $T'$, $T''$ be three components of $\widetilde S - \widetilde \lambda$ such that $T''$ separates $T$ from $T'$ in $\widetilde S$. Let $\widetilde s''$ be the spike of $T''$ delimited by the two sides of $T''$ that separate $T$ from $T'$. 

We first consider the case where $\widetilde s''$ points to the left as seen from $T$. 

Let $g$ be the side of $T$ that is closest to $T'$ and $T''$, and let $g'$ be the side of $T'$ that is closest to $T$ and $T''$. Let $f$ be the side of $T''$ that faces $T$, and let $f'$ be the side of $T''$ that faces $T'$. Orient these leaves of $\widetilde \lambda$ to the left as seen from $T$. Let $E$, $F$, $E'$, $F'$, $E''$, $H$, $H' \in \Flag$ be the flags respectively associated by the flag  curve $\F_\rho \colon \partial_\infty \widetilde S \to \Flag$ to the positive endpoint of $g$, the negative endpoint of $g$, the positive endpoint of $g'$, the negative endpoint of $g'$, the positive endpoint $\widetilde s''$ of $f$ and $f'$, the negative endpoint of $f$, and the negative endpoint of $f'$. Similarly, let $G$, $G'\in \Flag$ be respectively associated to the vertex of $T$ that is not contained in $g$, and to the vertex of $T'$ that is not contained in $g'$. See Figure~\ref{fig:ShearingCycle2}, where the vertices of $T$, $T'$, $T''$ are labelled by the flags associated to them by the flag curve $\F_\rho$. 

\begin{figure}[htbp]

\SetLabels  
( -.02*.17 ) $ E$ \\
(1.02 * .21) $ F$ \\
( .35* -.05) $G $ \\
( -.04* .74) $E' $ \\
( 1.04* .69) $F' $ \\
( .71* 1.02) $ G'$ \\
( -.05* .48) $E'' $ \\
( 1.03* .41) $ H$ \\
( 1.05* .61) $ H'$ \\
( .55* .315) $ g$ \\
(.45 *.69 ) $g' $ \\
(.7 * .42) $ f$ \\
( .7*.58 ) $f' $ \\
(.5 *.2 ) $ T$ \\
( .58* .81) $T' $ \\
( .86* .51) $T''$ \\
\endSetLabels
\centerline{\AffixLabels{\includegraphics{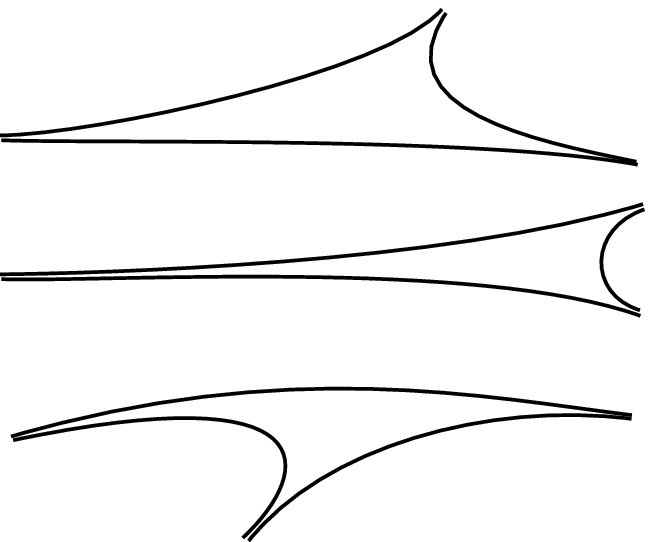}}}
\caption{}
\label{fig:ShearingCycle2}
\end{figure}

Then,
\begin{align*}
\sigma_a^\rho (T,T')&= \log D_a \bigl( E, F, G, \Sigma_{gg'}(G') \bigr)\\
&= \log D_a \bigl( E'', H, \Sigma_{fg}(G), \Sigma_{fg'}(G') \bigr)
\end{align*}
by using the fact that the slithering map $ \Sigma_{fg}$ sends $E$ to $E''$ and $F$ to $H$. 
Similarly,
\begin{align*}
\sigma_a^\rho (T, T'')&= \log D_a \bigl( E, F, G, \Sigma_{gf}(H') \bigr)\\
&= \log D_a \bigl( E'', H, \Sigma_{fg}(G), H' \bigr)
\end{align*}
and
\begin{align*}
\sigma_a^\rho (T'', T')&= \log D_a \bigl( E'', H', H, \Sigma_{f'g'}(G') \bigr)\\
&= \log D_a \bigl( E'', H, \Sigma_{ff'}(H), \Sigma_{fg'}(G') \bigr).
\end{align*}

Using the elementary properties of double ratios stated in  Lemma~\ref{lem:RelationsDoubleRatios}, it follows that
$$
\sigma_a^\rho (T, T') =\sigma_a^\rho (T, T'') + \sigma_a^\rho (T'', T')
+ \log D_a\bigl (E'', H, H'  ,  \Sigma_{ff'}(H)\bigr).
$$

By definition of the double product, 
$$D_a\bigl (E'', H, H'  ,  \Sigma_{ff'}(H)\bigr)= 
- \frac{e''{}^{(a)}\wedge h^{(n-a-1)} \wedge h'{}^{(1)}}
 {e''{}^{(a)}\wedge h^{(n-a-1)} \wedge  \Sigma_{ff'}\bigl(h^{(1)}\bigr)}
\frac
 {e''{}^{(a-1)}\wedge h^{(n-a)} \wedge  \Sigma_{ff'}\bigl(h^{(1)}\bigr)}
 {e''{}^{(a-1)}\wedge h^{(n-a)} \wedge h'{}^{(1)}}
 $$
 for arbitrary non-zero $e''{}^{(b)} \in \Lambda^b (E''{}^{(b)} )$, $h^{(b)} \in \Lambda^b (H^{(b)} )$, $h'{}^{(b)} \in \Lambda^b (H'{}^{(b)} )$. 
 
 The elementary slithering map $\Sigma_{f'f} = \Sigma_{ff'}^{-1}$ sends $H$ to $H'$. By Condition~(4) of Proposition~\ref{prop:Slithering}, it  acts trivially on each $\Lambda^b (E''{}^{(b)} )$ and on $\Lambda^n(\R^n)$. If we choose $h'{}^{(b)} = \Sigma_{f'f} (h^{(b)} )$, we consequently have that 
$$
e''{}^{(b)}\wedge h^{(n-b-1)} \wedge  \Sigma_{ff'}\bigl(h^{(1)}\bigr)
= e''{}^{(b)}\wedge h'{}^{(n-b-1)} \wedge  h^{(1)}
$$
for every $b$. 
Similarly, $ e''{}^{(b)}  \wedge h^{(n-b)} = e''{}^{(b)}  \wedge h'{}^{(n-b)} $ for every $b$

Combining these properties and rearranging terms provides
\begin{align*}
D_a\bigl (E'', H, H'  ,  \Sigma_{ff'}(H)\bigr)&= 
- \frac{e''{}^{(a)}\wedge h^{(n-a-1)} \wedge h'{}^{(1)}}
 {e''{}^{(a)}\wedge h'{}^{(n-a-1)} \wedge  h^{(1)}}
\frac
 {e''{}^{(a-1)}\wedge h'{}^{(n-a)} \wedge  h^{(1)}}
 {e''{}^{(a-1)}\wedge h^{(n-a)} \wedge h'{}^{(1)}}\\
  &= 
\frac{e''{}^{(a)}\wedge h^{(n-a-1)} \wedge h'{}^{(1)}}
 {e''{}^{(a-1)}\wedge h^{(n-a)} \wedge h'{}^{(1)}}
\frac
 {e''{}^{(a-1)} \wedge  h^{(1)} \wedge h'{}^{(n-a)} }
 {e''{}^{(a)} \wedge  h^{(1)} \wedge h'{}^{(n-a-1)} }
\\
 &\qquad\qquad\qquad
 \qquad\qquad\qquad\qquad\qquad
 \frac{e''{}^{(a+1)}  \wedge h'{}^{(n-a-1)} }
 {e''{}^{(a+1)} \wedge h^{(n-a-1)} }
  \frac{e''{}^{(a)}  \wedge h^{(n-a)} }
 {e''{}^{(a)} \wedge h'{}^{(n-a)} }
 \\
 &= Q_{a}(E'', H, H')^{-1}.
\end{align*}

This proves that 
\begin{align*}
\sigma_a^\rho (T,T') &= \sigma_a^\rho (T,T'')+  \sigma_a^\rho (T'', T') - \log Q_a (E'',H, H')\\
&= \sigma_a^\rho (T,T'')+  \sigma_a^\rho (T'', T') - \theta^\rho_a(s'')
\end{align*}
where $s''$ is the slit of $\lambda$ that is the projection of the slit $\widetilde s''$ of $\widetilde\lambda$. 

This computation holds when $\widetilde s''$ points to the left as seen from $T$. 
When $\widetilde s''$ points to the right, a very similar computation or an application of Lemma~\ref{lem:ReversingOrientationShear} shows that in this case
$$
\sigma_a^\rho (T,T') = \sigma_a^\rho (T,T'')+  \sigma_a^\rho (T'', T') - \theta^\rho_{n-a}(s'').
$$

Considering these two cases, Proposition~\ref{prop:RelTgtCycleDiffrentViewpoint} then shows that the rule $(T,T') \mapsto \sigma_a^\rho(T,T')$  defines a relative tangent cycle  $\sigma_a^\rho \in \CC(\widehat \lambda, \slits; \R)$, whose boundary $\partial \sigma_a^\rho \colon \{ \text{slits of } \widehat \lambda \} \to \R$ is the one described in the statement of Lemma~\ref{lem:ShearingCycleIsCycle}. This concludes the proof of that lemma.
\end{proof}

Through Proposition~\ref{prop:TwistedRelTgtCycleDiffrentViewpoint}, the combination of Lemmas~\ref{lem:ReversingOrientationShear} and \ref{lem:ShearingCycleIsCycle} shows that the relative tangent cycles $\sigma_a^\rho \in \CC(\widehat \lambda, \slits ; \R)$ can be combined to define a  relative tangent cycle $\sigma^\rho \in \CC(\lambda, \slits; \widehat \R^{n-1})$ valued in the twisted coefficient bundle $\widehat\R^{n-1}$ introduced in \S \ref{subsect:TwistedRelTangentCycles}. This twisted relative tangent cycle is the \emph{shearing cycle} of the Hitchin character $\rho \in \Hit(S)$ with respect to the maximal geodesic lamination $\lambda$.

 \section{Hitchin characters are determined by their invariants}
 \label{bigsect:Uniqueness}
  The goal of this section is to show that, if two Hitchin homomorphisms $\rho$, $\rho' \colon \pi_1(S) \to \PSL$ have the same triangle invariants and the same shearing cycle, then they represent the same character in  the Hitchin component $\Hit(S)$.

\subsection{Revisiting the slithering map}
\label{subsect:RevisitSlithering}

We want to give a different description of the slithering map $\Sigma_{gg'}$ of \S \ref{subsect:Slithering}. This new formulation is based on the following simple algebraic trick. 

\begin{lem}
\label{lem:ReorderProduct}
Let $A_1$, $A_2$, \dots, $A_m$ be elements of a group. Then,  
$$
A_1A_2 \dots A_{m-1}A_m = \widehat A_m \widehat A_{m-1} \dots \widehat A_2 \widehat A_1
$$
where $\widehat A_i = (A_1A_2 \dots A_{i-1}) A_i (A_1A_2\dots A_{i-1})^{-1}$. 
\end{lem}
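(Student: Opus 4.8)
The plan is to verify the identity by a direct induction on $m$, since the claim is purely algebraic and the hatted elements $\widehat A_i = (A_1\cdots A_{i-1})A_i(A_1\cdots A_{i-1})^{-1}$ are built precisely so that the telescoping works. First I would record the base cases: for $m=1$ the statement reads $A_1=\widehat A_1$, which is immediate since $\widehat A_1 = A_1$ (empty prefix). For $m=2$ one checks $A_1A_2 = \widehat A_2\widehat A_1 = (A_1A_2A_1^{-1})A_1 = A_1A_2$, which motivates the general pattern.

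For the inductive step, assume the identity holds for $m-1$ elements. Write $P=A_1A_2\cdots A_{m-1}$ for the prefix product, so that $\widehat A_m = P A_m P^{-1}$, and note that for $i\le m-1$ the element $\widehat A_i$ defined relative to the full list $A_1,\dots,A_m$ coincides with the one defined relative to the truncated list $A_1,\dots,A_{m-1}$, since it only involves $A_1,\dots,A_i$. Then
$$
\widehat A_m\,\widehat A_{m-1}\cdots\widehat A_1 = (P A_m P^{-1})\bigl(\widehat A_{m-1}\cdots\widehat A_1\bigr) = P A_m P^{-1}\,P = P A_m = A_1A_2\cdots A_{m-1}A_m,
$$
where the second equality uses the induction hypothesis $\widehat A_{m-1}\cdots\widehat A_1 = A_1\cdots A_{m-1} = P$. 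This completes the induction.

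There is essentially no obstacle here: the only point requiring a moment's care is the observation that the prefix products appearing in $\widehat A_i$ for $i<m$ are unchanged when one passes from the list of length $m$ to the list of length $m-1$, so that the induction hypothesis applies verbatim to the tail product $\widehat A_{m-1}\cdots\widehat A_1$. The identity is exactly the kind of "reverse the order of a product at the cost of conjugating each factor by the preceding partial product" manipulation that will later let us rewrite the ordered infinite product defining $\Sigma_{gg'}$ with the pivot moves applied in the opposite order.
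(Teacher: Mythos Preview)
Your proof is correct and follows essentially the same approach as the paper: the paper's one-line proof observes that $A_1A_2\cdots A_m = \widehat A_m\, A_1A_2\cdots A_{m-1}$ and proceeds by induction, which is exactly the peeling-off step you carry out in your inductive argument.
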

\begin{proof} Observe that 
$A_1A_2 \dots A_{m-1}A_m = \widehat A_m A_1A_2 \dots A_{m-1}$, and proceed by induction.
\end{proof}

We return to the construction of the slithering map $\Sigma_{gg'}$ in     \S \ref{subsect:Slithering}. Let $g$ and $g'$ be two leaves of the preimage $\widetilde\lambda \subset \widetilde S$ of the geodesic lamination $\lambda$, and let $\mathcal T_{gg'}$ be the set of components of $\widetilde S - \widetilde \lambda$ that separate $g$ from $g'$, where these components are ordered from $g$ to $g'$. For such a component $T\in \mathcal T_{gg'}$, we consider the elementary slithering $\Sigma_T = \Sigma_{g_T^{\phantom i}g_T'}$ defined by Condition~(4) of Proposition~\ref{prop:Slithering}, where $g_T$ and $g_T'$ are the two sides of $T$ that are respectively closest to $g$ and $g'$.  

We now consider the infinite product of the maps
$$
\widehat \Sigma_T = \Sigma_{gg_T}\circ  \Sigma_T \circ  \Sigma_{gg_T}^{-1}.
$$
More precisely, let $\mathcal T = \{T_1, T_2, \dots, T_m\}$ be a finite subset of $\mathcal T_{gg'}$, where each $T_i$ separates $T_{i+1}$ from $g$. We then consider the limit
$$
\overleftarrow{\prod_{\kern -10pt T\in \mathcal T_{gg'}\kern -10pt}} \kern 5pt\widehat\Sigma_T
=\lim_{\mathcal T \to \mathcal T_{gg'}} \widehat\Sigma_{T_m} \circ \widehat\Sigma_{T_{m-1}} \circ \dots \circ \widehat\Sigma_{T_2} \circ \widehat\Sigma_{T_1} .
$$
The reverse arrow on top of the product sign is here to remind us that the composition of  the $\widehat\Sigma_{T}$ is taken in the order opposite to the ordering of the elements of $\mathcal T_{gg'}$ from $g$ to $g'$, 

\begin{prop}
\label{prop:RevisitSlithering}
$$
 \Sigma_{gg'} = \overleftarrow{\prod_{\kern -10pt T\in \mathcal T_{gg'}\kern -10pt}} \kern 5pt\widehat\Sigma_T .
$$
\end{prop}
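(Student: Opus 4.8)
The identity to be proved is purely formal once both sides are known to converge: it is the infinite-product version of the reordering trick in Lemma~\ref{lem:ReorderProduct}. The plan is therefore to reduce the statement to that lemma applied to a finite subset $\mathcal T = \{T_1, T_2, \dots, T_m\}$ of $\mathcal T_{gg'}$, and then to pass to the limit as $\mathcal T \to \mathcal T_{gg'}$, using the convergence estimates already established in \S\ref{subsect:Slithering}.

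First I would fix a finite subset $\mathcal T = \{T_1, \dots, T_m\}$ of $\mathcal T_{gg'}$, ordered from $g$ to $g'$, and set $A_i = \Sigma_{T_i}$, so that $\Sigma_{\mathcal T} = A_1 A_2 \dots A_m$ in the notation of \S\ref{subsect:Slithering}. I then need to identify $\widehat A_i = (A_1\cdots A_{i-1}) A_i (A_1\cdots A_{i-1})^{-1}$ with the map $\widehat\Sigma_{T_i}^{\mathcal T} := \Sigma_{\mathcal T_i^-}\circ \Sigma_{T_i}\circ \Sigma_{\mathcal T_i^-}^{-1}$, where $\mathcal T_i^- = \{T_1, \dots, T_{i-1}\}$; this is immediate from the definitions since $A_1\cdots A_{i-1} = \Sigma_{\mathcal T_i^-}$. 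Lemma~\ref{lem:ReorderProduct} then gives
\begin{equation*}
\Sigma_{\mathcal T} = A_1 A_2 \dots A_m = \widehat A_m \widehat A_{m-1} \dots \widehat A_1 = \widehat\Sigma_{T_m}^{\mathcal T}\circ \widehat\Sigma_{T_{m-1}}^{\mathcal T}\circ \dots \circ \widehat\Sigma_{T_1}^{\mathcal T}.
\end{equation*}
The only subtlety is that $\widehat\Sigma_{T_i}^{\mathcal T}$ uses the partial slithering $\Sigma_{\mathcal T_i^-}$ over the finite set $\mathcal T_i^-$, whereas the target quantity $\widehat\Sigma_{T_i} = \Sigma_{gg_{T_i}}\circ \Sigma_{T_i}\circ \Sigma_{gg_{T_i}}^{-1}$ uses the full slithering map $\Sigma_{gg_{T_i}}$ between $g$ and the near side of $T_i$.

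The main step is therefore the limit argument: as $\mathcal T$ exhausts $\mathcal T_{gg'}$, the left side converges to $\Sigma_{gg'}$ by Lemma~\ref{lem:SlitheringExists}, while on the right side I must show that the finite composition $\widehat\Sigma_{T_m}^{\mathcal T}\circ \dots \circ \widehat\Sigma_{T_1}^{\mathcal T}$ converges to $\overleftarrow{\prod}{}_{T\in\mathcal T_{gg'}} \widehat\Sigma_T$. Here $\widehat\Sigma_T = \Sigma_{gg_T}\circ\Sigma_T\circ\Sigma_{gg_T}^{-1}$ satisfies $\|\widehat\Sigma_T - \Id\| \le \|\Sigma_{gg_T}\|\,\|\Sigma_{gg_T}^{-1}\|\,\|\Sigma_T - \Id\| = O\bigl(\ell(k\cap T)^\nu\bigr)$ by Lemmas~\ref{lem:EstimateElemSlithering} and \ref{lem:SlitheringBounded}, so by Lemma~\ref{lem:SumPowersLengthsGapsConverges} the infinite product defining $\overleftarrow{\prod}{}_T \widehat\Sigma_T$ converges (and the partial compositions are uniformly bounded), by exactly the Cauchy argument of Lemma~\ref{lem:SlitheringExists}. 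To get that $\widehat\Sigma_{T_m}^{\mathcal T}\circ\dots\circ\widehat\Sigma_{T_1}^{\mathcal T}$ has the \emph{same} limit, I would estimate $\|\widehat\Sigma_{T_i}^{\mathcal T} - \widehat\Sigma_{T_i}\|$: the difference comes from replacing $\Sigma_{\mathcal T_i^-}$ by $\Sigma_{gg_{T_i}}$, and $\Sigma_{gg_{T_i}} = \Sigma_{gg_{T_i}}\circ\Sigma_{\mathcal T_i^-}^{-1}\circ\Sigma_{\mathcal T_i^-}$ with $\|\Sigma_{gg_{T_i}}\circ\Sigma_{\mathcal T_i^-}^{-1} - \Id\| = O\bigl(\sum_{T\in\mathcal T_{gg_{T_i}}\setminus\mathcal T_i^-}\ell(k\cap T)^\nu\bigr)$ by the argument in Lemma~\ref{lem:SlitheringExists}. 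Summing these bounds over $i$ (using $\nu\le 1$ and the geometric-series domination of Lemma~\ref{lem:SumPowersLengthsGapsConverges}) shows that the total discrepancy between the two finite compositions is $O\bigl(\sum_{T\in\mathcal T_{gg'}\setminus\mathcal T}\ell(k\cap T)^\nu\bigr)$, which tends to $0$. Combining, both sides of the claimed identity have the same limit, which is $\Sigma_{gg'}$.

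The step I expect to be the main obstacle — though it is more bookkeeping than genuine difficulty — is the double-limit juggling in the last paragraph: one must be careful that the reordered finite product is an \emph{exact} identity for each fixed $\mathcal T$, and that the passage to the limit is legitimate on \emph{both} sides simultaneously, i.e. that $\widehat\Sigma_{T_i}^{\mathcal T}$ (the ``finite-partial-slithering'' conjugates appearing after reordering $\Sigma_{\mathcal T}$) and $\widehat\Sigma_{T_i}$ (the ``full-slithering'' conjugates appearing in the target infinite product) differ by a quantity that is summably small over $\mathcal T_{gg'}$. Once the estimates of \S\ref{subsect:Slithering} are invoked in the right combination this is routine, so the proof is short; I would write it as a two-sentence reduction to Lemma~\ref{lem:ReorderProduct} followed by a brief appeal to the now-standard convergence estimates, rather than re-deriving them.
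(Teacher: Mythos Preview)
Your proposal is correct and follows essentially the same approach as the paper: apply Lemma~\ref{lem:ReorderProduct} to a finite subset $\mathcal T\subset\mathcal T_{gg'}$ to rewrite $\Sigma_{\mathcal T}$ as the reversed product of the conjugates $\widehat\Sigma_{T_i}^{\mathcal T}$, then pass to the limit using the convergence estimates of \S\ref{subsect:Slithering}. Your treatment of the limit step is in fact slightly more explicit than the paper's, which simply invokes ``uniformity in the estimates'' to justify that $\widehat\Sigma_T^{\mathcal T}\to\widehat\Sigma_T$ can be passed through the infinite product.
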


\begin{proof}
First of all, the fact that the infinite product converges is proved by the estimates of \S \ref{subsect:Slithering}, using the fact that the $\Sigma_{gg_T}$ are uniformly bounded (Lemma~\ref{lem:SlitheringBounded}) and the estimates on $\Sigma_T-\Id_{\R^n}$ given by Lemmas~\ref{lem:EstimateElemSlithering} and \ref{lem:SumPowersLengthsGapsConverges}. 

As usual, let $\mathcal T = \{T_1, T_2, \dots, T_m\}$ be a finite subset of $\mathcal T_{gg'}$, where each $T_i$ separates $T_{i+1}$ from $g$. By Lemma~\ref{lem:ReorderProduct}, 
$$
\overrightarrow{\prod_{\kern -10pt T\in \mathcal T\kern -10pt}} \kern 3pt\Sigma_T
=
 \Sigma_{T_1} \circ \Sigma_{T_{2}} \circ \dots \circ \Sigma_{T_{m-1}} \circ \Sigma_{T_m} 
 =
 \widehat\Sigma_{T_m}^{\mathcal T} \circ \widehat\Sigma_{T_{m-1}}^{\mathcal T} \circ \dots \circ \widehat\Sigma_{T_2}^{\mathcal T} \circ \widehat\Sigma_{T_1}^{\mathcal T} 
 =\overleftarrow{\prod_{\kern -10pt T\in \mathcal T\kern -10pt}} \kern 3pt\widehat\Sigma_T^{\mathcal T}
$$
where 
$$
 \widehat\Sigma_{T_i}^{\mathcal T} 
 = \bigl( \Sigma_{T_1}  \circ \Sigma_{T_2}  \circ \dots \circ \Sigma_{T_{i-1}}  \bigr)
 \circ \Sigma_{T_i} 
 \circ
  \bigl( \Sigma_{T_1}  \circ \Sigma_{T_2}  \circ \dots \circ \Sigma_{T_{i-1}}  \bigr)^{-1}. 
$$

For a fixed $T$, the map $ \widehat\Sigma_{T}^{\mathcal T} $ tends to $\widehat \Sigma_T = \Sigma_{gg_T}\circ  \Sigma_T \circ  \Sigma_{gg_T}^{-1}$ as the finite family $\mathcal T$ tends to the set $\mathcal T_{gg'}$ of all components of $\widetilde S - \widetilde \lambda$ separating $g$ from $g'$,  by definition of the slithering map. By uniformity in the estimates guaranteeing the convergence of the infinite products, it follows that 
\begin{equation*}
   \Sigma_{gg'} 
   =
   \lim_{\mathcal T \to \mathcal T_{gg'}} \kern 2pt \overrightarrow{\prod_{\kern -10pt T\in \mathcal T\kern -10pt}} \kern 3pt\Sigma_T
   =  \lim_{\mathcal T \to \mathcal T_{gg'}} \kern 2pt \overleftarrow{\prod_{\kern -10pt T\in \mathcal T\kern -10pt}} \kern 3pt\widehat\Sigma_T^{\mathcal T}
   =  \lim_{\mathcal T \to \mathcal T_{gg'}} \kern 2pt \overleftarrow{\prod_{\kern -10pt T\in \mathcal T\kern -10pt}} \kern 3pt\widehat\Sigma_T
   = \overleftarrow{\prod_{\kern -10pt T\in \mathcal T_{gg'}\kern -10pt}} \kern 5pt\widehat\Sigma_T . \qedhere
\end{equation*}
\end{proof}

\subsection{Reconstructing a Hitchin homomorphism from its invariants}
\label{subsect:ParamInjective}

We now show how to reconstruct, up to conjugation by an element of $\PSL$,  a Hitchin homomorphism $\rho \colon \pi_1(S) \to \PSL$ from its triangle invariants and its shearing cycle. 

For this, we first normalize $\rho$ to avoid having to worry about conjugations.  Fix a component $T_0$ of $\widetilde S-\widetilde \lambda$, with vertices $x_0$, $y_0$, $z_0 \in  \partial_\infty \widetilde S$. Also, choose a positive flag triple $(E_0, F_0, G_0)$. 

\begin{lem}
\label{lem:NormalizeHitchinRep}
After conjugating the Hitchin homomorphism $\rho$ by an element of $\PSL$, we can arrange that the flag $\F_\rho(x_0)$ is equal to $E_0$, the flag $\F_\rho(y_0)$ is equal to $F_0$, and the line $\F_\rho(z_0)^{(1)}$ is equal to the line $G_0^{(1)}$. 
\end{lem}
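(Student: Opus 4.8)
The plan is to exhibit an explicit element of $\PGL$ (in fact of $\PSL$) carrying the flag data $\bigl(\F_\rho(x_0), \F_\rho(y_0), \F_\rho(z_0)\bigr)$ to the prescribed data $(E_0, F_0, G_0^{(1)})$, and then to observe that conjugating $\rho$ by this element (using the $\rho$--equivariance of the flag curve from Proposition~\ref{prop:FlagCurve}) achieves the desired normalization. First I would note that, since $x_0$, $y_0$, $z_0$ are the three vertices of a component $T_0$ of $\widetilde S - \widetilde\lambda$, they are distinct points of $\partial_\infty\widetilde S$, so Theorem~\ref{thm:FlagCurvePositive} tells us that the flag triple $\bigl(\F_\rho(x_0), \F_\rho(y_0), \F_\rho(z_0)\bigr)$ is positive, and in particular generic in the sense of \S\ref{subsect:Flags}. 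The target triple $(E_0, F_0, G_0)$ was chosen to be positive, hence also generic.

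The key step is then a transitivity statement for the $\PGL$--action on configurations consisting of two generic flags plus a line in generic position. Given two generic flag triples $(E,F,G)$ and $(E',F',G')$, elementary linear algebra (the argument sketched at the start of \S\ref{subsect:WedgeInvariants}) produces a linear isomorphism $\R^n \to \R^n$ sending the generic pair $(E,F)$ to $(E',F')$; indeed one can diagonalize using the decomposition $\R^n = \bigoplus_a L_a$ with $L_a = E^{(a)}\cap F^{(n-a+1)}$ from Lemma~\ref{lem:ComputeDoubleRatios}. The stabilizer of an ordered generic pair $(E',F')$ is the diagonal torus relative to the decomposition $\bigoplus_a L_a'$ with $L_a' = E'{}^{(a)}\cap F'{}^{(n-a+1)}$, which acts on the remaining line $G{}^{(1)}$ (whose projection to each $L_a'$ is nonzero, by genericity of the triple) simply transitively up to scalar, i.e. transitively after passing to $\PGL$. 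Composing, one obtains $g \in \GL$ with $g\cdot(E,F) = (E',F')$ and $g\cdot E^{(1)}$-wait, rather $g$ sending the line $G^{(1)}$ to $G'{}^{(1)}$. Applying this with $(E,F,G) = \bigl(\F_\rho(x_0), \F_\rho(y_0), \F_\rho(z_0)\bigr)$ and $(E',F',G') = (E_0,F_0,G_0)$ gives the required $g$; projecting to $\PGL$ and, if necessary (when $n$ is even), multiplying by a suitable scalar to land in the index--$2$ subgroup $\PSL$, we get the element we need. Note we only need to match $G_0^{(1)}$, not all of $G_0$, which is exactly what the torus action allows.

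Finally, replace $\rho$ by $g\rho g^{-1}$. By the equivariance clause of Proposition~\ref{prop:FlagCurve}, the flag curve of the conjugated homomorphism is $x \mapsto g\bigl(\F_\rho(x)\bigr)$, so it sends $x_0 \mapsto E_0$, $y_0 \mapsto F_0$, and $z_0 \mapsto$ a flag whose first subspace is $G_0^{(1)}$, as desired; and conjugation does not change the character, so we stay in $\Hit(S)$. The main obstacle here is purely bookkeeping: one must be careful that the constructed element actually lies in $\PSL$ rather than merely $\PGL$ when $n$ is even, which is handled by the freedom to rescale $g$ by a scalar (the flag data only depends on the $\PGL$--class, and scaling changes the determinant continuously so one can adjust into $\SL$); there is no serious geometric content beyond the transitivity computation.
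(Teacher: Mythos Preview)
Your construction of the element $g\in\PGL$ carrying $(\F_\rho(x_0),\F_\rho(y_0),\F_\rho(z_0)^{(1)})$ to $(E_0,F_0,G_0^{(1)})$ is fine, and is essentially the ``elementary linear algebra'' step the paper invokes; in fact, as the paper notes, this element is \emph{unique} in $\PGL$, since the stabilizer of a generic flag pair is the diagonal torus, and a torus element fixing a line in generic position is scalar.

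The gap is in your passage from $\PGL$ to $\PSL$ when $n$ is even. You write that one can ``multiply by a suitable scalar to land in the index--$2$ subgroup $\PSL$'', and that ``scaling changes the determinant continuously so one can adjust into $\SL$''. But for even $n$, scaling $g\in\GL$ by $\lambda$ multiplies $\det g$ by $\lambda^n>0$, so you cannot change the sign of the determinant this way; equivalently, once you pass to $\PGL$ the scalar freedom is already quotiented out and the unique $g$ either lies in $\PSL$ or it does not. So this step does not work as written.

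The paper closes this gap by a connectivity argument rather than a scaling one: since $g$ is \emph{uniquely} determined by the source and target data, it depends continuously on them; both the source triple $(\F_\rho(x_0),\F_\rho(y_0),\F_\rho(z_0))$ and the target $(E_0,F_0,G_0)$ are positive, and the space of positive flag triples is connected (Proposition~\ref{prop:TripRatiosDetermineFlagTriples}). Deforming the source to the target along a path of positive triples deforms $g$ continuously to the identity, so $g$ lies in the identity component of $\PGL$, namely $\PSL$. You should replace the scaling remark with this connectivity argument.
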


\begin{proof} By elementary linear algebra, there exists a unique element $\phi \in \PGL$ sending the flag $\F_\rho(x_0)$ to $E_0$, the flag $\F_\rho(y_0)$  to $F_0$, and the line $\F_\rho(z_0)^{(1)}$  to the line $G_0^{(1)}$. Because the set of positive flag triples is connected (see for instance Proposition~\ref{prop:TripRatiosDetermineFlagTriples}), $\phi$ is in the connected component of $\PGL$ that contains the identity, namely $\phi$ is an element of $\PSL$. 

Conjugating $\rho$ by  $\phi \in \PSL$ replaces the flag curve $\F_\rho \colon \partial_\infty \widetilde S \to \Flag$ by its composition with the action of $\phi$ on $\Flag$, which completes the proof. 
\end{proof}

The following lemma will help in the exposition, by decreasing the number of cases to consider. Let $g_0$ be the side of $T_0$ joining $x_0$ and $y_0$, and let $h_0$ be the side joining $x_0$ and $z_0$. 

\begin{lem}
\label{lem:NiceGeneratorsForPi1}
The fundamental group $\pi_1(S)$ is generated by finitely many elements $\gamma \in \pi_1(S)$ whose axes cross both $g_0$ and $h_0$, and send $T_0$ to a triangle $\gamma T_0$ contained in the component of $\widetilde S - T_0$ that is adjacent to $g_0$.

\end{lem}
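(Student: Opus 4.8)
The plan is to reduce the statement to a well-known fact about surface groups acting on the tree (or rather, on the set of complementary triangles) dual to a maximal geodesic lamination, and then to check that one may choose generators with the required crossing properties. First I would recall the standard picture: the components of $\widetilde S-\widetilde\lambda$ form the vertex set of a tree-like structure (more precisely, the ``dual graph'' where two triangles are joined when they share a leaf, together with its completion), and $\pi_1(S)$ acts on this set with finite quotient, namely with $4(g-1)$ orbits of triangles. Since $S$ is closed, this action is cocompact, so there is a compact fundamental domain $K\subset\widetilde S$ that we may take to be a union of finitely many triangle components and the leaves between them; choosing $K$ to contain $T_0$ and to be connected, the elements $\gamma\in\pi_1(S)$ with $\gamma K\cap K\neq\emptyset$ generate $\pi_1(S)$ by the usual Milnor--\v Svarc / fundamental-domain argument.

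Next I would refine the choice so that the generators have the two stated properties. For the first property (axis crossing both $g_0$ and $h_0$), note that replacing a generating set $\{\gamma_i\}$ by $\{\gamma_i'\}$ with $\gamma_i'=\delta\gamma_i\delta'$ for fixed $\delta,\delta'$ still generates, and more flexibly, we may enlarge the generating set and conjugate: any element $\gamma$ can be written as a product of elements whose axes are positioned as we like, because $\widetilde\lambda$ fills $\widetilde S$ and $T_0$ has both $g_0$ and $h_0$ on its boundary with $x_0$ their common endpoint. Concretely, I would pick a finite symmetric generating set, and for each generator $\gamma$ write $\gamma = (\gamma\eta)\eta^{-1}$ where $\eta\in\pi_1(S)$ is chosen (using that the limit set is all of $\partial_\infty\widetilde S$, or just that translates of $T_0$ accumulate everywhere) so that both $\gamma\eta$ and $\eta$ have axes meeting $g_0$ and $h_0$ and move $T_0$ across $g_0$ into the far component of $\widetilde S-T_0$. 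This uses only that through any two disjoint leaves (or any leaf and any point of $\partial_\infty\widetilde S$ not an endpoint of it) there passes the axis of some element of $\pi_1(S)$, which follows from density of axis endpoints in $\partial_\infty\widetilde S\times\partial_\infty\widetilde S$ for a cocompact Fuchsian-type action.

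For the second property, I would observe that whether $\gamma T_0$ lies in the component of $\widetilde S-T_0$ adjacent to $g_0$ depends only on the ``side'' of $T_0$ that $\gamma$ moves $T_0$ toward; if a given generator $\gamma$ moves $T_0$ to the wrong side, one can post-compose with a fixed element $\delta_0$ whose axis crosses $g_0$ and which pushes everything across $g_0$ — the key point is that $\{\gamma,\delta_0\gamma,\delta_0^{-1}\gamma,\dots\}$ together still generate, so after replacing each generator by such a modification we are done. Throughout, ``axis crosses $g_0$'' should be read as: the two fixed points of $\gamma$ on $\partial_\infty\widetilde S$ lie on opposite sides of $g_0$, which is an open condition and hence robust under the small perturbations above.

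The main obstacle I anticipate is purely bookkeeping: making precise the claim that one may simultaneously arrange \emph{all three} conditions (generates; axis crosses both $g_0$ and $h_0$; sends $T_0$ across $g_0$) without the modifications interfering with one another. The cleanest route is probably to first fix \emph{any} generating set $S_0=\{\gamma_1,\dots,\gamma_k\}$, then choose one auxiliary element $\delta$ whose axis crosses both $g_0$ and $h_0$ and which moves $T_0$ deep into the $g_0$-side, with translation length large compared to the diameters of $T_0$ and of the $\gamma_i T_0$; then each of the finitely many elements $\delta\gamma_i$ (and $\delta$ itself) will automatically have axis crossing $g_0$ and $h_0$ and send $T_0$ to the $g_0$-side, by a ``large translation length beats bounded displacement'' estimate in $\delta$-hyperbolic geometry, and $\{\delta,\delta\gamma_1,\dots,\delta\gamma_k\}$ generates since $\gamma_i=\delta^{-1}(\delta\gamma_i)$. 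Verifying the geometric estimate — that an isometry with axis crossing two fixed disjoint leaves and with sufficiently long translation length carries a fixed bounded set across one of them — is the one genuinely quantitative step, but it is a routine consequence of the thin-triangles inequality in the negatively curved metric $m$, so I expect it to be short.
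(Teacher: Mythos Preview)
Your final ``cleanest route'' is almost the right idea, but it contains a genuine gap. With only left multiplication by $\delta$, you control the attracting fixed point of $\delta\gamma_i$ (it lies near $\delta^+$, since $\delta$ contracts $\partial_\infty\widetilde S\setminus\{\delta^-\}$ toward $\delta^+$) but \emph{not} the repelling one: the repelling fixed point of $\delta\gamma_i$ is the attracting fixed point of $\gamma_i^{-1}\delta^{-1}$, and the same contraction argument shows this lies near $\gamma_i^{-1}(\delta^-)$, not near $\delta^-$. Since $\gamma_i^{-1}(\delta^-)$ can land in any of the three arcs of $\partial_\infty\widetilde S$ cut out by the vertices $x_0,y_0,z_0$, there is no reason for the axis of $\delta\gamma_i$ to cross $h_0$. (Concretely: crossing both $g_0=x_0y_0$ and $h_0=x_0z_0$ forces one endpoint of the axis into each of the two arcs adjacent to $x_0$; you have only pinned down one of them.) Your ``large translation length beats bounded displacement'' heuristic fails here precisely because $\gamma_i$ does not have bounded displacement on all of $\partial_\infty\widetilde S$.

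The fix is to multiply on \emph{both} sides: replace $\gamma_i$ by $\gamma_i'=\gamma_0^{m_i}\gamma_i\gamma_0^{n_i}$ with $m_i,n_i$ large. This is exactly the paper's argument. One first uses density of axes in the space of geodesics of $\widetilde S$ to find a single $\gamma_0\in\pi_1(S)$ whose axis crosses $g_0$ and $h_0$ and whose attracting fixed point lies in the closure of the $g_0$--side component $U$; then a Pingpong argument shows that for $m_i,n_i\gg 0$ both fixed points of $\gamma_i'$ are close to those of $\gamma_0$, so the axis of $\gamma_i'$ crosses $g_0$ and $h_0$ and $\gamma_i'T_0\subset U$. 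Since $\gamma_i=\gamma_0^{-m_i}\gamma_i'\gamma_0^{-n_i}$, the set $\{\gamma_0,\gamma_1',\dots,\gamma_k'\}$ still generates. This bypasses entirely the dual-tree and Milnor--\v Svarc discussion in your first two paragraphs, which are not needed.
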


\begin{proof} The axes of $\pi_1(S)$ are dense in the space of geodesics of $\widetilde S$. Therefore, there exists an element $\gamma_0 \in \pi_1(S)$ whose axis crosses both $g_0$ and $h_0$, and whose attracting fixed point in $\partial_\infty \widetilde S$ is contained in the closure of the component $U$ of $\widetilde S - T_0$ delimited by $g_0$. In particular, $\gamma_0 T_0$ is contained in $U$. 

Let $\gamma_1$, $\gamma_2$, \dots, $\gamma_k$ be a set of generators for $\pi_1(S)$. The Pingpong Lemma shows that, for $m_i$, $n_i>0$ large enough, the attracting and repulsing fixed points of $\gamma_i'= \gamma_0^{m_i} \gamma_i \gamma_0^{n_i}$ are very close to the attracting and repulsing fixed points of $\gamma_0$. In particular, the axis of $\gamma_i'$ crosses both $g_0$ and $h_0$, and $\gamma_i'T_0$ is contained in $U$. 

Then the family of elements $\gamma_0$, $\gamma_1'$, $\gamma_2'$, \dots, $\gamma_k'$ generates $\pi_1(S)$ and has the required properties. 
\end{proof}

\begin{figure}[htbp]

\SetLabels
(.48 * .2) $ T_0$ \\
(-.02 * .22) $x_0 $ \\
(1.02 *.18 ) $y_0 $ \\
( .64* -.03) $z_0 $ \\
( .5 * .32) $ g_0$ \\
( .4* .13) $h_0 $ \\
( .58*.82 ) $\gamma T_0$ \\
( -.05* .75) $\gamma x_0 $ \\
( .71* 1.02) $\gamma y_0 $ \\
( 1.05* .7) $\gamma z_0 $ \\
( .4*.88 ) $ \gamma g_0$ \\
(.5* .69) $\gamma h_0 $ \\
( .15* .47) $T $ \\
( .4* .42) $ g_T$ \\
( -.03 * .35) $x_T $ \\
( 1.04* .5) $ y_T$ \\
( -.03* .58) $z_T $ \\
(.42 * .54) $ g_T'$ \\
( * ) $ $ \\
( * ) $ $ \\
( * ) $ $ \\
\endSetLabels
\centerline{\AffixLabels{\includegraphics{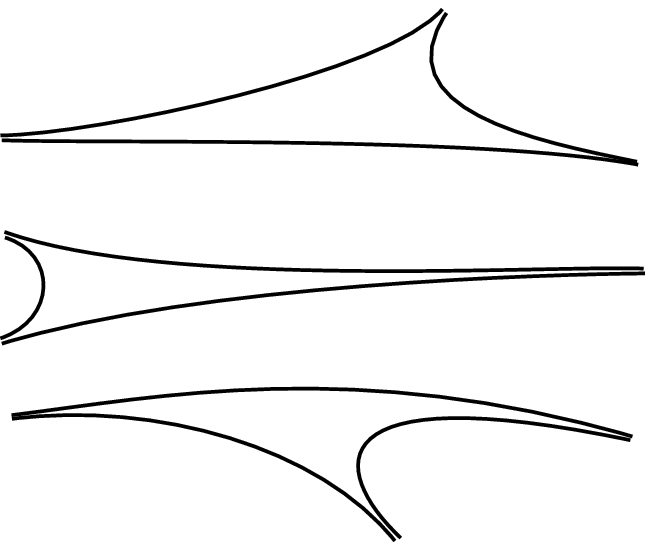}}}

\caption{}
\label{fig:Reconstruction}
\end{figure}

For $t=(t_1, t_2, \dots, t_{n-1}) \in \R^{n-1}$, let $u_1$, $u_2$, \dots, $u_n$ be uniquely determined by the properties that  $t_a =  u_a - u_{a+1}$ and $\sum_{a=1}^n u_a=0$. Namely,  $u_a=  \frac1n \sum_{b=1}^{n-1} (n-b) t_b - \sum_{b=1}^{a-1} t_b $. Then, let  $\Theta^t_{E_0F_0}\colon \R^n \to \R^n$ be the element of $\SL$  that acts by multiplication of $\mathrm e^{u_a}$ on each line $E_0^{(a)} \cap F_0^{(n-a+1)}$. 

For every generic flag triple $(E,F,G)$, elementary linear algebra provides a unique projective map $\phi \in \PGL$ that sends $E$ to $E_0$, $F$ to $F_0$, and such that 
$$
D_a\bigl(E_0, F_0, G_0, \phi(G) \bigr) =1
$$
for every $a\in \{1,2,\dots, n-1\}$. We then define
$$
\mathcal G_{(E_0, F_0, G_0)}(E,F,G) = \phi(G)\in \Flag. 
$$

In particular, we can apply this to the flag triple $\bigl( \F_\rho(x_0), \F_\rho(z_0), \F_\rho(y_0) \bigr)$ associated to the vertices of the base triangle $T_0$.  (Note the unusual vertex ordering.) This defines a projective map $\phi_0 \in \PGL$ sending the flag $E_0 =  \F_\rho(x_0)$ to itself, $\F_\rho(z_0)$ to $F_0=\F_\rho(y_0)$, and $F_0$ to $\mathcal G_{(E_0, F_0, G_0)}\bigl( \F_\rho(x_0), \F_\rho(z_0), \F_\rho(y_0) \bigr)$.

\begin{lem}
\label{lem:GroupActionAndSlithering}
Let $\rho$ be normalized as in Lemma~{\upshape \ref{lem:NormalizeHitchinRep}}, and let $\gamma\in \pi_1(S)$ be as in Lemma~{\upshape \ref{lem:NiceGeneratorsForPi1}}. Then,
$$
\rho(\gamma) = \Sigma_{g_0(\gamma h_0)}^{-1} \circ  \Theta^{\sigma^\rho(T_0, \gamma T_0)}_{E_0F_0}
\circ \phi_0 \in \PGL
$$
where $\Theta^t_{E_0F_0}$ and $\phi_0$ are defined as above, and where $\sigma^\rho(T_0, \gamma T_0)\in \R^{n-1}$ is the shear vector of $\rho$ between $T_0$ and $\gamma T_0$. 
\end{lem}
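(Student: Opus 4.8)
The plan is to verify the asserted identity by checking that both sides agree as elements of $\PGL$, i.e.\ that the right-hand side sends a sufficient collection of flags/lines to the images prescribed by $\rho(\gamma)$ via the flag curve, together with its $\rho$--equivariance. Since $\rho$ is normalized so that $\F_\rho(x_0)=E_0$, $\F_\rho(y_0)=F_0$, and $\F_\rho(z_0)^{(1)}=G_0^{(1)}$, and since $\gamma$ was chosen in Lemma~\ref{lem:NiceGeneratorsForPi1} so that its axis crosses both $g_0$ and $h_0$ and $\gamma T_0$ lies in the component of $\widetilde S-T_0$ adjacent to $g_0$, the geometry is ``generic'' in the sense that all the slithering maps and double-ratio normalizations appearing are well defined; the positivity inputs are Theorems~\ref{thm:FlagCurvePositive} and Lemma~\ref{lem:ShearAlongArcDefined}. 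The key point is to track what happens to the flags $\F_\rho(x_0)$, $\F_\rho(y_0)$, $\F_\rho(z_0)$ along the leaf $g_0$ and then along $\gamma h_0$.

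First I would analyze $\phi_0$. By construction $\phi_0\in\PGL$ fixes $E_0=\F_\rho(x_0)$, sends $\F_\rho(z_0)$ to $F_0=\F_\rho(y_0)$, and sends $F_0=\F_\rho(y_0)$ to $\mathcal G_{(E_0,F_0,G_0)}\bigl(\F_\rho(x_0),\F_\rho(z_0),\F_\rho(y_0)\bigr)$; in other words, $\phi_0$ ``rotates'' the flag data of $T_0$ so that the side $h_0$ (joining $x_0$, $z_0$) plays the role that $g_0$ played, normalizing the third flag by the condition that all double ratios $D_a(E_0,F_0,G_0,\phi_0(\F_\rho(y_0)))=1$. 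Since $g_0$ and $\gamma h_0$ are oriented in parallel as seen from $T_0$ (this is where the hypothesis on $\gamma$ from Lemma~\ref{lem:NiceGeneratorsForPi1} enters), the slithering map $\Sigma_{g_0(\gamma h_0)}$ of Proposition~\ref{prop:Slithering} carries the line decomposition $\R^n=\bigoplus_a \widetilde L_a(\gamma h_0)$ to $\R^n=\bigoplus_a \widetilde L_a(g_0)$, fixing $E_0^{(a)}$ and $F_0^{(a)}$ (by Condition~(4) and the fact that $\Sigma_{g_0(\gamma h_0)}$ sends $\F_\rho(\gamma x_0)\mapsto\F_\rho(x_0)$, $\F_\rho(\gamma z_0)\mapsto\ldots$; here I would use the pivot/parallel-transport properties carefully). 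The middle factor $\Theta^{\sigma^\rho(T_0,\gamma T_0)}_{E_0F_0}$ is diagonal in the decomposition $L_a=E_0^{(a)}\cap F_0^{(n-a+1)}$ and is designed precisely so that, after conjugating back by $\Sigma_{g_0(\gamma h_0)}^{-1}$, the third flag lands in the position dictated by the $a$--th shear parameters $\sigma_a^\rho(T_0,\gamma T_0)=\log D_a(\F_\rho(x_0),\F_\rho(y_0),\F_\rho(z_0),\Sigma_{g_0 g_0'}(\F_\rho(z_0')))$ from \S\ref{subsect:ShearingCycle}.

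Concretely, the verification amounts to three checks: (i) the right-hand side $\Psi:=\Sigma_{g_0(\gamma h_0)}^{-1}\circ\Theta^{\sigma^\rho(T_0,\gamma T_0)}_{E_0F_0}\circ\phi_0$ sends $\F_\rho(x_0)$ to $\F_\rho(\gamma x_0)$, (ii) it sends $\F_\rho(z_0)$ to $\F_\rho(\gamma y_0)$ (note the vertex relabeling: under $\gamma$, the side $h_0$ of $T_0$ becomes the side $\gamma h_0$, so $x_0\mapsto\gamma x_0$, $z_0\mapsto\gamma z_0$, but the parallel leaf in play is $\gamma h_0=$ side $(\gamma x_0)(\gamma z_0)$, so the matching of endpoints must be read off Figure~\ref{fig:Reconstruction}), and (iii) the double ratios of $\Psi(\F_\rho(y_0))$ against the reference triple are the correct ones, which is exactly the content of the definition of $\Theta^t_{E_0F_0}$ with $t=\sigma^\rho(T_0,\gamma T_0)$ together with the transition rule $t_a=u_a-u_{a+1}$, $\sum u_a=0$. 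Since $\rho(\gamma)$ is the \emph{unique} projective map implementing the $\rho$--equivariance $\F_\rho(\gamma x)=\rho(\gamma)\F_\rho(x)$ and these three conditions pin down a unique element of $\PGL$ (by the uniqueness clause used throughout, e.g.\ in the definition of $\mathcal G$ and in Lemma~\ref{lem:NormalizeHitchinRep}), this forces $\Psi=\rho(\gamma)$.

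The main obstacle I anticipate is bookkeeping rather than any deep new idea: keeping straight the two orientations (left-hand-side vs.\ right-hand-side joint orientations of the lines $\widetilde L_a$, as in the proof of Lemma~\ref{lem:ShearAlongArcDefined}) under the reversal induced by passing from $T_0$ to $\gamma T_0$, and making sure that the revisited slithering formula of Proposition~\ref{prop:RevisitSlithering} (the conjugated product $\overleftarrow{\prod}\widehat\Sigma_T$) is the version that interacts cleanly with the group action $\gamma\cdot$. In particular one must check that $\Sigma_{g_0(\gamma h_0)}$ is $\rho(\gamma)$--related to $\Sigma_{(\gamma^{-1}g_0)h_0}$, i.e.\ the equivariance $\rho(\gamma)\circ\Sigma_{hh'}\circ\rho(\gamma)^{-1}=\Sigma_{(\gamma h)(\gamma h')}$, which follows from the uniqueness in Proposition~\ref{prop:Slithering} but should be stated explicitly. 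Once the equivariance of the slithering map and the precise endpoint matching in Figure~\ref{fig:Reconstruction} are nailed down, the computation reduces to the elementary double-ratio identities of Lemma~\ref{lem:RelationsDoubleRatios} and the definition of the shear parameters.
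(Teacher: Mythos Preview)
Your approach is essentially the same as the paper's: verify that $\Sigma_{g_0(\gamma h_0)}\circ\rho(\gamma)$ and $\Theta^{\sigma^\rho(T_0,\gamma T_0)}_{E_0F_0}\circ\phi_0$ agree on the generic flag triple $\bigl(\F_\rho(x_0),\F_\rho(y_0),\F_\rho(z_0)\bigr)$, which forces equality in $\PGL$. The paper carries this out exactly as you outline, checking the three flags one at a time.

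Two remarks. First, your check (ii) has a labeling slip: $\Psi$ sends $\F_\rho(z_0)$ to $\F_\rho(\gamma z_0)$, not $\F_\rho(\gamma y_0)$. Indeed $\phi_0$ sends $\F_\rho(z_0)$ to $F_0$, which is fixed by $\Theta$, and then $\Sigma_{g_0(\gamma h_0)}^{-1}$ sends $F_0=\F_\rho(y_0)$ to $\F_\rho(\gamma z_0)$ since, with the parallel orientation, $\gamma h_0$ has positive endpoint $\gamma x_0$ and negative endpoint $\gamma z_0$. The nontrivial check is the one for $\F_\rho(y_0)\mapsto\F_\rho(\gamma y_0)$, and that is precisely where the definition of $\sigma^\rho(T_0,\gamma T_0)$ together with $D_a(E_0,F_0,G_0,\phi_0(F_0))=1$ and Lemma~\ref{lem:ComputeDoubleRatios} is used. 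Second, the machinery you flag as potential obstacles (the revisited slithering of Proposition~\ref{prop:RevisitSlithering}, the equivariance $\rho(\gamma)\Sigma_{hh'}\rho(\gamma)^{-1}=\Sigma_{\gamma h,\gamma h'}$) is not needed here; the paper's argument is entirely elementary once the endpoint matching is read off the figure.
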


\begin{proof}
By definition of the shear parameter
\begin{align*}
\sigma^\rho_a(T_0, \gamma T_0)
&= \log D_a \bigl( \F_\rho(x_0), \F_\rho(y_0), \F_\rho(z_0), \Sigma_{g_0(\gamma h_0)} \circ \F_\rho(\gamma y_0) \bigr)\\
&= \log D_a \bigl( E_0, F_0, G_0, \Sigma_{g_0(\gamma h_0)} \circ \F_\rho(\gamma y_0) \bigr),
\end{align*}
where the second equality comes form the fact that the flag  $\F_\rho(x_0)$ is equal to $E_0$, the flag $\F_\rho(y_0)$ is equal to $F_0$, and the line $\F_\rho(z_0)^{(1)}$ is equal to the line $G_0^{(1)}$. (Recall that the double ratio $D_a(E,F,G,G')$ does not really depend on the whole flags $G$ and $G'$, only on the lines $G^{(1)}$ and $G'{}^{(1)}$.) Since
$$
D_a\Bigl(E_0, F_0, G_0, \mathcal G_{(E_0, F_0, G_0)}\bigl( \F_\rho(x_0), \F_\rho(z_0), \F_\rho(y_0) \bigr) \Bigr) =1,
$$
it follows from Lemma~\ref{lem:ComputeDoubleRatios} that $\Sigma_{g_0(\gamma h_0)} \circ \F_\rho(\gamma y_0) $ and $\mathcal G_{(E_0, F_0, G_0)}\bigl( \F_\rho(x_0), \F_\rho(z_0), \F_\rho(y_0) \bigr)=\phi_0(F_0)$ differ only by the action of $ \Theta^{\sigma^\rho(T_0, \gamma T_0)}_{E_0F_0}$. More precisely,  
\begin{align*}
\Sigma_{g_0(\gamma h_0)} \circ \F_\rho(\gamma y_0) &= 
 \Theta^{\sigma^\rho(T_0, \gamma T_0)}_{E_0F_0}
\circ
\mathcal G_{(E_0, F_0, G_0)}\bigl( \F_\rho(x_0), \F_\rho(z_0), \F_\rho(y_0) \bigr)\\
&=  \Theta^{\sigma^\rho(T_0, \gamma T_0)}_{E_0F_0}
\circ \phi_0(F_0) = 
 \Theta^{\sigma^\rho(T_0, \gamma T_0)}_{E_0F_0}
\circ \phi_0 \circ \F_\rho(y_0).
\end{align*}

The geodesic $g_0$ has endpoints $x_0$ and $y_0$, and the geodesic $\gamma h_0$ has endpoints $\gamma x_0$ and $\gamma z_0$. Therefore,
$$
\Sigma_{g_0(\gamma h_0)} \circ \F_\rho(\gamma x_0)
= \F_\rho( x_0) 
=  \Theta^{\sigma^\rho(T_0, \gamma T_0)}_{E_0F_0}
 \circ  \F_\rho( x_0) 
=  \Theta^{\sigma^\rho(T_0, \gamma T_0)}_{E_0F_0}
\circ \phi_0 \circ  \F_\rho( x_0) 
$$
since the flag $E_0= \F_\rho( x_0) $ is fixed by $ \Theta^{\sigma^\rho(T_0, \gamma T_0)}_{E_0F_0}$ and by $ \phi_0$. 

Similarly, because $ \Theta^{\sigma^\rho(T_0, \gamma T_0)}_{E_0F_0}$ fixes the flag $F_0 = \F_\rho(y_0)$ and because $\phi_0$ sends $\F_\rho(z_0)$ to $F_0$,
$$
\Sigma_{g_0(\gamma h_0)} \circ \F_\rho(\gamma z_0)
= \F_\rho( y_0) 
=  \Theta^{\sigma^\rho(T_0, \gamma T_0)}_{E_0F_0}
 \circ  \F_\rho( y_0) 
=  \Theta^{\sigma^\rho(T_0, \gamma T_0)}_{E_0F_0}
\circ \phi_0 \circ  \F_\rho( z_0) .
$$

Remembering that the flag curve is $\rho$--equivariant, so that $\F_\rho(\gamma x) = \rho(\gamma) \circ \F_\rho(x)$ for every $x\in \partial_\infty \widetilde S$, we concludes that the projective maps $\Sigma_{g_0(\gamma h_0)} \circ \rho(\gamma)$ and $ \Theta^{\sigma^\rho(T_0, \gamma T_0)}_{E_0F_0}
\circ \phi_0$ coincide on each flag of the generic flag triple $\bigl( \F_\rho(x_0), \F_\rho(z_0), \F_\rho(y_0) \bigr)$. This proves that 
$$\Sigma_{g_0(\gamma h_0)} \circ \rho(\gamma)= \Theta^{\sigma^\rho(T_0, \gamma T_0)}_{E_0F_0}
\circ \phi_0$$
as projective maps. The result then follows. 
\end{proof}

In the formula of Lemma~\ref{lem:GroupActionAndSlithering}, the term $ \Theta^{\sigma^\rho(T_0, \gamma T_0)}_{E_0F_0}$ depends only on the shearing cycle $\sigma^\rho$, while $\phi_0$ is completely determined by the triangle invariants $\tau_{abc}^\rho (s)$ of the base triangle $T_0$. We now turn our attention to the remaining term, the slithering map $\Sigma_{g_0(\gamma h_0)}$.

By Proposition~\ref{prop:RevisitSlithering},
$$
 \Sigma_{g_0(\gamma h_0)}  =\kern 10pt  \overleftarrow{\prod_{\kern -13pt T\in \mathcal T_{g_0(\gamma h_0)}\kern -13pt}} \kern 7pt\widehat\Sigma_T .
$$
with the notation of that statement. 

Consider the contribution $\widehat\Sigma_T = \Sigma_{g_0g_T} \circ \Sigma_T \circ \Sigma_{g_0g_T}^{-1}$ of a triangle $T\in \mathcal T_{g_0(\gamma h_0)}$, separating $g_0$ from $\gamma h_0$. Index the vertices of $T$ as $x_T$, $y_T$ and $z_T$, in such a way that the side $g_T = y_T x_T$ is the one that is closest to $g_0=y_0x_0$, and is oriented in parallel with $g_0$. There are two cases to consider, according to whether the side $g_T'$ of $T$ that is closest to $\gamma T_0$ is equal to $z_Tx_T$ or to $y_Tz_T$. 

Consider the case where $T$ points to the right, namely where $g_T'$ is equal to $y_Tz_T$, as in Figure~\ref{fig:Reconstruction}. Then, the elementary slithering $\Sigma_T= \Sigma_{g_T^{\phantom i}g_T'}$ is the unique linear map that fixes the flag $\F_\rho(y_T)$, acts by the identity on each line $\F_\rho(y_T)^{(a+1)}/\F_\rho(y_T)^{(a)}$, and sends the flag $\F_\rho(z_T)$ to $\F_\rho(x_T)$. It follows that 
$\widehat\Sigma_T = \Sigma_{g_0g_T} \circ \Sigma_T \circ \Sigma_{g_0g_T}^{-1}$ is the unique linear map that fixes the flag $\Sigma_{g_0g_T}\circ \F_\rho(y_T) = F_0$, acts as the identity on each line $F_0^{(a+1)}/F_0^{(a)}$, and sends the flag $\Sigma_{g_0g_T}\circ\F_\rho(z_T)$ to $\Sigma_{g_0g_T}\circ\F_\rho(x_T)=E_0$. 

We now express  $\Sigma_{g_0g_T}\circ\F_\rho(z_T)$ in terms of  the flag $G_T'= \mathcal G_{(E_0, F_0, G_0)} \bigl ( \F_\rho(x_T), \F_\rho(y_T), \F_\rho(z_T) \bigr)$, as defined above Lemma~\ref{lem:GroupActionAndSlithering}. By definition, $G_T'$  is the unique flag such that there is a projective map sending $\F_\rho(x_T)$ to $E_0$, $\F_\rho(y_T)$ to $F_0$ and $\F_\rho(z_T)$ to $G'_T$, and such that
$
D_a \bigl(
E_0, F_0, G_0, G_T'  \bigr)
=1
$
for every $a\in \{1,2,\dots, n-1\}$. Also, 
\begin{align*}
\sigma^\rho (T_0, T) &= \log 
D_a \bigl( \F_\rho(x_0), \F_\rho(y_0), \F_\rho(z_0), \Sigma_{g_0g_T} \circ \F_\rho(z_T) \bigr)\\
&= \log D_a \bigl( E_0, F_0, G_0, \Sigma_{g_0g_T} \circ \F_\rho(z_T) \bigr).
\end{align*}
As in the proof of Lemma~\ref{lem:GroupActionAndSlithering}, we conclude that 
$
\Sigma_{g_0g_T} \circ \F_\rho(z_T) = \Theta_{E_0F_0}^{\sigma^\rho(T_0, T)} (G_T')
$.

Therefore, $\widehat \Sigma_T$ is the unique projective map that sends the flag $F_0$ to itself, acts as the identity on each line $F_0^{(a+1)}/F_0^{(a)}$, and sends $ \Theta_{E_0F_0}^{\sigma^\rho(T_0, T)} (G_T')$ to $E_0$. Because $ \Theta_{E_0F_0}^{\sigma^\rho(T_0, T)}$ fixes the flags $E_0$ and $F_0$, we conclude that
$$
\widehat \Sigma_T =  \Theta_{E_0F_0}^{\sigma^\rho(T_0, T)} \circ \widehat\Sigma_T' \circ  \Theta_{E_0F_0}^{-\sigma^\rho(T_0, T)} 
$$
where $\widehat\Sigma_T'$ is the projective map that fixes $F_0$, acts as the identity on each  $F_0^{(a+1)}/F_0^{(a)}$, and sends $G_T'= \mathcal G_{(E_0, F_0, G_0)} \bigl ( \F_\rho(x_T), \F_\rho(y_T), \F_\rho(z_T) \bigr)$ to $E_0$. A key observation here is that  $\widehat\Sigma_T'$ depends only on the orbit of the flag triple  $\bigl ( \F_\rho(x_T), \F_\rho(y_T), \F_\rho(z_T) \bigr)$ under the action of $\PGL$. In particular, $\widehat\Sigma_T'$ is completely determined by the triangle invariants of the Hitchin homomorphism $\rho$ (and by our normalization conventions). 

A similar property holds them $T$ points to the left, namely when $g_t'$ is equal to the geodesic $z_Tx_T$. More precisely,
$$
\widehat \Sigma_T =  \Theta_{E_0F_0}^{\sigma^\rho(T_0, T)} \circ \widehat\Sigma_T' \circ  \Theta_{E_0F_0}^{-\sigma^\rho(T_0, T)} 
$$
where $\Sigma_T'$ fixes $E_0$, acts as the identity on each  $E_0^{(a+1)}/E_0^{(a)}$, and sends $ \mathcal G_{(E_0, F_0, G_0)} \bigl ( \F_\rho(x_T), \F_\rho(y_T), \F_\rho(z_T) \bigr)$ to $F_0$. In particular, $\widehat\Sigma_T'$ is completely determined by the triangle invariants of $\rho$ in this case as well. 

Combining these observations with Lemma~\ref{lem:GroupActionAndSlithering} gives:

\begin{lem}
\label{lem:GroupActionAndInvariants}
Let $\rho$ be normalized as in Lemma~{\upshape \ref{lem:NormalizeHitchinRep}}, and let $\gamma\in \pi_1(S)$ be as in Lemma~{\upshape \ref{lem:NiceGeneratorsForPi1}}. Then,
$$
\rho(\gamma) = \biggl(\kern 15pt
\overleftarrow{\prod_{\kern -13pt T\in \mathcal T_{g_0(\gamma h_0)}\kern -13pt}} \kern 7pt
\Bigl(
\Theta_{E_0F_0}^{\sigma^\rho(T_0, T)} \circ \widehat\Sigma_T' \circ  \Theta_{E_0F_0}^{-\sigma^\rho(T_0, T)}
\Bigr)
\biggr)^{-1}
\circ  \Theta^{\sigma^\rho(T_0, \gamma T_0)}_{E_0F_0}
\circ \phi_0
$$
in $\PGL$, with the definitions introduced above. In particular, the maps $\widehat\Sigma_T'$ and $\phi_0$ depend only on the triangle invariants $\tau_{abc}^\rho(s)$ of $\rho$, while the terms $\Theta_{E_0F_0}^{\pm \sigma^\rho(T_0, T)} $ are determined by its shearing cycle $\sigma^\rho \in \mathcal C(\lambda, \slits; \widehat\R^{n-1})$. \qed
\end{lem}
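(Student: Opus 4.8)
The plan is to obtain the formula by direct substitution into the two preceding lemmas, with the only real content being the bookkeeping of which factors depend on which invariants. First I would recall Lemma~\ref{lem:GroupActionAndSlithering}: for $\rho$ normalized as in Lemma~\ref{lem:NormalizeHitchinRep} and $\gamma$ chosen as in Lemma~\ref{lem:NiceGeneratorsForPi1}, one has
$$
\rho(\gamma) = \Sigma_{g_0(\gamma h_0)}^{-1} \circ \Theta^{\sigma^\rho(T_0,\gamma T_0)}_{E_0F_0} \circ \phi_0 \in \PGL.
$$
Next I would insert the expression for the slithering map provided by Proposition~\ref{prop:RevisitSlithering}, namely $\Sigma_{g_0(\gamma h_0)} = \overleftarrow{\prod}_{T\in \mathcal T_{g_0(\gamma h_0)}} \widehat\Sigma_T$, whose convergence is established there; taking inverses gives $\rho(\gamma) = \bigl(\overleftarrow{\prod}_{T} \widehat\Sigma_T\bigr)^{-1} \circ \Theta^{\sigma^\rho(T_0,\gamma T_0)}_{E_0F_0} \circ \phi_0$.

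The heart of the matter is the factorization $\widehat\Sigma_T = \Theta_{E_0F_0}^{\sigma^\rho(T_0,T)} \circ \widehat\Sigma_T' \circ \Theta_{E_0F_0}^{-\sigma^\rho(T_0,T)}$ worked out in the discussion preceding the lemma, in both the case where $T$ points to the right and the case where $T$ points to the left of the path from $g_0$ to $\gamma h_0$. Since this is literally the same $\widehat\Sigma_T$ occurring in Proposition~\ref{prop:RevisitSlithering}, the rewritten infinite product still converges, and substituting it inside $\overleftarrow{\prod}_{T}$ yields exactly the displayed formula of the lemma. The middle factor $\widehat\Sigma_T'$ is, by its defining property, the projective map fixing one of $E_0$, $F_0$, acting trivially on the successive quotients of that flag, and sending $\mathcal G_{(E_0,F_0,G_0)}\bigl(\F_\rho(x_T), \F_\rho(y_T), \F_\rho(z_T)\bigr)$ to the other flag; as already emphasized, this depends only on the $\PGL$-orbit of the triple $\bigl(\F_\rho(x_T), \F_\rho(y_T), \F_\rho(z_T)\bigr)$, hence by Proposition~\ref{prop:TripRatiosDetermineFlagTriples} only on its triple ratios, i.e.\ on the triangle invariants $\tau^\rho_{abc}$ of the component of $S-\lambda$ carried by $T$. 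Likewise $\phi_0$ is constructed solely from the flag triple of the base triangle $T_0$, so it too is determined by the triangle invariants; and the remaining factors $\Theta^{\pm\sigma^\rho(T_0,T)}_{E_0F_0}$ and $\Theta^{\sigma^\rho(T_0,\gamma T_0)}_{E_0F_0}$ depend only on the shear vectors $\sigma^\rho(T_0,\cdot)$, which by construction are recorded by the shearing cycle $\sigma^\rho \in \CC(\lambda, \slits; \widehat\R^{n-1})$ (Proposition~\ref{prop:TwistedRelTgtCycleDiffrentViewpoint}).

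I do not anticipate a genuine obstacle, since Lemmas~\ref{lem:GroupActionAndSlithering} and the factorization of $\widehat\Sigma_T$ do all the work; the only points needing a little care are purely organizational: tracking the order of composition when passing to the inverse of a one-sided infinite product, and verifying that the left- and right-pointing cases for $T$ are uniformly subsumed — which is precisely what the common form of the factorization of $\widehat\Sigma_T$ guarantees — together with noting that $\widehat\Sigma_T'$ and $\phi_0$ also depend on the fixed normalization conventions of Lemma~\ref{lem:NormalizeHitchinRep}, which is harmless since those conventions are part of the ambient setup rather than of $\rho$.
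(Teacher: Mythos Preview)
Your proposal is correct and follows exactly the paper's approach: the lemma is stated with a \qed because it is the direct combination of Lemma~\ref{lem:GroupActionAndSlithering}, Proposition~\ref{prop:RevisitSlithering}, and the factorization $\widehat\Sigma_T = \Theta_{E_0F_0}^{\sigma^\rho(T_0,T)} \circ \widehat\Sigma_T' \circ \Theta_{E_0F_0}^{-\sigma^\rho(T_0,T)}$ worked out in the two cases just before the statement. Your bookkeeping of which factors depend on the triangle invariants versus the shearing cycle matches the paper's observations verbatim.
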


\begin{cor}
\label{cor:HitchinDeterminedByInvariants}
If two Hitchin homomorphisms $\rho$, $\rho'\colon \pi_1(S) \to \PSL$  that have the same triangle invariants $\sigma_{abc}^{\rho}(s)= \sigma_{abc}^{\rho'}(s) $ and the same shearing cycles $\sigma^\rho = \sigma^{\rho'} \in \mathcal C(\lambda, \slits; \widehat \R^n)$ are conjugate by an element of  $\PSL$, and therefore represent the same character in  $\Hit(S)$. 
\end{cor}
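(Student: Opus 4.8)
The plan is to deduce Corollary~\ref{cor:HitchinDeterminedByInvariants} as an immediate consequence of Lemma~\ref{lem:GroupActionAndInvariants}, by exploiting the normalization of Lemma~\ref{lem:NormalizeHitchinRep} and the choice of generators of Lemma~\ref{lem:NiceGeneratorsForPi1}. First I would conjugate both $\rho$ and $\rho'$, using Lemma~\ref{lem:NormalizeHitchinRep}, so that their flag curves satisfy $\F_\rho(x_0)=\F_{\rho'}(x_0)=E_0$, $\F_\rho(y_0)=\F_{\rho'}(y_0)=F_0$, and $\F_\rho(z_0)^{(1)}=\F_{\rho'}(z_0)^{(1)}=G_0^{(1)}$, for the fixed base triangle $T_0$ and fixed positive flag triple $(E_0,F_0,G_0)$. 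This replaces $\rho$ and $\rho'$ by homomorphisms that are conjugate to the original ones by elements of $\PSL$, so it suffices to prove that the two normalized homomorphisms are \emph{equal}.

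Next I would observe that, with this normalization, the right-hand side of the formula in Lemma~\ref{lem:GroupActionAndInvariants} depends only on the triangle invariants $\tau_{abc}^\rho(s)$ and the shearing cycle $\sigma^\rho$. Indeed, the maps $\widehat\Sigma_T'$ and $\phi_0$ were shown (in the discussion preceding Lemma~\ref{lem:GroupActionAndInvariants}) to depend only on the $\PGL$--orbits of the relevant flag triples $\bigl(\F_\rho(x_T),\F_\rho(y_T),\F_\rho(z_T)\bigr)$, hence only on the triangle invariants by Proposition~\ref{prop:TripRatiosDetermineFlagTriples}; and the factors $\Theta_{E_0F_0}^{\pm\sigma^\rho(T_0,T)}$ and $\Theta_{E_0F_0}^{\sigma^\rho(T_0,\gamma T_0)}$ depend only on the shear vectors, i.e.\ on the shearing cycle $\sigma^\rho\in\CC(\lambda,\slits;\widehat\R^{n-1})$. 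The indexing set $\mathcal T_{g_0(\gamma h_0)}$ of the infinite product, and the sides $g_T$, $g_T'$, depend only on the geometry of $\widetilde\lambda$ and the generators $\gamma$, which have been fixed once and for all and do not depend on $\rho$. Since by hypothesis $\tau_{abc}^\rho(s)=\tau_{abc}^{\rho'}(s)$ for all $s$ and $\sigma^\rho=\sigma^{\rho'}$, the formula of Lemma~\ref{lem:GroupActionAndInvariants} gives $\rho(\gamma)=\rho'(\gamma)$ in $\PGL$ for every generator $\gamma$ of the form provided by Lemma~\ref{lem:NiceGeneratorsForPi1}.

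Finally, since Lemma~\ref{lem:NiceGeneratorsForPi1} guarantees that $\pi_1(S)$ is generated by such elements $\gamma$, and $\rho$ and $\rho'$ agree on each of these generators as maps $\pi_1(S)\to\PGL$, they agree on all of $\pi_1(S)$. Because $\rho$ and $\rho'$ take values in $\PSL\subset\PGL$, equality in $\PGL$ is equality in $\PSL$. Thus the normalized homomorphisms are equal, so the original $\rho$ and $\rho'$ are conjugate by an element of $\PSL$, and therefore represent the same point of $\Hit(S)$. The only mild subtlety to be careful about is that the normalization of Lemma~\ref{lem:NormalizeHitchinRep} is applied independently to $\rho$ and $\rho'$; since that lemma produces a \emph{unique} $\phi\in\PGL$ realizing the normalization and it lies in $\PSL$, both normalized representatives satisfy exactly the same constraints on $\F(x_0),\F(y_0),\F(z_0)^{(1)}$, so the comparison in the previous paragraph is legitimate. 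No genuine obstacle remains; the content of the corollary is entirely contained in the reconstruction formula of Lemma~\ref{lem:GroupActionAndInvariants}, and the proof is a short bookkeeping argument.
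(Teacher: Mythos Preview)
Your proposal is correct and follows essentially the same approach as the paper's own proof: normalize both homomorphisms via Lemma~\ref{lem:NormalizeHitchinRep}, apply the reconstruction formula of Lemma~\ref{lem:GroupActionAndInvariants} to the generators of Lemma~\ref{lem:NiceGeneratorsForPi1}, and conclude that the normalized homomorphisms coincide. Your write-up is simply more explicit about why each ingredient in the formula depends only on the invariants, and about the $\PSL\subset\PGL$ bookkeeping, but the argument is the same.
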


\begin{proof}
Conjugate  $\rho$ and $\rho'$ by elements of $\PSL$ to normalize them as in Lemma~\ref{lem:NormalizeHitchinRep}. Then, for every element $\gamma \in \pi_1(S)$ satisfying the conditions of Lemma~\ref{lem:NiceGeneratorsForPi1}, the formula of Lemma~\ref{lem:GroupActionAndInvariants} shows that $\rho(\gamma) = \rho'(\gamma)$. Since these $\gamma$ generate $\pi_1(S)$, this proves that $\rho=\rho'$. 
\end{proof}

\section{Length functions}
\label{bigsect:LengthFunctions}

Our next goal is to determine which triangle invariants and shearing cycles can be realized as invariants of Hitchin characters. The length functions considered in this section provide one of the constraints that need to be satisfied by these invariants. 

\subsection{Length functions associated to Hitchin characters}
\label{subsect:LengthFunctions}
Let $\rho\colon \pi_1(S) \to \PSL$ be a Hitchin homomorphism. Labourie proves in \cite{Lab1} that for every non-trivial $\gamma \in \pi_1(S)$, the matrix $\rho(\gamma)\in \PSL$ is diagonalizable and its eigenvalues can be indexed as $\mu_1\bigl(\rho(\gamma) \bigr)$,  $\mu_2 \bigl(\rho(\gamma) \bigr)$, \dots,  $\mu_n \bigl(\rho(\gamma) \bigr)$ in such a way that 
$$
\frac{ \mu_a\bigl(\rho(\gamma) \bigr) }{ \mu_{a+1}\bigl(\rho(\gamma) \bigr)} >1
$$
for every $i=1$, $2$, \dots, $n-1$. (Note that eigenvalues of an element of $\PSL$ are only defined up to sign, but that the quotient between two such eigenvalues makes intrinsic sense.) This property is in fact an easy consequence of Theorem~\ref{thm:AnosovProperty}. 

Eigenvalues are independent under conjugation. This consequently defines $n-1$ functions 
$$
\ell_a^\rho \colon \{\text{non-trivial conjugacy classes of } \pi_1(S) \} \to \R
$$
by the property that $ \ell_a^\rho(\gamma) = \log \frac{ \mu_a\bigl(\rho(\gamma) \bigr) }{ \mu_{a+1}\bigl(\rho(\gamma) \bigr)} > 0 $. The same conjugation invariance shows that the length function $\ell_a^\rho$ depends only on the Hitchin character $\rho \in \Hit(S)$, not on the Hitchin homomorphism $\rho\colon \pi_1(S) \to \PSL$ that represents it. 

The set of conjugacy classes of the fundamental group $\pi_1(S)$ is discrete, but these length functions have a natural extension to a continuous space. Indeed, endowing the surface $S$ with an arbitrary negatively curved riemannian metric, a conjugacy class of $\pi_1(S)$ uniquely determines an oriented  closed geodesic of $S$, and therefore a closed orbit of the geodesic flow of the unit tangent bundle $T^1S$. This closed leaf is endowed with an integer multiplicity $m>0$ if the conjugacy class is not primitive and is an $m$--power of a primitive class. Considering the Dirac transverse measure defined by this closed orbit and this multiplicity, this provides an analytic interpretation of a conjugacy class of $\pi_1(S)$ as a transverse measure for the \emph{geodesic foliation} $\mathcal F_S$ of $T^1S$, whose leaves are the orbits of the geodesic flow. 

This defines a completion of the set of conjugacy classes of $\pi_1(S)$ by the space $\CC(S)$ of all (positive Radon) transverse measures for the geodesic foliation $\mathcal F_S$ \cite{Bon86, Bon88, Bon91}, analogous to Thurston's completion \cite{Thu0, FLP, PenH} of the set of isotopy classes of simple closed curves in $S$ by the space $\ML(S)$ of measured laminations on $S$. 

For differentiability properties, it is useful to consider more general transverse structures for the geodesic foliation, namely \emph{transverse H\"older distributions} in the sense of \cite{Bon97a, Bon97b}. This embeds the set of conjugacy classes of $\pi_1(S)$ in the topological vector space $\CH(S)$ of all transverse H\"older distributions for the geodesic foliation $\mathcal F_S$. In other words, we now have embeddings
$$
\{\text{non-trivial conjugacy classes of } \pi_1(S) \} \subset \CC(S) \subset \CH(S).
$$

The elements of $\CC(S)$ and $\CH(S)$ are respectively called \emph{measure geodesic currents} and \emph{H\"older geodesic currents} for the surface $S$. See the references mentioned above for a proof that these constructions depend only on the topology of the surface $S$, and in particular are independent of the choice of a negatively curved riemannian metric on $S$.  

\begin{thm}
[{\cite{Dre1}}]
\label{thm:LengthFunctions}
For each Hitchin character $\rho \in \Hit(S)$ and for each $a=1$, $2$, \dots, $n-1$, the length function
$$
\ell_a^\rho \colon \{\text{non-trivial conjugacy classes of } \pi_1(S) \} \to \R
$$
extends to a continuous linear map
$
\ell_a^\rho \colon \CH(S) \to \R
$.\qed
\end{thm}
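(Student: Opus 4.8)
The plan is to realize each length function $\ell_a^\rho$ as the \emph{period functional} of a suitable $1$--form tangent to the geodesic foliation $\mathcal F_S$ of the unit tangent bundle $T^1S$ and H\"older continuous transversally to it, and then to invoke the duality between such forms and transverse H\"older distributions developed in \cite{Bon97a,Bon97b} to get the continuous linear extension to $\CH(S)$. Thus the whole problem reduces to constructing a leafwise $1$--form $\omega_a$ on $(T^1S,\mathcal F_S)$, H\"older continuous in the transverse direction, whose integral $\int_{g_\gamma}\omega_a$ over the closed orbit $g_\gamma\subset T^1S$ determined by a non-trivial conjugacy class $\gamma$ equals $\ell_a^\rho(\gamma)$ --- and the corresponding integer multiple when $\gamma$ is not primitive, matching the Dirac current carried by $g_\gamma$ with its multiplicity, as in the discussion preceding the theorem.

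First I would extract the relevant data from Labourie's Anosov property (Theorem~\ref{thm:AnosovProperty}). Fixing a lift $\rho'\colon\pi_1(S)\to\SL$, one has the flow--invariant splitting $T^1S\times_{\rho'}\R^n=L_1\oplus\dots\oplus L_n$, and for non-trivial $\gamma$ the holonomy of the flat lift $(G_t)$ once around $g_\gamma$ acts on the fibre of $L_a$ by multiplication by the eigenvalue $\mu_a(\rho'(\gamma))$. Hence the induced flow on the line bundle $M_a:=\mathrm{Hom}(L_a,L_{a+1})=L_a^*\otimes L_{a+1}$ has holonomy around $g_\gamma$ equal to $\mu_a(\rho'(\gamma))/\mu_{a+1}(\rho'(\gamma))$, whose logarithm is exactly $\ell_a^\rho(\gamma)$; this ratio is independent of the chosen lift, and Theorem~\ref{thm:AnosovProperty}(2) says precisely that the flow on $M_a$ is uniformly expanding (it is dual to the uniformly contracting flow on $\mathrm{Hom}(L_{a+1},L_a)$), so in particular $\ell_a^\rho(\gamma)>0$.

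Next I would turn this holonomy into a genuine $1$--form. Since the flag curve $\F_\rho$ is H\"older continuous (Proposition~\ref{prop:FlagCurve}) and the lines $\widetilde L_a(g)=\F_\rho(x_+)^{(a)}\cap\F_\rho(x_-)^{(n-a+1)}$ are transverse intersections of its values --- with transversality uniform thanks to the positivity of Theorem~\ref{thm:FlagCurvePositive} together with compactness --- the bundle $M_a\to T^1S$ is a H\"older line bundle, and it is flat (indeed constant) along the leaves of $\mathcal F_S$. Choose on $M_a$ a Euclidean metric $\Vert\cdot\Vert$ that is H\"older continuous over $T^1S$ and smooth along each flow line, and set, for $u\in T^1S$ and any $0\ne w$ in the fibre over $u$,
$$
\omega_a(u)=\Bigl(\tfrac{d}{dt}\Big|_{t=0}\log\tfrac{\Vert G_t(w)\Vert_{g_t u}}{\Vert w\Vert_u}\Bigr)\,dt ,
$$
which is independent of $w$. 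Integrating $\omega_a$ over $g_\gamma$ recovers the logarithm of the holonomy of $(G_t)$ on $M_a$ around $g_\gamma$, so $\int_{g_\gamma}\omega_a=\ell_a^\rho(\gamma)$. The step that requires real work --- and which I expect to be the main obstacle --- is to verify that $\omega_a$ is H\"older continuous transversally, not merely continuous: this rests on Labourie's quantitative H\"older estimates for the splitting $L_1\oplus\dots\oplus L_n$ and on a sufficiently careful choice of the metric on $M_a$, so that extracting the leafwise logarithmic derivative does not destroy transverse H\"older regularity.

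Granting this, I would finish as follows. By the construction of the topological vector space $\CH(S)$ of transverse H\"older distributions and its pairing with leafwise H\"older $1$--forms \cite{Bon97a,Bon97b}, the assignment $\alpha\mapsto\langle\alpha,\omega_a\rangle$ is a continuous linear map $\CH(S)\to\R$; and by the previous step it agrees with $\ell_a^\rho$ on the Dirac currents attached to conjugacy classes of $\pi_1(S)$. Setting $\ell_a^\rho(\alpha):=\langle\alpha,\omega_a\rangle$ then yields the desired continuous linear extension. (A different admissible metric on $M_a$ changes $\omega_a$ by a leafwise exact form $d_{\mathcal F}\log\phi$ with $\phi>0$ H\"older, which does not alter the values on the closed subspaces of $\CH(S)$ coming from geodesic laminations or from the geodesic foliation itself; this is the partial uniqueness of the extension alluded to in the introduction, and is not needed for existence.)
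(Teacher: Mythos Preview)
Your sketch is correct and matches the approach the paper attributes to \cite{Dre1}: the theorem is stated with a \qed and not proved in the paper, but the construction is recalled in the proof of Proposition~\ref{prop:MeasureHasPositiveLengths} and again in \S\ref{subsect:ShearingAndLength}, where the leafwise form is written as $(f_a-f_{a+1})\,dt$ with $f_a(x)=-\tfrac{d}{dt}\big|_{t=0}\log\lVert G_t(v_a(x))\rVert$ for $v_a(x)\in L_a(x)$, and $\ell_a^\rho(\alpha)$ is defined as the integral of this form against $\alpha\times dt$. This is exactly your pairing $\langle\alpha,\omega_a\rangle$ once the $M_a=L_a^{\,*}\otimes L_{a+1}$ packaging is unwound (up to checking the sign convention on the holonomy of $M_a$).
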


\begin{rem}
\label{rem:Dreyer1DifferentConventions}
The reader should beware that the above functions $\ell_a^\rho$ are slightly different from those introduced in \cite{Dre1}. Namely, our functions $\ell_a^\rho$ would be called $\ell_a^\rho - \ell_{a+1}^\rho$  in \cite{Dre1}. Although mathematically equivalent to those of \cite{Dre1}, our conventions tend to be better adapted to the framework of the current article, as can for instance be apparent in   Proposition~\ref{prop:MeasureHasPositiveLengths} and Theorem~\ref{thm:ShearingAndLength} below. 
\end{rem}

\begin{rem}
\label{rem:LengthHolderGeodCurrentsUnique?}
By linearity and continuity, the extension $\ell_a^\rho \colon \CH(S) \to \R$ is uniquely determined on the closure of the set of all linear combinations of conjugacy classes of $\pi_1(S)$. We do not know if this closure is equal to all of $\CH(S)$ (this seems unlikely), but it does contain all the H\"older geodesic currents that will occur in this article. 
\end{rem}

The following statement will be particularly important in our characterization of which relative tangent cycles can occur as shearing cycles of Hitchin characters.

\begin{prop}
\label{prop:MeasureHasPositiveLengths}
Let $\alpha\in \CC(S)$ be a non-zero  measure geodesic current. Then, 
$$
\ell_a^\rho (\alpha)>0
$$
for every Hitchin character $\rho \in \Hit(S)$ and every $a=1$, $2$, \dots, $n-1$. 
\end{prop}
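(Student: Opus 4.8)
The plan is to reduce the positivity of $\ell_a^\rho$ on measure geodesic currents to the case of closed orbits of the geodesic flow, where it follows directly from Labourie's Anosov Property (Theorem~\ref{thm:AnosovProperty}). The key point is that the geodesic foliation $\mathcal F_S$ of $T^1S$ supports, for each non-trivial $\gamma \in \pi_1(S)$, a Dirac transverse measure carried by the corresponding closed orbit, and these are dense among ergodic transverse measures in a suitable sense; but because $\ell_a^\rho$ is already known to be linear and continuous on $\CH(S)$ by Theorem~\ref{thm:LengthFunctions}, and because the cone $\CC(S)$ of positive transverse measures is spanned (in the weak sense relevant to the duality with H\"older functions) by these Dirac currents together with limits thereof, it suffices to establish a \emph{uniform} lower bound of the form $\ell_a^\rho(\gamma) \geq c\,\lVert \gamma \rVert$ for all $\gamma$, where $\lVert \gamma \rVert$ denotes the period of the associated closed orbit.

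\textbf{Key steps.} First I would recall that $\ell_a^\rho(\gamma) = \log\bigl(\mu_a(\rho(\gamma))/\mu_{a+1}(\rho(\gamma))\bigr)$, so the task is a uniform spectral gap estimate for the diagonalizable matrices $\rho(\gamma)$. Second, I would invoke the Anosov Property: applying Theorem~\ref{thm:AnosovProperty}(2) along a closed orbit $g$ of period $t_\gamma = \lVert\gamma\rVert$, the holonomy of the flat bundle $T^1S\times_{\rho'}\R^n$ around $g$ is precisely $\rho(\gamma)$ (up to the identification of the fiber), and its restriction to $L_a$ versus $L_b$ for $a<b$ satisfies the contraction inequality with the factor $\E^{-B_{ab}t_\gamma}$. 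Taking $b = a+1$ and comparing the growth rates of $\lVert G_{t_\gamma}\rVert$ on $L_a$ and $L_{a+1}$, the eigenvalue of $\rho(\gamma)$ on the line $L_a(g)\subset\R^n$ is exactly $\pm\mu_a(\rho(\gamma))$, and the inequality yields
$$
\frac{|\mu_{a+1}(\rho(\gamma))|}{|\mu_a(\rho(\gamma))|} \leq A_{a(a+1)}\,\E^{-B_{a(a+1)}\,t_\gamma},
$$
so $\ell_a^\rho(\gamma) \geq B_{a(a+1)}\,t_\gamma - \log A_{a(a+1)}$. Third, since $t_\gamma$ is bounded below by the length of the shortest closed geodesic of the auxiliary metric, there is a constant $c>0$ (depending on $\rho$, $a$ and the metric) with $\ell_a^\rho(\gamma)\geq c\, t_\gamma$ for every non-trivial $\gamma$. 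Fourth, I would extend this bound to all of $\CC(S)$: writing an arbitrary $\alpha \in \CC(S)$ as a weak limit of positive linear combinations $\sum_i \lambda_i \gamma_i$ of closed-orbit Dirac currents (with $\lambda_i>0$), and noting that the "total mass" functional $\alpha\mapsto \sum_i\lambda_i t_{\gamma_i}$ is itself a continuous positive linear functional on $\CC(S)$ (it is the pairing with the constant function $1$, i.e.\ essentially the Liouville-type total mass), the estimate $\ell_a^\rho(\sum_i\lambda_i\gamma_i)\geq c\sum_i\lambda_i t_{\gamma_i}$ passes to the limit by continuity of $\ell_a^\rho$, giving $\ell_a^\rho(\alpha)\geq c\,\lVert\alpha\rVert > 0$ whenever $\alpha\neq 0$.

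\textbf{Main obstacle.} The delicate point is the fourth step: justifying that every non-zero measure geodesic current lies in the closed cone generated by closed-orbit Dirac currents, and that the total-mass functional is continuous and strictly positive on non-zero currents, so that the uniform lower bound survives the limit. This is standard in the theory of geodesic currents (it is essentially the statement that closed geodesics are dense in $\CC(S)$ together with the fact that $\ell_a^\rho$ and the mass functional are both continuous and linear, so the inequality $\ell_a^\rho \geq c\cdot(\text{mass})$ is closed), but it needs to be invoked carefully using the references \cite{Bon86, Bon88, Bon91} for the structure of $\CC(S)$; alternatively one can bypass density and argue directly via the ergodic decomposition of $\alpha$, applying the Anosov contraction to $\alpha$-generic orbits and integrating, which also works but requires a Birkhoff-type argument along leaves of $\mathcal F_S$. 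Either route is routine given the Anosov Property, which does the real work.
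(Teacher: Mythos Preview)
Your approach is correct in spirit but more circuitous than the paper's, and it has one small gap. The paper does not pass through closed orbits or density at all: it recalls from \cite{Dre1} that
\[
\ell_a^\rho(\alpha)=\int_{T^1S}(f_a-f_{a+1})\,\alpha\times dt,
\]
where $f_a(x)=-\frac{d}{dt}\log\lVert G_t(v_a(x))\rVert\big|_{t=0}$, and then uses the flow-invariance of the measure $\alpha\times dt$ to replace the integrand by its time-$t_0$ average, namely
\[
\frac{1}{t_0}\log\Bigl(\frac{\lVert v_a(x)\rVert}{\lVert G_{t_0}(v_a(x))\rVert}\cdot\frac{\lVert G_{t_0}(v_{a+1}(x))\rVert}{\lVert v_{a+1}(x)\rVert}\Bigr).
\]
The Anosov inequality of Theorem~\ref{thm:AnosovProperty} bounds this below by $\frac{1}{t_0}(\log A+Bt_0)$ pointwise, which is strictly positive once $t_0$ is large; integrating against the non-zero positive measure $\alpha\times dt$ gives $\ell_a^\rho(\alpha)>0$ directly. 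This is essentially your route~(b), stripped of the unnecessary ergodic decomposition, so you already had the paper's argument in hand as your fallback.

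The gap in your main route is step~3: the bound $\ell_a^\rho(\gamma)\geq Bt_\gamma-\log A$ together with the systole bound $t_\gamma\geq s$ does \emph{not} yield $\ell_a^\rho(\gamma)\geq c\,t_\gamma$ for some $c>0$, since nothing prevents $Bs<\log A$. The fix is easy: apply the Anosov inequality at time $kt_\gamma$ (equivalently, to $\gamma^k$) to obtain $k\,\ell_a^\rho(\gamma)\geq Bkt_\gamma-\log A$, divide by $k$, and let $k\to\infty$, giving $\ell_a^\rho(\gamma)\geq B\,t_\gamma$. With this correction your density argument goes through, but the paper's direct integral argument is shorter and sidesteps the appeal to density of closed orbits in $\CC(S)$.
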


\begin{proof}
This is a simple consequence of the Anosov property of Theorem~\ref{thm:AnosovProperty}.

For this, we need to remind the reader of the construction of the length functions $
\ell_a^\rho \colon \CH(S) \to \R
$ in \cite{Dre1}, taking Remark~\ref{rem:Dreyer1DifferentConventions} into account.  As in \S \ref{subsect:FlagCurve}, consider the geodesic flow $(g_t)_{t\in \R}$ on the unit tangent bundle $T^1S$ (for an arbitrary metric of negative curvature) and its flat lift to a flow $(G_t)_{t\in \R}$ on the vector bundle $T^1S \times _{\rho'} \R^n$, twisted by a homomorphism $\rho' \colon \pi_1(S) \to \SL$ lifting $\rho$. In addition, choose a riemannian metric $\Vert \ \Vert$ on the vector bundle $T^1S \times _{\rho'} \R^n \to T^1S$. 

The vector bundle $T^1S \times _{\rho'} \R^n \to T^1S$ splits as a direct sum of line bundles $L_a \to T^1S$ as in \S \ref{subsect:FlagCurve}. For $a=1$, $2$, \dots, $n$, this data provides a function $f_a \colon T^1S \to \R$ defined by the property that for $x\in T^1S$
$$
f_a(x) = - \left( {\textstyle \frac{d}{dt}} \log \left\Vert G_t\bigl(v_a(x)\bigr)\right \Vert_{g_t(x)}   \right)_{t=0}
$$
where $v_a(x)$ is an arbitrary non-zero vector in the fiber $L_a(x)$ of the line bundle $L_a \to T^1S$. For a measure geodesic current $\alpha \in \CC(S)$, the length $\ell_a^\rho(\alpha)$ is then defined as the integral
$$
\ell_a^\rho(\alpha)
= \int_{T^1S} (f_a - f_{a+1}) \, \alpha \kern -2pt \times \kern -3pt dt
$$
of the function $(f_a - f_{a+1})$ with respect to the measure $\alpha \kern -2pt \times \kern -3pt dt$ on $T^1S$ that, locally, is the product of the transverse measure $\alpha$ for the geodesic flow $(g_t)_{t\in \R}$ with the measure $dt$ along the  orbits of this geodesic flow.  (Remember that what is called $\ell_a^\rho(\alpha)$ in this article was called $\ell_a^\rho(\alpha)-\ell_{a+1}^\rho(\alpha)$ in \cite{Dre1}). 

The measure $\alpha \kern -2pt \times \kern -2pt dt$ is invariant under the geodesic flow. Therefore, for every $t_0>0$, 
\begin{align*}
\int_{T^1S} f_a  \, \alpha \kern -2pt\times \kern -2pt dt &= \int_{T^1S} f_a\circ g_u  \, \alpha  \kern -2pt\times \kern -2pt dt
 = \frac 1{t_0} \int_{T^1S}  \int_0^{t_0} f_a\circ g_u  \, du \  \alpha\kern -2pt \times \kern -2pt dt \\
 &= \frac 1{t_0} \int_{T^1S}  \int_0^{t_0} -{\textstyle \frac{d}{du}} \log \left\Vert G_u \bigl(v_a (x)\bigr)\right \Vert_{g_u(x)}   \, du \  \alpha\kern -2pt \times \kern -2pt dt(x) \\
 &= \frac 1{t_0} \int_{T^1S}  
 \log \frac{\left\Vert v_a(x) \right\Vert_{x}}  
 {\left\Vert G_{t_0} \bigl(v_a(x) \bigr)\right \Vert_{g_{t_0}\kern -1pt(x)} \kern -18pt} 
 \kern 10pt  \alpha\kern -2pt \times \kern -2pt dt(x)
\end{align*}
so that 
$$
\ell_a^\rho(\alpha) =  \frac 1{t_0} \int_{T^1S}  
 \log \frac{\left\Vert v_a(x) \right\Vert_{x}}  
 {\left\Vert G_{t_0} \bigl(v_a(x) \bigr)\right \Vert_{g_{t_0}\kern -1pt(x)} \kern -18pt} 
 \kern 10pt  
 \frac
  {\left\Vert G_{t_0} \bigl(v_{a+1}(x) \bigr)\right \Vert_{g_{t_0}\kern -1pt(x)} \kern -18pt} 
  {\left\Vert v_{a+1}(x) \right\Vert_{x}}  
  \kern 20 pt
 \alpha\kern -2pt \times \kern -3pt dt(x).
$$
Theorem~\ref{thm:AnosovProperty} provides constants $A$, $B>0$ such that 
$$
 \log \frac{\left\Vert v_a(x) \right\Vert_{x}}  
 {\left\Vert G_{t_0} \bigl(v_a(x) \bigr)\right \Vert_{g_{t_0}\kern -1pt(x)} \kern -18pt} 
 \kern 10pt  
 \frac
  {\left\Vert G_{t_0} \bigl(v_{a+1}(x) \bigr)\right \Vert_{g_{t_0}\kern -1pt(x)} \kern -18pt} 
  {\left\Vert v_{a+1}(x) \right\Vert_{x}}  
  \kern 20 pt
  \geq \log A + Bt_0
$$ 
for every $t_0>0$. In particular, this integrant is strictly positive for $t_0$ large enough, and it follows that the integral $\ell_a^\rho(\alpha)$ is strictly positive. 
\end{proof}

\subsection{Shearing  cycles and length functions}
\label{subsect:ShearingAndLength}

We now consider a special type of H\"older geodesic current. 

We saw in \S \ref{subsect:TangentCycles} that a positive tangent cycle $\mu \in \CC(\widehat \lambda; \R)$ determines a transverse measure for $\widehat \lambda$. A general tangent cycle $\alpha \in \CC(\widehat \lambda; \R)$  determines a transverse H\"older distribution, which lifts to a H\"older geodesic current  $\alpha \in \CH(S)$ \cite{Bon97a, Bon97b}. This provides an embedding $\CC(\widehat \lambda ; \R) \subset \CH(S)$, and the length functions  $\ell_a^\rho \colon \CH(S) \to \R$ of the previous section restrict  to  linear functions $\ell_a^\rho \colon \CC(\widehat\lambda; \R) \to \R$. 

\begin{thm}
\label{thm:ShearingAndLength}
Let $\rho \in \Hit(S)$ be a Hitchin character with shearing cyle $\sigma^\rho \in \CC(\lambda,\slits; \widehat\R^{n-1}) \linebreak \subset \CC(\widehat\lambda, \slits; \R^{n-1})$. Then, for every $a=1$, $2$, \dots, $n-1$, the $a$--th component $\sigma_a^\rho \in \CC(\widehat \lambda, \slits; \R) \cong H_1(\widehat U, \delv \widehat U; \R)$ of $\sigma^\rho$ is related to the length function $\ell_a^\rho \colon \CC(\widehat\lambda; \R) \to \R$ by the property that
$$
\ell_a^\rho(\alpha) = [\alpha] \cdot [\sigma_a^\rho]
$$
for every tangent cycle $\alpha \in \CC(\widehat\lambda; \R)  \cong H_1(\widehat U; \R)$, where $\cdot $ denotes the algebraic intersection number of relative homology classes in the train track neighborhood $\widehat U$ of the orientation cover $\widehat \lambda$. 
\end{thm}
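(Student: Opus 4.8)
The plan is to relate both sides of the formula to the asymptotic geometry of the geodesic flow, using the Anosov splitting of Theorem~\ref{thm:AnosovProperty}. The key point is that the length function $\ell_a^\rho$ was defined in the proof of Proposition~\ref{prop:MeasureHasPositiveLengths} as an integral over $T^1S$ of the logarithmic expansion rate $f_a - f_{a+1}$ of the line bundle $\mathrm{Hom}(L_a, L_{a+1})$ against the measure $\alpha\times dt$, and that the shear parameter $\sigma_a^\rho(T,T')$ was defined as a double ratio $\log D_a(\F_\rho(x), \F_\rho(y), \F_\rho(z), \Sigma_{gg'}(\F_\rho(z')))$ comparing the line decompositions attached to the two leaves $g$ and $g'$ bounding $T$ and $T'$ on the side facing each other. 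So the first step is to make precise how the double ratio measures a discrepancy between the transports of a fixed reference vector along the flow, so that summing these discrepancies along a flow orbit telescopes into the total expansion rate.

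Concretely, I would first reduce to the case where $\alpha$ is a weighted simple closed geodesic, since such currents are dense in $\CC(\widehat\lambda;\R)\cong H_1(\widehat U;\R)$ and both sides of the desired identity are continuous and linear in $\alpha$ (the left side by Theorem~\ref{thm:LengthFunctions}, the right side because algebraic intersection is continuous). For a closed geodesic $\gamma$ carried by the train track $\widehat U$, I would express the eigenvalue ratio $\mu_a(\rho(\gamma))/\mu_{a+1}(\rho(\gamma))$ — equivalently $\E^{\ell_a^\rho(\gamma)}$ — in terms of the holonomy of $\rho$ around $\gamma$ acting on the line $L_a$ versus $L_{a+1}$, and then decompose that holonomy into a product over the triangles of $\widetilde S - \widetilde\lambda$ that $\gamma$ crosses, exactly as the slithering map $\Sigma_{gg'}$ is built in Proposition~\ref{prop:Slithering} and as $\rho(\gamma)$ is reconstructed in Lemma~\ref{lem:GroupActionAndInvariants}. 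Using the reconstruction formula $\rho(\gamma) = \Sigma_{g_0(\gamma h_0)}^{-1}\circ \Theta^{\sigma^\rho(T_0,\gamma T_0)}_{E_0F_0}\circ\phi_0$ of Lemma~\ref{lem:GroupActionAndSlithering}, the eigenvalue ratio on $L_a\otimes L_{a+1}^*$ picks out precisely the $\Theta$-factor, whose logarithm is the $a$-th coordinate $\sigma_a^\rho(T_0,\gamma T_0)$ of the shear vector; the slithering and triangle terms are conjugation-type factors that do not contribute to the eigenvalue ratio. The sum of shear parameters along the triangles crossed by $\gamma$, read homologically via Proposition~\ref{prop:RelativeHomologyTangentCycles}, is exactly the algebraic intersection number $[\gamma]\cdot[\sigma_a^\rho]$ of the tie-class of $\gamma$ with the relative homology class of $\sigma_a^\rho$ in $\widehat U$; here one must be careful that the boundary correction terms $\theta_a^\rho(s)$ appearing in Lemma~\ref{lem:ShearingCycleIsCycle} cancel telescopically around the closed curve $\gamma$, which is what makes $[\gamma]\cdot[\sigma_a^\rho]$ well defined despite $\sigma_a^\rho$ being only a relative cycle.

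The main obstacle I anticipate is the bookkeeping of signs and of the passage to the orientation cover $\widehat\lambda$: the leaves of $\widehat\lambda$ carry a canonical orientation, a closed geodesic $\gamma$ lifts to a (possibly disconnected) curve in $\widehat U$, and one must check that the left-to-right orientation convention on the ties of $\widehat U$ (used in Propositions~\ref{prop:HomologyTangentCycles} and \ref{prop:RelativeHomologyTangentCycles}) matches the sign with which each triangle's shear parameter enters the holonomy product. A secondary technical point is the convergence of the infinite telescoping when $\gamma$ is replaced by a general tangent cycle with uncountable support; but this is handled by the density/continuity reduction above, so the real work is the closed-geodesic computation. Once the identity $\ell_a^\rho(\gamma) = [\gamma]\cdot[\sigma_a^\rho]$ is established for all closed geodesics carried by $\widehat U$, linearity and continuity of both sides, together with the isomorphism $\CC(\widehat\lambda;\R)\cong H_1(\widehat U;\R)$ of Proposition~\ref{prop:HomologyTangentCycles}, extend it to all $\alpha\in\CC(\widehat\lambda;\R)$, which is the assertion of the theorem. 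Assembling the vector-valued statement $\ell^\rho(\alpha) = [\alpha]\cdot[\sigma^\rho]\in\R^{n-1}$ is then just collecting the coordinates.
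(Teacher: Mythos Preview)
Your density reduction is where the argument breaks. You claim that weighted simple closed geodesics are dense in $\CC(\widehat\lambda;\R)$, but this is false in general: a closed geodesic defines an element of $\CC(\widehat\lambda;\R)$ only if it is a closed leaf of $\widehat\lambda$, and a generic maximal geodesic lamination (for instance, one containing the stable lamination of a pseudo-Anosov) has no closed leaves at all. Closed curves \emph{carried by the train track} $\widehat U$ do span $H_1(\widehat U;\R)$ combinatorially, but the tangent cycle in $\CC(\widehat\lambda;\R)$ associated to such a homology class is a transverse H\"older distribution supported on $\widehat\lambda$, not the Dirac current of the underlying closed geodesic; these are different elements of $\CH(S)$, and $\ell_a^\rho$ need not agree on them. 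So there is no dense set of closed geodesics on which to check the identity.

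Even setting that aside, your claim that the eigenvalue ratio of $\rho(\gamma)$ ``picks out precisely the $\Theta$-factor'' in the reconstruction formula of Lemma~\ref{lem:GroupActionAndSlithering} is not justified: the factors $\Sigma_{g_0(\gamma h_0)}^{-1}$ and $\phi_0$ are not diagonal in the eigenbasis of $\rho(\gamma)$ and do not act by conjugation, so the eigenvalues of the product are not those of $\Theta^{\sigma^\rho(T_0,\gamma T_0)}_{E_0F_0}$.

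The paper circumvents both issues by working directly with the H\"older geodesic current interpretation. It extends the leafwise $1$--form $\omega_a = f_a\,dt$ (built from the expansion rate of $L_a$) to a H\"older-continuous topological closed $1$--form on $\widehat U$, so that $\ell_a^\rho(\alpha) = \langle [\omega_a - \omega_{a+1}], [\alpha]\rangle$ for every tangent cycle $\alpha$ by the Ruelle--Sullivan pairing (Lemma~\ref{lem:LengthIsEvaluationOmega}). It then computes $\int_k(\omega_a-\omega_{a+1})$ across a tightly transverse arc $k$ gap-by-gap using the slithering map (Lemmas~\ref{lem:IntegralOmegaOverGap} and \ref{lem:IntegrateOmegaTransverseArc}), obtaining $\sigma_a^\rho(k)$ plus boundary terms that depend only on the components of $\delh\widehat U$ containing $\partial k$. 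Finally (Lemma~\ref{lem:EvaluateOmegaIsIntersection}), expressing $[\alpha]$ as a combination of ties and using $\partial[\alpha]=0$ in $H_0(\delh\widehat U;\R)$ makes these boundary terms cancel, yielding $\langle[\omega_a-\omega_{a+1}],[\alpha]\rangle = [\alpha]\cdot[\sigma_a^\rho]$. No closed geodesics are needed.
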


\begin{proof} We will split the proof into several lemmas. 

We first give a different computation of the shearing cycle that uses  the functions $f_a$ that we encountered in the proof of Proposition~\ref{prop:MeasureHasPositiveLengths}. Actually, we will consider the differential 1--form $\omega_a= f_a\, dt$ defined along the orbits of the geodesic flow. By restriction, this form projects to a differential form along the leaves of the orientation cover $\widehat\lambda$, that we will also denote by $\omega_a$. 

We now extend this $\omega_a$ to a closed 1--form on the neighborhood $\widehat U$ of $\widehat\lambda$, in a weaker sense  because of the low regularity of the line bundle $L_a$. Remember that a differential form is closed if and only if it is locally exact. This leads us to define a \emph{topological closed $1$--form} on $\widehat U$ as the data, at each point of $ \widehat U$,  of a germ of continuous function well-defined up to an additive constant; in addition we require these function germs to be locally compatible in the sense that, when $y$ is sufficiently close to $x \in \widehat U$, the germ associated to $y$ is the restriction of the germ associated to $x$. Such a topological closed 1--form is \emph{H\"older continuous} if it is defined by a family of germs of H\"older continuous functions.

In our case, the 1--form $\omega_a$ was locally defined on each leaf $g$ of $\lambda$ as $\omega_a = dF_a$ for an explicit smooth function $F_a(t) = -\log \left\Vert G_t(\bigl(v_a(x) \bigr) \right\Vert_{g_t(x)}$  defined on that leaf and, locally, uniquely determined up to an additive constant. The construction of this function $F_a$ involves the line bundle $L_a$ and the choice of a riemannian metric on the bundle $T^1S \times_{\rho'}\R^n$. In particular, because the line bundle $L_a$ is H\"older continuous by Proposition~\ref{prop:FlagCurve}, this function $F_a$ can be chosen to be locally H\"older continuous on $\widehat\lambda$. Since a H\"older continuous function defined on a closed subset of a metric space always extends to a Holder continuous function over the larger space, this enables us to extend $\omega_a$ to a H\"older continuous topological closed $1$--form $\omega_a$ on $\widehat U$.

The definition of topological closed 1--form is specially designed so that the integral $\int_k \omega_a$ makes sense for every continuous arc $k$ in $\widehat U$. In particular, $\omega_a$ determines a cohomology class $[\omega_a] \in H^1(\widehat U; \R)$. 

\begin{lem}
\label{lem:LengthIsEvaluationOmega}
For every tangent cycle $\alpha \in \CC(\widehat\lambda;\R)$, the length $\ell_a^\rho(\alpha)$ is equal to the evaluation
$$
\ell_a^\rho(\alpha) = \bigl \langle  [\omega_a] - [\omega_{a+1}], [\alpha]   \bigr\rangle
$$
of the cohomology class $[\omega_a] - [\omega_{a+1}] \in H^1(\widehat U; \R)$ over the homology class $[\alpha] \in  H_1(\widehat U; \R)$ determined by $\alpha$ as in Proposition~{\upshape\ref{prop:HomologyTangentCycles}}. 
\end{lem}
\begin{proof} The tangent cycle $\alpha\in \CC(\widehat\lambda; \R)$  defines a transverse H\"older distribution for the geodesic lamination $\widehat\lambda$; see \cite{Bon97a, Bon97b}. As in \cite{RueSul}, we can then interpret the data of the geodesic lamination $\widehat\lambda$ endowed with this  transverse H\"older distribution  as a closed de Rham current in $\widehat U$. The homology class of $H_1(\widehat U; \R)$ defined by this de Rham current is exactly the class $[\alpha]$ introduced in Proposition~\ref{prop:HomologyTangentCycles}. 

By definition, the length $\ell_a^\rho(\alpha) $ is obtained by locally integrating the differential form $\omega_a - \omega_{a+1}$ over the leaves of $\widehat\lambda$, and then integrating the corresponding function of the leaves of $\widehat\lambda$ with respect to the transverse H\"older distribution defined by $\alpha$. See \cite{Dre1} for precise details, using a suitable partition of unity for $T^1S$. This construction is identical to the expression of \cite{RueSul} for the evaluation of $[\omega_a - \omega_{a+1}] \in H^1(\widehat U; \R)$ over the homology class $[\alpha] \in H_1(\widehat U; \R)$ represented by the de Rham current $\alpha \in \CH(\widehat\lambda)$. 
\end{proof}

To relate the shearing cycles $\sigma_a^\rho$ to the forms $\omega_a$, consider an arc $k$  in $\widehat U$ that is tightly transverse to $\widehat\lambda$. As usual, orient $k$ to the right of the leaves of $\widehat\lambda$, and lift $k$ to an oriented arc $\widetilde k$ in the universal cover $\widetilde S$. Consistently with the canonical orientation of the leaves of $\widehat\lambda$, we orient the leaves of $\widetilde\lambda$ that meet $\widetilde k$ to the left of $\widetilde k$. 

We first consider a component $d$ of $\widetilde k-\widetilde\lambda$ that does not contain any of the two endpoints of $k$. In particular, the positive and negative endpoints $x_d^+$ and $x_d^-$ of $d$ belong to $\widetilde \lambda$. 

The tangent  of the oriented leaf of $\widetilde\lambda$ passing through $x_d^\pm$ determines an element $u_d^\pm\in T^1\widetilde S$ of the unit tangent bundle of $\widetilde S$. If $g_d^\pm$ denotes the leaf of $\widetilde\lambda$ passing through $x_d^\pm$ and if we use the same letter to denote the projection $d\subset k\subset \widehat U$ of the arc $d\subset \widetilde k\subset \widetilde S$, we now connect the integral $\int_d \omega_a$ to  the elementary slithering map $\Sigma_{g_d^+ g_d^-} \colon \R^n \to \R^n$. The riemannian metric on the vector bundle $T^1 S \widetilde\times_{\rho'} \R^n$ used in the definition of the forms $\omega_a=f_a\,dt$ along $\widehat\lambda$ defines, for each $u\in T^1 \widetilde S$, a norm $\Vert \ \Vert_u$ on $\R^n$. 

\begin{figure}[htbp]

\SetLabels
( .37* .34) $ \widetilde k$ \\
\tiny
( .395* .485) $d $ \\
( .455*.535 ) $ x_d^+$ \\
( .44*.44 ) $x_d^- $ \\
( .25* .57) $u_d^+ $ \\
(.25 * .42) $ u_d^-$ \\
( .76*.52 ) $ y_d^+$ \\
( .76*.43 ) $y_d^- $ \\
( .6*.52 ) $w_d^+ $ \\
( .6* .43) $ w_d^-$ \\
( .51*.76 ) $ d^+$ \\
(.5 * .68) $x_{d^+}^- $ \\
(.3 * .68) $ u_{d^+}^-$ \\
( .43* .18) $d^- $ \\
( .44*.28 ) $ x_{d^-}^+$ \\
(.25 * .29) $u_{d^-}^+ $ \\
( -.0*.77 ) $ x$ \\
( 1* .81) $ y$ \\
( .63*1 ) $ z$ \\
( * ) $ $ \\
( * ) $ $ \\
( * ) $ $ \\
\endSetLabels
\centerline{\AffixLabels{\includegraphics{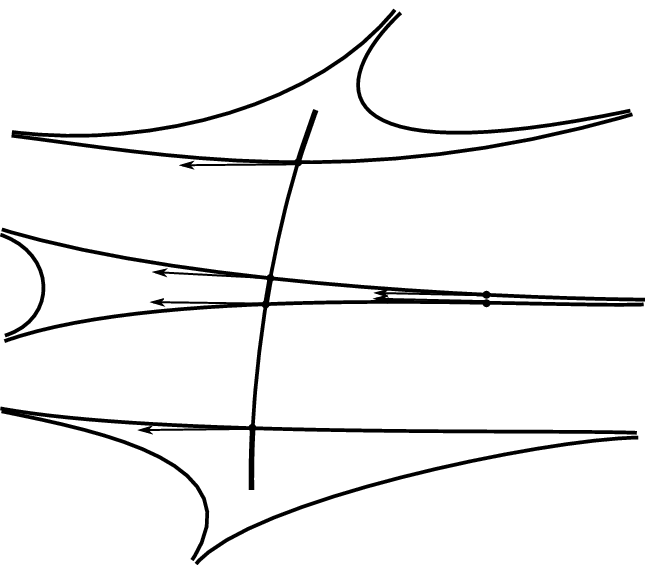}}}

\caption{}
\label{fig:LengthIntersection}
\end{figure}

\begin{lem}
\label{lem:IntegralOmegaOverGap}
Let $k$ be an arc in $\widehat U$ that is tightly transverse to $\widehat\lambda$, and let $d$ be a component of $k-\widehat\lambda$ that contains none of the two endpoints of $k$. Then,
$$
\int_d \omega_a =  \log\frac{\Vert v_a (u_d^-) \Vert_{u_{d}^-}}{\bigl\Vert \Sigma_{g_d^+ g_d^-}\bigl(v_a (u_{d}^-) \bigr)\bigr\Vert_{u_{d}^+}}
$$
for any non-zero vector $v_a (u_d^-) $ in the line $ \widetilde L_a (u_d^-)$. 
\end{lem}

\begin{proof}
The two leaves $g_d^+$ and $g_d^-$ are asymptotic. We can therefore find points $y_d^+\in g_d^+$ and $y_d^- \in g_d^-$ which are arbitrarily close to each other.   Let $w_d^\pm $ be the element of the  unit tangent bundle  $T^1\widetilde S$ determined by the tangent of the oriented geodesic $g_d^\pm$ at the point $y_d^\pm$. See Figure~\ref{fig:LengthIntersection}.  We can then deform $d$ to an arc consisting of the arc from $x_d^-$ to $y_d^-$ in the leaf $g_d^-$, followed by a short arc from $y_d^-$ to $y_d^+$, and completed by the arc from $y_d^+$ to $x_d^+$ in $g_d^+$. Then, by definition of the form $\omega_a$,
\begin{align*}
\int_d \omega_a &= \int_{x_d^-}^{y_d^-} \omega_a +  \int_{y_d^-}^{y_d^+} \omega_a +  \int_{y_d^+}^{x_d^+} \omega_a \\
&= \log \frac{\Vert v_a (u_d^-) \Vert_{u_{d}^-}} {\Vert v_a(u_d^-) \Vert_{w_{d}^-}}
+  \int_{y_d^-}^{y_d^+} \omega_a +
\log \frac{\Vert v_a (u_d^+) \Vert_{w_{d}^+}} {\Vert v_a (u_d^+) \Vert_{u_{d}^+}}
\end{align*}
for arbitrary non-zero vectors $v_a (u_d^+) \in \widetilde L_a (u_d^+)$ and $v_a (u_d^-) \in \widetilde L_a (u_d^-)$. In particular,  we can choose $v_a (u_d^+) = \Sigma_{g_d^+ g_d^-}\bigl(v_a (u_{d}^-) \bigr)$, in which case 
$$
\int_d \omega_a
= \log\frac{\Vert v_a (u_d^-) \Vert_{u_{d}^-}}{\bigl\Vert \Sigma_{g_d^+ g_d^-}\bigl(v_a(u_{d}^-) \bigr)\bigr\Vert_{u_{d}^+}} 
- \log\frac{\Vert v_a (u_d^-) \Vert_{w_{d}^-}}{\bigl\Vert \Sigma_{g_d^+ g_d^-}\bigl(v_a(u_{d}^-) \bigr)\bigr\Vert_{w_{d}^+}}  
+  \int_{y_d^-}^{y_d^+} \omega_a .
$$
Now, we  let the points $y_d^+$ and $y_d^-$ tend to the common endpoint of $g_d^+$ and $g_d^-$ in such a way that the distance from  $y_d^+$ to $y_d^-$ tends to 0. Looking at the projections to $S$, the integral $\int_{y_d^-}^{y_d^+} \omega_a $ tends to 0, while the quotient $\frac{\Vert v_a (u_d^-) \Vert_{w_{d}^-}}{\bigl\Vert \Sigma_{g_d^+ g_d^-}\bigl(v_a(u_{d}^-) \bigr)\bigr\Vert_{w_{d}^+}} $ tends to 1 (compare Lemma~\ref{lem:EstimateElemSlithering} and use the $\rho$--equivariance of the riemannian metric $\Vert\ \Vert$). It follows that 
$$
\int_d \omega_a =  \log\frac{\Vert v_a(u_d^-) \Vert_{u_{d}^-}}{\bigl\Vert \Sigma_{g_d^+ g_d^-}\bigl(v_a(u_{d}^-) \bigr)\bigr\Vert_{u_{d}^+}}
$$
for any non-zero vector $v_a(u_d^-) \in \widetilde L_a (u_d^-)$. 
\end{proof}

We will now choose preferred vectors $v_a(u_d^-) \in \widetilde L_a (u_d^-)$.

Let  $d^+$ and $d^-$ be the components of $\widetilde k-\widetilde\lambda$ that contain the positive and negative endpoints of $\widetilde k$, respectively. In particular, their endpoints $x_{d^+}^-$ and $x_{d^-}^+$ are the points of $\widetilde k \cap \widetilde \lambda$  that  are closest to the positive and negative endpoints in $\widetilde k$, respectively.  As usual, let  $u_{d^\pm}^\mp\in T^1 \widetilde S$ be defined by the vector tangent to the (oriented) leaf $g_{d^\pm}^\mp$ of $\widetilde\lambda$ passing through $x_{d^\pm}^\mp$. See Figure~\ref{fig:LengthIntersection}.
 
 The flag map $\mathcal F_\rho \colon \partial_\infty \widetilde S \to \Flag$  associates several lines of $\R^n$ to the vector $u_{d^+}^-\in T^1 \widetilde S$. This includes the  $n$ lines  $\widetilde L_a(u_{d^+}^-) =  \mathcal F_\rho(x)^{(a)} \cap  \mathcal F_\rho(y)^{(n-a+1)} $ of \S \ref{subsect:FlagCurve}, defined by the flags  $ \mathcal F_\rho (x)$ and  $ \mathcal F_\rho (y)$ respectively associated to the positive endpoint $x$ and the negative endpoint $y$ of the leaf $g_{d^+}^-$. We can also consider the line  $\mathcal F_\rho (z)^{(1)}$ of the flag $ \mathcal F_\rho (z)$ associated to the third vertex $z$ of the triangle component of $\widetilde S - \widetilde \lambda$ that contains  $d^+$. Pick a non-trivial vector $v(u_{d^+}^-) $ in this line $\mathcal F_\rho (z)^{(1)}$, and let $v_a(u_{d^+}^-)\in \widetilde L_a(u_{d^+}^-)$ be the projection of $v(u_{d^+}^-)$ parallel to the $\widetilde L_b(u_{d^+}^-)$ with $b\neq a$. 

In particular, considering the riemannian metric $\Vert\ \Vert$,  the quantity $\frac{\Vert v_a (u_{d^+}^-) \Vert_{u_{d^+}^-}}{\Vert v_{a+1} (u_{d^+}^-) \Vert_{u_{d^+}^-}}$ is independent of the choice of the vector $ v(u_{d^+}^-)  \in \mathcal F_\rho (z)^{(1)}$. Note that this ratio is finite and positive by genericity of the flag triple $\bigl( \mathcal F_\rho(x), \mathcal F_\rho(y), \mathcal F_\rho(z) \bigr)$. 

We can introduce similar definitions at the point $x_{d^-}^+$ of $\widetilde k \cap \widetilde \lambda$  that is closest to the negative endpoint of $\widetilde k$. Considering the triangle component of $\widetilde S- \widetilde\lambda$ that contains the negative endpoint of $\widetilde k$,  this leads to a well-defined positive ratio $\frac{\Vert v_a (u_{d^-}^+) \Vert_{u_{d^-}^+}}{\Vert v_{a+1} (u_{d^-}^+) \Vert_{u_{d^-}^+}}$.

\begin{lem}
\label{lem:IntegrateOmegaTransverseArc}
Let $k$ be an arc in $\widehat U$ that is tightly transverse to $\widehat\lambda$. Then, for the above definitions, 
$$
\sigma_a^\rho(k) =  \int_{k-d^+ \cup d^-}( \omega_a 
-   \omega_{a+1} )
+ \log\frac{\Vert v_a (u_{d^+}^-) \Vert_{u_{d^+}^-}}{\Vert v_{a+1} (u_{d^+}^-) \Vert_{u_{d^+}^-}}
- \log \frac{\Vert v_a (u_{d^-}^+) \Vert_{u_{d^-}^+}}{\Vert v_{a+1} (u_{d^-}^+) \Vert_{u_{d^-}^+}}.
$$
\end{lem}
Note that the notation is  ambiguous in the special case where $u_{d^+}^-=u_{d^-}^+$, which occurs when the arc $k$ crosses $\widehat\lambda$ in only one point. We will leave to the reader the easy task of lifting the ambiguity in this case. 

\begin{proof}
By a well-known result of Birman-Series \cite{BirSer}, the intersection $\widetilde k \cap \widetilde \lambda$ has Hausdorff dimension 0. Since the topological closed 1--form $\omega$ is H\"older continuous, if follows that
$$
\int_{k-d^+ \cup d^-} \omega_a = \sum_d \int_{d} \omega_a 
$$
where the sum is over all components $d$ of $\widetilde k-\widetilde\lambda$ that are different from $d^+$ and $d^-$. (The critical property is  that the image of a set of Hausdorff dimension 0 under a H\"older continuous function has Hausdorff dimension 0, and in particular has Lebesgue measure 0 in $\R$.)

We now apply Lemma~\ref{lem:IntegralOmegaOverGap} while choosing $v_a (u_d^-) = \Sigma_{g_{d}^-g_{d^-}^+}\bigl(v_a(u_{d^-}^+) \bigr) \in L_a(u_d^-)$, where $v_a(u_{d^-}^+) \in \widetilde L_a(u_{d^-}^+)$ is determined as above by the vertices of the triangle component of $\widetilde S - \widetilde \lambda$ that contains $d^+$. Then, 
$$
 \int_{k-d^+ \cup d^-} \omega_a 
= \sum_d \log\frac
{\bigl\Vert \Sigma_{g_{d}^-g_{d^-}^+}\bigl(v_a(u_{d^-}^+) \bigr) \bigr\Vert_{u_{d}^-}}
{\bigl\Vert\Sigma_{g_{d}^+g_{d^-}^+}\bigl(v_a(u_{d^-}^+) \bigr) \bigr\Vert_{u_{d}^+}}
$$
by observing that
$$\Sigma_{g_d^+ g_d^-}\bigl(v_a(u_{d}^-) \bigr) = \Sigma_{g_d^+ g_d^-} \circ \Sigma_{g_{d}^-g_{d^-}^+}\bigl(v_a(u_{d^-}^+) \bigr) = \Sigma_{g_d^+ g_{d^-}^+}\bigl(v_a(u_{d^-}^+) \bigr).
$$

If $g_x$ denotes the oriented leaf of $\widetilde\lambda$ passing through $x\in \widetilde k \cap \widetilde \lambda$ and if $u_x\in T^1 \widetilde S$ is the unit vector tangent to $g_x$ at $x$, the map $x \mapsto \bigl\Vert \Sigma_{g_x g_{d^-}^+}\bigl(v_a(u_{d^-}^+) \bigr) \bigr\Vert_{u_x}$ is H\"older continuous, because $g_x$ depends Lipshitz continuously on $x$ by \cite[\S 5.2.6]{CEG}), and because the slithering map $\Sigma_{gg_{d^-}^+}$ is a H\"older continuous function of the leaf $g$ by Proposition~\ref{prop:Slithering}. Using again the fact that $\widetilde k \cap \widetilde \lambda$ has Hausdorff dimension 0, it follows that  
$$
 \int_{k-d^+ \cup d^-} \omega_a 
=\sum_d \log\frac
{\bigl\Vert \Sigma_{g_{d}^-g_{d^-}^+}\bigl(v_a(u_{d^-}^+) \bigr) \bigr\Vert_{u_{d}^-}}
{\bigl\Vert\Sigma_{g_{d}^+g_{d^-}^+}\bigl(v_a(u_{d^-}^+) \bigr) \bigr\Vert_{u_{d}^+}}
=  \log \frac
{\Vert v_a (u_{d^-}^+) \Vert_{u_{d^-}^+}}
{\bigl\Vert \Sigma_{g_{d^+}^-g_{d^-}^+}\bigl(v_a(u_{d^-}^+) \bigr)\bigr\Vert_{u_{d^+}^-}}.
$$

By construction, the slithering map $\Sigma_{g_{d^+}^-g_{d^-}^+}$ sends $\widetilde L_a(u_{d^-}^+)$ to $\widetilde L_a(u_{d^+}^-)$. In particular, there exists a non-zero number $\mu_a$ such that $\Sigma_{g_{d^+}^-g_{d^-}^+} \bigl(  v_a(u_{d^-}^+) \bigr) = \mu_a v_a(u_{d^+}^-)$. Then,
\begin{align*}
 \int_{k-d^+ \cup d^-}( \omega_a - \omega_{a+1} )
&=
 \log \frac
{\Vert v_a (u_{d^-}^+) \Vert_{u_{d^-}^+}}
{\Vert v_{a+1} (u_{d^-}^+) \Vert_{u_{d^-}^+}}
- 
 \log \frac
{\Vert v_a (u_{d^+}^-) \Vert_{u_{d^+}^-}}
{\Vert v_{a+1} (u_{d^+}^-) \Vert_{u_{d^+}^-}}
-\log\left|\frac{\mu_a}{\mu_{a+1}} \right| \\
&=
 \log \frac
{\Vert v_a (u_{d^-}^+) \Vert_{u_{d^-}^+}}
{\Vert v_{a+1} (u_{d^-}^+) \Vert_{u_{d^-}^+}}
- 
 \log \frac
{\Vert v_a (u_{d^+}^-) \Vert_{u_{d^+}^-}}
{\Vert v_{a+1} (u_{d^+}^-) \Vert_{u_{d^+}^-}}
+ \sigma_a^\rho(k)
\end{align*}
by definition of the shear parameter $\sigma_a^\rho(k)$ (use Lemma~\ref{lem:ComputeDoubleRatios}). 
\end{proof} 

\begin{lem}
\label{lem:EvaluateOmegaIsIntersection}
For every homology class $[\alpha] \in H_1(\widehat U; \R)$, 
$$
 \bigl \langle  [\omega_a] - [\omega_{a+1}], [\alpha]   \bigr\rangle = [\alpha] \cdot [\sigma_a^\rho].
$$
\end{lem}
\begin{proof}
We already observed, in the proof of Proposition~\ref{prop:RelativeHomologyTangentCycles}, that $H_1(\widehat U, \delh \widehat U; \R)$ admits a basis where each element is represented by a generic tie of $\widehat U$. We can therefore  write  the image $[\alpha]\in H_1(\widehat U, \delh \widehat U; \R)$ as a linear combination $[\alpha] = \sum_i \mu_i [k_i]$ of classes represented by generic  ties $k_i$, with coefficients $\mu_i\in \R$. 

Recall that the ties of $\widehat U$ are oriented to the right for the canonical orientation of the leaves of $\widehat \lambda$. In particular, the components of  the horizontal boundary $\delh \widehat U$ are of two types: those components where the orientation of the ties point outside of $\widehat U$, and those where it points inside. Also, because of this orientation convention, 
$$
 [\alpha] \cdot [\sigma_a^\rho]  =  \sum_i \mu_i \,[k_i]  \cdot [\sigma_a^\rho]  = \sum_i \mu_i \, \sigma_a^\rho(k_i)
$$
by definition of the homology class $[\sigma_a^\rho] \in H_1(\widehat U, \delv \widehat U; \R)$ associated to the relative tangent cycle $\sigma_a^\rho \in \CC(\widehat \lambda, \slits; \R)$ by Proposition~\ref{prop:RelativeHomologyTangentCycles}. 

We now modify each arc $k_i$ by a homotopy respecting $\widehat \lambda$ and $\delh \widehat U$ to obtain an arc $k_i'$ such that the following holds: for every component $C$ of the horizontal boundary $\delh \widehat U$, there is an arc $k_C\subset \widehat U$ such that, for every  arc $k_i'$ with an endpoint in $C$, the component of $k_i'-\widehat\lambda$ containing this endpoint is equal to $k_C$. The only case where this regrouping of arcs near the horizontal boundary requires some care is when the original tie $k_i$ meets $\widehat\lambda$ in one point; in this special situation, one  needs  to first choose the relevant arcs $k_C$ so that $k_i'=k_i$, and then modify the other $k_j$ accordingly. 

Now, by Lemma~\ref{lem:IntegrateOmegaTransverseArc},
\begin{align*}
\sum_i \mu_i\, \sigma_a^\rho(k_i') &=   \sum_i \mu_i  \int_{k_i'}( \omega_a -    \omega_{a+1} ) \\
&\qquad -   \sum_i \mu_i  \int_{d_i^+ }( \omega_a 
-   \omega_{a+1} ) -  \sum_i \mu_i   \int_{ d_i^-}( \omega_a 
-   \omega_{a+1} )\\
&\qquad+  \sum_i \mu_i  \log\frac{\Vert v_a (u_{d_i^+}^-) \Vert_{u_{d_i^+}^-}}{\Vert v_{a+1} (u_{d_i^+}^-) \Vert_{u_{d_i^+}^-}}
-  \sum_i \mu_i \log \frac{\Vert v_a (u_{d_i^-}^+) \Vert_{u_{d_i^-}^+}}{\Vert v_{a+1} (u_{d_i^-}^+) \Vert_{u_{d_i^-}^+}}
\end{align*}
where $d_i^+$ and $d_i^-$ are the components of $k_i' - \widehat\lambda$ containing the positive and negative components of $k_i'$, respectively. In particular, each $d_i^\pm$ is equal to one of the arcs $k_C$ associated to the components $C$ of the horizontal boundary $\delh \widehat U$.

The key observation is now that $[\alpha] =\sum_i \mu_i [k_i] \in H_1(\widehat U, \delh \widehat U; \R)$ comes from an element of $H_1(\widehat U; \R)$, and in particular has boundary 0. This implies that, for each component $C$ of $\delh \widehat U$ where the ties point outwards, the sum of the $\mu_i$ such that $k_i$ has an endpoint in $C$ is equal to 0; equivalently, the $\mu_i$ such that $d_i^+=k_C$ add up to 0. Similarly, for each component $C$ of $\delh \widehat U$ where the ties point inwards, the sum of the coefficients $\mu_i$ such that $d_i^- = k_C$ is equal to 0. 

This implies that most terms cancel out in the above sum, and that
$$
\sum_i \mu_i \sigma_a^\rho(k_i') =   \sum_i \mu_i  \int_{k_i'}( \omega_a -    \omega_{a+1} )=   \bigl \langle  [\omega_a] - [\omega_{a+1}], [\alpha]   \bigr\rangle.
$$
For the second equality note that, because  the $\mu_i$ for which the positive (resp. negative) endpoint of $k_i'$ is in a given component $C$ of $\delh\widehat U$ add up to 0,  the chain $\sum_i \mu_i k_i'$ is closed and represents the class  $[\alpha]\in H_1(\widehat U; \R)$. 

This proves that 
\begin{equation*}
 [\alpha] \cdot [\sigma_a]   = \sum_i \mu_i  \sigma_a(k_i) = \sum_i \mu_i  \sigma_a(k_i') =   \bigl \langle  [\omega_a] - [\omega_{a+1}], [\alpha]   \bigr\rangle. \qedhere
\end{equation*}
\end{proof}

The combination of Lemmas \ref{lem:LengthIsEvaluationOmega} and \ref{lem:EvaluateOmegaIsIntersection} completes the proof of Theorem~\ref{thm:ShearingAndLength}.
\end{proof}

\begin{cor}
\label{cor:TransMeasureHasPositiveIntersection}
Let $\mu$ be a non-trivial transverse measure for the orientation cover $\widehat\lambda$, and let $[\mu] \in H_1(\widehat U; \R)$ be its associated homology class as in \S {\upshape\ref{subsect:SpaceTangentCycles}}. Then,
$$
 [\mu] \cdot [\sigma_a^\rho] >0
$$
for each component $\sigma_a^\rho \in \CC(\widehat \lambda, \slits; \R) \cong H_1(\widehat U, \delv \widehat U; \R)$ of the shearing cycle $\sigma^\rho \in \CC(\lambda,\slits; \widehat\R^{n-1})$ of a Hichin character $\rho \in \Hit(S)$.  
\end{cor}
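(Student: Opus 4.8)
The plan is to combine the Length and Intersection Formula of Theorem~\ref{thm:ShearingAndLength} with the positivity of length functions on measure geodesic currents established in Proposition~\ref{prop:MeasureHasPositiveLengths}. The corollary should follow almost immediately once we observe that a transverse measure for $\widehat\lambda$ is in particular a (positive) tangent cycle in $\CC(\widehat\lambda;\R)$, and that its image under the embedding $\CC(\widehat\lambda;\R)\subset\CH(S)$ is a non-trivial measure geodesic current.

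First I would recall that, by the discussion at the beginning of \S\ref{subsect:ShearingAndLength}, a non-trivial transverse measure $\mu$ for $\widehat\lambda$ defines a non-zero positive tangent cycle $\mu\in\CC(\widehat\lambda;\R)$, and hence — via the inclusion $\CC(\widehat\lambda;\R)\subset\CH(S)$ coming from \cite{Bon97a, Bon97b} — a H\"older geodesic current on $S$. The key point to check is that this geodesic current is actually a \emph{measure} geodesic current, i.e.\ lies in $\CC(S)$ rather than merely in $\CH(S)$, and that it is non-zero in $\CC(S)$: both follow from the fact that $\mu$ is a positive transverse measure, so the associated transverse distribution for the geodesic foliation $\mathcal F_S$ is a genuine positive Radon measure, and it is non-trivial because $\mu\neq 0$. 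Then Proposition~\ref{prop:MeasureHasPositiveLengths} applies and gives $\ell_a^\rho(\mu)>0$ for every $a=1,2,\dots,n-1$.

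Next I would invoke Theorem~\ref{thm:ShearingAndLength} with $\alpha=\mu$: under the isomorphisms $\CC(\widehat\lambda;\R)\cong H_1(\widehat U;\R)$ and $\CC(\widehat\lambda,\slits;\R)\cong H_1(\widehat U,\delv\widehat U;\R)$, the theorem states precisely that
$$
\ell_a^\rho(\mu) = [\mu]\cdot[\sigma_a^\rho]
$$
for each $a$. Combining the two displayed facts yields $[\mu]\cdot[\sigma_a^\rho] = \ell_a^\rho(\mu) > 0$ for every $a=1,2,\dots,n-1$, which is exactly the assertion of Corollary~\ref{cor:TransMeasureHasPositiveIntersection}.

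I do not anticipate a serious obstacle here: the corollary is a formal consequence of two already-established results. The only mild subtlety is the bookkeeping needed to see that the transverse measure $\mu$ for $\widehat\lambda$ really does land in $\CC(S)$ as a non-trivial measure geodesic current — this requires matching the normalizations in the embedding $\CC(\widehat\lambda;\R)\subset\CH(S)$ of \cite{Bon97a} with the definition of $\CC(S)$, but this is routine and already implicit in the setup of \S\ref{subsect:ShearingAndLength}. One should also note in passing that the statement as given is equivalent to the vector form $[\mu]\cdot[\sigma^\rho]\in\R^{n-1}$ having all coordinates positive, which is the form used in Proposition~\ref{prop:PositiveIntersectionIntro}.
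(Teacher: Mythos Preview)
Your proposal is correct and matches the paper's own proof, which simply states that the result is an immediate consequence of Theorem~\ref{thm:ShearingAndLength} and Proposition~\ref{prop:MeasureHasPositiveLengths}. The only difference is that you spell out the bookkeeping (that a non-trivial transverse measure for $\widehat\lambda$ yields a non-zero element of $\CC(S)$), which the paper leaves implicit.
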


\begin{proof}
This is an immediate consequence of Theorem~\ref{thm:ShearingAndLength} and Proposition~\ref{prop:MeasureHasPositiveLengths}. 
\end{proof}

 \section{Parametrizing Hitchin components}
 \label{bigsect:Param}
 
 In \S \ref{bigsect:TriangleInv} and \S \ref{subsect:ShearingCycle}, we associated certain invariants to a Hitchin character $\rho \in \Hit(S)$. 
 
 The first type of invariants are the triangle invariants $\tau_{abc}^\rho(s)$, defined as $s$ ranges over the slits of $\lambda$ and $a$, $b$, $c\geq 1$ range over all integers such that $a+b+c=n$.  Noting that there are $\frac{(n-1)(n-2)}2$ such triples $(a,b,c)$ and $12(g-1)$ slits of $S-\lambda$, we can combine all these invariants into a single map
 $$
  \Hit(S) \to \R^{6(g-1)(n-1)(n-2)}.
 $$
 
 The second invariant is the shearing cycle $\sigma^\rho \in \mathcal C(\lambda, \slits;\widehat\R^n)$, which provides a map
 $$
  \Hit(S) \to  \mathcal C(\lambda, \slits;\widehat\R^n) \cong \R^{18(g-1)(n-1)}. 
 $$

Combining these two maps, we define
$$
\Phi   \colon \Hit(S) \to \R^{6(g-1)(n-1)(n-2)} \times \mathcal C(\lambda, \slits ;\widehat\R^n)\cong \R^{6(g-1)(n+1)(n-1)},
$$
which sends each Hitchin character $\rho \in \Hit(S)$ to its triangle invariants and its shearing cycle. We will show that $\Phi$ induces a homeomorphism between $\Hit(S)$ and an open convex polyhedral cone  $\mathcal P$  contained in a linear subspace of $ \R^{6(g-1)(n+1)(n-1)}$.

\begin{lem}
\label{lem:InvariantsContinuous}
The above map 
$$
\Phi \colon \Hit(S) \to \R^{6(g-1)(n-1)(n-2)} \times \mathcal C(\lambda, \slits ;\widehat\R^n)
$$
is continuous.
\end{lem}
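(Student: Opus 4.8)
The plan is to verify continuity of $\Phi$ by checking continuity of its two components separately, since $\Phi$ is the Cartesian product of the triangle-invariant map $\rho \mapsto (\tau_{abc}^\rho(s))$ and the shearing-cycle map $\rho \mapsto \sigma^\rho$. Both are built out of the flag curve $\F_\rho$, so the crucial preliminary observation is that $\F_\rho$ depends continuously on $\rho$. This follows from the construction recalled in \S\ref{subsect:FlagCurve}: the line bundles $L_a \to T^1S$ of Theorem~\ref{thm:AnosovProperty} vary continuously with $\rho$ (the Anosov splitting is robust under perturbation — a standard consequence of the uniform hyperbolicity estimates), hence so do the lines $\widetilde L_a(g) \subset \R^n$, and therefore the flag $E(g) = \F_\rho(\widetilde x_+)$ depends continuously on $\rho$. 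More precisely, one gets continuity of the map $\Hit(S) \to C^0(\partial_\infty \widetilde S, \Flag)$, $\rho \mapsto \F_\rho$, in the compact-open topology; one can cite Labourie \cite{Lab1} or Fock--Goncharov \cite{FoG1} for this, or reprove it from the estimates already invoked in the proof of Proposition~\ref{prop:Slithering}.

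For the triangle invariants, continuity is then immediate: $\tau_{abc}^\rho(s) = \log T_{abc}\bigl(\F_\rho(\widetilde s), \F_\rho(\widetilde s'), \F_\rho(\widetilde s'')\bigr)$, and since $\widetilde s$, $\widetilde s'$, $\widetilde s''$ are fixed points of $\partial_\infty \widetilde S$ (once a lift of the triangle $T$ is chosen), the triple ratio $T_{abc}$ is a continuous — indeed real-analytic — function of the flag triple on the open set of generic triples, the logarithm is continuous on $(0,\infty)$, and positivity is guaranteed by Theorem~\ref{thm:FlagCurvePositive}. Composing, $\rho \mapsto \tau_{abc}^\rho(s)$ is continuous for each slit $s$ and each $(a,b,c)$, and there are only finitely many of these.

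For the shearing cycle, we work in the space $\CC(\lambda, \slits; \widehat\R^{n-1}) \cong H_1(U, \delv U; \widetilde\R^{n-1})$, which is finite-dimensional, so it suffices to check that finitely many coordinates vary continuously — equivalently, that $\sigma_a^\rho(T_0, T)$ depends continuously on $\rho$ for finitely many fixed pairs $(T_0, T)$ of components of $\widetilde S - \widetilde\lambda$ that together determine the class (e.g. one pair for each edge of the train track $\widehat U$, using Proposition~\ref{prop:RelativeHomologyTangentCycles}). By definition $\sigma_a^\rho(T_0,T) = \log D_a\bigl(\F_\rho(x), \F_\rho(y), \F_\rho(z), \Sigma_{gg'}(\F_\rho(z'))\bigr)$, so the one genuinely new ingredient is the continuous dependence of the slithering map $\Sigma_{gg'} = \Sigma_{gg'}^\rho$ on $\rho$. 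Here I would revisit the proof of Proposition~\ref{prop:Slithering}: $\Sigma_{gg'}^\rho$ is the limit of the finite products $\Sigma_{\mathcal T}^\rho = \Sigma_{T_1}^\rho \circ \cdots \circ \Sigma_{T_m}^\rho$, each elementary factor $\Sigma_T^\rho$ depends continuously (in fact differentiably) on the two flag pairs it is built from, hence continuously on $\rho$; and the convergence $\Sigma_{\mathcal T}^\rho \to \Sigma_{gg'}^\rho$ is \emph{uniform} in $\rho$ on compact subsets of $\Hit(S)$, because the estimates of Lemmas~\ref{lem:EstimateElemSlithering}, \ref{lem:SumPowersLengthsGapsConverges}, \ref{lem:SlitheringBounded} are uniform there (the Hölder constant $A$ and exponent $\nu$ of the flag curve can be chosen locally uniformly in $\rho$, and the geometric decay in Lemma~\ref{lem:SumPowersLengthsGapsConverges} depends only on the auxiliary metric $m$, not on $\rho$). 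A uniform limit of continuous functions is continuous, so $\rho \mapsto \Sigma_{gg'}^\rho$ is continuous; then $\rho \mapsto \Sigma_{gg'}^\rho(\F_\rho(z'))$ is continuous, and since the relevant double ratio stays finite and positive by Lemma~\ref{lem:ShearAlongArcDefined}, $\rho \mapsto \sigma_a^\rho(T_0,T)$ is continuous.

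The main obstacle is this last point — establishing that the infinite product defining the slithering map converges locally uniformly in $\rho$, which requires going back into the proof of Proposition~\ref{prop:Slithering} and observing that all the constants there can be taken locally uniform over $\Hit(S)$; once that is in hand the rest is a routine composition of continuous maps. An alternative route that sidesteps reopening the slithering estimates would be to use the reconstruction formula of Lemma~\ref{lem:GroupActionAndInvariants} in reverse, but that seems more delicate, so I would stick with the direct argument.
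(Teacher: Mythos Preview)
Your proposal is correct and follows essentially the same approach as the paper: both split into the triangle-invariant and shearing-cycle components, both identify the continuous (and locally uniformly H\"older) dependence of the flag curve $\F_\rho$ on $\rho$ as the key input from Anosov structural stability, and both handle the shearing cycle by arguing that this uniform H\"older continuity makes the slithering estimates of \S\ref{subsect:Slithering} locally uniform in $\rho$, yielding continuity of $\Sigma_{gg'}^\rho$. Your write-up is in fact somewhat more explicit than the paper's in spelling out why finitely many coordinates suffice and which lemmas are being made uniform, but the strategy is the same.
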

\begin{proof} The key property is that the flag curve $\mathcal F_\rho \colon \partial_\infty \widetilde S \to \Flag$ depends continuously on the Hitchin homomorphism $\rho\colon \pi_1(S) \to \PSL$, and is uniformly H\"older continuous as $\rho$ ranges over a compact subset of the space of homomorphisms $ \pi_1(S) \to \PSL$. These two properties  follow from the application to the setup of \S \ref{subsect:FlagCurve} of the classical structural stability theorems for Anosov flows, and  H\"older continuity properties for their stable and unstable foliations; see for instance \cite[\S18--19]{KatHas}. 

The continuity property immediately shows  that the triangle invariants $\tau_{abc}^\rho(s)$ depend continuously on $\rho$. 

The case of the shearing cycle $\sigma^\rho \in \mathcal C(\lambda, \slits;\widehat\R^n)$ requires an additional argument, because its construction relies on the slithering maps $\Sigma_{gg'}\colon \R^n \to \R^n$. The uniform H\"older continuity property makes the estimates used in the construction of  slithering maps in \S \ref{subsect:Slithering}  uniform, and guarantees uniform convergence in this construction. It follows that, for any two leaves $g$, $g'$ of $\widetilde \lambda$, the slithering map $\Sigma_{gg'}$ depends continuously on $\rho$. After this, the continuous dependence of the flag map $\mathcal F_\rho$ on $\rho$ is enough to prove that $\sigma^\rho$ depends continuously on $\rho$. 
\end{proof}

\subsection{Constraints between invariants}
\label{subsect:ConstraintsInvariants}

There are clear constraints on the image of $\Phi$. The first one is the following consequence of Lemma~\ref{lem:SymmetriesTripleRatios}, which we have already encountered in Lemma~\ref{lem:RotateTriangleInvariants}.

\medskip
\noindent\textsc{Triangle Rotation Condition:} If the spikes of the component $T$ of $S-\lambda$ are indexed as  $s$, $s'$, $s''$ in counterclockwise order around $T$, then$$
\tau_{abc}^\rho(s) = \tau_{bca}^\rho(s') = \tau_{cab}^\rho(s'').
$$
\smallskip

The second constraint comes from the quasi-additivity property of the  shearing cycle $\sigma^\rho$. Recall that the lack of additivity  of the $a$--th component $\sigma_a^\rho \in \CC(\widehat\lambda, \slits ; \R^{n-1})$ of $\sigma^\rho \in \CC(\lambda, \slits ; \widehat \R^{n-1}) \subset \CC( \widehat \lambda, \slits; \R^{n-1})$ is measured by its boundary $\partial\sigma^\rho_a$, which  associates a number $\sigma^\rho_a(\widehat s)\in \R$ to each spike $\widehat s$ of the orientation cover $\widehat \lambda$ of the geodesic lamination $\lambda$. The spikes $\widehat s$ can be \emph{positive} of \emph{negative}, according to whether the canonical orientation of the leaves of $\widehat\lambda$ orients the two leaves that are adjacent to $\widehat s$ towards $\widehat s$ or away from $\widehat s$. 

The following constraint comes from the computation of $\partial \sigma_a^\rho$ provided by Lemmas~\ref{lem:ShearingCycleIsCycle} and \ref{lem:ExpressThetaTriangleInvariants}. 

\medskip
\noindent\textsc{Shearing Cycle Boundary Condition:} For every positive slit $ s^+$ of $\widehat\lambda$ projecting to a slit $s$ of $\lambda$, 
$$
\partial \sigma^\rho_a( s^+) =\sum_{b+c=n-a} \tau^\rho_{abc}(s) .
$$
\medskip

Note that this property for positive slits,  combined with the equivariance property of $\sigma^\rho \in \CC(\lambda, \slits ; \widehat \R^{n-1}) \subset \CC( \widehat \lambda, \slits; \R^{n-1})$ with respect to the covering involution of the cover $\widehat\lambda \to \lambda$, determines  $\partial \sigma^\rho_a$ on negative slits. More precisely,
$$
\partial \sigma^\rho_a( s^-) =-\sum_{b+c=a} \tau^\rho_{(n-a)bc}(s)
$$
for every negative slit $ s^-$ of $\widehat\lambda$ projecting to a slit $s$ of $\lambda$.

The last condition is provided by Corollary~\ref{cor:TransMeasureHasPositiveIntersection}. 

\medskip
\noindent\textsc{Positive Intersection Condition:} 
$$
[\mu ]\cdot [\sigma^\rho_a] >0
$$
for every transverse measure $\mu$ for $\widehat\lambda$, where $[\mu] \in H_1(\widehat U; \R)$ and $[\sigma_a^\rho] \in H_1(\widehat U, \delv \widehat U; \R)$ are the homology classes respectively defined by $\mu\in \CC(\widehat \lambda; \R)$ and by the $a$--th component $\sigma_a^\rho \in \CC(\widehat\lambda, \slits; \R)$ of the shearing cycle $\sigma^\rho \in \CC(\lambda; \widehat \R^{n-1}) \subset \CC( \widehat \lambda, \slits; \R^{n-1})$, and where $\cdot$ denotes the algebraic intersection in~$\widehat U$. 
\medskip

Let $\mathcal P $ be the set of pairs $(\tau, \sigma)$ such that
\begin{enumerate}
\item $\tau$ is a function associating a number $\tau_{abc}(s)\in \R$ to each triple of integers $a$, $b$, $c\geq 1$ with $a+b+c=n$, and to each slit $s$ of $\lambda$;
\item $\sigma \in \CC(\lambda, \slits; \widehat \R^{n-1})$ is a tangent cycle for $\lambda$ valued in the coefficient bundle $\widehat\R^{n-1}$ and relative to the slits of $\lambda$; in particular, $\sigma$ is defined by $n-1$ relative tangent cycles $\sigma_a \in \CC(\widehat\lambda, \slits; \R)$;
\item $\tau$ and $\sigma$ satisfy the above Triangle Rotation Condition, Shearing Cycle Boundary Condition and Positive Intersection Condition. 
\end{enumerate}

We will call a function $\tau \in  \R^{6(g-1)(n-1)(n-2)} $ as in {\setcounter{enumi}{1}\labelenumi} a \emph{triangle data function}. It is \emph{rotation invariant} when it satisfies the Triangle Rotation Condition.

\begin{prop}
\label{prop:ComputePolytopeDim}
The space $\mathcal P$ is an open convex polyhedral cone in a $2(g-1)(n^2-1)$--dimensional subspace of $\R^{6(g-1)(n-1)(n-2)} \times  \CC(\lambda, \slits; \widehat \R^{n-1})$. 
\end{prop}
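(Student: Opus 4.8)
\textbf{Proof plan for Proposition~\ref{prop:ComputePolytopeDim}.}
The plan is to first identify the linear subspace $V$ of $\R^{6(g-1)(n-1)(n-2)} \times \CC(\lambda, \slits; \widehat\R^{n-1})$ cut out by the two linear conditions --- the Triangle Rotation Condition and the Shearing Cycle Boundary Condition --- and to compute its dimension, and then to observe that $\mathcal P$ is the intersection of $V$ with the open cone defined by the (infinitely many, but linear) inequalities of the Positive Intersection Condition. Convexity and the cone property are immediate: the Positive Intersection Condition asks that a family of linear functionals $(\tau,\sigma) \mapsto [\mu]\cdot[\sigma_a]$ (indexed by transverse measures $\mu$ and by $a$) be strictly positive, so $\mathcal P$ is an intersection of open half-spaces inside $V$, hence an open convex cone; one should note that, since the space of transverse measures is finitely generated as a cone (being the cone of measured laminations carried by the finitely many branches of $U$), only finitely many of these inequalities are independent, so $\mathcal P$ is genuinely \emph{polyhedral}. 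The substance of the proof is therefore the dimension count $\dim V = 2(g-1)(n^2-1)$.

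For the dimension count I would proceed as follows. First, the space of triangle data functions $\tau$ has dimension $6(g-1)(n-1)(n-2)$ (there are $\frac{(n-1)(n-2)}{2}$ triples $(a,b,c)$ and $12(g-1)$ slits). The Triangle Rotation Condition identifies the three slits of each of the $4(g-1)$ triangles in a cyclic fashion; on each triangle it cuts the $3\cdot\frac{(n-1)(n-2)}{2}$ coordinates down to $\frac{(n-1)(n-2)}{2}$ (the map $(a,b,c)\mapsto(b,c,a)$ has no fixed triple with $a,b,c\ge 1$ when we also ask $a+b+c=n$, except we must be slightly careful: a triple is fixed iff $a=b=c$, which forces $3\mid n$, and then contributes nothing to a reduction in the generic computation --- this is exactly the kind of edge case that needs a sentence). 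So the space of rotation-invariant triangle data functions has dimension $4(g-1)\cdot\frac{(n-1)(n-2)}{2} = 2(g-1)(n-1)(n-2)$. Next, the Shearing Cycle Boundary Condition prescribes $\partial\sigma^\rho_a(s^+)$ for every positive slit as an \emph{explicit linear function of $\tau$}; combined with the equivariance built into $\CC(\lambda,\slits;\widehat\R^{n-1})$ this also prescribes $\partial\sigma^\rho_a(s^-)$, so it determines the full boundary $\partial\sigma \in H_0(\delv\widehat U;\R^{n-1})$ as a linear function of $\tau$. Thus $V$ fibers linearly over the $2(g-1)(n-1)(n-2)$-dimensional space of rotation-invariant $\tau$'s, with fiber the affine subspace of $\CC(\lambda,\slits;\widehat\R^{n-1})$ of relative tangent cycles having the prescribed boundary; since the boundary map $\partial\colon \CC(\lambda,\slits;\widehat\R^{n-1}) = H_1(\widehat U,\delv\widehat U;\widehat\R^{n-1}) \to H_0(\delv\widehat U;\widehat\R^{n-1})$ is surjective onto its image (the kernel is $\CC(\lambda;\widehat\R^{n-1})$ by Lemma~\ref{lem:RelativeTgtCyclesWithBdry0}), each nonempty fiber has dimension $\dim\CC(\lambda;\widehat\R^{n-1}) = 6(g-1)(n-1) + \lfloor\frac{n-1}{2}\rfloor$ by Proposition~\ref{prop:ComputeTwistedTgentCycles}.

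There is one genuine subtlety I expect to be the main obstacle: the fiber over a given rotation-invariant $\tau$ may be \emph{empty}, because the prescribed boundary class $\partial\sigma(\tau) \in H_0(\delv\widehat U;\widehat\R^{n-1})$ need not lie in the image of $\partial$. The image of $\partial$ has codimension $\dim H_0(U;\widetilde\R^{n-1}) = \lfloor\frac{n-1}{2}\rfloor$ in $H_0(\delv U;\widetilde\R^{n-1})$ (this is exactly the exact-sequence computation in the proof of Proposition~\ref{prop:ComputeTwistedTgentCycles}), so the condition ``$\partial\sigma(\tau)$ is a boundary'' imposes $\lfloor\frac{n-1}{2}\rfloor$ further \emph{linear} constraints on $\tau$ --- and this is precisely the unexpected codimension $\lfloor\frac{n-1}{2}\rfloor$ phenomenon announced in Proposition~\ref{prop:RestrictTriangleInvIntro}. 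I would therefore compute $\dim V$ as
\begin{equation*}
\dim V = \Bigl(2(g-1)(n-1)(n-2) - \lfloor{\textstyle\frac{n-1}{2}}\rfloor\Bigr) + \Bigl(6(g-1)(n-1) + \lfloor{\textstyle\frac{n-1}{2}}\rfloor\Bigr) = 2(g-1)(n-1)(n-2) + 6(g-1)(n-1),
\end{equation*}
and then simplify: $2(g-1)(n-1)\bigl((n-2)+3\bigr) = 2(g-1)(n-1)(n+1) = 2(g-1)(n^2-1)$, as claimed. To make this rigorous I need the one non-formal input that the explicit linear formula of \S\ref{subsect:ShearingCycle} for $\partial\sigma(\tau)$, restricted to rotation-invariant $\tau$, hits a subspace that surjects onto $H_0(\delv U;\widetilde\R^{n-1})/\mathrm{im}(\partial)$ --- i.e. that the $\lfloor\frac{n-1}{2}\rfloor$ constraints it imposes are exactly the expected ones and no more; I would verify this by unwinding the formula $\partial\sigma^\rho_a(s^+) = \sum_{b+c=n-a}\tau^\rho_{abc}(s)$ together with the $x\mapsto -\overline x$ twisting, reducing it to a finite-dimensional linear-algebra check on a single hexagon of $\delv\widehat U$ as in the proof of Proposition~\ref{prop:ComputeTwistedTgentCycles}. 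Finally, openness of $\mathcal P$ in $V$ follows because the defining inequalities are strict and finite in number, and non-emptiness of $\mathcal P$ (so that ``open convex polyhedral cone'' is not vacuous) follows from the existence of Fuchsian-type Hitchin characters, whose invariants manifestly satisfy all three conditions.
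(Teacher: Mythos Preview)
Your proposal is correct and follows essentially the same route as the paper. Both arguments reduce the Positive Intersection Condition to finitely many strict linear inequalities (via finite generation of the cone of transverse measures), identify the linear space $\mathcal L=V$ cut out by the Triangle Rotation and Shearing Cycle Boundary Conditions, and compute $\dim\mathcal L$ from the same three ingredients: the rotation-invariant $\tau$'s have dimension $2(g-1)(n-1)(n-2)$, the kernel of $\partial$ has dimension $6(g-1)(n-1)+\lfloor\frac{n-1}{2}\rfloor$, and the condition $\Theta(\tau)\in\mathrm{im}\,\partial$ imposes exactly $\lfloor\frac{n-1}{2}\rfloor$ independent constraints on $\tau$.

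The only organizational difference is that the paper routes the last step through an explicit description of $\mathrm{im}\,\Theta$ (its Lemma~\ref{lem:ImageTheta}, which also feeds into the later Corollaries~\ref{cor:RestrictionShearingCycle}--\ref{cor:RestrictionShearingCycle3and2}) and then checks that the $\lfloor\frac{n-1}{2}\rfloor$ relations defining $\mathrm{im}\,\partial$ restrict to independent relations on $\mathrm{im}\,\Theta$; you instead phrase this as surjectivity of $\Theta$ followed by projection to $\mathrm{coker}\,\partial$. These are equivalent linear-algebra statements, and in both write-ups the actual verification is left as an ``easy check''. Your aside about orbits of $(a,b,c)$ under cyclic rotation and the $3\mid n$ case is unnecessary: as the paper notes, one simply picks one spike $s_j$ per triangle and records all $\tau_{abc}(s_j)$, giving $4(g-1)\cdot\frac{(n-1)(n-2)}{2}$ free parameters with no case analysis.
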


\begin{proof} The transverse measures for the geodesic lamination $\widehat\lambda$ form a positive cone over a finite-dimensional simplex \cite{Kato, Papa}. It therefore suffices to check the Positive Intersection Condition on the vertices of this simplex (corresponding to ergodic measures). This reduces the Positive Intersection Condition to finitely many linear inequalities. As a consequence, $\mathcal P$ is an open convex polyhedral cone in the linear subspace of $ \R^{6(g-1)(n-1)(n-2)} \times \mathcal C(\lambda, \slits ;\widehat\R^n)$ defined by the Triangle Rotation Condition and the Shearing Cycle Boundary Condition. We need to compute its dimension, which will require a few lemmas. 

The Triangle Rotation Condition divides the dimension of the space of triangle data functions by $3$, in the sense that the space of rotation invariant triangle data functions $\tau \in \R^{6(g-1)(n-1)(n-2)}$  is  isomorphic to $\R^{2(g-1)(n-1)(n-2)}$. Indeed, if we pick a spike $s_j$ for each triangle component $T_j$ of $S-\lambda$, such a rotation invariant $\tau$ is completely determined by the $2(g-1)(n-1)(n-2)$ numbers $\tau_{abc}(s_j)$. We will use this observation to denote by $\R^{2(g-1)(n-1)(n-2)}$ the space of all rotation invariant triangle data functions $\tau$. 

Consider the linear subspace $\mathcal L \subset  \R^{2(g-1)(n-1)(n-2)} \times  \CC(\lambda, \slits; \widehat\R^{n-1})$ consisting of all  pairs $(\tau, \sigma)$ where $\tau$ is a rotation invariant triangle data function, where $\sigma$ is a twisted tangent cycle for $\lambda$ relative to its slits, and where $\tau$ and $\sigma$ satisfy the Shearing Cycle Boundary Condition.

To analyze $\mathcal L$, we introduce a new vector space $\CC(\slits; \R^{n-1})$, consisting of all functions $\theta \colon \{ \text{slits of } \lambda\} \to \R^{n-1}$. For $a=1$, $2$, \dots, $n-1$, we denote the $a$--th component of such a $\theta\in \CC(\slits; \R^{n-1})$ by  $\theta_a \colon\{ \text{slits of } \lambda\} \to \R$. The definition of the space $\mathcal L$ can then be expressed in terms of two maps $\partial\colon  \CC(\lambda, \slits; \widehat\R^{n-1}) \to  \CC(\slits; \R^{n-1})$ and $\Theta \colon \R^{2(g-1)(n-1)(n-2)} \to \CC(\slits; \R^{n-1})$. 

The first map $\partial\colon  \CC(\lambda, \slits; \widehat\R^{n-1}) \to  \CC(\slits; \R^{n-1})$ is the usual boundary map, and associates to a relative cycle $\sigma \in  \CC(\lambda, \slits; \widehat\R^{n-1})$ the restriction $\partial \sigma \colon  \{ \text{positive slits of } \widehat\lambda\} = \{ \text{slits of } \lambda\} \to \R^{n-1}$ of its boundary $\partial \sigma$ to positive slits of the orientation cover $\widehat \lambda$. (Recall that this restriction completely determines $\partial \sigma$ by definition of twisted relative tangent cycles, as $\partial \sigma_a(s^-) =- \partial\sigma_{n-a}(s^+)$ when the negative slit $s^-$ of $\widehat\lambda$ projects to the same slit of $\lambda$ as the positive slit $s^+$.)

The second map $\Theta \colon \R^{2(g-1)(n-1)(n-2)} \to \CC(\slits; \R^{n-1})$ associates to each rotation invariant triangle data function $\tau \in \R^{2(g-1)(n-1)(n-2)}$ the function $\theta^\tau \colon \{ \text{slits of } \lambda\} \to \R^{n-1}$ defined by the property that
$$
\theta_a^\tau (s) = \sum_{b+c=n-a} \tau_{abc}(s) \in\R
$$
for every slit $s$ of $\lambda$ and every $a=1$, $2$, \dots, $n-1$.

Then the subspace $\mathcal L$ consists of all pairs  $( \tau, \sigma) \in \R^{2(g-1)(n-1)(n-2)}   \times   \CC(\lambda, \slits; \widehat\R^{n-1})$ such that  $\partial \sigma = \Theta(\tau)$ in $ \CC(\slits; \R^{n-1})$. 

\begin{lem}
\label{lem:ImageBoundaryMap}
The image of  $\partial\colon  \CC(\lambda, \slits; \widehat\R^{n-1}) \to  \CC(\slits; \R^{n-1})$ consists of all $\theta \in \CC(\slits; \R^{n-1})$ such that
$$
\sum_{s\text{ slit of } \lambda} \theta_a(s) = \sum_{s\text{ slit of } \lambda} \theta_{n-a}(s) 
$$
for every $a=1$, $2$, \dots, $n-1$. This image has codimension $\lfloor \frac{n-1}2 \rfloor$ in $\CC(\slits; \R^{n-1}) \cong \R^{12(g-1)(n-1)}$.
\end{lem}

\begin{proof} This is an immediate consequence of the homological interpretation of twisted relative tangent cycles in \S \ref{subsect:TwistedRelTangentCycles}, and more precisely of the isomorphism $\CC(\lambda, \slits; \widehat\R^{n-1}) \cong H_1( U, \delv U; \widetilde \R^{n-1})$ constructed there. 

This construction is well behaved with respect to the boundary maps $\partial$ in the following sense. There is a unique isomorphism $\CC(\slits;\R^{n-1} )\cong H_0(\delv U; \widetilde \R^{n-1})$ defined as follows: this isomorphism  associates to $\theta\in \CC(\slits; \R^{n-1})$ the element of $H_0 (\delv U; \widetilde \R^{n-1}) \subset H_0(\delv \widehat U; \R^{n-1}) $ that assigns to each component of $\delv \widehat U$ facing a positive slit $s^+$ the multiplicity $\theta(s) \in \R^{n-1}$ associated by $\theta$ to the projection $s$ of $s^+$ (and assigns  multiplicity $-\theta_{n-a}(s)$ to the component of $\delv \widehat U$ facing a negative slit $s^-$ projecting to $s$).  Then, for these isomorphisms  $\CC(\lambda, \slits; \widehat\R^{n-1}) \cong H_1( U, \delv U; \widetilde \R^{n-1})$ and $\CC(\slits;\R^{n-1} )\cong H_0(\delv U; \widetilde \R^{n-1})$, the boundary homomorphism $\partial \colon \CC(\lambda, \slits; \widehat\R^{n-1}) \to \CC(\slits;\R^{n-1} )$ corresponds to the homological boundary $\partial \colon   H_1( U, \delv U; \widetilde \R^{n-1}) \to H_0(\delv U; \widetilde \R^{n-1})$. 

Lemma~\ref{lem:ImageBoundaryMap} is then  an immediate consequence of the long exact sequence
$$
\dots \to H_1( U, \delv U; \widetilde \R^{n-1}) \to H_0(\delv U; \widetilde \R^{n-1})\to  H_0( U; \widetilde \R^{n-1})\to H_0( U, \delv U; \widetilde \R^{n-1}) ,
$$
using the properties that, because  $\widehat U$ is connected and $\delv \widehat U$ is non-empty,  $\dim H_0( U; \widetilde \R^{n-1}) = \lfloor \frac{n-1}2 \rfloor$ and $ H_0( U, \delv U; \widetilde \R^{n-1}) =0$. 
\end{proof}

\begin{lem}
\label{lem:ImageTheta}
For $n>3$, the image of  $\Theta \colon \R^{2(g-1)(n-1)(n-2)} \to \CC(\slits; \R^{n-1})$ consists of all $\theta \in \CC(\slits; \R^{n-1})$ such that
\begin{align*}
 \theta_{n-1}(s_1) &= 0 \\
\text{and }
\theta_1(s_1) &= \sum_{a=2}^{n-2} ({\textstyle \frac{a-1}{n-3}} -1) \theta_a(s_1) +   \sum_{a=2}^{n-2} {\textstyle \frac{a-1}{n-3}}\, \theta_a(s_2)+  \sum_{a=2}^{n-2} {\textstyle \frac{a-1}{n-3}}\, \theta_a(s_3)
\end{align*}
whenever $s_1$, $s_2$ and $s_3$ are the three spikes of the same component $T$ of $S-\lambda$. In particular, the image of $\Theta$ has dimension $ 12(g-1)(n-3)$. 

When $n=3$, the image of  $\Theta \colon \R^{4(g-1)} \to \CC(\slits; \R^{2})$ consists of all $\theta \in \CC(\slits; \R^{2})$ such that
\begin{align*}
 \theta_{2}(s_1) &= 0 \\
\text{and }
\theta_1(s_1) &= \theta_1(s_2) = \theta_1(s_3) 
\end{align*}
whenever $s_1$, $s_2$ and $s_3$ are the three spikes of the same component $T$ of $S-\lambda$. In particular, the image of $\Theta$ then has dimension $ 4(g-1)$. 
\end{lem}

\begin{proof} By definition, if $\theta^\tau = \Theta(\tau)$ for a rotation invariant function $\tau\in \R^{2(g-1)(n-1)(n-2)}$ , then $\theta_{n-1}^\tau (s) = \sum_{b+c=1} \tau_{(n-1)bc}(s)=0$ for every slit $s$  since all indices $b$, $c$ are supposed to be at least 1. 

Less trivially, if $n>3$ and if $s_1$, $s_2$, $s_3$  are the three spikes of a same component $T$ of $S-\lambda$, in this order counterclockwise around $T$, 
\begin{align*}
\sum_{a=2}^{n-2} {\textstyle \frac{a-1}{n-3}} \, \theta_a^\tau (s_1) &+   \sum_{a=2}^{n-2} {\textstyle \frac{a-1}{n-3}}\, \theta_a^\tau(s_2)+  \sum_{a=2}^{n-2} {\textstyle \frac{a-1}{n-3}}\, \theta_a^\tau(s_3)\\
&= 
\sum_{a=1}^{n-2}{\textstyle \frac{a-1}{n-3}} \kern -4pt \sum_{b+c=n-a}  \kern -8pt \tau_{abc} (s_1) + 
\sum_{a=1}^{n-2}{\textstyle \frac{a-1}{n-3}} \kern -4pt \sum_{b+c=n-a} \kern -8pt  \tau_{abc} (s_2) + 
\sum_{a=1}^{n-2}{\textstyle \frac{a-1}{n-3}} \kern -4pt \sum_{b+c=n-a} \kern -8pt \tau_{abc} (s_3) \\
&= 
\sum_{a=1}^{n-2}{\textstyle \frac{a-1}{n-3}} \kern -4pt \sum_{b+c=n-a}\kern -8pt  \tau_{abc} (s_1) + 
\sum_{b=1}^{n-2}{\textstyle \frac{b-1}{n-3}} \kern -4pt \sum_{a+c=n-b} \kern -8pt \tau_{abc} (s_1) + 
\sum_{c=1}^{n-2}{\textstyle \frac{c-1}{n-3}} \kern -4pt \sum_{a+b=n-c} \kern -8pt \tau_{abc} (s_1) \\
&= \sum_{a,b,c} \bigl( {\textstyle \frac{a-1}{n-3}} + {\textstyle \frac{b-1}{n-3}} + {\textstyle \frac{c-1}{n-3} }\bigr)   \tau_{abc} (s_1)
= \sum_{a,b,c}   \tau_{abc} (s_1)= \sum_{a=1}^{n-1} \theta_a^\tau(s_1)
\end{align*}
where the second equality uses the rotation invariance of $\tau$. It follows that 
$$
\theta_1^\tau(s_1) 
= \sum_{a=2}^{n-2} \bigl( {\textstyle \frac{a-1}{n-3}} -1)  \theta_a^\tau(s_1) +   \sum_{a=2}^{n-2} {\textstyle \frac{a-1}{n-3}}\, \theta_a^\tau(s_2)+  \sum_{a=2}^{n-2} {\textstyle \frac{a-1}{n-3}}\, \theta_a^\tau(s_3).
$$

As a consequence, any function $\theta= \Theta(\tau)$ in the image of $\Theta$ satisfies the relations of Lemma~\ref{lem:ImageTheta}. 

Conversely, as  $a$ ranges from $2$ to $n-2$ and $s$ ranges over all slits of $\lambda$, the functions $\tau \mapsto \theta_a^\tau(s)$ are linearly independent over the space $\R^{2(g-1)(n-1)(n-2)}$ of rotation invariant triangle data functions $\tau$. Indeed, this follows from a simple computation focusing on the coefficients of the terms $\tau_{1bc}(s)$ and $\tau_{2bc}(s)$ in any linear relation between these functions. 

The dimension computation then follows from the fact that $\lambda$ has $12(g-1)$ slits. This completes the proof of Lemma~\ref{lem:ImageTheta} in the case considered, when $n>3$. 

The proof is much simpler when $n=3$, as the triangle data function $\tau$ assigns only one number $\tau_{111}(s)$ to each slit $s$. This makes the argument in this case completely straightforward.  
\end{proof}

\begin{lem}
\label{lem:IntersectionImagesBoundaryMapTheta}
The intersection $\mathrm{im}(\partial) \cap \mathrm{im}(\Theta)$ of the images $\mathrm{im}(\partial) = \partial \bigl(   \CC(\lambda, \slits; \widehat\R^{n-1}) \bigr)$ and $ \mathrm{im}(\Theta) = \Theta(\R^{2(g-1)(n-1)(n-2)})$ has dimension $ 12(g-1)(n-3) - \lfloor \frac{n-1}2 \rfloor $ if $n>3$, and $4g-5$ if $n=3$. 
\end{lem}
\begin{proof}
This is an immediate consequence of the characterization of these images in Lemmas~\ref{lem:ImageBoundaryMap} and \ref{lem:ImageTheta}.  Indeed, one very easily checks that the restrictions of the $\lfloor \frac{n-1}2 \rfloor $ relations of Lemma~\ref{lem:ImageBoundaryMap} to the image $\mathrm{im}(\Theta)$  are  linearly independent. 
\end{proof}

We now return to the subspace $\mathcal L \subset \R^{2(g-1)(n-1)(n-2)}   \times   \CC(\lambda, \slits; \widehat\R^{n-1})$, consisting of all pairs  $( \tau, \sigma)$ such that  $\partial \sigma = \Theta(\tau)$ in $ \CC(\slits; \R^{n-1})$.  The maps $\Theta$ and $\partial$ combine to give a  linear map $ \mathcal L \to  \CC(\slits; \R^{n-1})$,  whose image is $\mathrm{im}(\partial) \cap \mathrm{im}(\Theta)$ and whose kernel is the direct sum of $\ker \Theta$ and $\ker \partial$. Note that $\ker\partial $ is just the space $\CC(\lambda; \widehat \R^{n-1})$ of closed tangent cycles. Therefore, by combining Lemma~\ref{lem:IntersectionImagesBoundaryMapTheta},  Lemma~\ref{lem:ImageTheta} and  Proposition~\ref{prop:ComputeTwistedTgentCycles},
\begin{align*}
\dim \mathcal L &= \dim \mathrm{im}(\partial) \cap \mathrm{im}(\Theta) + \dim \ker \Theta + \dim \ker \partial\\
&= 12(g-1)(n-3) - {\textstyle\lfloor \frac{n-1}2 \rfloor}  \\
&\qquad\qquad\qquad+  2(g-1)(n-1)(n-2) -12(g-1)(n-3)  \\
&\qquad\qquad\qquad+  6(g-1)(n-1) + {\textstyle\lfloor \frac{n-1}2 \rfloor } \\
&= 2(g-1)(n^2-1)
\end{align*}
when $n>3$. 

When $n=3$ the same argument gives that 
$$
\dim \mathcal L=  ( 4g-5 ) + 0 + ( 12g-11) = 16(g-1),
$$
which is equal to $2(g-1)(n^2-1)$ in this case as well. 

Since $\mathcal P$ is an open convex polyhedral cone in the space $\mathcal L$, this concludes the proof of Proposition~\ref{prop:ComputePolytopeDim}.
\end{proof}

\begin{cor}
\label{cor:InvariantsLocalHomeo}
The map $\Phi\colon \Hit(S) \to \mathcal P$ is a local homeomorphism. 
\end{cor}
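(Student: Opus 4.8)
The plan is to combine the three main ingredients already at hand: the continuity of $\Phi$ (Lemma~\ref{lem:InvariantsContinuous}), the injectivity of $\Phi$ up to conjugation (Corollary~\ref{cor:HitchinDeterminedByInvariants}), and the dimension count showing $\dim \mathcal P = 2(g-1)(n^2-1) = \dim \Hit(S)$ (Proposition~\ref{prop:ComputePolytopeDim} together with Hitchin's Theorem~\ref{thm:Hitchin}). Since $\mathcal P$ is an open subset of a linear space of the same dimension as the manifold $\Hit(S)$, and $\Phi$ is a continuous injection between two manifolds of equal finite dimension, invariance of domain (Brouwer) immediately gives that $\Phi$ is an open map, hence a homeomorphism onto its (open) image; in particular it is a local homeomorphism. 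So the skeleton of the argument is short.

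Concretely, I would first recall that $\Hit(S)$ is a manifold of dimension $2(g-1)(n^2-1)$ by Theorem~\ref{thm:Hitchin}, and that by Proposition~\ref{prop:ComputePolytopeDim} the target $\mathcal P$ is an open subset of a linear subspace $\mathcal L$ of that same dimension, hence itself a topological manifold of dimension $2(g-1)(n^2-1)$. Next I would invoke Lemma~\ref{lem:InvariantsContinuous} for continuity of $\Phi \colon \Hit(S) \to \mathcal P$, and Corollary~\ref{cor:HitchinDeterminedByInvariants} for injectivity: two Hitchin characters with the same triangle invariants and the same shearing cycle coincide. Then the classical theorem on invariance of domain applies: a continuous injective map from an $m$--manifold to an $m$--manifold is open. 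Therefore $\Phi$ maps open sets to open sets; restricting to small neighborhoods, $\Phi$ is a local homeomorphism onto its image, which is exactly the assertion of Corollary~\ref{cor:InvariantsLocalHomeo}.

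The only point that requires genuine care — and which I expect to be the main obstacle in writing this cleanly — is making sure that $\mathcal P$, being defined by equalities (Triangle Rotation, Shearing Cycle Boundary) together with the strict inequalities of the Positive Intersection Condition, really is an open subset of a linear space and not merely a subset of one. This is precisely the content of Proposition~\ref{prop:ComputePolytopeDim}: $\mathcal P$ is an open convex polyhedral cone inside the linear subspace $\mathcal L$, and $\mathcal L$ is a genuine vector space of the stated dimension. Once that is granted, $\mathcal P$ is locally Euclidean of dimension $2(g-1)(n^2-1)$, and invariance of domain has a legitimate target manifold to work with. One should also note, for safety, that $\Hit(S)$ is connected (it is diffeomorphic to a Euclidean space), though connectedness is not actually needed for the local statement — only the equality of dimensions and injectivity matter.

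I would therefore write the proof as: ``By Theorem~\ref{thm:Hitchin}, $\Hit(S)$ is a manifold of dimension $2(g-1)(n^2-1)$. By Proposition~\ref{prop:ComputePolytopeDim}, $\mathcal P$ is an open subset of a linear subspace of dimension $2(g-1)(n^2-1)$, hence also a manifold of that dimension. The map $\Phi$ is continuous by Lemma~\ref{lem:InvariantsContinuous} and injective by Corollary~\ref{cor:HitchinDeterminedByInvariants}. A continuous injective map between topological manifolds of the same finite dimension is open, by invariance of domain. Hence $\Phi$ is an open map, and in particular a local homeomorphism onto its image.'' This is essentially all that is needed; surjectivity onto $\mathcal P$ is deferred to the later Theorem~\ref{thm:InvariantsHomeomorphism}, so no realization argument is required here.
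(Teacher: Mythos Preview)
Your proposal is correct and follows essentially the same approach as the paper: continuity from Lemma~\ref{lem:InvariantsContinuous}, injectivity from Corollary~\ref{cor:HitchinDeterminedByInvariants}, equality of dimensions from Proposition~\ref{prop:ComputePolytopeDim} (and Hitchin's theorem), and then Invariance of Domain. The paper's proof is just the terse version of what you wrote.
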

\begin{proof}
The map $\Phi$ is continuous by Lemma~\ref{lem:InvariantsContinuous}, and injective by Corollary~\ref{cor:HitchinDeterminedByInvariants}. By the Invariance of Domain Theorem, it is therefore a local homeomorphism since $\Hit(S)$ and $\mathcal P$ have the same dimension by Proposition~\ref{prop:ComputePolytopeDim}. 
\end{proof}

\subsection{An estimate from the Positive Intersection Condition} 
\label{subsect:PositiveIntersectionRevisited}

This section is devoted to an estimate that will be crucial to prove that the above map $\Phi\colon \Hit(S) \to \mathcal P$ is a global homeomorphism. 

In the universal cover $\widetilde S$ of $S$, we want to introduce a measure of the topological complexity of the components $T$ of the complement $\widetilde S - \widetilde \lambda$ of the preimage $\widetilde\lambda$ of the maximal geodesic lamination $\lambda$. For this, we choose a  train track neighborhood $U$ of $\lambda$, with preimage $\widetilde U$ in $\widetilde S$. 

We also select an oriented arc $\widetilde k$ tightly transverse to $\widetilde \lambda$ in $\widetilde S$; recall that this means that $\widetilde k$ is transverse to the leaves of $\widetilde\lambda$ and that, for each component $T$ of $\widetilde S - \widetilde\lambda$, the intersection $T\cap \widetilde k$ is either empty, or an arc containing an endpoint of $\widetilde k$, or an arc joining two distinct components of $\partial T$. As in \S \ref{subsect:RelTgtCyclesDifferentView}, using Proposition~\ref{prop:TrainTrackMaxGeodLam}, we can arrange by a homotopy respecting $\widetilde\lambda$ that $\widetilde k$ is contained in $\widetilde U$. 

Let $T$ be a component of $\widetilde S - \widetilde\lambda$ that meets $\widetilde k$, and does not contain any of the endpoints of $\widetilde k$. Then $\widetilde k \cap T$ consists of a single arc since $\widetilde k$ is tightly transverse to $\widetilde\lambda$, and can be joined to the complement $T- \widetilde U$ by a path contained in $T$. We define  the  \emph{divergence radius} $r(T)\geq 1$ of $T$ with respect to $\widetilde U$ and $\widetilde k$ as  the minimum number of edges of $\widetilde U$ that are met by a path  joining $\widetilde k \cap T$ to the complement $T - \widetilde U$ in $T$. 

\begin{lem}
\label{lem:BoundedDivergenceRadius}
For every integer $r_0$, the number of triangles $T$ with divergence radius $r(T) = r_0$ is uniformly bounded, independently of $r_0$. 
\end{lem}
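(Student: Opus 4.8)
The statement is a ``bounded geometry'' fact about the tree-like combinatorial structure of $\widetilde\lambda$ relative to the transverse arc $\widetilde k$, and the natural approach is to exploit the tree structure on the set of components of $\widetilde S - \widetilde\lambda$ ordered along $\widetilde k$, together with the finiteness of the train track $U$ upstairs in $S$. First I would fix the finite train track neighborhood $U$ of $\lambda$ and note that it has only finitely many edges, say $N$ of them; by Proposition~\ref{prop:TrainTrackMaxGeodLam} each edge of $\widetilde U$ projects to one of these $N$ edges, and each triangle component $T$ of $\widetilde S - \widetilde\lambda$ meets $\widetilde U$ in a subset whose combinatorial type (how its ties run through the edges of $\widetilde U$) is governed by the picture of Figure~\ref{fig:TrainTrackMaxGeodLam}. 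The divergence radius $r(T)$ counts the minimal number of edges crossed by a path in $T$ from $\widetilde k \cap T$ out to $T - \widetilde U$, so a triangle with $r(T) = r_0$ gives, after projection to $S$, a specific combinatorial ``corridor'' of length $r_0$ in the finite train track $U$: a sequence of $r_0$ edges $e_1, e_2, \dots, e_{r_0}$ of $U$ traversed in order, together with the turning data (left/right at each switch) describing how the corridor bends.

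\textbf{Key steps.} The second step is to observe that this corridor in $U$ is \emph{almost} determined by its endpoint data: the corridor starts at the edge of $U$ containing the projection of $\widetilde k \cap T$ and it spirals or zig-zags toward one of the hexagonal complementary regions $S - U$; because $\widetilde k$ is tightly transverse, the two sides of $T$ that $\widetilde k$ crosses are precisely the two sides closest to $\widetilde k$, and the corridor is pinned down by which of the finitely many intersection points $\widetilde k \cap \widetilde\lambda$ it emanates from, near the ``entrance'' edge. More precisely, I would argue that two triangles $T$, $T'$ with $r(T) = r(T') = r_0$ and with $\widetilde k \cap T$, $\widetilde k \cap T'$ lying in the same edge of $\widetilde U$ and separated by no triangle of smaller divergence radius must in fact coincide: the corridor, being of minimal length $r_0$, must follow the unique shortest route to the complementary hexagon, and this route, read off in $\widetilde U$, is determined by its starting edge and the requirement of minimality (any genuine choice at a switch would either increase the divergence radius or produce a triangle that separates $\widetilde k$ from this one with strictly smaller radius). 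The third step is a counting argument: the entrance edge of the corridor lies among the finitely many edges of $\widetilde U$ that $\widetilde k$ actually crosses — but $\widetilde k$ is a \emph{compact} arc, so it crosses only finitely many edges of $\widetilde U$, in number bounded by a constant depending on $\widetilde k$ and $U$ but \emph{not} on $r_0$ — and for each such entrance edge the discussion above produces at most a bounded number of triangles of divergence radius exactly $r_0$. This is exactly the classical ``uniformly bounded divergence radius'' phenomenon used in Lemma~\ref{lem:SumPowersLengthsGapsConverges} (see Lemmas 4 and 5 of \cite{Bon96}), which the excerpt explicitly signals as the model to follow.

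\textbf{Main obstacle.} The delicate point is the rigidity claim in the second step — that, among triangles meeting $\widetilde k$ in a fixed region with a fixed divergence radius, the combinatorial corridor is essentially unique. One must be careful about the hexagonal complementary regions: a corridor can also terminate by ``spilling into'' a hexagon $S - U$, and two different triangles abutting the same hexagon could a priori realize the same radius. Handling this cleanly requires using the precise local structure of $\widetilde U$ near a complementary hexagon from Proposition~\ref{prop:TrainTrackMaxGeodLam} (exactly three vertical-boundary arcs, three horizontal ones), to see that only a bounded number — in fact at most a constant times the number of hexagons adjacent to the relevant part of $\widetilde U$, hence $O(1)$ locally — of triangles of a given radius can arise this way. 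I expect the bulk of the work, and the place where one must be most careful to get a bound \emph{independent of $r_0$}, is in organizing this case analysis (corridor terminating at a hexagon vs. continuing past a switch) so that the ``one new triangle per unit of radius per entrance edge'' heuristic is made rigorous; the global finiteness inputs (finitely many edges of $U$, compact $\widetilde k$) are then routine.
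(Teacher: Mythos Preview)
Your plan is considerably more elaborate than necessary, and the ``rigidity'' step you flag as the main obstacle is both unjustified and avoidable. The claim that two triangles with the same entrance edge and the same divergence radius, separated by no triangle of smaller radius, must coincide is not obviously true: there is no reason a minimal-length corridor in $\widetilde U$ from a given edge to a complementary hexagon should be unique, and you do not supply a mechanism that forces it.

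The paper's argument sidesteps this entirely by a direct structural observation. First project: cut $\widetilde k$ into short pieces so that each projects to an \emph{embedded} arc $k\subset U$, and count components $d$ of $k-\lambda$ instead of triangles. Now decompose $U-\lambda$ into rectangles $R_i$, one for each component of $e-\lambda$ as $e$ ranges over the edges of $U$. These rectangles fall into two types: finitely many that touch $\partial U$, and $12(g-1)$ infinite chains $R_{i_1}\cup R_{i_2}\cup\cdots$ forming the spikes of $U-\lambda$ (Proposition~\ref{prop:TrainTrackMaxGeodLam}). The crucial point is that a component $d$ of $k-\lambda$ with $r(d)=r_0>1$ lies in some spike and, by the very definition of divergence radius, meets the $(r_0-1)$--th rectangle $R_{i_{r_0-1}}$ of that spike. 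So the components with $r(d)=r_0$ are distributed among at most $12(g-1)$ specific rectangles, one per spike. Since $k$ is a compact embedded arc in $S$, the number of components of $k-\lambda$ meeting any fixed rectangle $R_i$ is uniformly bounded (by a constant depending on the minimum distance between $\widetilde k$ and its $\pi_1(S)$--translates). Multiplying gives the bound, with no corridor-uniqueness needed.

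In short: you were trying to track corridors forward from $\widetilde k$ toward the hexagons, which forces you into a delicate combinatorial uniqueness argument. The paper instead indexes the components of $k-\lambda$ by which rectangle of $U-\lambda$ they sit in, and observes that ``divergence radius $r_0$'' picks out one rectangle per spike.
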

\begin{proof} Instead of counting the components $T$ of $\widetilde S - \widetilde\lambda$ meeting $\widetilde k$, it is easier to count the components of $\widetilde k - \widetilde \lambda$. Cutting $\widetilde k$ into smaller arcs if necessary, we can assume without loss of generality that  $\widetilde k$ is sufficiently short that it projects to an arc $k$ embedded in $S$. Then there is a natural correspondence between the components of $\widetilde k - \widetilde \lambda$ and those of  $k-\lambda$. For each component $d$ of $k-\lambda$, let $T_d$ be the component of $\widetilde S - \widetilde \lambda$ that contains the component of $\widetilde k- \widetilde \lambda$ corresponding to $d$, and define $r(d) = r(T_d)$. We need to show that the number of components $d$ of $ k -  \lambda$ with $r(d)=r_0$ is uniformly bounded.

As $e$ ranges over all edges of the train track neighborhood $U$, the components of $e-\lambda$ form a family of rectangles $R_i$ whose union is equal to $U - \lambda$. In particular, this decomposes $U-\lambda$ in two pieces:
\begin{enumerate}
\item the union of the finitely many rectangles $R_i$ that meet the boundary $\partial U$;
\item $12(g-1)$ infinite chains of rectangles $R_{i_1} \cup R_{i_2} \cup \dots \cup R_{i_k} \cup \cdots$, where each $R_{i_k}$ shares with $R_{i_{k+1}}$ a side contained in a tie of $U$, that form the spikes of $U-\lambda$. 
\end{enumerate}
Compare Proposition~\ref{prop:TrainTrackMaxGeodLam} and Figure~\ref{fig:TrainTrackMaxGeodLam}. 

If $d$ is a component of $k-\lambda$ whose divergence radius $r(d)$ is equal to 1, then it meets one of the finitely many rectangles $R_i$ of (1) above. 
The number of components of $k-\lambda$ meeting a given rectangle $R_i$ is uniformly bounded, by a constant depending on the minimum distance between $\widetilde k$ and its iterates under the action of $\pi_1(S)$. Therefore, there are only finitely many components of $k-\lambda$ with divergence radius 1.

If $d$ is a component of $k-\lambda$ with $r(d)>1$, it is contained in one of the spikes  $R_{i_1} \cup R_{i_2} \cup \dots \cup R_{i_k} \cup \cdots$ as in (2) above. In fact,  $d$ meets the $(r(d)-1)$--th rectangle $R_{i_{r(d)-1}}$ of this spike by definition of the divergence radius $r(d)$. Since the number of components of $k-\lambda$ meeting each $R_i$ is uniformly bounded, and since there are only $12(g-1)$ spikes, it follows that for $r_0>1$ the number of components $d$ of $k-\lambda$ with $r(d)=r_0$ is uniformly bounded. 
\end{proof}

To explain the divergence radius terminology, consider the two sides of $T$ that meet $\widetilde k$. These two leaves of $\widetilde\lambda$ follow the same train route in $\widetilde U$ over a length of approximately $r(T)$ edges (up to a bounded error term) before diverging at some switch of $\widetilde U$. 

The side of the oriented arc $\widetilde k$ where this divergence occurs will greatly matter. There are two possibilities for the two sides of $T$ meeting $\widetilde k$: Either they are asymptotic on the left-hand side of $\widetilde k$, or they are asymptotic on the right-hand side. We will say that $T$ \emph{points to the left} of $\widetilde k$ in the first case, and \emph{points to the right} in the second case. 

Finally, remember that $\widehat \lambda$ denotes the orientation cover of $\lambda$, and that the covering map $\widehat\lambda \to \lambda$ uniquely extends to a cover $\widehat U \to U$ for some train track neighborhood $\widehat U$ of $\widehat \lambda$. . 

Let $T_0$ be the component of $\widetilde S - \widetilde \lambda$ containing the negative endpoint of $\widetilde k$. Using the point of view of \S \ref{subsect:RelTgtCyclesDifferentView}, a relative tangent cycle $\sigma \in \CC(\widehat\lambda, \slits; \R)$ associates a number $\sigma(T_0, T) \in \R$ to each component $T$ of $\widetilde S - \widetilde \lambda$.

\begin{lem}
\label{lem:PositiveIntersectionCondnEstimate}
Suppose that the relative tangent cycle $\sigma \in \CC(\widehat\lambda, \slits; \R)\cong H_1(\widehat U, \delv \widehat U; \R)$ satisfies the following Positive Intersection Property: $[\mu] \cdot [\sigma]>0$ for every transverse measure $\mu$ for $\widehat \lambda$, defining a homology class $[\mu] \in H_1(\widehat U ; \R)$. Then, there exists a constant $C>0$ such that, for all but finitely many components $T$ of $\widetilde S - \widetilde \lambda$ meeting $\widetilde k$, 
\begin{itemize}
\item $\sigma(T_0, T) \geq C r(T)$ if $T$ points to the right of $\widetilde k$;
\item $\sigma(T_0, T) \leq -C r(T)$ if $T$ points to the left of $\widetilde k$.
\end{itemize}

\end{lem}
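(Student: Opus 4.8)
The plan is to reduce the estimate to a statement about transverse measures by a compactness argument on the (projectivized) cone of transverse measures, and then to compare the shear parameter $\sigma(T_0,T)$ with an intersection-number count that grows linearly in the divergence radius $r(T)$.

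First I would set up the dictionary between the combinatorics of $\widetilde U$ and homology. Recall from \S\ref{subsect:RelTgtCyclesDifferentView} and Proposition~\ref{prop:RelativeHomologyTangentCycles} that $\sigma(T_0,T)$ is computed by integrating $\sigma$, viewed as a relative homology class $[\sigma] \in H_1(\widehat U,\delv\widehat U;\R)$, against the relative class $[\widehat k_T] \in H_1(\widehat U,\delh\widehat U;\R)$ of a tightly transverse arc $\widehat k_T$ from $T_0$ to $T$, lifted appropriately to the orientation cover. When $T$ points to the right of $\widetilde k$ with divergence radius $r(T)$, the two sides of $T$ fellow-travel through roughly $r(T)$ edges of $\widehat U$ before diverging; the arc $\widehat k_T$, once pushed into $\widehat U$, can be chosen to run through a chain of about $r(T)$ edges along a single ``train route'', then cross back, so that the corresponding path in the edge graph of $\widehat U$ wraps around a sub-train-track that carries some sublamination. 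The key point is that $[\widehat k_T]$, up to a uniformly bounded correction, is homologous to a chain that runs $r(T)$ times along a primitive cycle (or a fixed finite collection of primitive cycles) in $\widehat U$ supported near $\widehat\lambda$. In other words, $[\widehat k_T] = r(T) \cdot c + (\text{bounded})$ where $c$ is represented by a curve carried by the train track, and the sign of $c$ (i.e. whether it contributes $+$ or $-$ to the intersection with $[\sigma]$) is dictated by whether $T$ points left or right.

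Next I would invoke the Positive Intersection Property to pin down the sign and get linear growth. The cycles $c$ that can arise as ``primitive directions'' of the arcs $\widehat k_T$ are, after projectivizing, limits of normalized transverse measures for $\widehat\lambda$; more precisely, any such $c$ lies in the closed cone $\CC(\widehat\lambda;\R_{\geq 0})$ of transverse measures (up to the sign determined by the pointing direction), because a long fellow-traveling route in the train track, normalized, converges to a transverse measure. Since the space of projectivized transverse measures is compact (a finite-dimensional simplex, cf.\ \cite{Kato, Papa} as used in Proposition~\ref{prop:ComputePolytopeDim}) and $\mu \mapsto [\mu]\cdot[\sigma]$ is continuous and strictly positive on it by hypothesis, it attains a positive minimum $2C_0 > 0$. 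Then for all but finitely many $T$ pointing to the right, $[\widehat k_T]\cdot[\sigma] = r(T)\,(c\cdot[\sigma]) + O(1) \geq 2C_0\, r(T) + O(1) \geq C\,r(T)$ for a suitable $C>0$, once $r(T)$ is large enough (Lemma~\ref{lem:BoundedDivergenceRadius} guarantees only finitely many $T$ have bounded $r(T)$, which absorbs the $O(1)$ and the finitely many exceptions). The case where $T$ points to the left is identical with the opposite sign, giving $\sigma(T_0,T) \leq -C\,r(T)$.

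The main obstacle I expect is making rigorous the claim that $[\widehat k_T]$ decomposes as $r(T)\cdot c + (\text{bounded})$ with $c$ lying in (plus or minus) the transverse-measure cone, and that the ``bounded'' error is genuinely uniform over all $T$. This requires a careful analysis of how a tightly transverse arc joining $T_0$ to a deeply nested triangle $T$ sits inside the train track $\widehat U$: one must show that after a homotopy respecting $\widehat\lambda$, the arc runs monotonically along a spike-like chain of edges whose associated edge-path, divided by its length $r(T)$, stays within a fixed neighborhood of the simplex of transverse measures in $H_1(\widehat U;\R)$, with the deviation controlled by the finitely many ``turning'' edges near $T_0$ and near $T$. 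This is morally the same geometry that underlies Lemma~\ref{lem:SumPowersLengthsGapsConverges} and the structure of train-track spikes in the proof of Lemma~\ref{lem:BoundedDivergenceRadius}, so I would import those estimates, but the bookkeeping of signs (tracking whether the fellow-traveling happens on the left or the right of $\widetilde k$, and how this translates through the orientation cover $\widehat\lambda \to \lambda$ into the sign of $c\cdot[\sigma]$) is the delicate part that must be done explicitly.
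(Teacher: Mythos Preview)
Your overall strategy matches the paper's: use compactness of the projectivized cone of transverse measures to get a uniform lower bound $\epsilon>0$ for $[\mu]\cdot[\sigma]$ on norm-one measures, and then show that the normalized class $\frac{1}{r(T)}[\widehat k_T]$ is essentially a transverse measure (with the correct sign depending on left/right pointing). So the core idea is right.

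Where the paper differs from your outline is in the execution, and the difference is worth noting because it dissolves exactly the obstacle you flagged. You propose a direct decomposition $[\widehat k_T]=r(T)\cdot c+(\text{bounded})$ with $c$ in the transverse-measure cone; as you suspected, making $c$ genuinely lie in that cone for each finite $T$ is awkward, since the normalized edge-path of a long train route is only \emph{close} to a transverse measure, not equal to one. The paper sidesteps this by arguing by contradiction: assume a sequence $T_n$ violates the estimate, represent $[\widehat k_{T_n}]$ by an explicit arc $\widehat k_n'$ consisting of a short transverse piece, two short tie segments, and a long segment $l_n$ lying in a single leaf of $\widehat\lambda$ and crossing about $r(T_n)$ edges. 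Then one only needs weak-$*$ compactness to pass to a subsequence where $\frac{1}{r(T_n)}(\text{counting measure of }l_n)$ converges to an honest transverse measure $\mu$ with $\|\mu\|=1$; the bounded pieces disappear in the limit, and one reads off $[\mu]\cdot[\sigma]\le C<\epsilon$, a contradiction. The sign comes out for free because $l_n$ inherits the canonical orientation of the leaf, which agrees or disagrees with the orientation of $\widehat k_n'$ precisely according to whether $T_n$ points right or left.

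So your plan is correct, but you would save yourself the uniform-error bookkeeping by recasting it as a proof by contradiction with a subsequential weak-$*$ limit, and by replacing the vague ``wraps around a primitive cycle'' picture with the concrete construction of $\widehat k_T'$ as (short transverse arc) $\cup$ (long leaf segment) $\cup$ (bounded junk).
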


\begin{proof} 
Pick a tie $k_e$ in each edge $e$ of the train track neighborhood $\widehat U$. Then, for each transverse measure $\mu$ for $\widehat\lambda$, define
$$
\Vert \mu \Vert = \sum_e \mu(k_e)
$$
where the sum is over all edges $e$ of $\widehat U$. This defines a norm $\Vert \ \Vert$ on  the space $\mathcal M(\widehat\lambda) \subset \CC(\widehat \lambda; \R)$ of transverse measures for $\widehat\lambda$.
 The  space of transverse measures of norm 1 is compact for the weak${}^*$ topology, and there consequently exists a number $\epsilon>0$ such that $[\mu] \cdot [\sigma]\geq \epsilon $ for every transverse measure $\mu$ with $\Vert \mu \Vert =1$. We will show that the conclusion of the lemma holds for every $C<\epsilon$.

For this, we use a proof by contradiction. Suppose that the property does not hold. Then, there exists  a sequence of distinct components  $T_n$ of $\widetilde S - \widetilde \lambda$ meeting $\widetilde k$ such that $\sigma(T_0, T_n)< C r(T_n)$ if $T_n$ points to the right of $\widetilde k$, and $\sigma(T_0, T_n)> -C r(T_n)$ if it points to the left. Passing to a subsequence if necessary, we can arrange that either all $T_n$ point to the right, or they all point to the left. 

Let us focus attention on the case where all $T_n$ point to the left, in which case $\sigma(T_0, T_n)>- C  r(T_n)$ for every $n$. The other case will be similar.

Let $\widetilde k_n$ be the subarc of $\widetilde k$ going from the negative endpoint of $\widetilde k$ to an arbitrary point of $\widetilde k \cap T_n$. Let $k_n$ be the projection of $\widetilde k_n \subset \widetilde U$ to $U$. Among the two lifts of $k_n$ to the cover $\widehat U$ of $U$, let $\widehat k_n$ be the one where the canonical orientation of the leaves of $\widehat\lambda$ points to the left for the orientation of $\widehat k_n$ coming from the orientation of $\widetilde k$. (We are here using the fact that $\widetilde k$ is tightly transverse to $\widetilde\lambda$.)
In particular, $\widehat k_n$ is tightly transverse to $\widehat\lambda$ in $\widehat U$, and $\sigma(T_0, T) = \sigma(\widehat k_n)$ by the construction of \S \ref{subsect:RelTgtCyclesDifferentView}.

\begin{figure}[htbp]

\SetLabels
( .45* .1) $ \widehat U$ \\
(.05 * .145) $\delh \widehat U $ \\
(.93 * .49) $ \delh \widehat U$ \\
( .3* .21) $ l_0$ \\
( .243*.21 ) $ t_0$ \\
( .65* .49) $l_n $ \\
(.86 * .445) $ t_n$ \\
( .375* .374) $\widehat k_n'' $ \\
(.365 *.9 ) $ \widehat k$ \\
( .8* .75) $\widehat U $ \\
( -.01* .65) $\widehat\lambda $ \\
\endSetLabels
\centerline{\AffixLabels{\includegraphics{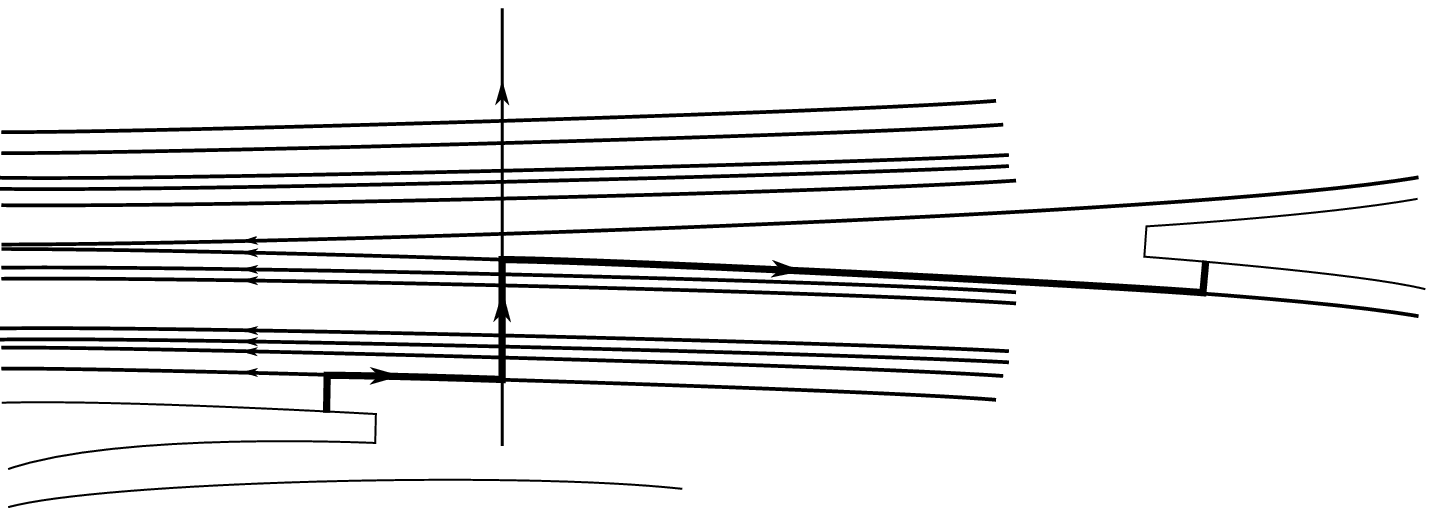}}}
\caption{}
\label{fig:PositiveIntersectionCondition}
\end{figure}

Let $[\widehat k_n] \in H_1(\widehat U, \delh \widehat U; \R)$ be the relative homology class associated to $\widehat k_n$ as in the proof of Proposition~\ref{prop:RelativeHomologyTangentCycles}. Namely, $[\widehat k_n]$ is represented by an arc $\widehat k_n' \subset \widehat U$ with $\partial\widehat k_n' \subset \delh \widehat U$ that is made up of the following five pieces: the arc $\widehat k_n''$ obtained from $\widehat k_n$ by removing the two components of $\widehat k_n-\widehat\lambda$ that contain its endpoints; two arcs $l_0$ and $l_n$ in the leaves of $\widehat\lambda$ that contain the endpoints of $\widehat k_n''$; two arcs $t_0$ and $t_n$ contained in ties of $\widehat U$, with one endpoint in  the horizontal boundary $\delh \widehat U$, with the other endpoint in $\widehat\lambda$, and whose interior is disjoint from $\widehat\lambda$. We choose the indexing so that $l_n$ joins the positive endpoint of $\widehat k_n''$ to the negative endpoint of $t_n$, and $l_0$ joins the positive endpoint of $t_0$ to the negative endpoint of $\widehat k_n''$. In addition, we can arrange that $t_0$ and $l_0$ are independent of $n$. See Figure~\ref{fig:PositiveIntersectionCondition}.

By Step 2 of the proof of Proposition~\ref{prop:RelativeHomologyTangentCycles}, the homology classes $[\sigma] \in  H_1(\widehat U, \delv \widehat U; \R)$ and $[\widehat k_n] = [\widehat k_n'] \in  H_1(\widehat U, \delh \widehat U; \R)$ are such that
$$
  [\widehat k_n']  \cdot [\sigma]= \sigma( \widehat k_n)  = \sigma(T_0, T_n).
$$

By definition of the divergence radius $r(T_n)$, the arc $l_n$ crosses approximately $r(T_n)$ edges of $\widehat U$ (counted with multiplicity). Because the triangles $T_n$ are all distinct, $r(T_n)$ tends to infinity as $n$ tends to $\infty$ by Lemma~\ref{lem:BoundedDivergenceRadius}. Passing to a subsequence if necessary, the standard weak${}^*$ compactness argument provides a nontrivial transverse measure $\mu$ for $\widehat\lambda$ such that 
$$
\int_k \mu = \lim_{n\to \infty}  {\textstyle\frac1{r(T_n)}} \# k\cap l_n
$$
for every arc $k$ transverse to $\widehat\lambda$, where $\# k \cap l_n$ demotes the number of points of $k \cap l_n$. In addition, $\Vert\mu\Vert=1$ by definition of the norm $\Vert \ \Vert$. 

Note that $\widehat k_n' - l_n$ has uniformly bounded length. In addition, the orientation of $l_n$ coming from the orientation of $\widehat k_n'$ is opposite the canonical orientation of the leaf of $\widehat\lambda$ that contains it. Therefore, 
$$
\lim_{n\to \infty} {\textstyle\frac1{r(T_n)}} [\widehat k_n'] = - [\mu] 
$$
in $ H_1(\widehat U, \delh \widehat U; \R)$. 
Intersecting with the class $[\sigma]\in H_1(\widehat U, \delv \widehat U; \R)$ defined by $\sigma \in \CC(\widehat\lambda, \slits; \R)$ then gives
$$
[\mu]\cdot [\sigma] = -\lim_{n\to \infty} {\textstyle\frac1{r(T_n)}} [\widehat k_n'] \cdot [\sigma] = 
-\lim_{n\to \infty} {\textstyle\frac1{r(T_n)}} \sigma(T_0, T_n) \leq C
$$
since $\sigma(T_0, T_n)> -C  r(T_n)$ by hypothesis. 

Therefore, we have constructed a  transverse measure $\mu$ for $\widehat\lambda$ such that $[\mu]\cdot [\sigma] \leq C$ and $\Vert\mu\Vert=1$. But this contradicts our hypothesis that $C<\epsilon \leq [\mu]\cdot [\sigma]$ for any such $\mu$, and provides the contradiction sought when all $T_n$ point to the left of $\widetilde k$.

The argument is similar when all $T_n$ point to the right. The only difference is that the transverse measure $\mu$ then constructed has associated homology class $[\mu] = + \lim_{n\to \infty} {\textstyle\frac1{r(T_n)}} [\widehat k_n'] $ in $ H_1(\widehat U, \delh \widehat U; \R)$, because the orientation of $l_n$ now coincides with the canonical orientation of the leaf of $\widehat\lambda$ containing it. Since the inequality $\sigma(T_0, T_n)<C  r(T_n)$ is also reversed, this again provides a  transverse measure $\mu$ for $\widehat\lambda$ such that $[\mu]\cdot [\sigma] <C<\epsilon$ and $\Vert\mu\Vert=1$, concluding the proof in this case as well. 
\end{proof}

\begin{comp}
\label{comp:PositiveIntersectionCondnEstimate}
The conclusion of Lemma~{\upshape\ref{lem:PositiveIntersectionCondnEstimate}} holds when $\sigma$ is replaced by any $\sigma'$ in a small neighborhood of $\sigma$ in $\CC(\widehat\lambda, \slits; \R)$. 
\end{comp}

\begin{proof}
By compactness of the space of transverse measures $\mu$ with $\Vert\mu\Vert=1$, we can choose $\epsilon>0$ so that $[\mu] \cdot [\sigma']\geq \epsilon $ for every $\sigma' \in \CC(\widehat\lambda, \slits; \R)$ sufficiently close to $\sigma$ and every transverse measure $\mu$ with $\Vert \mu \Vert =1$. Then the proof shows that the conclusion of  Lemma~\ref{lem:PositiveIntersectionCondnEstimate}  holds for any such $\sigma'$ and $C<\epsilon$. 
\end{proof}

\subsection{Realization of invariants, and parametrization of $\Hit(S)$} 
\label{subsect:RealizeInvariants}

At the beginning of \S \ref{bigsect:Param}, we introduced the map
 $$
\Phi  \colon \Hit(S) \to  \mathcal P \subset \R^{6(g-1)(n-1)(n-2)} \times \CC(\lambda, \slits;\widehat\R^n) 
 $$
that associates its triangle invariants and shearing cycle  to a Hitchin character. We showed in \S \ref{subsect:ConstraintsInvariants} that the image of $\Phi$ is contained in the convex polyhedral cone $\mathcal P$ defined by the Triangle Rotation Condition, the Shearing Cycle Boundary Condition, and the Positive Intersection Condition. We also showed in Corollary~\ref{cor:InvariantsLocalHomeo} that $\Phi \colon \Hit(S) \to \mathcal P$ is  a local homeomorphism. 

\begin{prop}
\label{prop:InvariantsProper}
The map $\Phi \colon \Hit(S) \to \mathcal P$ is proper.
\end{prop}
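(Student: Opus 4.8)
The plan is to show that $\Phi$ is proper by a direct argument: take a sequence $\rho_j \in \Hit(S)$ whose image $\Phi(\rho_j) = \bigl(\tau^{\rho_j}, \sigma^{\rho_j}\bigr)$ converges to a point of $\mathcal P$, and produce a subsequence of the $\rho_j$ converging in $\Hit(S)$. Equivalently, using the description of $\mathcal P$ as a subset of a finite-dimensional vector space, I would show that whenever $\Phi(\rho_j)$ stays in a compact subset $K \subset \mathcal P$, the homomorphisms $\rho_j$ (suitably normalized) stay in a compact subset of the representation variety, and no limit can escape $\Hit(S)$ (the Hitchin component is closed in the character variety, so this last point is automatic once we know the $\rho_j$ converge at all). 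The normalization of Lemma~\ref{lem:NormalizeHitchinRep} is the right setup: after conjugating, $\F_{\rho_j}(x_0)=E_0$, $\F_{\rho_j}(y_0)=F_0$, and $\F_{\rho_j}(z_0)^{(1)}=G_0^{(1)}$ for all $j$, so there is no conjugation freedom left to worry about, and convergence of $\rho_j$ is equivalent to convergence of $\rho_j(\gamma)$ for $\gamma$ in the finite generating set provided by Lemma~\ref{lem:NiceGeneratorsForPi1}.

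The engine of the proof is the reconstruction formula of Lemma~\ref{lem:GroupActionAndInvariants}:
$$
\rho_j(\gamma) = \biggl(\ \overleftarrow{\prod_{T\in \mathcal T_{g_0(\gamma h_0)}}}\ \Bigl( \Theta_{E_0F_0}^{\sigma^{\rho_j}(T_0, T)} \circ \widehat\Sigma_T'{}^{(j)} \circ \Theta_{E_0F_0}^{-\sigma^{\rho_j}(T_0, T)} \Bigr) \biggr)^{-1} \circ \Theta^{\sigma^{\rho_j}(T_0, \gamma T_0)}_{E_0F_0} \circ \phi_0^{(j)},
$$
in which every term is controlled by the data $\bigl(\tau^{\rho_j}, \sigma^{\rho_j}\bigr)$. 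The factor $\phi_0^{(j)}$ and each $\widehat\Sigma_T'{}^{(j)}$ depend only on the triangle invariants $\tau^{\rho_j}$, which by hypothesis live in a compact set; so these are uniformly bounded, and moreover the $\widehat\Sigma_T'{}^{(j)}$ differ from $\Id_{\R^n}$ by an amount controlled by the triangle invariants of $T$ together with the divergence radius $r(T)$ (combining Lemma~\ref{lem:EstimateElemSlithering}-type estimates with the decay of $\ell(k\cap T)$ from Lemma~\ref{lem:SumPowersLengthsGapsConverges}). The terms $\Theta_{E_0F_0}^{\pm\sigma^{\rho_j}(T_0,T)}$ are diagonal (in the $L_a$ decomposition) with entries $\mathrm e^{\pm u_a}$ where the $u_a$ are linear in $\sigma^{\rho_j}(T_0,T)\in\R^{n-1}$. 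The whole point of \S\ref{subsect:PositiveIntersectionRevisited} is that the Positive Intersection Condition, which holds throughout $K$, forces $\sigma^{\rho_j}(T_0,T)$ to have the correct sign and to grow at least linearly in $r(T)$ (Lemma~\ref{lem:PositiveIntersectionCondnEstimate}), uniformly over $K$ by Complement~\ref{comp:PositiveIntersectionCondnEstimate}: $\sigma_a^{\rho_j}(T_0,T)\geq Cr(T)$ when $T$ points to the right of the arc from $g_0$ toward $\gamma h_0$, and $\leq -Cr(T)$ when it points to the left. Since the conjugating factor $\Theta_{E_0F_0}^{\sigma^{\rho_j}(T_0,T)}$ expands exactly the directions along which $\widehat\Sigma_T'{}^{(j)}-\Id$ is supported when $T$ points one way and contracts them when $T$ points the other way, the products $\Theta^{\sigma}\circ\widehat\Sigma_T'\circ\Theta^{-\sigma}$ stay within $1 + (\text{const})\,\mathrm e^{-\delta r(T)}$ of $\Id_{\R^n}$ for some $\delta>0$; this is the mechanism already used in the proof of Lemma~\ref{lem:SlitheringBounded} and Proposition~\ref{prop:RevisitSlithering}, now made uniform over the compact family $K$. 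Together with Lemma~\ref{lem:BoundedDivergenceRadius} (the number of $T$ with $r(T)=r_0$ is bounded independently of $r_0$), the infinite product converges and is uniformly bounded, with bound depending only on $K$. Hence each $\rho_j(\gamma)$, for $\gamma$ in the finite generating set, lies in a compact subset of $\mathrm{PGL}_n(\R)$ depending only on $K$; passing to a subsequence, $\rho_j$ converges to some homomorphism $\rho_\infty$, which lies in $\Hit(S)$ since the Hitchin component is a connected component (hence closed) of the character variety and, being an Anosov representation limit, remains discrete and faithful. This proves properness.

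The main obstacle is the uniformity of the slithering estimates over the compact set $K$, and in particular verifying that the competition between the exponential growth of the $\Theta_{E_0F_0}^{\pm\sigma(T_0,T)}$ and the exponential decay of $\widehat\Sigma_T'-\Id_{\R^n}$ really produces a net exponential decay — this requires knowing not just that $\|\widehat\Sigma_T'-\Id_{\R^n}\|$ is small but that $\widehat\Sigma_T'-\Id_{\R^n}$ is supported (to leading order) in the off-diagonal blocks that $\Theta_{E_0F_0}^{-\sigma(T_0,T)}$ contracts rather than expands, which is exactly the content tying the "points to the left/right" dichotomy of \S\ref{subsect:PositiveIntersectionRevisited} to the structure of the elementary slithering $\widehat\Sigma_T'$ (it fixes $F_0$ and acts trivially on the $F_0^{(a+1)}/F_0^{(a)}$ when $T$ points right, and symmetrically for $E_0$ when $T$ points left — see the discussion before Lemma~\ref{lem:GroupActionAndInvariants}). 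A secondary point to handle with care is that the arc $\widetilde k$ implicit in the divergence radius must be chosen compatibly for all the finitely many generators $\gamma$ simultaneously (e.g. one fixed tightly transverse arc crossing $g_0$ and $h_0$ works, since each $\gamma h_0$ is reached by following $\widetilde\lambda$ from $g_0$), so that a single constant $C$ and a single application of Lemma~\ref{lem:BoundedDivergenceRadius} suffice. Once these estimates are in place, the topological conclusion — proper + local homeomorphism + $\mathcal P$ connected $\Rightarrow$ homeomorphism onto $\mathcal P$ — is the routine step recorded as Theorem~\ref{thm:InvariantsHomeomorphism}.
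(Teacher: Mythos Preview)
Your proposal is correct and follows essentially the same route as the paper's proof: normalize via Lemma~\ref{lem:NormalizeHitchinRep}, plug into the reconstruction formula of Lemma~\ref{lem:GroupActionAndInvariants}, and bound the infinite product by combining the triangular structure of $\widehat\Sigma_T'$ (upper or lower according to whether $T$ points left or right) with the linear growth of $\sigma^{\rho_j}(T_0,T)$ in $r(T)$ supplied by Lemma~\ref{lem:PositiveIntersectionCondnEstimate} and Complement~\ref{comp:PositiveIntersectionCondnEstimate}. One sentence is misleading: you write that ``the $\widehat\Sigma_T'{}^{(j)}$ differ from $\Id_{\R^n}$ by an amount controlled by \dots\ the divergence radius $r(T)$,'' citing Lemma~\ref{lem:EstimateElemSlithering}, but $\widehat\Sigma_T'{}^{(j)}$ depends only on the triangle invariants of the projection of $T$ to $S-\lambda$ and is merely uniformly bounded, not decaying in $r(T)$; the decay comes \emph{entirely} from the conjugation by $\Theta_{E_0F_0}^{\pm\sigma(T_0,T)}$ acting on the strictly triangular part, exactly as you then explain correctly in the following sentence and in your ``main obstacle'' paragraph (this is the paper's Lemma~\ref{lem:ShearLeftRightEstimate}).
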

\begin{proof}
We need to prove the following property: Let $(\rho_i)_{i\in \N}$ be a sequence in $\Hit(S)$ such that $\bigl ( \Phi(\rho_i) \bigr)_{i\in\N} = \bigl ((\tau^{\rho_i}, \sigma^{\rho_i}) \bigr)_{i\in\N}$ converges to a point $(\tau^\infty, \sigma^\infty) \in \mathcal P$; then the sequence $(\rho_i)_{i\in \N}$ admits a converging subsequence. 

For this, we will revisit our proof that a Hitchin character is determined by its triangle invariants and its shearing cycle, as in \S \ref{subsect:ParamInjective}. In that proof, we showed that the fundamental group $\pi_1(S)$ is generated by elements $\gamma$ of the type described in Lemma~\ref{lem:NiceGeneratorsForPi1}, and then proved that
$$
\rho_i(\gamma) = \biggl(\kern 15pt
\overleftarrow{\prod_{\kern -13pt T\in \mathcal T_{g_0(\gamma h_0)}\kern -13pt}} \kern 7pt
\Bigl(
\Theta_{E_0F_0}^{\sigma^{\rho_i} (T_0, T)} \circ \widehat\Sigma_T^i \circ  \Theta_{E_0F_0}^{-\sigma^{\rho_i} (T_0, T)}
\Bigr)
\biggr)^{-1}
\circ  \Theta^{\sigma^{\rho_i}(T_0, \gamma T_0)}_{E_0F_0}
\circ \phi_0^i \in \PGL
$$
with the notation of Lemma~\ref{lem:GroupActionAndInvariants} (except that $\widehat\Sigma_T^i$ and $ \phi_0^i$ were respectively called $\widehat\Sigma_T'$ and $ \phi_0$ there).

\begin{lem}
\label{lem:ShearLeftRightEstimate}
There exists a constant $C$, independent of $T$,  such that
$$
\bigl\Vert \Theta_{E_0F_0}^{\sigma^{\rho_i} (T_0, T)} \circ \widehat\Sigma_T^i \circ  \Theta_{E_0F_0}^{-\sigma^{\rho_i} (T_0, T)} - \Id_{\R^n} \bigr\Vert
\leq
C \max_a \mathrm e^{-(n-1)\sigma_a^{\rho_i} (T_0, T)} 
$$
if $T$ points to the right  between $T_0$ and $\gamma T_0$ (as seen from $T_0$), and 
$$
\bigl\Vert \Theta_{E_0F_0}^{\sigma^{\rho_i} (T_0, T)} \circ \widehat\Sigma_T^i \circ  \Theta_{E_0F_0}^{-\sigma^{\rho_i} (T_0, T)} - \Id_{\R^n} \bigr\Vert
\leq
C \max_a \mathrm e^{(n-1)\sigma_a^{\rho_i} (T_0, T)} 
$$
if $T$ points to the left. 
\end{lem}

\begin{proof} Choose for $\R^n$ a basis in which the $a$--th term belongs to the line $E_0^{(a)}\cap F_0^{(n-a+1)}$. Then, by definition, the matrix of $ \Theta_{E_0F_0}^{\sigma^{\rho_i} (T_0, T)} $  in this basis is diagonal, with diagonal entries  $\E^{u_1}$, $\E^{u_2}$, \dots, $\E^{u_n}$
where   $u_1$, $u_2$, \dots, $u_n$ are  uniquely determined by the properties that $ u_{a} - u_{a+1}=  \sigma^{\rho_i}_a (T_0, T)$ and $\sum_{a=1}^n u_a=0$.

Consider for instance the case where $T$ points to the left. Then the map $\widehat\Sigma_T^i $ respects the flag $E_0$, and acts by the identity on each of the lines $E_0^{(a)}/E_0^{(a-1)}$. Therefore, in the above basis for $\R^n$,  the matrix $A$ of $\widehat\Sigma_T^i $ is upper triangular with all diagonal entries equal to 1. 

By construction, the map $\widehat\Sigma_T^i $ is completely determined by, and depends continuously on, the triangle invariants $\tau_{abc}^{\rho_i}(s)$ associated to the slit $s$ of $\lambda$ that is the projection of the spike of $T$ delimited by the two components of $\partial T$ that separate $T_0$ from $\gamma T_0$. Since these triangle invariants converge to $\tau_{abc}^\infty(s)$, we conclude that each $ab$--entry $A_{ab}$ of the matrix $A$ is uniformly bounded by a constant $C$.  We already observed that $A_{ab}=0$ if $a>b$ and $A_{aa}=1$. 

Multiplying matrices, we conclude that for $a<b$ the $ab$--entry  of the matrix of $\Theta_{E_0F_0}^{\sigma^{\rho_i} (T_0, T)} \circ \widehat\Sigma_T^i \circ  \Theta_{E_0F_0}^{-\sigma^{\rho_i} (T_0, T)} - \Id_{\R^n}$ is equal to $A_{ab} \E^{u_a-u_b}$ and bounded by
\begin{align*}
|A_{ab} |\E^{u_a-u_b} & \leq C \E^{u_a-u_b}  = C \E^{-\sum_{c=a}^{b-1} (u_{c+1}-u_c)} = C \E^{+\sum_{c=a}^{b-1} \sigma^{\rho_i}_c (T_0, T)} \\
&\leq C \max_c  \E^{(n-1)\sigma^{\rho_i}_c (T_0, T)} .
\end{align*}
The other entries of this matrix are 0 since $A_{ab}=0$ if $a>b$, and since $A_{aa}=1$. 

This proves the estimate required when the triangle $T$ points to the left.

The proof is almost identical when $T$ points to the right, except that the matrix $A$ is now lower diagonal. 
\end{proof}

We now use the property that the limit $(\tau^\infty, \sigma^\infty)\in \R^{6(g-1)(n-1)(n-2)} \times \CC(\lambda, \slits;\widehat\R^n)  $ actually belongs to the polyhedron $ \mathcal P$, and more precisely the fact that the relative tangent cycle $ \sigma^\infty \in\CC(\lambda, \slits;\widehat\R^n) $ satisfies the Positive Intersection Condition. 

\begin{lem}
\label{lem:BoundedByPositiveIntersection}
For $\gamma \in \pi_1(S)$ as above, the $\rho_i(\gamma)\in \PSL$ are bounded independently of $i$. 
\end{lem}

\begin{proof}
Because $ \sigma^\infty $ satisfies the Positive Intersection Condition, the combination of Lemma~\ref{lem:PositiveIntersectionCondnEstimate}, Complement~\ref{comp:PositiveIntersectionCondnEstimate} and Lemma~\ref{lem:ShearLeftRightEstimate} provides  constants $C$, $D>0$ such that, in the expression
$$
\rho_i(\gamma) = \biggl(\kern 15pt
\overleftarrow{\prod_{\kern -13pt T\in \mathcal T_{g_0(\gamma h_0)}\kern -13pt}} \kern 7pt
\Bigl(
\Theta_{E_0F_0}^{\sigma^{\rho_i} (T_0, T)} \circ \widehat\Sigma_T^i \circ  \Theta_{E_0F_0}^{-\sigma^{\rho_i} (T_0, T)}
\Bigr)
\biggr)^{-1}
\circ  \Theta^{\sigma^{\rho_i}(T_0, \gamma T_0)}_{E_0F_0}
\circ \phi_0^i,
$$
the contribution of  each triangle $T$ is such that 
$$
\bigl\Vert \Theta_{E_0F_0}^{\sigma^{\rho_i} (T_0, T)} \circ \widehat\Sigma_T^i \circ  \Theta_{E_0F_0}^{-\sigma^{\rho_i} (T_0, T)} - \Id_{\R^n} \bigr\Vert
\leq
C \E^{-D r(T)},
$$
for the divergence radius $r(T)$ defined in \S \ref{subsect:PositiveIntersectionRevisited}. In addition, for every integer $r_0\geq 1$, Lemma~\ref{lem:BoundedDivergenceRadius} shows that the number of triangles $T$ such that $r(T)=r_0$ is bounded independently of $r_0$. It follows that the product 
$$
\overleftarrow{\prod_{\kern -13pt T\in \mathcal T_{g_0(\gamma h_0)}\kern -13pt}} \kern 7pt
\Bigl(
\Theta_{E_0F_0}^{\sigma^{\rho_i} (T_0, T)} \circ \widehat\Sigma_T^i \circ  \Theta_{E_0F_0}^{-\sigma^{\rho_i} (T_0, T)}
\Bigr)
$$
converges and is uniformly bounded. 

By construction, the remaining terms $\Theta^{\sigma^{\rho_i}(T_0, \gamma T_0)}_{E_0F_0}$
and $ \phi_0^i$ are completely determined by, and depends continuously on, the triangle and shear invariants of $\rho_i$. Since these invariants converge, it follows that these two terms are also uniformly bounded. 
\end{proof}

Lemma~\ref{lem:BoundedByPositiveIntersection} shows that the sequence  $\bigl(\rho_i(\gamma)\bigr)_{i\in \N}$  admits a converging subsequence in $\PSL$. Doing this for all  $\gamma$ in the finite set of generators for  $\pi_1(S)$ provided by Lemma~\ref{lem:NiceGeneratorsForPi1}, we conclude that the sequence $(\rho_i)_{i\in \N}$ admits a converging subsequence in $\Hit(S)$. 

Therefore, every sequence of $\Hit(S)$ whose image under $\Phi$ converges in the polyhedron $\mathcal P$ admits a converging subsequence in $\Hit(S)$. This proves that the map $\Phi \colon \Hit(S) \to \mathcal P$ is proper, and concludes the proof of Proposition~\ref{prop:InvariantsProper}. 
\end{proof}

\begin{thm}
\label{thm:InvariantsHomeomorphism}
The map $\Phi \colon \Hit(S) \to \mathcal P$ is a homeomorphism from the Hitchin component $\Hit(S)$ to the polyhedron $\mathcal P \subset  \R^{6(g-1)(n-1)(n-2)} \times \CC(\lambda, \slits;\widehat\R^n)$.
\end{thm}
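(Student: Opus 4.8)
The plan is to assemble Theorem~\ref{thm:InvariantsHomeomorphism} from three facts that have already been established: $\Phi$ is a local homeomorphism (Corollary~\ref{cor:InvariantsLocalHomeo}), $\Phi$ is proper (Proposition~\ref{prop:InvariantsProper}), and the target $\mathcal P$ is connected (it is an open convex polyhedral cone, hence convex, hence connected, by Proposition~\ref{prop:ComputePolytopeDim}). A continuous map between (Hausdorff, locally compact, say manifold-like) spaces that is a local homeomorphism and proper is a covering map onto its image; more precisely, since $\Phi$ is proper its image is closed in $\mathcal P$, and since $\Phi$ is an open map (being a local homeomorphism) its image is also open in $\mathcal P$, so by connectedness of $\mathcal P$ the map $\Phi$ is onto. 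Thus $\Phi\colon \Hit(S) \to \mathcal P$ is a proper local homeomorphism which is surjective, hence a covering map.

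Next I would pin down the number of sheets of this covering. The key input here is Corollary~\ref{cor:HitchinDeterminedByInvariants}, which asserts that $\Phi$ is injective: two Hitchin characters with the same triangle invariants and the same shearing cycle coincide. A covering map that is injective is a homeomorphism onto its image. Combined with surjectivity from the previous paragraph, this gives that $\Phi\colon \Hit(S)\to\mathcal P$ is a continuous bijection; a bijective local homeomorphism is automatically a homeomorphism (its set-theoretic inverse is continuous because $\Phi$ is open). This completes the argument.

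I should be slightly careful about one soft point: to run the ``open $+$ closed image $\Rightarrow$ onto'' argument I only need that $\mathcal P$ is connected and nonempty, and that $\Hit(S)$ is nonempty — the latter is Hitchin's Theorem~\ref{thm:Hitchin}, which also tells us $\Hit(S)\cong\R^{2(g-1)(n^2-1)}$ is connected. In fact, once I know $\Phi$ is an injective local homeomorphism with nonempty domain, properness already forces the image to be all of a connected component of $\mathcal P$, and since $\mathcal P$ is connected this is all of $\mathcal P$; strictly speaking I do not even need to invoke the dimension count from Proposition~\ref{prop:ComputePolytopeDim} beyond what Corollary~\ref{cor:InvariantsLocalHomeo} already used. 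So the proof reduces to: (i) $\Phi$ continuous (Lemma~\ref{lem:InvariantsContinuous}); (ii) $\Phi$ injective (Corollary~\ref{cor:HitchinDeterminedByInvariants}); (iii) $\Phi$ open, i.e. a local homeomorphism (Corollary~\ref{cor:InvariantsLocalHomeo}, via Invariance of Domain); (iv) $\Phi$ proper (Proposition~\ref{prop:InvariantsProper}); then openness $+$ properness $+$ connectedness of $\mathcal P$ gives surjectivity, and (ii)$+$(iii) upgrade the continuous bijection to a homeomorphism.

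The ``main obstacle'' in this final theorem is not really in this last step — which is a short topological packaging argument — but lies in the inputs, all of which are already proved in the excerpt: the properness statement (Proposition~\ref{prop:InvariantsProper}), resting on the delicate divergence-radius estimates of \S\ref{subsect:PositiveIntersectionRevisited} and the Positive Intersection Condition, is the genuinely hard ingredient. In writing up Theorem~\ref{thm:InvariantsHomeomorphism} itself, the only point requiring a line of care is justifying that a proper continuous injective local homeomorphism onto a connected space is surjective; I would state this as: $\Phi(\Hit(S))$ is open by the local homeomorphism property, and closed because $\Phi$ is proper (preimages of compact sets are compact, so $\Phi$ is a closed map), hence equals $\mathcal P$ by connectedness; then $\Phi$ is a continuous open bijection, so a homeomorphism.
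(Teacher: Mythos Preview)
Your proposal is correct and follows essentially the same approach as the paper: the paper's proof is a one-sentence assembly of exactly the same ingredients (local homeomorphism from Corollary~\ref{cor:InvariantsLocalHomeo}, properness from Proposition~\ref{prop:InvariantsProper}, injectivity from Corollary~\ref{cor:HitchinDeterminedByInvariants}, and connectedness of $\mathcal P$). Your write-up is simply a more detailed unpacking of the standard ``injective proper local homeomorphism onto a connected target is a homeomorphism'' argument.
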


\begin{proof}
The map $\Phi$ is a local homeomorphism by Corollary~\ref{cor:InvariantsLocalHomeo}, and proper by Proposition~\ref{prop:InvariantsProper}. Since $\Phi$ is injective by Corollary~\ref{cor:HitchinDeterminedByInvariants} and since the convex polytope $\mathcal P$ is connected, this proves that $\Phi$ is a homeomorphism. 
\end{proof}

 \begin{rem}
 The formulas of \S \ref{subsect:ParamInjective}, in particular Lemma~\ref{lem:GroupActionAndInvariants}, provide an explicit construction for the inverse map $\Phi^{-1} \colon  \mathcal P \to \Hit(S) $. The boundedness estimates that we just used in the proof of Lemma~\ref{lem:BoundedByPositiveIntersection} show that the infinite products involved in these formulas do converge. This immediately proves that this inverse map $\Phi^{-1}$ is real analytic. 
 
 It can be shown that the forward map $\Phi$ is also analytic, using the fact \cite{BCLS} that the flag curve $\mathcal F_\rho \colon \partial_\infty \widetilde S \to \Flag$ depends real analytically on the homomorphism $\rho$. However, this is beyond the scope of this article. 
\end{rem}

\subsection{Constraints among triangle invariants, and on shearing cycles}
\label{subsect:ConstraintInvariantsBis}
 The Shearing Cycle Boundary Condition does more than connecting the boundary of  the shearing cycle $\sigma^\rho$ of a Hitchin character $\rho \in \Hit(S)$ to  its triangle invariants $\tau_{abc}^\rho(s)$. It also puts constraints between the triangle invariants themselves, and restricts the twisted relative tangent cycles that can occur as shearing cycles of Hitchin characters. As a complement to Theorem~\ref{thm:InvariantsHomeomorphism}, this section is devoted to emphasizing these somewhat unexpected phenomena, which we already encountered in Lemmas~\ref{lem:ImageBoundaryMap} and \ref{lem:ImageTheta}.

\begin{cor}
\label{cor:RestrictionTriangleInvariants}
A rotation invariant triangle data function  $\tau \in \R^{2(g-1)(n-1)(n-2)}$ is the triangle invariant $\tau^\rho$ of a Hitchin character $\rho \in \Hit(S)$  if and only if 
$$
\sum_{s\text{ slit of }\lambda} \ \sum_{b+c=n-a} \tau_{abc}(s) = \sum_{s\text{ slit of }\lambda} \ \sum_{b+c=a} \tau_{(n-a)bc}(s) 
$$
for every $a=1$, $2$, \dots, $n-1$. 

As a consequence, the triangle invariants of Hitchin characters form a linear subspace of codimension $\lfloor \frac{n-1}2 \rfloor$ in the space $  \R^{2(g-1)(n-1)(n-2)}$ of all rotation invariant triangle data functions.
\end{cor}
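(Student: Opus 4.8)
\textbf{Proof proposal for Corollary~\ref{cor:RestrictionTriangleInvariants}.}

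The plan is to deduce this directly from the surjectivity of $\Phi$ onto $\mathcal P$ established in Theorem~\ref{thm:InvariantsHomeomorphism}, combined with the homological analysis of the boundary map already carried out in Lemmas~\ref{lem:ImageBoundaryMap} and \ref{lem:ImageTheta}. First I would observe that, by Theorem~\ref{thm:InvariantsHomeomorphism}, a rotation invariant triangle data function $\tau$ arises as the triangle invariant $\tau^\rho$ of some Hitchin character $\rho$ if and only if there exists a twisted relative tangent cycle $\sigma \in \CC(\lambda,\slits;\widehat\R^{n-1})$ such that the pair $(\tau,\sigma)$ lies in $\mathcal P$; and since the Positive Intersection Condition is an \emph{open} condition (it cuts out an open convex cone inside the linear subspace $\mathcal L$), the set of $\tau$ realizable this way is exactly the image of $\mathcal L$ under the projection to the $\tau$--factor. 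Indeed, once $(\tau,\sigma_0)\in\mathcal L$ for some $\sigma_0$, one can add to $\sigma_0$ any element of $\ker\partial = \CC(\lambda;\widehat\R^{n-1})$ — in particular a large positive multiple of a transverse measure — to land inside the open cone $\mathcal P$ without changing $\tau$; this uses the fact that transverse measures have strictly positive intersection with themselves and span enough of $H_1(\widehat U;\R)$, which is exactly the mechanism already exploited in the proof of Proposition~\ref{prop:ComputePolytopeDim}.

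Thus the realizable $\tau$ are precisely those in the image of the projection $\mathcal L \to \R^{2(g-1)(n-1)(n-2)}$, which by definition of $\mathcal L$ means precisely those $\tau$ for which $\Theta(\tau)$ lies in the image of $\partial\colon \CC(\lambda,\slits;\widehat\R^{n-1}) \to \CC(\slits;\R^{n-1})$. Next I would invoke Lemma~\ref{lem:ImageBoundaryMap}, which identifies $\mathrm{im}(\partial)$ as the set of $\theta\in\CC(\slits;\R^{n-1})$ satisfying $\sum_{s} \theta_a(s) = \sum_s \theta_{n-a}(s)$ for every $a$. Applying this to $\theta = \Theta(\tau)$, whose $a$--th component is $\theta_a(s) = \sum_{b+c=n-a}\tau_{abc}(s)$, immediately yields
$$
\sum_{s\text{ slit of }\lambda}\ \sum_{b+c=n-a}\tau_{abc}(s) = \sum_{s\text{ slit of }\lambda}\ \sum_{b+c=a}\tau_{(n-a)bc}(s),
$$
which is the claimed system of equations (note the index shift $n-a$ inside $\theta_{n-a}$ produces the sum $\sum_{b+c = n-(n-a)=a}$ over $\tau_{(n-a)bc}$). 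Conversely, if $\tau$ satisfies these equations, then $\Theta(\tau)\in\mathrm{im}(\partial)$, hence $(\tau,\sigma)\in\mathcal L$ for some $\sigma$, and the openness argument above produces a genuine Hitchin character with triangle invariant $\tau$.

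For the codimension statement, I would argue that the $\lfloor\frac{n-1}2\rfloor$ linear equations indexed by $a=1,\dots,n-1$ are equivalent to $\lfloor\frac{n-1}2\rfloor$ independent ones: the equation for index $a$ and the one for index $n-a$ coincide, and the middle one (when $n-1$ is even, $a=\frac{n}{2}$, so $a=n-a$) is vacuous, while for $a=n-1$ the left side $\sum_s\theta_{n-1}(s)$ vanishes identically since $\theta_{n-1}^\tau(s)=\sum_{b+c=1}\tau_{(n-1)bc}(s)=0$ (no admissible $b,c\geq 1$), so that equation reads $\sum_s\theta_1(s)=0$ and is a genuine constraint. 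Counting carefully, one gets exactly $\lfloor\frac{n-1}2\rfloor$ independent conditions; the independence of the restrictions of these relations was already verified in the proof of Lemma~\ref{lem:IntersectionImagesBoundaryMapTheta}, so I would simply cite that. The main obstacle, and the only point requiring genuine care, is the openness/surjectivity step: one must check that \emph{every} $(\tau,\sigma_0)\in\mathcal L$ can be pushed into the open cone $\mathcal P$ by modifying $\sigma_0$ within $\ker\partial$, i.e.\ that the intersection of $\mathcal L$ with the positivity cone is nonempty in each fiber over a realizable $\tau$; this follows because $\CC(\lambda;\widehat\R^{n-1})=\ker\partial\subset\mathcal L$ contains all twisted tangent cycles, in particular arbitrarily large positive combinations of transverse measures, whose intersection numbers with any fixed class dominate the bounded contribution of $\sigma_0$, but it is worth stating explicitly rather than leaving implicit.
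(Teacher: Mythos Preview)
Your overall strategy matches the paper's: reduce via Theorem~\ref{thm:InvariantsHomeomorphism} to the condition $\Theta(\tau)\in\mathrm{im}(\partial)$, invoke Lemma~\ref{lem:ImageBoundaryMap} for the explicit equations, and cite Lemma~\ref{lem:IntersectionImagesBoundaryMapTheta} for the codimension count. That part is fine.

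The gap is in your treatment of the Positive Intersection Condition. You claim that once $(\tau,\sigma)\in\mathcal L$, one can add ``a large positive multiple of a transverse measure'' to land in $\mathcal P$, because ``transverse measures have strictly positive intersection with themselves''. This is false: the intersection pairing $H_1(\widehat U;\R)\times H_1(\widehat U,\delv\widehat U;\R)\to\R$ restricted to classes coming from $H_1(\widehat U;\R)$ is the ordinary antisymmetric intersection form on the oriented surface $\widehat U$, so $[\mu]\cdot[\mu]=0$ for every transverse measure $\mu$. Transverse measures therefore do \emph{not} lie in the cone of closed cycles having strictly positive intersection with all transverse measures; they sit, roughly speaking, on its boundary. (There is also a type mismatch: a transverse measure for $\widehat\lambda$ is an element of $\CC(\widehat\lambda;\R)$, not automatically a twisted cycle in $\CC(\lambda;\widehat\R^{n-1})$; but the sign issue above is the essential one.) Your parenthetical reference to ``the mechanism already exploited in the proof of Proposition~\ref{prop:ComputePolytopeDim}'' is also off: that proof is a pure dimension count and contains no such positivity argument.

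The paper fills this step by a different, non-obvious device: it takes $\sigma_0=\sigma^{\rho_0}$ to be the shearing cycle of a Fuchsian character $\rho_0\in\mathrm{Hit}_2(S)\subset\Hit(S)$. All triangle invariants of such a $\rho_0$ vanish, so the Shearing Cycle Boundary Condition forces $\partial\sigma_0=0$; hence $\sigma_0\in\CC(\lambda;\widehat\R^{n-1})=\ker\partial$. Moreover $\sigma_0$ satisfies the Positive Intersection Condition by Corollary~\ref{cor:TransMeasureHasPositiveIntersection}. Then $\sigma+c\sigma_0$ with $c\gg0$ works. So the existence of the required closed positive cycle is not elementary linear algebra on transverse measures; it uses the nonemptiness of $\mathrm{Hit}_2(S)$.
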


\begin{proof} Theorem~\ref{thm:InvariantsHomeomorphism} shows that $\tau$ is the triangle invariant of a Hitchin character if and only if there exists a relative cycle $\sigma\in \CC(\lambda, \slits; \widehat\R^{n-1})$ such that the pair $(\tau, \sigma)$ satisfies the Shearing Boundary Condition, and such that $\sigma $ satisfies the Positive Intersection Condition. 

The proof of Proposition~\ref{prop:ComputePolytopeDim}, and in particular  Lemmas~\ref{lem:ImageBoundaryMap} and \ref{lem:IntersectionImagesBoundaryMapTheta}, takes care of the first constraint. More precisely, with the notation of that proof, there exists $\sigma\in \CC(\lambda, \slits; \widehat\R^{n-1})$ such that $(\tau, \sigma)$ satisfies the Shearing Boundary Cycle Condition if and only if $\Theta(\tau)$ belongs to the image $\mathrm{im}(\partial) $. Lemma~\ref{lem:ImageBoundaryMap} shows that this is equivalent to the condition stated in Corollary~\ref{cor:RestrictionTriangleInvariants}, while Lemma~\ref{lem:IntersectionImagesBoundaryMapTheta} shows that $\Theta^{-1} \bigl( \mathrm{im}(\partial) \bigr)$ has codimension $\lfloor \frac{n-1}2 \rfloor$ in  $  \R^{2(g-1)(n-1)(n-2)}$. 

The only thing left to prove is that the Positive Intersection Condition has no impact on this property. Namely: If there exists $\sigma\in \CC(\lambda, \slits; \widehat\R^{n-1})$ such that $(\tau, \sigma)$ satisfies the Shearing Cycle Boundary Condition, the relative tangent cycle $\sigma$ can be chosen so that, in addition, it satisfies the Positive Intersection Condition. 

For this, we will use the existence of a closed twisted  tangent cycle $\sigma_0 \in \CC(\lambda; \widehat\R^{n-1})$ that satisfies the Positive Intersection Condition. An easy way to construct such a tangent cycle is to consider the shearing cycle $\sigma_0 = \sigma^{\rho_0} \in  \CC(\lambda, \slits; \widehat\R^{n-1})$ of a Hitchin character $\rho_0 \in \mathrm{Hit}_2(S) \subset \Hit(S)$ coming from a discrete homomorphism $\rho \colon \pi_1(S) \to \mathrm{PSL}_2(\R) \subset \PSL$. All triangle invariants $\tau_{abc}^{\rho_0}(s)$ of such a Hitchin character are equal to 0; the easiest way to see this is to apply Lemma~\ref{lem:SymmetriesTripleRatios} and to observe that, for every triangle component of $\widetilde S - \widetilde\lambda$ with vertices $\widetilde s$, $\widetilde s'$ and $\widetilde s''$, there is an element of $\PGL$ coming from an element of $\mathrm{PGL}_2(\R)$ that fixes the flag $\F_{\rho_0}(\widetilde s) \in \Flag$ and exchanges $\F_{\rho_0}(\widetilde s)$ and $\F_{\rho_0}(\widetilde s)$. It therefore follows from the Shearing Cycle Boundary Condition that $\partial \sigma_0=0$, namely that $\sigma_0$ is closed. 

If the rotation invariant triangle data function $\tau \in \R^{2(g-1)(n-1)(n-2)}$ satisfies the conditions of Corollary~\ref{cor:RestrictionTriangleInvariants}, we just showed that there exists $\sigma \in  \CC(\lambda, \slits; \widehat\R^{n-1})$ such that $(\tau, \sigma)$  satisfies the Shearing Cycle Boundary Condition. For $c>0$ sufficiently large, $\sigma + c \sigma_0$ satisfies the Positive Intersection Condition since this property holds for $\sigma_0$ and since the space of transverse measures for $\widehat\lambda$ is finite-dimensional \cite{Kato, Papa}. In addition, the pair $(\tau, \sigma + c \sigma_0)$ satisfies the Shearing Cycle Boundary Condition since $\partial(\sigma + c\sigma_0) = \partial \sigma$, and the Triangle Rotation Condition by choice of  $\tau$. As a consequence, Theorem~\ref{thm:InvariantsHomeomorphism} provides a Hitchin character $\rho\in \Hit(S)$ whose triangle invariant  $\tau^\rho$  is $\tau$, and whose shearing cycle $\sigma^\rho$ is equal to  $ \sigma + c \sigma_0$.
\end{proof}

Lemma~\ref{lem:IntersectionImagesBoundaryMapTheta} and Proposition~\ref{prop:ComputeTwistedTgentCycles} similarly give the following characterization of the shearing cycles of Hitchin characters. 

\begin{cor}
\label{cor:RestrictionShearingCycle}
Suppose that $n>3$. 
For a twisted relative tangent cycle $\sigma \in \CC(\lambda, \slits; \widehat\R^{n-1}) $ and for $a=1$, $2$, \dots, $n-1$, let $\partial\sigma_a$ be the $a$--th component of its boundary $\partial\sigma \colon \{ \text{slits of } \widehat\lambda \} \to \R^{n-1}$. Then, $\sigma$ is the shearing cycle $\sigma^\rho$ of a Hitchin character $\rho \in \Hit(S)$ if and only if $\sigma$ satisfies the Positive Intersection Condition and 
\begin{align*}
&\partial \sigma_{n-1}(s_1^+) = 0 \\
\text{and }
&\partial\sigma_1(s_1^+) = \sum_{a=2}^{n-2} ({\textstyle \frac{a-1}{n-3}} -1) \partial\sigma_a(s_1^+) +   \sum_{a=2}^{n-2} {\textstyle \frac{a-1}{n-3}}\, \partial\sigma_a(s_2^+)+  \sum_{a=2}^{n-2} {\textstyle \frac{a-1}{n-3}}\, \partial\sigma_a(s_3^+)
\end{align*}
whenever $s_1^+$, $s_2^+$ and $s_3^+$ are positive slits of the orientation cover $\widehat \lambda$ that project to the three spikes of the same component $T$ of $S-\lambda$. 

As a consequence, the shearing cycles of Hitchin characters form an open convex  polyhedral cone in a linear subspace of codimension $24(g-1)$ of $ \CC(\lambda, \slits; \widehat\R^{n-1})\cong \R^{18(g-1)(n-1)} $. \qed
\end{cor}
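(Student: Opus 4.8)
The plan is to deduce Corollary~\ref{cor:RestrictionShearingCycle} from Theorem~\ref{thm:InvariantsHomeomorphism} in exactly the same way that Corollary~\ref{cor:RestrictionTriangleInvariants} was deduced, but now tracking the shearing cycle rather than the triangle data function. By Theorem~\ref{thm:InvariantsHomeomorphism}, a twisted relative tangent cycle $\sigma \in \CC(\lambda,\slits;\widehat\R^{n-1})$ is the shearing cycle $\sigma^\rho$ of some Hitchin character $\rho\in\Hit(S)$ if and only if there exists a rotation invariant triangle data function $\tau\in\R^{2(g-1)(n-1)(n-2)}$ such that the pair $(\tau,\sigma)$ lies in $\mathcal P$, i.e.\ such that $(\tau,\sigma)$ satisfies the Shearing Cycle Boundary Condition and $\sigma$ satisfies the Positive Intersection Condition (the Triangle Rotation Condition being automatic from rotation invariance of $\tau$). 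So the first step is to isolate the purely homological constraint: with the notation $\Theta\colon\R^{2(g-1)(n-1)(n-2)}\to\CC(\slits;\R^{n-1})$ and $\partial\colon\CC(\lambda,\slits;\widehat\R^{n-1})\to\CC(\slits;\R^{n-1})$ from the proof of Proposition~\ref{prop:ComputePolytopeDim}, such a $\tau$ exists with $(\tau,\sigma)$ satisfying the Shearing Cycle Boundary Condition precisely when $\partial\sigma \in \mathrm{im}(\Theta)$.

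The second step is then to translate $\partial\sigma\in\mathrm{im}(\Theta)$ into the explicit equations in the statement. This is exactly what Lemma~\ref{lem:ImageTheta} does for the $n>3$ case: $\mathrm{im}(\Theta)$ consists of all $\theta\in\CC(\slits;\R^{n-1})$ with $\theta_{n-1}(s_1)=0$ and $\theta_1(s_1)=\sum_{a=2}^{n-2}(\frac{a-1}{n-3}-1)\theta_a(s_1)+\sum_{a=2}^{n-2}\frac{a-1}{n-3}\theta_a(s_2)+\sum_{a=2}^{n-2}\frac{a-1}{n-3}\theta_a(s_3)$ whenever $s_1,s_2,s_3$ are the spikes of one triangle component of $S-\lambda$. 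Under the identification of the positive slits of $\widehat\lambda$ with the slits of $\lambda$, and recalling that $\partial\sigma_a(s^+)$ restricted to positive slits corresponds to the component $\theta_a$ (with $\partial\sigma_a(s^-)=-\partial\sigma_{n-a}(s^+)$ for the negative slits, so that no information is lost), writing $\theta=\partial\sigma$ gives exactly the displayed relations of Corollary~\ref{cor:RestrictionShearingCycle}. Conversely, if $\sigma$ satisfies these relations and the Positive Intersection Condition, then $\partial\sigma\in\mathrm{im}(\Theta)$, so there is $\tau$ with $(\tau,\sigma)$ in the Shearing Cycle Boundary Condition; adding the Positive Intersection Condition on $\sigma$ puts $(\tau,\sigma)$ in $\mathcal P$, and Theorem~\ref{thm:InvariantsHomeomorphism} produces a Hitchin character with shearing cycle $\sigma$. (Here there is nothing analogous to the ``no impact of positivity'' argument needed in Corollary~\ref{cor:RestrictionTriangleInvariants}, since we are already assuming $\sigma$ satisfies the Positive Intersection Condition.)

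The third step is the dimension count. The shearing cycles of Hitchin characters form the set of $\sigma\in\CC(\lambda,\slits;\widehat\R^{n-1})$ satisfying the Positive Intersection Condition together with the linear conditions $\partial\sigma\in\mathrm{im}(\Theta)$; since the Positive Intersection Condition cuts out an open convex polyhedral cone (it reduces to finitely many linear inequalities, as in the proof of Proposition~\ref{prop:ComputePolytopeDim}), this is an open convex polyhedral cone in the linear subspace $\{\sigma:\partial\sigma\in\mathrm{im}(\Theta)\}$. To compute its codimension, note that $\partial$ is surjective onto $\mathrm{im}(\partial)$, which by Lemma~\ref{lem:ImageBoundaryMap} has codimension $\lfloor\frac{n-1}2\rfloor$ in $\CC(\slits;\R^{n-1})$; pulling back the subspace $\mathrm{im}(\Theta)\cap\mathrm{im}(\partial)$ of $\mathrm{im}(\partial)$ along $\partial$, the codimension of $\{\sigma:\partial\sigma\in\mathrm{im}(\Theta)\}$ in $\CC(\lambda,\slits;\widehat\R^{n-1})$ equals the codimension of $\mathrm{im}(\Theta)\cap\mathrm{im}(\partial)$ in $\mathrm{im}(\partial)$. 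Using Lemma~\ref{lem:IntersectionImagesBoundaryMapTheta} ($\dim(\mathrm{im}(\partial)\cap\mathrm{im}(\Theta))=12(g-1)(n-3)-\lfloor\frac{n-1}2\rfloor$) together with $\dim\mathrm{im}(\partial)=12(g-1)(n-1)-\lfloor\frac{n-1}2\rfloor$ (from Lemma~\ref{lem:ImageBoundaryMap}, as $\CC(\slits;\R^{n-1})\cong\R^{12(g-1)(n-1)}$), this codimension is $12(g-1)(n-1)-12(g-1)(n-3)=24(g-1)$, and since $\dim\CC(\lambda,\slits;\widehat\R^{n-1})=18(g-1)(n-1)$ by Proposition~\ref{prop:ComputeTwistedTgentCycles}, we obtain the stated ambient dimension. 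The only mildly delicate point is bookkeeping the identification of $\mathrm{im}(\partial)$ with a subspace of $\CC(\slits;\R^{n-1})$ and checking that the $\lfloor\frac{n-1}2\rfloor$ relations defining $\mathrm{im}(\partial)$ restrict independently to $\mathrm{im}(\Theta)$ — but this is precisely the content of Lemma~\ref{lem:IntersectionImagesBoundaryMapTheta}, so there is no real obstacle; the proof is essentially an unwinding of results already established, which is why the statement can be given with a \texttt{\textbackslash qed} immediately.
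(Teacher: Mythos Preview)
Your proposal is correct and follows essentially the same approach as the paper, which simply cites Lemma~\ref{lem:IntersectionImagesBoundaryMapTheta} and Proposition~\ref{prop:ComputeTwistedTgentCycles} and appends a \texttt{\textbackslash qed}. You have accurately unpacked the implicit argument: $\sigma$ is a shearing cycle iff $\partial\sigma\in\mathrm{im}(\Theta)$ and the Positive Intersection Condition holds, the former being characterized by Lemma~\ref{lem:ImageTheta}, and the codimension computation via $\dim\mathrm{im}(\partial)-\dim(\mathrm{im}(\partial)\cap\mathrm{im}(\Theta))=24(g-1)$ is exactly right.
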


\begin{cor}
\label{cor:RestrictionShearingCycle3and2}
When $n=3$, a twisted relative tangent cycle $\sigma \in \CC(\lambda, \slits; \widehat\R^{2}) $   is the shearing cycle $\sigma^\rho$ of a Hitchin character $\rho \in \mathrm{Hit}_3(S)$ if and only if $\sigma$ satisfies the Positive Intersection Condition and 
\begin{align*}
\partial \sigma_2(s_1^+) &= 0 \\
\text{and }
\partial \sigma_1(s_1^+) &= \partial \sigma_1(s_2^+) =\partial \sigma_1(s_3^+) =0
\end{align*}
whenever $s_1^+$, $s_2^+$ and $s_3^+$ are positive slits of the orientation cover $\widehat \lambda$ that project to the three spikes of the same component $T$ of $S-\lambda$. 
As a consequence, the shearing cycles of Hitchin characters form an open convex  polyhedral cone in a subspace of codimension $20(g-1)$ of $ \CC(\lambda, \slits; \widehat\R^{2})\cong \R^{36(g-1)}$. 

When $n=2$, a twisted relative tangent cycle $\sigma \in \CC(\lambda, \slits; \widehat\R) $   is the shearing cycle $\sigma^\rho$ of a Hitchin character $\rho \in \mathrm{Hit}_2(S)$ if and only if $\sigma$ is closed and satisfies the Positive Intersection Condition. \qed
\end{cor}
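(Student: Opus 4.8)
The plan is to derive Corollary~\ref{cor:RestrictionShearingCycle3and2} from Theorem~\ref{thm:InvariantsHomeomorphism} by the same mechanism used for Corollary~\ref{cor:RestrictionShearingCycle}, feeding in the $n=3$ and $n=2$ special cases of Lemmas~\ref{lem:ImageBoundaryMap}, \ref{lem:ImageTheta} and \ref{lem:IntersectionImagesBoundaryMapTheta} together with Proposition~\ref{prop:ComputeTwistedTgentCycles}. The key point is that, by Theorem~\ref{thm:InvariantsHomeomorphism}, a twisted relative tangent cycle $\sigma$ is the shearing cycle of some Hitchin character if and only if there is a rotation invariant triangle data function $\tau$ with $(\tau,\sigma)\in\mathcal P$; since the Triangle Rotation Condition is automatic for a rotation invariant $\tau$ and the Positive Intersection Condition depends only on $\sigma$, this happens precisely when $\sigma$ satisfies the Positive Intersection Condition and the restriction of $\partial\sigma$ to the positive slits of $\widehat\lambda$ lies in the image of the map $\Theta$.

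First I would handle the \emph{only if} direction: if $\sigma=\sigma^\rho$, then the Shearing Cycle Boundary Condition together with Lemma~\ref{lem:ExpressThetaTriangleInvariants} identifies $\partial\sigma$ on positive slits with $\Theta(\tau^\rho)$, so it lies in $\mathrm{im}(\Theta)$; the explicit description of $\mathrm{im}(\Theta)$ in Lemma~\ref{lem:ImageTheta} then yields the stated per-triangle boundary relations when $n=3$, and when $n=2$ there are no triangle invariants at all, $\Theta$ is the zero map, and the condition becomes $\partial\sigma=0$, i.e.\ $\sigma$ closed. The Positive Intersection Condition holds in both cases by Corollary~\ref{cor:TransMeasureHasPositiveIntersection}.

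For the \emph{if} direction, given $\sigma$ satisfying the stated conditions, Lemma~\ref{lem:ImageTheta} (resp.\ the triviality of $\Theta$ when $n=2$) produces a rotation invariant $\tau$ with $\Theta(\tau)=\partial\sigma$ on positive slits, so $(\tau,\sigma)$ satisfies the Triangle Rotation and Shearing Cycle Boundary Conditions; combined with the assumed Positive Intersection Condition this places $(\tau,\sigma)$ in $\mathcal P$, and Theorem~\ref{thm:InvariantsHomeomorphism} supplies a Hitchin character $\rho$ with $\sigma^\rho=\sigma$. Note that here, unlike in Corollary~\ref{cor:RestrictionTriangleInvariants}, the Positive Intersection Condition is already part of the hypothesis, so no auxiliary ``add a large multiple of a Fuchsian shearing cycle'' argument is needed. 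The codimension statements then follow from the dimension bookkeeping already carried out inside the proof of Proposition~\ref{prop:ComputePolytopeDim}: for $n=3$ one has $\ker\Theta=0$, so the projection $(\tau,\sigma)\mapsto\sigma$ from the space $\mathcal L$ of pairs satisfying the Shearing Cycle Boundary Condition is injective with image of dimension $\dim\mathcal L=16(g-1)$, that is, of codimension $20(g-1)$ in $\CC(\lambda,\slits;\widehat\R^{2})\cong\R^{36(g-1)}$; for $n=2$ the image is the space of all closed twisted tangent cycles $\CC(\lambda;\widehat\R)$.

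The only part that is not purely mechanical, and which I expect to be the main obstacle, is the bookkeeping in the $n=3$ case: one must carefully combine the description of $\mathrm{im}(\Theta)$ from Lemma~\ref{lem:ImageTheta} with the fact that the boundary of \emph{any} twisted relative tangent cycle automatically lies in $\mathrm{im}(\partial)$ (Lemma~\ref{lem:ImageBoundaryMap}, i.e.\ the global balancing relation $\sum_s\partial\sigma_a(s^+)=\sum_s\partial\sigma_{n-a}(s^+)$), translate both into conditions on the components $\partial\sigma_1$ and $\partial\sigma_2$ at the positive slits of a single triangle, and verify that what results is exactly the list of equalities stated in the corollary and accounts for precisely the claimed codimension $20(g-1)$ via Lemma~\ref{lem:IntersectionImagesBoundaryMapTheta}. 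The $n=2$ case is then immediate and recovers Thurston's parametrization of Teichm\"uller space by positive shear cocycles.
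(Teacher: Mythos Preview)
Your proposal is correct and follows exactly the approach the paper intends: the paper offers no separate argument for this corollary beyond the sentence ``Lemma~\ref{lem:IntersectionImagesBoundaryMapTheta} and Proposition~\ref{prop:ComputeTwistedTgentCycles} similarly give the following characterization,'' and your outline unpacks precisely that, including the observation that $\ker\Theta=0$ when $n=3$ so the projection $(\tau,\sigma)\mapsto\sigma$ is injective on $\mathcal L$. Your remark that the Positive Intersection Condition is already hypothesized, so the Fuchsian-shearing-cycle trick from Corollary~\ref{cor:RestrictionTriangleInvariants} is unnecessary here, is also apt.
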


We conclude this article by giving, in the next two sections, two brief applications of the machinery developed in this article. In particular, these applications require the full generality of geodesic laminations (as opposed to the much simpler case of geodesic laminations with finitely many leaves considered in \cite{BonDre}).

\section{The action of pseudo-Anosov homeomorphisms on the Hitchin component}
\label{bigsect:PseudoAnosov}

Let $\phi \colon S \to S$ be a pseudo-Anosov homeomorphism of the surface $S$. We can use our parametrization of  $\Hit(S)$ to show that the action of $\phi$ on the Hitchin component $\Hit(S)$ is concentrated in a relatively small factor of $\Hit(S)$. This section is only intended as an illustration of the possible applications of the main results of the article;  we are consequently limiting its scope to avoid making an already long article much longer.

The pseudo-Anosov property  of $\phi$ is usually expressed in terms of transverse measured foliations on the surface $S$ \cite{ThuBAMS, FLP}. It will be more convenient to use the point of view of  \cite{CasBlei}, so that the homeomorphism $\phi \colon S \to S$  is (isotopic to) a pseudo-Anosov homeomorphism if  there exist a  geodesic lamination $\lambda^{\mathrm s}$, a transverse measure $\mu^{\mathrm s}$ for $\lambda^{\mathrm s}$, and a number $R>1$ such that, after an isotopy of $\phi$:
\begin{enumerate}
\item each component of the complement of the topological support $\lambda^{\mathrm s}$ of $\mu^{\mathrm s}$ is a topological disk;
\item $\phi(\lambda^{\mathrm s}) = \lambda^{\mathrm s}$; 
\item the pull back $\phi^*(\mu^{\mathrm s})$ of the transverse measure $\mu^{\mathrm s}$ is equal to $R\mu^{\mathrm s}$. 
\end{enumerate}

The homomorphism $\phi \colon S \to S$ acts on the character variety $\mathcal X_{\PSL}(S)$ as $\rho \mapsto \phi_* \circ  \rho$, where $\phi_* \colon \pi_1(S) \to \pi_1(S)$ is any homomorphism induced by $\phi$ (by choosing a path joining the base point to its image under $\phi$). When $\rho\in \mathcal X_{\PSL}(S)$ comes from a Teichm\"uller character of $\mathrm{Hit}_2(S)$, it is immediate that so does $\rho \circ \phi_*$. By connectedness, it follows that the action $\rho \mapsto \rho \circ \phi_*$ respects the Hitchin component $\Hit(S)$. 

Replacing $\phi$ by one of its powers does not significantly change its dynamics. 

\begin{lem}
\label{lem:GoodPowerPseudoAnosov}
There exists an integer $k>0$ and a maximal geodesic lamination $\lambda^+$ containing  $\lambda^{\mathrm s}$ such that $\phi^k(\lambda^+) = \lambda^+$ after isotopy of $\phi^k$. In addition, $\phi^k$ can be chosen so that it respects each slit of $\lambda^+$.
\end{lem}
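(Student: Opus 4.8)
The plan is to produce $\lambda^+$ by completing the stable lamination $\lambda^{\mathrm s}$ to a maximal geodesic lamination in a way that can be controlled under iteration of $\phi$. First I would recall the standard structure theory from \cite{CasBlei}: since $\phi$ is pseudo-Anosov with stable lamination $\lambda^{\mathrm s}$, each component of $S-\lambda^{\mathrm s}$ is a disk, either an ideal polygon with $p\geq 3$ spikes or a once-punctured-disk-type region around a puncture (but $S$ is closed here, so each component is a crowned disk, i.e.\ an ideal $p$-gon for some $p\geq 3$). To complete $\lambda^{\mathrm s}$ to a maximal lamination, one must add finitely many bi-infinite leaves inside each complementary ideal polygon, subdividing each ideal $p$-gon into $p-2$ ideal triangles. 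The key point is that $\phi$ permutes the components of $S-\lambda^{\mathrm s}$ (since $\phi(\lambda^{\mathrm s})=\lambda^{\mathrm s}$), and hence permutes their spikes; moreover $\phi$ permutes the finitely many ``diagonals'' one could choose in each polygon only up to the combinatorial choice made, so one cannot expect $\phi(\lambda^+)=\lambda^+$ on the nose, only after passing to a power and after isotopy.

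The main steps, in order, are: (1) Since there are only finitely many components of $S-\lambda^{\mathrm s}$ and $\phi$ permutes them, replace $\phi$ by a power so that $\phi$ fixes each complementary component $P$ setwise; since each such $P$ is an ideal polygon with finitely many spikes and $\phi|_P$ permutes the spikes, take a further power so that $\phi$ fixes each spike of each $P$, hence acts on $P$ isotopic (rel the ideal vertices / spikes) to the identity. This already gives the last sentence of the lemma: $\phi^k$ respects each slit of $\lambda^+$ once $\lambda^+$ is constructed, because the slits of $\lambda^+$ are exactly the spikes of the complementary triangles, and these are either spikes of the original polygons $P$ (fixed by construction) or newly created spikes lying along the added diagonals, which are also determined by the fixed combinatorial data. (2) In each ideal polygon $P$, choose a triangulation by diagonals; let $\lambda^+$ be $\lambda^{\mathrm s}$ together with the geodesic representatives of these diagonals (with respect to an auxiliary negatively curved metric). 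This $\lambda^+$ is a maximal geodesic lamination containing $\lambda^{\mathrm s}$. (3) Since $\phi^k|_P$ is isotopic to the identity rel spikes, $\phi^k$ carries the chosen diagonal system of $P$ to an isotopic diagonal system; because a homeomorphism of an ideal polygon fixing all ideal vertices is isotopic to the identity rel the vertices, $\phi^k(\lambda^+)$ is isotopic to $\lambda^+$ through an isotopy supported in the complementary polygons (hence fixing $\lambda^{\mathrm s}$ pointwise up to isotopy). Straightening geodesically, $\phi^k(\lambda^+)=\lambda^+$ after isotopy of $\phi^k$.

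\textbf{Expected main obstacle.}
The delicate point is step (1)–(3): one must be careful that the power $k$ and the isotopy can be chosen \emph{simultaneously} so that (a) $\phi^k$ fixes every complementary polygon and every one of its spikes, and (b) the same isotopy that straightens $\phi^k(\lambda^+)$ to $\lambda^+$ does not disturb $\lambda^{\mathrm s}$ (which must remain invariant with its transverse measure scaled by $R^k$). Since the isotopy in step (3) is supported inside the complementary disks $S-\lambda^{\mathrm s}$, it is disjoint from $\lambda^{\mathrm s}$, so condition (b) is automatic; the real content is the finiteness and the ``isotopic to identity rel ideal vertices implies isotopic to identity'' fact for ideal polygons, which is classical. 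I would also need to check that $\lambda^+$ being \emph{maximal} forces exactly a triangulation of each $P$ — this is precisely the characterization recalled in \S\ref{bigsect:GeodLam} that maximality is equivalent to all complementary components being triangles, so adding $p-3$ diagonals to each ideal $p$-gon is both necessary and sufficient. No genuinely hard estimate is involved; the subtlety is purely the bookkeeping of permutations of spikes and the choice of power, which I expect to dispatch in a short paragraph.
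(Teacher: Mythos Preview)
Your proposal is correct and follows essentially the same route as the paper: pass to a power of $\phi$ fixing every spike of $\lambda^{\mathrm s}$ (since there are only finitely many and $\phi$ permutes them), then observe that any maximal $\lambda^+ \supset \lambda^{\mathrm s}$ is obtained by adding diagonals joining these now-fixed spikes, so $\phi^k$ can be isotoped within the complementary polygons to preserve $\lambda^+$ and each of its slits. The paper's proof is just a terser version of exactly this argument.
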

\begin{proof}
The homeomorphism $\phi$ permutes the finitely many slits of $\lambda^{\mathrm s}$. Therefore, there exists $k$ such that $\phi^k$ respects each slit.

Let $\lambda^+$ be any maximal geodesic lamination containing $\lambda^{\mathrm s}$. Because each component of $S-\lambda^{\mathrm s}$ is a topological disk, or more precisely an ideal polygon, $\lambda^+$ is obtained from $\lambda^{\mathrm s}$  by adding finitely many diagonal leaves joining spikes of these polygons. Since $\phi^k$ respects each slit of $\lambda^{\mathrm s}$, namely each spike of $S-\lambda^{\mathrm s}$, it can easily be isotoped to respect these diagonal leaves (as well as $\lambda^{\mathrm s}$). By construction, $\phi^k$  respects  each slit of $\lambda^+$.
\end{proof}

We can now use the maximal geodesic lamination $\lambda^+$ to construct a parametrization of the Hitchin component $\Hit(S)$ by the polytope $ \mathcal P \subset \R^{6(g-1)(n-1)(n-2)} \times \CC(\lambda^+, \slits; \widehat \R^{n-1})$ as in Theorem~\ref{thm:InvariantsHomeomorphism}. 

Because $\phi^k$ respects the geodesic lamination $\lambda^+$, it acts on $ \CC(\lambda^+, \slits;  \widehat \R^{n-1})$ as follows. Lift $\phi$ to a homeomorphism $\widetilde \phi \colon \widetilde S \to \widetilde S$ of the universal cover $\widetilde S$; in particular, $\widetilde \phi^k$ respects the pre-image $\widetilde\lambda^+$ of $\lambda^+$. Then, using the point of view of \S \ref{subsect:RelTgtCyclesDifferentView}, define $\phi_{\bullet}^k \colon  \CC(\lambda^+, \slits;  \widehat \R^{n-1}) \to \CC(\lambda^+, \slits;  \widehat \R^{n-1}) $ by the property that $\phi_\bullet^k (\alpha)(T, T')= \alpha \bigl( \widetilde \phi^k(T), \widetilde \phi^k(T') \bigr)$ for any two components $T$, $T'$ of $\widetilde S - \widetilde\lambda^+$. 

\begin{prop}
\label{prop:ActionPseudoAnosov}
For the homeomorphism 
$$\Phi \colon\Hit(S) \to \mathcal P \subset \R^{6(g-1)(n-1)(n-2)} \times \CC(\lambda^+, \slits; \widehat \R^{n-1})$$ 
provided by Theorem~{\upshape\ref{thm:InvariantsHomeomorphism}}, the action of $\phi^k$ on $\Hit(S)$ corresponds to the restriction to $\mathcal P$ of the product of the identity $\Id_{\R^{6(g-1)(n-1)(n-2)}}$ and of the action of $\phi^k$ on $ \CC(\lambda^+, \slits; \widehat \R^{n-1})$. 
\end{prop}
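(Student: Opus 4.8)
\textbf{Proof plan for Proposition~\ref{prop:ActionPseudoAnosov}.}
The plan is to chase the two invariants --- triangle invariants and shearing cycle --- through the action $\rho \mapsto \rho\circ\phi^k_*$, using the naturality of all the constructions involved. The key point is that everything in the definition of $\Phi$ (the flag curve $\mathcal F_\rho$, the slithering maps $\Sigma_{gg'}$, the triple ratios, the double ratios) is canonically associated to $\rho$ and to the geometric data $(\lambda^+, \widetilde S)$, and hence transforms equivariantly when we precompose $\rho$ with $\phi^k_*$ and simultaneously move the geometric data by $\widetilde\phi^k$. First I would record the basic equivariance of the flag curve: since $\widetilde\phi^k$ is a lift of a homeomorphism homotopic to the identity-twisted-by-$\phi^k_*$, the flag curve of $\rho' = \rho\circ\phi^k_*$ satisfies $\mathcal F_{\rho'}(\widetilde x) = \mathcal F_\rho\bigl(\widetilde\phi^k(\widetilde x)\bigr)$ up to the ambiguity of the extension of $\widetilde\phi^k$ to $\partial_\infty\widetilde S$ --- more precisely $\mathcal F_{\rho\circ\phi^k_*}$ and $\mathcal F_\rho\circ\widetilde\phi^k|_{\partial_\infty}$ both satisfy the characterizing properties of Proposition~\ref{prop:FlagCurve} for $\rho\circ\phi^k_*$ (H\"older continuity is preserved since $\widetilde\phi^k$ extends to a H\"older homeomorphism of $\partial_\infty\widetilde S$, and the equivariance matches up because $\widetilde\phi^k\gamma\widetilde\phi^{-k}$ acts as $\phi^k_*(\gamma)$), so by uniqueness they coincide.

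Next I would feed this into the two invariants. For the triangle invariants: given a spike $s$ of $\lambda^+$, choose a lift $\widetilde T$ with vertices $\widetilde s, \widetilde s', \widetilde s''$; then $\widetilde\phi^k(\widetilde T)$ is a lift of the spike $\phi^k(s) = s$ (here we use the crucial property from Lemma~\ref{lem:GoodPowerPseudoAnosov} that $\phi^k$ fixes every slit of $\lambda^+$), with vertices $\widetilde\phi^k(\widetilde s)$, etc. Since triple ratios are $\PGL$-invariant and $\mathcal F_{\rho\circ\phi^k_*}(\widetilde s) = \mathcal F_\rho(\widetilde\phi^k\widetilde s)$, we get $\tau^{\rho\circ\phi^k_*}_{abc}(s) = \tau^\rho_{abc}(s)$; that is, $\phi^k$ acts as the identity on the triangle-invariant factor $\R^{6(g-1)(n-1)(n-2)}$. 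For the shearing cycle: here I would use the description of \S\ref{subsect:RelTgtCyclesDifferentView}, where $\sigma^\rho$ is the function $(T,T')\mapsto \sigma^\rho(T,T')\in\R^{n-1}$ on ordered pairs of components of $\widetilde S-\widetilde\lambda^+$. The shear vector $\sigma^\rho(T,T')$ is built from the double ratio $D_a\bigl(\mathcal F_\rho(x),\mathcal F_\rho(y),\mathcal F_\rho(z),\Sigma_{gg'}(\mathcal F_\rho(z'))\bigr)$. Applying $\widetilde\phi^k$ moves $g,g'$ to $\widetilde\phi^k(g),\widetilde\phi^k(g')$ and the vertices correspondingly; the one nontrivial check is that the slithering maps transform correctly, i.e.\ $\Sigma^{\rho\circ\phi^k_*}_{gg'}$ equals (conjugate of) $\Sigma^\rho_{\widetilde\phi^k(g)\widetilde\phi^k(g')}$. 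This follows from the uniqueness clause of Proposition~\ref{prop:Slithering}: the family $g,g'\mapsto \Sigma^\rho_{\widetilde\phi^k(g)\widetilde\phi^k(g')}$ satisfies Conditions (1)--(3) of that Proposition with respect to $\rho\circ\phi^k_*$ (H\"older continuity is preserved because $\widetilde\phi^k$ is bi-H\"older on the space of leaves), hence coincides with $\Sigma^{\rho\circ\phi^k_*}_{gg'}$. Since double ratios are $\PGL$-invariant, this gives $\sigma^{\rho\circ\phi^k_*}(T,T') = \sigma^\rho\bigl(\widetilde\phi^k(T),\widetilde\phi^k(T')\bigr) = \phi^k_\bullet(\sigma^\rho)(T,T')$, which is exactly the claimed formula.

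The main obstacle I anticipate is the H\"older-continuity bookkeeping needed to invoke the uniqueness statements: one must check that a lift $\widetilde\phi^k\colon\widetilde S\to\widetilde S$ of a pseudo-Anosov (more precisely, of a power respecting $\lambda^+$) induces a H\"older-bicontinuous homeomorphism of $\partial_\infty\widetilde S$ and of the space of leaves of $\widetilde\lambda^+$, so that post-composing the flag curve or the slithering family by it preserves the H\"older regularity required in Propositions~\ref{prop:FlagCurve} and \ref{prop:Slithering}. This is standard (quasi-isometries of Gromov-hyperbolic spaces extend to bi-H\"older boundary maps, see e.g.\ \cite{GhysHarp}), but it is the one place where a genuine estimate, rather than pure formal naturality, is needed. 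A secondary, purely notational point is to handle correctly the basepoint/path ambiguity in the definition of $\phi^k_*\colon\pi_1(S)\to\pi_1(S)$: different choices change $\rho\circ\phi^k_*$ by conjugation, hence give the same Hitchin character, and the corresponding changes in $\widetilde\phi^k$ are by deck transformations, which is absorbed by the $\PGL$-invariance of triple and double ratios --- so the final formula is well defined. Once these regularity and basepoint points are dispatched, the proposition follows by assembling the two computations above with Theorem~\ref{thm:InvariantsHomeomorphism}.
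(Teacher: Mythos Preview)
Your proposal is correct and follows essentially the same approach as the paper: establish the relation $\mathcal F_{\rho\circ\phi_*^k} = \mathcal F_\rho\circ\widetilde\phi^k$ on $\partial_\infty\widetilde S$, then trace through the definitions of the triangle invariants and the shearing cycle, using that $\phi^k$ fixes each slit of $\lambda^+$. The paper's proof is considerably terser---it simply asserts the flag-curve relation and then says the conclusion ``immediately follows'' from the definitions---whereas you spell out the intermediate step for the slithering maps via the uniqueness clause of Proposition~\ref{prop:Slithering} and flag the H\"older-regularity and basepoint issues; these elaborations are sound but the paper regards them as routine.
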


\begin{proof} For $\rho \in \Hit(S)$, we need to compare the triangle invariants $\tau_{abc}^{\rho \circ\phi_*^k}(s)$ and the  shearing cycle $\sigma^{\rho \circ\phi_*^k} \in \CC(\lambda^+, \slits; \widehat \R^{n-1})$ of $\rho \circ\phi_*^k$ to those of $\rho$. 

Lift $\phi$ to a homeomorphism $\widetilde \phi \colon \widetilde S \to \widetilde S$ of the universal cover $\widetilde S$, which is equivariant with respect to $\phi_* \colon \pi_1(S) \to \pi_1(S)$ in the sense that $\widetilde\phi(\gamma x) = \phi_*(\gamma)\widetilde \phi(x)$ for every $x\in \widetilde S$ and $\gamma \in \pi_1(S)$. The flag maps $\F_{\rho}$ and $\F_{\rho \circ \phi_*^k} \colon \partial_\infty \widetilde S \to \Flag$ are then related by the property that $\F_{\rho \circ \phi_*^k} = \F_\rho \circ \widetilde \phi^k$. Going back to the definitions of these invariants and remembering that $\phi^k$ respects each slit of $\lambda^+$, it immediately follows that $\rho$ and $\rho \circ \phi_*^k$ have the same triangle invariants $\tau_{abc}^{\rho \circ\phi_*^k}(s)=\tau_{abc}^{\rho}(s)$, and that $\sigma^{\rho \circ\phi_*^k}= \phi_\bullet^k(\sigma^{\rho})$. 
\end{proof}

This is better described in terms of the map  $\pi \colon \Hit(S) \to  \R^{6(g-1)(n-1)(n-2)} $ corresponding to the projection of $\Hit(S)\cong  \mathcal P $ to the first factor of $ \R^{6(g-1)(n-1)(n-2)} \times \CC(\lambda^+, \slits; \widehat \R^{n-1})$. Namely, $\pi$ associates its triangle invariants $\tau_{abc}^{\rho}(s)$ to a Hitchin character $\rho \in \Hit(S)$. The image $\mathcal L = \pi\bigl( \Hit(S) \bigr)$ is the vector space of dimension $2(g-1)(n-1)(n-2)- \lfloor \frac{n-1}2 \rfloor$ determined by Corollary~\ref{cor:RestrictionTriangleInvariants}.  This defines a fibration $\pi \colon \Hit(S) \to \mathcal L$, where the fiber $\pi^{-1}(\tau)$ above each $\tau \in \mathcal L$ is a convex polyhedral cone of dimension $3(g-1)(n-1)+  \lfloor \frac{n-1}2 \rfloor$ in $ \CC(\lambda^+, \slits; \widehat \R^{n-1})\cong \R^{18(g-1)(n-1)}$. 

Then, Proposition~\ref{prop:ActionPseudoAnosov} states that the action of $\phi^k$ on $\Hit(S)$ respects each fiber $\pi^{-1}(\tau)$, and acts on each of these polyhedral cones $\pi^{-1}(\tau) \subset  \CC(\lambda^+, \slits; \widehat \R^{n-1})$ by restriction of $\phi_{\bullet}^k \colon  \CC(\lambda^+, \slits;  \widehat \R^{n-1}) \to \CC(\lambda^+, \slits;  \widehat \R^{n-1}) $. 

In $U$ is a train track neighborhood of $\lambda^+$, the endomorphism $\phi_{\bullet}^k$ of $  \CC(\lambda^+, \slits;  \widehat \R^{n-1}) \cong H_1(U, \delv U; \widetilde \R^{n-1})$ can be explicitly explicitly described in terms of a classical object associated to the pseudo-Anosov homeomorphism $\phi$, namely the incidence matrix of $\phi$ with respect to the train track $U$ (see for instance \cite[Exp. 9-10]{FLP}). However, this would take us beyond the intended scope of this article.

\section{Length functions of measured laminations}
\label{bigsect:LengthMeasLam}

One of the motivations for this article is to extend to the Hitchin component the  differential calculus of lengths of simple closed curves that was developed for hyperbolic geometry in \cite{Thu0, Thu1, Bon97b, Bon96}.

For a Hitchin character $\rho\in\Hit(S)$, the length functions $\ell_1^\rho$, $\ell_2^\rho$, \dots, $\ell_{n-1}^\rho$, of \cite{Dre1} and \S \ref{subsect:LengthFunctions} can be restricted to Thurston's space $\ML(S)$ of measured geodesic laminations. There is just a little subtlety, which is that the geodesic currents discussed in \S \ref{subsect:LengthFunctions}  form  a completion of the set of homotopy classes of \emph{oriented} closed curves, whereas $\ML(S)$ completes the set of homotopy classes of \emph{unoriented} simple closed curves.

An unoriented simple closed curve $\gamma$ in $S$ defines two oriented curves $\gamma^*$ and $\gamma^{**}$, one for each orientation. Then there is a unique continuous embedding $\iota \colon \ML(S) \to \CC(S)$ that is homogeneous, in the sense that $\iota(t\mu) = t\iota(\mu)$ for every $\mu \in \ML(S)$ and every $t>0$, and such that $\iota(\gamma) = \frac12(\gamma^* + \gamma^{**})$ for every simple closed curve $\gamma \in \ML(S)$; see for instance \cite{Bon88}. Combining this embedding with $\ell^\rho_a \colon \CC(S) \to \R$ defines, for each $a=1$, $2$, \dots, $n-1$, a  length function $\ell^\rho_a \colon \ML(S) \to \R$. The definition, and in particular the introduction of the factor $\frac12$, is designed so that when $n=2$ the  function $\ell_1^\rho$ coincides with Thurston's length function $\ell^\rho \colon \ML(S) \to \R$ for the hyperbolic metric on $S$ associated to $\rho \in \mathrm{Hit}_2(S)$, which plays a fundamental r\^ole in hyperbolic geometry; see for instance  \cite{ThuBAMS, FLP, Thu0, Bon88, Mirza} for a few applications of this length function~$\ell^\rho$. 

Because $\ell_a^\rho(\gamma^{**}) = \ell_{n-a}^\rho(\gamma^*)$, the length functions $\ell_a^\rho$ and $\ell_{n-a}^\rho$ coincide on $\ML(S)$ so that, in practice, we have only $\lfloor \frac n2 \rfloor$  length functions $\ell^\rho_a \colon \ML(S) \to \R$.

The space $\ML(S)$ of measured geodesic laminations is homeomorphic to $\R^{6(g-1)}$, but admits no differentiable structure that is respected by the action of the mapping class group. As a consequence, we cannot use the standard concepts of differential calculus on this space.

However, $\ML(S)$ is naturally endowed with a  piecewise integral linear structure; this means that it admits an atlas locally modelling it over $\R^{6(g-1)}$ where the coordinate changes are piecewise linear and where the linear pieces of these coordinate changes have integer coefficients \cite{Thu0, PenH}. In particular, because  a piecewise linear map does have a tangent map, a consequence of the piecewise linear structure is that $\ML(S)$ admits a well-defined tangent space $T_\mu \ML(S)$ at each point $\mu\in \ML(S)$. 

Each tangent space $T_\mu \ML(S)$ is homeomorphic to $\R^{6(g-1)}$ and is homogeneous, in the sense that there is a well defined multiplication of tangent vectors by non-negative numbers, but it is not always a vector space. Indeed, there exists points $\mu\in \ML(S)$ where the tangent space $T_\mu \ML(S)$ admits no vector space structure which is respected by all coordinate charts; a typical example of such points  are the positive real multiples of simple closed curves, which are dense in $\ML(S)$. Conversely, at a measured geodesic lamination $\mu$ whose support is a maximal geodesic laminations, the piecewise integral linear structure does define a natural vector space structure on the tangent space $T_\mu \ML(S)$; these $\mu$ form a subset of full measure in $\ML(S)$.  See \cite{Thu1} for instance.

\begin{thm}[{\cite[\S 3.2]{Dre1}}]
\label{thm:DerivativeLengthMLexists}
For a Hitchin character $\rho\in \Hit (S)$ and for $a=1$, $2$, \dots, $\lfloor \frac n2 \rfloor$, the length function $\ell^\rho_a \colon \ML(S) \to \R$ admits a tangent map $T_\mu \ell^\rho_a \colon T_\mu \ML(S) \to \R$ at each $\mu\in \ML(S)$,  in the following sense. For $\mu\in \ML(S)$ and $v\in T_\mu\ML(S)$, let $t\mapsto \alpha_t$ be a curve in $ \mathcal{ML}(S)$ such that $\alpha_0=\mu$ and the  right-hand-side tangent derivative $\frac {d\ }{dt^+}\alpha_t{}_{|t=0}$ exists and is equal to $v$, then  $\frac {d\ }{dt^+} \ell_r^i(\alpha_t)_{|t=0} = T_{\mu} \ell^\rho_a( v )\in \R$.  \qed
\end{thm}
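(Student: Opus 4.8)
\textbf{Proof proposal for Theorem~\ref{thm:DerivativeLengthMLexists}.}

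The plan is to reduce the statement, via the machinery of this paper, to the \emph{linearity on faces} of the shearing parametrization, and then to invoke Theorem~\ref{thm:ShearingAndLength} together with the piecewise-linear description of $\ML(S)$. First I would fix a maximal geodesic lamination $\lambda$ whose support contains that of $\mu$ (possible since every geodesic lamination is contained in a maximal one); then $\mu \in \ML(S)$ defines a transverse measure for $\widehat\lambda$, and nearby measured laminations whose support is carried by a fixed train track neighborhood $U$ of $\lambda$ define, through the homological interpretation of \S\ref{subsect:SpaceTangentCycles}, points in the cone of transverse measures inside $\CC(\widehat\lambda; \R) \cong H_1(\widehat U; \R)$. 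The point of the piecewise integral linear structure of $\ML(S)$ (\cite{Thu0, PenH}) is precisely that, in a neighborhood of $\mu$, $\ML(S)$ is a finite union of polyhedral cones each of which embeds linearly into some such $H_1(\widehat U; \R)$ via a choice of carrying train track; the tangent space $T_\mu \ML(S)$ is correspondingly the union of these linear pieces, glued along the cone over $\mu$. So a curve $t \mapsto \alpha_t$ with $\alpha_0 = \mu$ and right-derivative $v$ eventually enters one such cone, and there $t \mapsto [\alpha_t] \in H_1(\widehat U; \R)$ has right-derivative equal to the image $[v]$ of $v$ under the corresponding linear chart.

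Next I would invoke the key linearity input: by Theorem~\ref{thm:ShearingAndLength}, for the Hitchin character $\rho$ with shearing cycle $\sigma^\rho$, and for any tangent cycle $\alpha \in \CC(\widehat\lambda;\R) \cong H_1(\widehat U;\R)$, one has $\ell_a^\rho(\alpha) = [\alpha]\cdot[\sigma_a^\rho]$, the algebraic intersection of $[\alpha] \in H_1(\widehat U;\R)$ with the fixed class $[\sigma_a^\rho] \in H_1(\widehat U,\delv\widehat U;\R)$. This is a \emph{linear} function of $[\alpha]$. Since the embedding $\iota\colon \ML(S) \to \CC(S)$ restricts, on the cone of measures supported in $U$, to the obvious linear inclusion $\CC(\widehat\lambda;\R) \hookrightarrow \CH(S)$ up to the homogeneity factor built into $\iota$ (here one must track the factor $\tfrac12$ and the fact that $\ell_a^\rho$ and $\ell_{n-a}^\rho$ agree on unoriented laminations, as noted just before the theorem), the composite $\ell_a^\rho\colon \ML(S) \to \R$ is, on each polyhedral piece near $\mu$, the restriction of the linear functional $[\,\cdot\,]\cdot[\sigma_a^\rho]$ pulled back through the linear chart. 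Therefore on each such piece $\ell_a^\rho$ is linear, hence trivially has a tangent map there; explicitly, $\frac{d}{dt^+}\ell_a^\rho(\alpha_t)_{|t=0} = [v]\cdot[\sigma_a^\rho]$, which depends only on $v$ and not on the chosen curve. Defining $T_\mu\ell_a^\rho(v)$ to be this common value gives a well-defined map $T_\mu \ML(S) \to \R$, and it is automatically linear on each face of $T_\mu\ML(S)$.

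The one subtlety — and the main obstacle — is \emph{consistency across faces}: different local carrying train tracks $U, U'$ give different linear charts, and one must check that the values $[v]\cdot[\sigma_a^\rho]$ computed in the two charts agree for tangent vectors $v$ lying in the common boundary face (the cone over $\mu$). This follows because both computations compute the same thing intrinsically, namely the right-derivative of the continuous function $\ell_a^\rho\circ\iota$ on $\ML(S)$, which exists and is chart-independent once we know it exists in \emph{one} chart; but to make this rigorous one should argue that the function $\mu' \mapsto \ell_a^\rho(\iota(\mu'))$ is continuous on $\ML(S)$ (immediate from Theorem~\ref{thm:LengthFunctions}, since $\iota$ is continuous and $\ell_a^\rho\colon \CH(S)\to\R$ is continuous linear) and that its restriction to each polyhedral piece is linear, so that the one-sided directional derivatives along admissible curves depend only on the tangent vector. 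I would also need to recall the standard fact (e.g. from \cite{Thu0, PenH}) that every $v\in T_\mu\ML(S)$ is realized as the right-derivative of some curve $t\mapsto\alpha_t$ staying within a single polyhedral cone for small $t>0$, which is what lets the argument see only the linear behavior. Modulo these standard PL-structure facts, the proof is then a direct assembly of Theorem~\ref{thm:ShearingAndLength}, Theorem~\ref{thm:LengthFunctions}, and Proposition~\ref{prop:HomologyTangentCycles}.
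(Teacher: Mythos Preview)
Your argument has a genuine gap at its central step. You claim that on each polyhedral piece of $\ML(S)$ near $\mu$ (the cone of measured laminations carried by a fixed train track $U$) the function $\ell_a^\rho$ coincides with the linear functional $[\,\cdot\,]\cdot[\sigma_a^\rho]$. But Theorem~\ref{thm:ShearingAndLength} does \emph{not} say this: it computes $\ell_a^\rho$ only on $\CC(\widehat\lambda;\R)$, the space of tangent cycles for the fixed lamination $\widehat\lambda$. A nearby measured lamination $\alpha_t$ carried by $U$ generally has support different from $\lambda$; as a H\"older geodesic current it is \emph{not} the tangent cycle for $\widehat\lambda$ that happens to have the same edge weights on $U$, and there is no reason for their $\ell_a^\rho$ values to agree. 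So you have not established that $t\mapsto \ell_a^\rho(\alpha_t)$ is linear (or even that its right derivative exists), and the whole deduction collapses.

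The proof in \cite{Dre1}, as summarized immediately after the statement, proceeds quite differently and avoids this issue: it uses the analytic interpretation from \cite{Bon97a, Bon97b} that the difference quotient $\frac1t(\alpha_t-\mu)$ converges to $v$ \emph{in the H\"older topology} of $\CH(S)$, together with the continuity (indeed linearity) of $\ell_a^\rho\colon \CH(S)\to\R$ for that topology; the existence of $T_\mu\ell_a^\rho(v)=\ell_a^\rho(v)$ then follows at once. Theorem~\ref{thm:ShearingAndLength} enters only afterwards, in Theorem~\ref{thm:DerivativeLengthMLFormula}, to compute the value $\ell_a^\rho(v)$ once $v$ is known to be a tangent cycle for $\lambda$---which is a statement about the tangent vector, not about the nearby laminations $\alpha_t$ themselves. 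In short, you have inverted the logical order: the intersection formula describes the tangent map after its existence is known, but cannot by itself produce that existence.
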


The  proof of Theorem~\ref{thm:DerivativeLengthMLexists}  relies on two key ingredients: the analytic interpretation \cite{Bon97a, Bon97b} of tangent vectors $v\in T_\mu \ML(S)$ as a certain type of  H\"older geodesic currents as in \S \ref{subsect:LengthFunctions}; and the continuity of the length function $\ell_a^\rho \colon \CH(S) \to \R$  for the H\"older topology, proved in \cite{Dre1}. In particular, $T_{\mu} \ell^\rho_a( v )$ is  equal to the $a$--th length $\ell_a^\rho(v)$ of the H\"older geodesic current $v\in \CH(S)$ associated to $v\in T_\mu\ML(S)$. 

The results of the current paper, and in particular Theorem~\ref{thm:ShearingAndLength},  provide a description of the tangent map $T_{\mu} \ell^\rho_a$ on the faces of $T_\mu \ML(S)$. 

This is based on a more combinatorial interpretation, also developed in  \cite{Bon97b, Bon97a}, of tangent vectors $v\in T_\mu \ML(S)$ as tangent cycles for geodesic laminations $\lambda$ containing the support $\lambda_\mu$ of $\mu$; these tangent cycles must satisfy a certain positivity condition (unrelated to the Positive Intersection Condition of \S \ref{subsect:ConstraintsInvariants}). This decomposes  the tangent space $ T_\mu \ML(S)$  into a family of cones $F_\lambda$, indexed by  geodesic laminations $\lambda$  containing the support $\lambda_\mu$ of $\mu$, where $F_\lambda$ consists of  those tangent vectors $v\in T_\mu \ML(S)$ that can be described as tangent cycles for $\lambda$. In particular, each $F_\lambda$ is naturally identified to  a convex polyhedral cone in the vector space $\CC(\lambda;\R)$ of all tangent cycles for $\lambda$, and the partial vector space structure induced on $F_\lambda$ by $\CC(\lambda;\R)$ is compatible with the piecewise  linear structure of $\ML(S)$. The $F_\lambda$  are the \emph{faces} of $ T_\mu \ML(S)$ for the piecewise  linear structure of $\ML(S)$. See \cite{Thu1} for a slightly different approach. 
  
 In the generic case where the support $\lambda_\mu$ of $\mu \in \ML(S)$ is maximal there is only one face  in $T_\mu \ML(S)$, namely $F_{\lambda_\mu}$. This face $F_{\lambda_\mu}$ is equal to the whole vector space $\CC(\lambda_\mu; \R)$ of tangent cycles for $\lambda_\mu$.

Because of the positivity condition involved in the interpretation of tangent vectors  $v\in T_\mu \ML(S)$ as tangent cycles for geodesic laminations, 
 it is quite possible that different geodesic laminations $\lambda$ and $\lambda'$ define the same face $F_\lambda= F_{\lambda'}$. 
The correspondence $\lambda \mapsto F_\lambda$ can be made bijective by restricting attention to chain recurrent geodesic laminations \cite{Thu1, Bon97a}. Instead, we will focus on the case where the geodesic lamination $\lambda$ is maximal, as it is better adapted to our purposes. Every geodesic lamination $\lambda'$ is contained in a maximal geodesic lamination $\lambda$, so that every face of  $T_\mu \ML(S)$ is contained in a face $F_\lambda$ associated to a maximal geodesic lamination $\lambda$. Note that, although $\lambda$ is maximal, the dimension of the associated face $F_\lambda$ may be significantly smaller than the dimension $6(g-1)$ of  $T_\mu \ML(S)$.

\begin{thm}
\label{thm:DerivativeLengthMLFormula}

The tangent map $T_\mu \ell^\rho_a \colon T_\mu \ML(S) \to \R$ is linear on each face of $T_\mu \ML(S)$. 

More precisely, if the face $F_\lambda \subset T_\mu \ML(S)$ is associated to a maximal geodesic lamination $\lambda$, if we interpret the tangent vector $v\in F_\lambda$ as a tangent cycle for  $\lambda$, and if $\sigma^\rho \in \CC(\lambda, \slits; \widehat\R^{n-1})$ is the shearing cycle of $\rho$, then 
$$  
T_{\mu} \ell^\rho_a( v ) = [\sigma_a^\rho] \cdot [v]
$$
where, as in {\upshape \S \ref{subsect:RelativeHomologyTangentCycles}} and {\upshape \S \ref{subsect:ShearingAndLength}}, the dot $\cdot$ denotes the algebraic intersection number in a train track neighborhood $\widehat U$ of the orientation cover $\widehat\lambda$ of $\lambda$, $[\sigma_a^\rho] \in H_1(\widehat U, \delv \widehat U; \R)$ is the $a$--th component of the twisted relative homology class $[\sigma^\rho] \in H_1(U, \delh U; \widetilde \R^{n-1}) \subset H_1(\widehat U, \delh \widehat U; \R^{n-1})$ defined by  $\sigma^\rho\in \CC(\lambda, \slits; \widehat \R^{n-1})$, and $[v] \in H_1(\widehat U; \R)$ is the homology class represented by $v\in \CC(\lambda; \R) \subset \CC(\widehat\lambda; \R)$. 

\end{thm}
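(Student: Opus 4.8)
The plan is to recognize Theorem~\ref{thm:DerivativeLengthMLFormula} as a direct consequence of the identification of $T_\mu\ell^\rho_a(v)$ with the length $\ell^\rho_a(v)$ of the H\"older geodesic current attached to $v$ (the last sentence of the discussion of Theorem~\ref{thm:DerivativeLengthMLexists}), combined with the Length and Intersection Formula of Theorem~\ref{thm:ShearingAndLength}. So the proof is essentially an assembly of results already proved, and the only genuine work is checking that the various interpretations of $v$ match up.

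First I would recall, from \cite{Bon97a, Bon97b}, the two descriptions of a tangent vector $v \in F_\lambda \subset T_\mu\ML(S)$: the analytic one, as a transverse H\"older distribution for the geodesic foliation, hence a H\"older geodesic current $v \in \CH(S)$; and the combinatorial one, as a tangent cycle $v \in \CC(\lambda;\R)$ for the maximal geodesic lamination $\lambda$. The key compatibility, which is exactly the content of the embedding $\CC(\widehat\lambda;\R)\subset\CH(S)$ discussed at the start of \S\ref{subsect:ShearingAndLength}, is that these two descriptions produce the same element of $\CH(S)$; under the Ruelle--Sullivan construction the tangent cycle $v$ is sent to the de Rham current whose homology class is the class $[v]\in H_1(\widehat U;\R)$ of Proposition~\ref{prop:HomologyTangentCycles}. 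Granting this, Theorem~\ref{thm:DerivativeLengthMLexists} gives $T_\mu\ell^\rho_a(v) = \ell^\rho_a(v)$ where the right-hand side is the length of $v$ viewed as a tangent cycle in $\CC(\widehat\lambda;\R)$.

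Then I would simply invoke Theorem~\ref{thm:ShearingAndLength}: for $v \in \CC(\widehat\lambda;\R)\cong H_1(\widehat U;\R)$, one has $\ell^\rho_a(v) = [v]\cdot[\sigma^\rho_a]$, the algebraic intersection in $\widehat U$ of $[v] \in H_1(\widehat U;\R)$ with the $a$-th component $[\sigma^\rho_a] \in H_1(\widehat U, \delv\widehat U;\R)$ of the shearing class. Up to the (sign-neutral) symmetry of the intersection pairing between $H_1(\widehat U;\R)$ and $H_1(\widehat U,\delv\widehat U;\R)$, this is the asserted formula $T_\mu\ell^\rho_a(v) = [\sigma^\rho_a]\cdot[v]$. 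Linearity of $T_\mu\ell^\rho_a$ on $F_\lambda$ is then immediate: the map $v \mapsto [v]$ from $F_\lambda \subset \CC(\lambda;\R) \subset \CC(\widehat\lambda;\R)$ to $H_1(\widehat U;\R)$ is the restriction of the linear isomorphism of Proposition~\ref{prop:HomologyTangentCycles}, and intersection with a fixed relative class is linear. One should also note that the formula is consistent across the two faces $F_\lambda$ and $F_{\lambda'}$ that a given $v$ may belong to (when $\lambda,\lambda'$ are two maximal laminations both carrying $v$), since in all cases $T_\mu\ell^\rho_a(v)$ equals the intrinsically defined length $\ell^\rho_a(v)$ of the underlying H\"older geodesic current; this is what makes the piecewise-linear statement meaningful.

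The main obstacle — really the only subtle point — is the verification that the H\"older geodesic current produced from $v$ by the analytic recipe of \cite{Bon97b} (used in Theorem~\ref{thm:DerivativeLengthMLexists}) coincides with the one produced by regarding $v$ as a tangent cycle for $\lambda$ and applying Ruelle--Sullivan, \emph{and} that the resulting homology class is the class $[v]$ of Proposition~\ref{prop:HomologyTangentCycles}. This is a matter of tracing through the definitions in \cite{Bon97a, Bon97b, Dre1}: the length $\ell^\rho_a(v)$ of \cite{Dre1} is computed by integrating the leafwise form $\omega_a - \omega_{a+1}$ against the transverse H\"older distribution defined by $v$, which by Lemma~\ref{lem:LengthIsEvaluationOmega} is exactly $\langle [\omega_a]-[\omega_{a+1}],[v]\rangle$, and then Lemma~\ref{lem:EvaluateOmegaIsIntersection} identifies this with $[v]\cdot[\sigma^\rho_a]$. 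So in fact one can bypass the abstract compatibility statement entirely and argue directly through Lemmas~\ref{lem:LengthIsEvaluationOmega} and \ref{lem:EvaluateOmegaIsIntersection}, which is the route I would take in writing the proof: cite Theorem~\ref{thm:DerivativeLengthMLexists} to reduce to computing $\ell^\rho_a(v)$ for the tangent cycle $v$, and then cite Theorem~\ref{thm:ShearingAndLength} (equivalently, the chain of Lemmas in its proof) to evaluate it.
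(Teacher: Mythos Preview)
Your proposal is correct and follows essentially the same approach as the paper: reduce $T_\mu\ell^\rho_a(v)$ to $\ell^\rho_a(v)$ via the observation following Theorem~\ref{thm:DerivativeLengthMLexists}, then apply Theorem~\ref{thm:ShearingAndLength}. The paper's proof is just these two sentences, while you have (helpfully but unnecessarily) unpacked the compatibility of the analytic and combinatorial descriptions of $v$ and traced through the constituent lemmas.
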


\begin{proof} We already observed that $T_{\mu} \ell^\rho_a( v ) = \ell^\rho_a( v )$ where the right hand side interprets $v$ as a tangent cycle for $\lambda$ and involves the function $\ell_a^\rho \colon \CC(\widehat\lambda) \to \R$ introduced in \S \ref{subsect:LengthFunctions}. The formula  then occurs as a special case of  Theorem~\ref{thm:ShearingAndLength}.
\end{proof}

%%%%%%%%%%%%%%%%%%%%%%%%%%%%%%%%%%%%%%

\bibliographystyle{amsalpha}
 \bibliography{HitchinLam} 
 \end{document}